\newtheorem{theorem}{Theorem}[section]
\newtheorem{lemma}[theorem]{Lemma}
\newtheorem{proposition}[theorem]{Proposition}
\theoremstyle{definition}
\newtheorem{definition}[theorem]{Definition}
\newtheorem{remark}[theorem]{Remark}
\numberwithin{equation}{section}
 \let\al=\alpha
\let\b=\beta
\let\d=\delta
\let\g=\gamma
\let\G=\Gamma
\let\fy=\infty  
\let\r=\rho
\let\s=\sigma
\let\z=\zeta
\let\ep=\epsilon
\let\la=\lambda
\let\La=\Lambda
\let\va=\varphi
\def\bbR{\mathbb{R}}
\def\scrP{\mathscr{P}}
\def\scrF{\mathscr{F}}
\def\scrL{\mathscr{L}}
\def\scrV{\mathscr{V}}
\def\calC{\mathcal {C}}
\def\calF{\mathcal {F}}
\def\calL{\mathcal {L}}
\def\calM{\mathcal {M}}
\def\calS{\mathcal {S}}
\def\rr{{\mathbb R}}
\def\rd{{{\bbR}^d}} 
\def\rdd{{{\rr}^{2d}}}  
\def\rddd{{{\rr}^{4d}}}
\def\zd{{{\mathbb{Z}}^d}}
\def\zdd{{{\mathbb{Z}}^{2d}}}
\def\mp{M^p}
\def\mpd{M^p(\rd)}
\def\mpq{M^{p,q}}
\def\mpqd{M^{p,q}(\rd)}
\def\mfi{M^{1}}
\def\mfid{M^{1}(\rd)}
\def\mf{M^{\infty}}
\def\mfd{M^{\infty}(\rd)}
\def\msj{M^{\infty,1}}
\def\msjd{M^{\infty,1}(\rd)}
\def\msjdd{M^{\infty,1}(\rdd)}
\def\mif{M^{1,\fy}}
\def\mpd{M^p(\rd)}
\def\wpq{W^{p,q}}
\newcommand{\be}{\begin{equation*}}
	\newcommand{\ee}{\end{equation*}}
\newcommand{\ben}{\begin{equation}}
	\newcommand{\een}{\end{equation}}
\newcommand{\bn}{\begin{enumerate}}
	\newcommand{\en}{\end{enumerate}}
\newcommand{\bs}{\backslash}
\newcommand{\lan}{\langle}
\newcommand{\ran}{\rangle}
\begin{document}
\title[Sharp estimates of the Schr\"{o}dinger type propagators on modulation spaces]
{Sharp estimates of the Schr\"{o}dinger type propagators on modulation spaces}

\address{School of Science, Jimei University, Xiamen, 361021, P.R.China}
\email{weichaoguomath@gmail.com}
\author{WEICHAO GUO}
\address{School of Mathematics and Statistics, Xiamen University of Technology, Xiamen, 361024, P.R.China} 
\email{guopingzhaomath@gmail.com}
\author{GUOPING ZHAO}
\subjclass[2020]{35S30, 47G30, 42B35}

\keywords{Fourier integral operator, Schr\"{o}dinger type operator, Modulation space, Short-time Fourier transform, Sj\"{o}strand class}

\begin{abstract}
This paper is devoted to conducting a comprehensive and self-contained study of the boundedness on modulation spaces of Fourier integral operators
arising when solving Schr\"{o}dinger type operators.
The symbols of these operators belong to the  Sj\"{o}strand class $\msj$,
and their phase functions satisfy certain regularity conditions associated with mixed modulation spaces.
Our conclusions cover two novel situations corresponding to the so-called mild and high growth of phase functions. 
These conclusions represent essential improvements and generalizations of existing results.
Our method is based on a reasonable decomposition and scaling of the symbol and phase functions, ensuring their membership in appropriate mixed modulation spaces.
In a certain sense, all conclusions of this paper are optimal.
\end{abstract}

\maketitle


\section{Introduction}
In this paper, we consider the Fourier integral operators (FIOs) defined by
\be
Tf(x)=\int_{\rdd}\sigma(x,\xi)\hat{f}(\xi)e^{2\pi i\Phi(x,\xi)}d\xi,
\ee
where the functions $\s$ and $\Phi$ are referred to as the symbol and phase function, respectively. 
When $\s e^{2\pi i\Phi}\in \calS'(\rdd)$, the corresponding FIO with symbol $\s$ and phase function $\Phi$ admits a weak formulation:
\be
\lan Tf, g\ran
=
\lan \s e^{2\pi i\Phi}, g\otimes \bar{\hat{f}}\ran
=\lan \scrF_2(\s e^{2\pi i\Phi}), g\otimes \bar{f}\ran,\ \ \ \ f,g\in \calS(\rd).
\ee

As is well established, FIOs constitute a pivotal mathematical tool for investigating the behavior of solutions to partial differential equations. 
We refer the reader to \cite{CorderoNicolaRodino2009TAMS, Hoermander1971AM, RuzhanskySugimoto2006CPDE, SeegerSoggeStein1991AM2} 
for the study of 
FIOs that arise in the solutions of hyperbolic equations, where the corresponding phase functions are positively homogeneous of degree 1 in the variable $\xi$.
Distinct from such FIOs, our focus lies on FIOs encountered in analyzing the Cauchy problem for
 Schr\"{o}dinger-type equations,
foundational works in this regard include those by
 Asada--Fujiwara \cite{AsadaFujiwara1978JJMNS} and Cordoba-Fefferman \cite{CordobaFefferman1978CPDE}.

In this paper, we consider a class of more general FIOs associated with Schr\"{o}dinger-type propagators.
Primary examples stem from so-called metaplectic operators (see \cite{CorderoNicola2008JFA, CorderoGroechenigNicolaRodino2013JMPA9}), 
pseudodifferential operators in the Kohn-Nirenberg form 
(see \cite{GroechenigHeil1999IEOT, Groechenig2006RMI, SugimotoTomita2008JFAA, CorderoNicola2010IMRNI, CorderoNicola2018IMRNI, GuoChenFanZhao2022IMRN}) and unimodular Fourier multipliers (see \cite{Benyi2007JFA, Miyachi2009PAMS, Cunanan2014JMAA, NicolaTabacoo2018JPDOA, GuoZhao2020JFA, ZhaoGuo2024NMJ}).
To characterize our phase functions precisely, we introduce the following conditions for real-valued functions $\Phi$ on $\rdd$.
\begin{itemize}
    \item $(\al,t_1,t_2)$-growth condition for second-order derivatives: Let $\ep>0$.
    For some $\al\in (-\fy,1]$, $t_1,t_2\geq 0$,
    the function $\Phi$ satisfies:    
    \begin{itemize}[label=$\circ$]
        \item $|\nabla_{x}\Phi(x,\xi)-\nabla_{x}\Phi(0,\xi)|\lesssim \lan x\ran^{1-\al}$ for all $x\in \rd$,\ 
        uniformly for $\xi\in \rd$,
        \item $\partial_{x,\xi}^{\g}\Phi\in W^{\fy,\fy}_{1\otimes v_{d+\ep,d+\ep}}(\rdd)$ for every multi-index $\g$ with
        $|\g|=2$,
        \item $\lan x\ran^{-t_1}\partial_{x,x}^{\g}\Phi\in W^{\fy,\fy}_{1\otimes v_{d+\ep,d+\ep}}(\rdd)$ and
	$\lan \xi\ran^{-t_2}\partial_{\xi,\xi}^{\g}\Phi\in W^{\fy,\fy}_{1\otimes v_{d+\ep,d+\ep}}(\rdd)$,
        for every multi-index $\g$ with
        $|\g|=2$.
    \end{itemize}
    \item Uniform separation condition ($x$-type and $y$-type):
    \begin{itemize}[label=$\circ$]
        \item $x$-type: For all $\xi\in \rd,\ |x_1-x_2|\gtrsim 1$, 
        \begin{equation*}
            |\nabla_{\xi} \Phi(x_1,\xi)-\nabla_{\xi} \Phi(x_2,\xi)|\gtrsim 1\ \ \text{uniformly}.
        \end{equation*}
        \item $\xi$-type: For all $x\in \rd,\ |\xi_1-\xi_2|\gtrsim 1$, 
        \begin{equation*}
            |\nabla_x \Phi(x,\xi_1)-\nabla_x \Phi(x,\xi_2)|\gtrsim 1\ \ \text{uniformly}.
        \end{equation*}
    \end{itemize}
\end{itemize}
Here, the notation $W^{\fy,\fy}_{1\otimes v_{d+\ep,d+\ep}}(\rdd)$ denotes a special Wiener amalgam space on $\rdd$, For further details, refer to Definition \ref{def-Wiener}.
For simplicity, we use the following notations for the second-order partial derivatives:
\begin{equation*}
    \partial_{x,\xi}^{\g}\Phi:= \partial_{x_j}\partial_{\xi_l}\Phi,
    \partial_{x,x}^{\g}\Phi:= \partial_{x_j}\partial_{x_l}\Phi,
    \partial_{\xi,\xi}^{\g}\Phi:= \partial_{\xi_j}\partial_{\xi_l}\Phi,
    \ \ \ 
    \text{for suitable multi-index}\ \g\ \text{with}\ |\g|=2.
\end{equation*}

Furthermore, $(\al,t_1,t_2)$-growth conditions are classified into the following cases.
\begin{itemize}
    \item Low growth: $\al=1$ and $t_1=t_2=0$.
    \item Mild growth: $\al\in (0,1)$ and $t_1=t_2=0$.
    \item Critical growth: $\al=t_1=t_2=0$.
    \item High growth: $\al=-\fy$, with $t_1,t_2\geq 0$ and $t_1+t_2>0$.
\end{itemize}
Here, we introduce the notation $(-\fy,t_1,t_2)$ to denote the case where the condition $|\nabla_{x}\Phi(x,\xi)-\nabla_{x}\Phi(0,\xi)|\lesssim \lan x\ran^{1-\al}$
is omitted. 
By using the embedding $W^{\fy,\fy}_{1\otimes v_{d+\ep,d+\ep}}(\rdd)\subset L^{\fy}(\rdd)$.
one can additionally verify that the $(0,0,0)$-growth condition is equivalent to the $(-\fy,0,0)$-growth condition.

In this paper, we consider the FIOs 
with Sj\"{o}strand symbols $\s$, where the phase functions $\Phi$
satisfy a specific $(\al,t_1,t_2)$-growth condition and a uniform separation condition.
We note that only the low-growth and critical-growth cases have been previously studied, 
while the mild-growth and high-growth cases represent entirely new contributions to the investigation of FIOs.

We focus on the boundedness of these operators acting on the so-called modulation spaces $M^{p,q}$ with $1\leq p,q\leq \fy$,  which are widely utilized in time-frequency analysis.
Modulation spaces were introduced by H. Feichtinger \cite{Feichtinger1983TRUoV} in 1983.
Toady, they are widely recognized as the ``natural'' function spaces
for time-frequency analysis \cite{GrochenigBook2013, Feichtinger2006STisaIP}.
More precisely, for $1\leq p,q\leq \fy$ and a weight function $m$ on $\rdd$,
modulation spaces $M^{p,q}_m(\rd)$
are defined by measuring the decay and integrability of the STFT (defined in Subsection 2.1) as follows:
\be
M^{p,q}_m(\rd)=\{f\in \calS'(\rd): V_gf\in L^{p,q}_m(\rdd) \},
\ee
endowed with the canonical norm. 
Here, $L^{p,q}_m(\rdd)$ denotes weighted mixed-norm Lebesgue spaces with weight $m$,
see Section 2 for more details. We use $\calM^{p,q}_m(\rd)$ to denote the closure of $\calS(\rd)$ in $M^{p,q}_m(\rd)$.

In prior work by Cordero-Nicola-Rodino \cite[Theorem 5.2]{CorderoNicolaRodino2009CoP&AA} and Cordero-Nicola \cite[Theorem 1.1]{CorderoNicola2010JFAA}, 
the $\mpq$-boundedness of FIOs associated with \textbf{low-growth phase functions} has been established. 
For FIOs with \textbf{critical-growth phase functions}, boundedness results are derived in
\cite[Theorem 1.2 and Remark 6.5]{CorderoNicola2010JFAA}; notably,
the sharpness of the threshold conditions for the regularity exponents $s_1$ and $s_2$ remains an open problem to date.
See also \cite{ElenaAnitaPatrik2013JotLMS} for some more general boundedness in the framework of modulation spaces, where the exponents
are closely related to the boundedness of pseudodifferential operators.
See \cite{ConcettiToft2009AfM, ToftConcettiGarello2010OJM} for a study of a more general type of FIO.

Note that in previous articles (see, e.g., \cite{CorderoNicolaRodino2009CoP&AA, CorderoNicola2010JFAA,ElenaAnitaPatrik2013JotLMS}), the phase function is assumed 
to be a so-called ``tame function'', which is a real-valued smooth funciton on $\rdd$, satisfying
$\partial^{\g}\Phi\in L^{\fy}$ for all $|\g|\geq 2$, and the non-degeneracy condition:
\begin{equation*}
    \left\lvert \det \left( \left. \frac{\partial^2 \Phi}{\partial x_j \partial \xi_l} \right|_{(x, \xi)} \right) \right\rvert \geq \delta > 0, \quad \forall (x, \xi) \in \mathbb{R}^{2d}.
\end{equation*}
Compared with our assumptions, the tame function actually belongs to a stronger class of functions,
satisfying both the critical growth condition and the uniform separation condition. 
More precisely, it can be proven that a tame function satisfies the $(0,0,0)$-growth condition and the uniform separation conditions,
whereas the converse does not hold.

Based on the above analysis of existing research, \textbf{our first motivation} is to solve the open problems left unresolved in \cite{CorderoNicola2010JFAA} and, at the same time, to investigate the boundedness of FIOs in the intermediate state between low-growth and critical-growth.

For $s,t\in \rr$, we define the power weights
\be
v_{s,t}(z)=\langle z_1\rangle^s\langle z_2\rangle^t,
\ \ z=(z_1,z_2)\in \rdd.
\ee
Our first result characterizes all exponents $p,q\in [1,\fy],$ $s_1,s_2\in \rr$ and $\alpha\in [0,1]$
of function spaces such that: for all symbols in $M^{\fy,1}_{v_{s_1,s_2}\otimes 1}(\rdd)$ and phase functions 
satisfying the $(\alpha, 0,0)$-growth condition and uniform separation condition, 
the corresponding operator $T_{\s,\Phi}$ is bounded on $\mpqd$. 
To avoid the fact that $\calS(\rd)$ in not dense in the endpoint spaces $M^{p,q}$ with $p=\fy$ or $q=\fy$, we only 
consider the action of FIOs on $\calS(\rd)$.
Once the corresponding boundedness is established, FIOs can be uniquely extended to bounded linear operators from 
$\calM^{p,q}$ into $M^{p,q}$.

\begin{theorem}[FIO on $\mpq$]\label{thm-FIO-Mpq}
	Let $1\leq p,q \leq \fy$, $s_1,s_2\in \rr$ and $\alpha\in [0,1]$.
	The following statements are equivalent:
	\bn
	\item
    For all symbols $\s\in M^{\fy,1}_{v_{s_1,s_2}\otimes 1}(\rdd)$ and all real-valued $C^2(\rdd)$ phase functions $\Phi$ satisfying 
    the $(\alpha,0,0)$-growth condition and uniform separation condition, the boundedness property holds:
    $$T_{\s,\Phi}\in \calL(\mpqd).$$
	\item 
        The exponents $s_1, s_2$ satisfy $s_1, s_2\geq 0$.
        In addition, for $\al \in [0,1)$,
	the following embedding relations are satisfied:
	\be
	l^{p}_{\frac{s_1}{1-\al}}(\zd)\subset l^q(\zd),\ \ \   l^q_{\frac{s_1}{1-\al}+s_2}(\zd)\subset l^p(\zd).
	\ee
    \item 
    The exponents $p,q,s_1,s_2,\al$ satisfy $s_1,s_2\geq 0$, and for $\al \in [0,1)$,
    the following conditions hold:
    \begin{itemize}[label=$\circ$]
        \item If $1/q>1/p$: $s_1>d(1-\al)(1/q-1/p)$;
        \item If $1/p>1/q$: $s_1+(1-\al)s_2>d(1-\al)(1/p-1/q)$.
    \end{itemize}
	\en
	Moreover, if any of the above statements holds,  the boundedness  
    in Statement (1) satisfies the following norm estimate:
	\be
	\|T_{\s,\Phi}\|_{\calL(\mpqd)}
	\lesssim 
	\|\s\|_{M^{\fy,1}_{v_{s_1,s_2}\otimes 1}}e^{\sum_{|\g|=2}\|\partial^{\g}\Phi\|_{W^{\fy,\fy}_{1\otimes v_{d+\ep,d+\ep}}}}.
	\ee
\end{theorem}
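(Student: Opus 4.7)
The plan is to prove the three-way equivalence as $(2)\Leftrightarrow(3)$ by weighted sequence-space embeddings, $(2)\Rightarrow(1)$ by a Gabor-decomposition argument that also yields the quantitative norm estimate, and $(1)\Rightarrow(2)$ by sharp counterexamples. The equivalence $(2)\Leftrightarrow(3)$ is elementary: the inclusion $l^p_s(\zd)\subset l^q(\zd)$ is equivalent to $\lan k\ran^{-s}\in l^r(\zd)$ with $1/r=(1/q-1/p)_+$, and a direct Hölder computation shows it holds iff either $p\leq q$ and $s\geq 0$, or $p>q$ and $s>d(1/q-1/p)$. Applying this criterion to the two embeddings in (2) reproduces exactly the list of cases in (3).

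For $(2)\Rightarrow(1)$, the strategy is a Gabor decomposition of the symbol combined with an $\al$-dependent rescaling of the phase. Expand $\s\in M^{\fy,1}_{v_{s_1,s_2}\otimes 1}(\rdd)$ as a superposition of time-frequency molecules $\s_\la$ localized on a Gabor cell in $\rdd$. For each molecule the Gabor matrix $\lan T_{\s_\la,\Phi}\pi(z)g,\pi(w)g\ran$ concentrates along the canonical map generated by $\Phi$; the $(\al,0,0)$-growth condition controls the concentration scale by $\lan z\ran^{1-\al}$, while the uniform separation condition forces the canonical map to be discretely bi-Lipschitz. Summing over $\la$ and transferring to the Gabor-coefficient side reduces the $\mpq$-boundedness to weighted $l^p$-to-$l^q$ estimates on sequences indexed by $\zd$, carrying exactly the weight exponents $s_1/(1-\al)$ and $s_1/(1-\al)+s_2$ that appear in (2); thus (2) is sufficient. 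The exponential factor in the stated norm bound arises from a Wiener-amalgam estimate for $\scrF_2(e^{2\pi i\Phi})$, contributing multiplicatively in the $W^{\fy,\fy}_{1\otimes v_{d+\ep,d+\ep}}$ norms of the second-order derivatives of $\Phi$.

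For $(1)\Rightarrow(2)$, we construct FIOs that saturate each condition. Non-negativity $s_j\geq 0$ is forced by choosing the pseudodifferential phase $\Phi(x,\xi)=x\cdot\xi$ and invoking the known failure of $\mpq$-boundedness for Kohn--Nirenberg pseudodifferential operators whose Sj\"ostrand symbols carry a negative weight. The two embeddings in (2) are tested via explicit phases $\Phi_\al(x,\xi)=\va(x)+x\cdot\xi$, with $\va\in C^2(\rd)$ chosen so that $|\partial^2\va(x)|\sim \lan x\ran^{-\al}$; these satisfy both the $(\al,0,0)$-growth and the uniform separation conditions. Testing the assumed boundedness against translated-modulated Gaussians concentrated at $|z_n|\to\fy$ converts the $\mpq$-bound into a scale-dependent $l^p\to l^q$ inequality, from which the embeddings in (2) are recovered in the limit.

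The main obstacle is achieving the sharp weight exponent $s_1/(1-\al)$ in the sufficiency direction without any logarithmic loss: one must match the Gabor-cell scaling of $\s$ against the quadratic scaling of $\Phi$ exactly. For $\al=0$ this is precisely the threshold problem left open in \cite{CorderoNicola2010JFAA}, and the decisive new ingredient enabling a uniform argument across $\al\in[0,1)$ is the mixed-modulation-space decomposition afforded by the $(\al,0,0)$-growth condition, which allows the canonical-map analysis to be carried out at scale $\lan z\ran^{1-\al}$ rather than at unit scale.
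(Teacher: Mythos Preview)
Your treatment of $(2)\Leftrightarrow(3)$ and the overall shape of the necessity argument $(1)\Rightarrow(2)$ are on target: the paper likewise tests against the single phase $\Phi(x,\xi)=\lan x\ran^{2-\al}+x\cdot\xi$ and extracts the two embeddings by constructing explicit Schwartz functions $F=\sum_k a_k h(x-k_\al)$ with $k_\al=\lan k\ran^{\al/(1-\al)}k$ (Lemma~\ref{lm-spf}), then reads off $s_2\geq 0$ from modulated test functions $M_kh$. Your ``translated-modulated Gaussians'' sketch is compatible with this.

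The genuine gap is in $(2)\Rightarrow(1)$. Your plan is the Gabor-matrix/Schur-test route of \cite{CorderoNicolaRodino2009CoP&AA,CorderoNicola2010JFAA}: expand $\s$ into time-frequency molecules, estimate $\lan T\pi(z)g,\pi(w)g\ran$ by a kernel concentrated along the canonical map, then apply a Schur bound. That is exactly the method that left the $\al=0$ threshold open, and you do not actually close it; your final paragraph names a ``mixed-modulation-space decomposition'' as the missing ingredient but does not say what it is or how to use it, and the phrase ``canonical-map analysis at scale $\lan z\ran^{1-\al}$'' does not describe any concrete mechanism. The paper's approach is structurally different: it abandons Schur tests and works at the level of distribution kernels via the kernel theorems of \cite{CorderoNicola2019JFAA} (Propositions~\ref{pp-KFIO-Mi-Mf}--\ref{pp-KFIO-Mif}), reducing matters to estimating $v_{s_1,s_2}^{-1}e^{2\pi i\Phi}$ in the mixed modulation spaces $M^{1,1,\fy,\fy}(c_i)$ and $M^{1,\fy,1,\fy}(c_j)$. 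The phase is then split multiplicatively into a piecewise-linearized factor $\sum_{(k,l)}\eta_{k,l}^*e^{2\pi i(\nabla_x\Phi(k,l)\cdot x+\nabla_\xi\Phi(k,l)\cdot\xi)}$ (estimated directly in the mixed space using the separation condition) and a second-order remainder (estimated in the algebra $W^{\fy,\fy}_{1\otimes v_{d+\ep,d+\ep}}$, which produces the exponential). The product inequalities of Lemmas~\ref{lm-pd-c12}--\ref{lm-pd-c4} recombine the factors, and the $\al$-dependence enters only at the $M^{\fy,1}$ and $M^{1,\fy}$ endpoints (Theorems~\ref{thm-FIO-Mfi}, \ref{thm-FIO-Mif}) through the single inequality $v_{s_1}^{-1}(k)\lan\z_1-\nabla_x\Phi(k,l)\ran^{-s_1/(1-\al)}\lesssim\lan\z_1-\nabla_x\Phi(0,l)\ran^{-s_1/(1-\al)}$, which uses the $(\al,0,0)$ first-order bound directly. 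The general $\mpq$ case is then obtained from the four endpoints $M^1$, $M^\fy$, $M^{\fy,1}$, $M^{1,\fy}$ by complex interpolation---an endpoint-plus-interpolation architecture entirely absent from your plan. Without these pieces, your sufficiency argument remains a restatement of the open problem rather than a solution.
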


Remarks on Theorem \ref{thm-FIO-Mpq}:
\bn
\item 
By assuming a weaker condition, namely that the phase function satisfies $(\al,0,0)$-growth condition and uniform separation condition, Theorem \ref{thm-FIO-Mpq} obtains a stronger boundedness conclusion compared to assuming a tame phase function. 
This type of assumption is inspired by the literatures \cite{CorderoNicolaRodino2009CoP&AA, NicolaTabacoo2018JPDOA,GuoZhao2020JFA}.
\item
If we set $\al=1$ in Theorem \ref{thm-FIO-Mpq}, we recover the conclusion for the low-growth case.
This refines \cite[Theorem 1.1]{CorderoNicola2010JFAA}, 
where phase functions were required to satisfy the “tame function” property.
\item 
If we set $\al=0$ in Theorem \ref{thm-FIO-Mpq}, we obtain the sharp range of exponents for the boundedness for the critical-growth case. 
This resolves the open problem of characterizing exponents left unresolved in \cite[Theorem 1.2]{CorderoNicola2010JFAA}.
\item 
If we take $\al\in (0,1)$ in Theorem \ref{thm-FIO-Mpq}, the result is entirely new and corresponds to an intermediate case between the low-growth and critical-growth cases.
We also note that this conclusion provides a refinement and extension of the main theorem in \cite{GuoZhao2020JFA}.
See subsection 3.5 for further details. 
\item 
In comparison with the proof methods in existing literature, our innovations are as follows:
\begin{itemize}[label=$\circ$]
    \item
    \textbf{Estimate at the level of function spaces.}
    We transform the boundedness of FIOs into mixed modulation space estimates for their symbols and phase functions (inspired by the Kernel theorems), rather than mixed Lebesgue space estimates for their short-time Fourier transforms (inspired by Schur-type tests, see, e.g., \cite{CorderoNicolaRodino2009CoP&AA, CorderoNicola2010JFAA}). This new perspective enables us to handle the estimates at the level of function spaces and also allows us to weaken the conditions on the phase function to the so-called $(\al,0,0)$-growth condition and uniform separation condition.
    \item 
    \textbf{Independence of the symbol $\s$.}
    Benefiting from considering the estimate at the level of mixed modulation spaces, we find that, in a certain sense, the Sj\"{o}strand symbol $\s$ has no influence on the boundedness of FIOs (see Propositions \ref{pp-KFIO-Mi-Mf},\ref{pp-KFIO-Mfi} and \ref{pp-KFIO-Mif}). 
    As a result, when studying the boundedness of FIOs, the symbol can be ignored, and only the quantities related to the phase function need to be estimated.
    \item 
    \textbf{Separation of the high-order part of phase function $\Phi$.}
    For the estimate of phase function, a crucial step is to separate its high-order (second-order and above) derivative components, allowing the low-order (zeroth- and first-order) derivatives to reflect their roles in boundedness. 
    Leveraging the advantages of mixed modulation space estimates, we achieve this separation through a priori decomposition of the phase function and using product inequalities in mixed-modulation spaces. 
\end{itemize}
\en

In many boundedness theorems for FIOs (i.e., Theorem \ref{thm-FIO-Mpq} above), the uniform separation condition of phase function is of crucial importance.
This significance is manifested not only in the proofs of these theorems but also in the fact that phase function $\Phi(x,\xi)=x\cdot \xi$
of pseudodifferential operators, which serve as the canonical examples of FIOs,  also satisfy the uniform separation property. 
A natural question arises: if a phase function \textit{lacks} this uniform separation property, how does the boundedness of the corresponding FIO behave? 
Establishing the boundedness of such FIOs (which lie beyond the pseudodifferential operator framework) is a meaningful task.

Based on the above argument, \textbf{our second motivation} is to investigate the boundedness of FIOs without assuming the uniform separation property of the phase function. 
The following theorem characterizes all exponents $p,q\in [1,\fy],$ $s_1,s_2\in \rr$ and $\alpha\in [0,1]$
of function spaces such that: for all symbols in $M^{\fy,1}_{v_{s_1,s_2}\otimes 1}(\rdd)$ and phase functions 
satisfy the $(\alpha,0,0)$-growth condition, the corresponding operator $T_{\s,\Phi}$ is boundedness on $\mpqd$.

\begin{theorem}[FIO on $\mpq$, non-separated phase]\label{thm-FIO-Mpq-nsp}
	Let $1\leq p,q \leq \fy$, $s_1,s_2\in \rr$ and $\alpha\in [0,1]$.
	The following statements are equivalent:
	\bn
\item
For all symbols $\s\in M^{\fy,1}_{v_{s_1,s_2}\otimes 1}(\rdd)$ and all real-valued $C^2(\rdd)$ phase functions $\Phi$ satisfying 
    the $(\alpha,0,0)$-growth condition, the boundedness property holds:
    $$T_{\s,\Phi}\in \calL(\mpqd).$$
\item The following embedding relations are satisfied:
	\be
	l^{\fy}_{\frac{s_1}{1-\al}-\frac{\al d}{(1-\al)p}}\subset l^q\ \text{for}\ \al\in [0,1),
	\ \ \ \ \ 	
l^{\fy}_{s_1}\subset l^p,\ \ \ l^q_{s_2}\subset l^1.
\ee

\item
    The exponents $p,q,s_1,s_2,\al$ satisfy 
    \begin{itemize}[label=$\circ$]
        \item $s_1\geq d/p$ with strict inequality for $p<\fy$,
        \item $s_2\geq d(1-1/q)$ with strict inequality for $q>1$,
    \end{itemize}
    and the following conditions for $\al \in [0,1)$
    \be
    s_1\geq \al d/p+(1-\al)d/q\ \ \text{with strict inequality for}\ q<\fy.
    \ee
	\en
Moreover, if any of the above statements holds,  the boundedness in statement (1) satisfies
 the following norm estimate
\be
\|T_{\s,\Phi}\|_{\calL(\mpqd)}
\lesssim 
\|\s\|_{M^{\fy,1}_{v_{s_1,s_2}\otimes 1}}e^{\sum_{|\g|=2}\|\partial^{\g}\Phi\|_{W^{\fy,\fy}_{1\otimes v_{d+\ep,d+\ep}}}}.
\ee
\end{theorem}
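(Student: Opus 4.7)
The plan is to mirror the structure used for Theorem \ref{thm-FIO-Mpq}, modified to account for the dropped uniform separation hypothesis, which leads to different (and weaker) embedding requirements on the exponents. I would first dispatch the equivalence $(2) \iff (3)$, which is purely a statement about weighted sequence-space embeddings: $l^{\infty}_{s} \subset l^{r}(\zd)$ holds iff $s \cdot r > d$ (or $s \geq 0$ when $r = \infty$), while $l^{q}_{s_2} \subset l^{1}(\zd)$ holds by H\"older iff $s_2 \cdot q' > d$ for $q > 1$. Matching the three embeddings in (2) against these criteria yields exactly the three inequalities in (3), with the strict/non-strict boundary behavior dictated by the endpoint exponents $p, q \in \{1, \infty\}$.

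For the implication $(2) \Rightarrow (1)$, I would adapt the kernel-theoretic approach emphasized in the remarks following Theorem \ref{thm-FIO-Mpq}: transfer the boundedness of $T_{\sigma,\Phi}$ on $\mpqd$ into a mixed-modulation-space estimate on its Schwartz kernel, then invoke Propositions \ref{pp-KFIO-Mi-Mf}, \ref{pp-KFIO-Mfi}, and \ref{pp-KFIO-Mif} to eliminate the Sj\"ostrand symbol $\sigma$ from the analysis. Decompose the phase as $\Phi(x,\xi) = \Phi(0,\xi) + x \cdot \nabla_x \Phi(0,\xi) + R(x,\xi)$, where $R$ collects the higher-order pieces controlled by the $W^{\infty,\infty}_{1 \otimes v_{d+\varepsilon,d+\varepsilon}}$ norms of the second derivatives. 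Product inequalities in mixed modulation spaces convert the low-order part (which carries the growth scale $\alpha$) into decay required by (2), while the high-order part contributes the exponential factor $e^{\sum_{|\gamma|=2}\|\partial^{\gamma}\Phi\|}$. Crucially, the absence of uniform separation means that the kernel can no longer be concentrated on a diagonal set, so the effective sequence-space embedding becomes $l^{\infty} \hookrightarrow l^{r}$ rather than the $l^{p} \hookrightarrow l^{q}$ type used in Theorem \ref{thm-FIO-Mpq}. This is precisely the origin of the three embeddings in (2) replacing the two of the separated case.

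For $(1) \Rightarrow (2)$, I would construct test families of Sj\"ostrand symbols $\sigma \in M^{\infty,1}_{v_{s_1,s_2} \otimes 1}(\rdd)$ together with phase functions satisfying the $(\alpha,0,0)$-growth condition but deliberately violating uniform separation---for instance, $\Phi(x,\xi) = \psi(x) \cdot \xi$ with $\psi$ having long plateaus while growing like $\langle x \rangle^{\alpha}$, so that $\nabla_{\xi}\Phi = \psi$ takes identical values on arbitrarily distant $x$'s. Testing $T_{\sigma,\Phi}$ on Gabor atoms in $M^{p,q}$ and tracking the resulting norm inequality yields the three required embeddings through three distinct test configurations: one probing the $x$-decay of $\sigma$ to force $l^{\infty}_{s_1} \subset l^{p}$, one probing the $\xi$-decay to force $l^{q}_{s_2} \subset l^{1}$, and one probing the interaction with the phase scale $\alpha$ to force the third embedding.

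The main obstacle is the fine-tuning required in $(2) \Rightarrow (1)$: without uniform separation, one loses a priori the diagonal concentration of the kernel, and the challenge is to quantify precisely how the growth index $\alpha$ interacts with this loss to produce the sharp embedding $l^{\infty}_{s_1/(1-\alpha) - \alpha d /((1-\alpha)p)} \subset l^{q}$. The mixing of the parameter $\alpha$ with $p$ in the weight exponent suggests that the correct scaling of the phase variable $\xi \mapsto \langle x \rangle^{-\alpha} \xi$ (or an analogous rescaling adapted to the mild-to-critical interpolation regime) must be built into the kernel estimate, and matching both sides at the endpoints $\alpha \to 1^{-}$ and $q = \infty$ will be the delicate part of the argument.
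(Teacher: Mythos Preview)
Your overall architecture (kernel estimates for sufficiency, test phases for necessity, Lemma \ref{lm-exp-eb} for $(2)\Leftrightarrow(3)$) is right, but two concrete pieces diverge from what actually works.

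\textbf{Necessity.} Your proposed test phase $\Phi(x,\xi)=\psi(x)\cdot\xi$ with $\psi$ having plateaus does \emph{not} satisfy the $(\alpha,0,0)$-growth condition: the first bullet requires $|\nabla_x\Phi(x,\xi)-\nabla_x\Phi(0,\xi)|\lesssim\langle x\rangle^{1-\alpha}$ \emph{uniformly in $\xi$}, but here $\nabla_x\Phi(x,\xi)-\nabla_x\Phi(0,\xi)=(D\psi(x)-D\psi(0))^{T}\xi$, which blows up in $\xi$ unless $\psi$ is affine. The paper instead uses the simplest possible non-separated phases, namely phases depending on one variable only. Taking $\sigma\equiv 1$ and $\Phi(x,\xi)=\varphi(\xi)\in C_c^{\infty}$ gives $T_{1,\Phi}f\equiv\text{const}$, forcing $1\in M^{p,q}_{v_{s_1}^{-1}\otimes 1}$ and hence $l^{\infty}_{s_1}\subset l^p$. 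Taking $\Phi(x,\xi)=\langle x\rangle^{2-\alpha}$ gives $T_{1,\Phi}f(x)=e^{2\pi i\langle x\rangle^{2-\alpha}}f(0)$; the condition $|f(0)|\lesssim\|f\|_{M^{p,q}_{1\otimes v_{s_2}}}$ then yields $l^q_{s_2}\subset l^1$, while the condition $e^{2\pi i\langle x\rangle^{2-\alpha}}\in M^{p,q}_{v_{s_1}^{-1}\otimes 1}$ is analyzed by testing against dilated bump functions $g((x-k_{\alpha})/\langle k\rangle^{\alpha/(1-\alpha)})$ to extract the third embedding. No rescaling $\xi\mapsto\langle x\rangle^{-\alpha}\xi$ is involved.

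\textbf{Sufficiency.} The global Taylor decomposition $\Phi(x,\xi)=\Phi(0,\xi)+x\cdot\nabla_x\Phi(0,\xi)+R(x,\xi)$ does not work: the remainder $R$ contains terms of size $|x|^2\cdot\|\partial_{x,x}^{\gamma}\Phi\|_{L^{\infty}}$, which cannot be absorbed into an $x$-independent exponential factor. The paper reuses the \emph{local} decomposition from the proof of Theorem \ref{thm-FIO-M1}: on each piece $\eta_{k,l}$ one subtracts the linear part $\nabla_x\Phi(k,l)\cdot x+\nabla_\xi\Phi(k,l)\cdot\xi$, so that the remainder is uniformly bounded in the working space $W^{\infty,\infty}_{1\otimes v_{d+\varepsilon,d+\varepsilon}}$. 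The estimate of the term $E_2$ (the high-order part) is literally the same as in Theorem \ref{thm-FIO-M1}; only the estimate of the linearized term $E_1$ changes, and there the absence of separation is compensated by the extra weight $v_{s_1}^{-1}(z_1)$ (respectively $v_{s_2}^{-1}$), which converts the $L^1_{z_1}$ norm of $\sum_k\chi_{B(k,r)}(z_1)$ into an $L^{\infty}$ norm via $l^{\infty}_{s_1}\subset l^1$. This gives the endpoint bounds on $M^1$, $M^{\infty}$, and (from Theorem \ref{thm-FIO-Mfi}, which already did not use separation) on $M^{\infty,1}$; the full range is then obtained by complex interpolation, with a somewhat involved case splitting in $(p,q,\alpha)$ to match the exponents. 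You should make the interpolation structure explicit, as it is the backbone of the argument.
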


Remarks on Theorem \ref{thm-FIO-Mpq-nsp}:
\bn
\item Theorem \ref{thm-FIO-Mpq-nsp} is the boundedness result corresponding to Theorem \ref{thm-FIO-Mpq},
with the uniform separation property of the phase function removed.
\item By comparing Theorem \ref{thm-FIO-Mpq} and Theorem \ref{thm-FIO-Mpq-nsp}, one observes that: in the absence
    of uniform separation condition on $\Phi$, 
    stronger conditions on $\s\in M^{\fy,1}_{v_{s_1,s_2}\otimes 1}$ are required to ensure the boundedness
    of $T_{\s,\Phi}$ on $\mpq$, except for the case $(p,q)=(\fy,1)$.
    This further illustrates the pivotal role of the uniform separation condition of the phase function $\Phi$
    in guaranteeing the boundedness of $T_{\s,\Phi}$ on $\mpq$.
\item The proof of boundedness in Theorem \ref{thm-FIO-Mpq-nsp} follows the same framework as that of Theorem \ref{thm-FIO-Mpq}, 
 including the decomposition of the symbol and the phase function, and the estimate in the framework of mixed modulation spaces.
 \item The optimality of the exponents for boundedness is illustrated by several examples. These examples go beyond the framework of Fourier multipliers and will be presented in Subsection 4.2.
\en

Finally, we turn our attention to the growth condition of phase functions. Whether the growth conditions considered in Theorems \ref{thm-FIO-Mpq} and \ref{thm-FIO-Mpq-nsp} (discussed above) or the tame function condition in many prior studies of FIOs (see \cite{CorderoNicolaRodino2009CoP&AA, CorderoNicola2010JFAA,ElenaAnitaPatrik2013JotLMS,CorderoGroechenigNicolaRodino2013JMPA9,FernandezGalbisPrimo2019TAMS}), 
the non-high-growth condition (bounded derivative condition of order two and more) is critical for establishing boundedness results of FIOs and their proofs. 
However, as basic examples illustrate, the assumptions of many unimodular multipliers exceed this scope (see, e.g., \cite{Miyachi2009PAMS, NicolaTabacoo2018JPDOA, GuoZhao2020JFA}).

Based on the above argument, \textbf{our third motivation} is to investigate the boundedness of FIOs beyond the
scope of non-high-growth condition. 
Precisely, our third conclusion is established under the framework of high-growth phase function and the $L^2$ boundedness of FIOs.
It characterizes the optimal exponents of $p\in [1,\fy]$, $s_1,s_2\in \rr$ and $t_1,t_2\geq 0$
of function spaces such that: for all symbols in $M^{\fy,1}_{v_{s_1,s_2}\otimes 1}(\rdd)$ and phase functions 
satisfy the $(-\fy, t_1,t_2)$-growth condition, the corresponding operator $T_{\s,\Phi}$ is boundedness on $\mpd$.

\begin{theorem}[FIO on $\mp$, high growth]\label{thm-FIO-Mp-12hg}
	Let $1\leq p\leq \fy$, $s_1,s_2\in \rr$, $t_1,t_2\geq 0$.
	The following statements are equivalent:
	\bn
	\item
	For all symbols $\s\in M^{\fy,1}_{v_{s_1,s_2}\otimes 1}$ and all real-valued $C^2(\rdd)$ functions $\Phi$ 
    satisfying 
    the $(-\fy,t_1,t_2)$-growth condition and uniform separation condition,
    if $T_{v_{s_1,s_2}\s,\Phi}\in \calL(L^2)$ with 
    \be
    \|T_{v_{s_1,s_2}\s,\Phi}\|_{\calL(L^2)}\lesssim \|\s\|_{M^{\fy,1}_{v_{s_1,s_2}\otimes 1}},
    \ee
	the following boundedness property holds:  
\be
T_{\s,\Phi}\in \calL(\mp)\cap \calL(M^{p'}).
\ee
	\item 
	The following conditions hold:
	\be
	s_1\geq dt_1|1/p-1/2|,\ \ \ s_2\geq dt_2|1/p-1/2|.
	\ee
	\en
	Moreover, if any of the above statements holds, the boundedness in statement (1) satisfies
 the following norm estimate:
	\be
	\|T_{\s,\Phi}\|_{\calL(\mpd)}+\|T_{\s,\Phi}\|_{\calL(M^{p'}(\rd))}
	\lesssim 
    \|\s\|_{\msj_{v_{s_1,s_2}\otimes 1}(\rdd)}
     e^{(A+B+C)|2/p-1|},
	\ee
    where 
    	\be
	A=\sum_{|\g|=2}\|\lan x\ran^{-s_1}\partial^{\g}_{x,x}\Phi\|_{W^{\fy,\fy}_{1\otimes v_{d+\ep,d+\ep}}},\ \ \ 
	B=\sum_{|\g|=2}\|\lan \xi\ran^{-s_2}\partial^{\g}_{\xi,\xi}\Phi\|_{W^{\fy,\fy}_{1\otimes v_{d+\ep,d+\ep}}},
	\ee
	and
	\be
	C=\sum_{|\g|=2}\|\partial^{\g}_{x,\xi}\Phi\|_{W^{\fy,\fy}_{1\otimes v_{d+\ep,d+\ep}}}.
	\ee
\end{theorem}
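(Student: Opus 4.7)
The plan is to establish the $M^p$ and $M^{p'}$ boundedness by complex interpolation between the assumed $L^2$ endpoint and an $M^\fy$ endpoint. By duality, it suffices to treat $p\in[2,\fy]$. The weight threshold $s_i\geq dt_i|1/p-1/2|$ and the exponential dependence $|2/p-1|$ in the conclusion are exactly what one expects from linear interpolation between $p=2$ (where the weight $v_{s_1,s_2}$ is absorbed into the $L^2$ hypothesis and no exponential loss occurs) and $p=\fy$ (where the full weight condition $s_i\geq dt_i/2$ and the exponential factor $e^{A+B+C}$ are needed).

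The first main step is to prove the $M^\fy$ endpoint: under $s_1\geq dt_1/2$ and $s_2\geq dt_2/2$, to show that
\be
\|T_{\s,\Phi}\|_{\calL(M^\fy)}\lesssim \|\s\|_{M^{\fy,1}_{v_{s_1,s_2}\otimes 1}}e^{A+B+C}.
\ee
I would localize in phase space via a Gabor-type partition of unity: on each unit patch centred at a lattice point $(x_0,\xi_0)\in \zdd$, the high-growth second derivatives $\partial^\g_{x,x}\Phi$ and $\partial^\g_{\xi,\xi}\Phi$ are effectively of sizes $\lan x_0\ran^{t_1}$ and $\lan\xi_0\ran^{t_2}$ respectively, while the mixed derivative $\partial^\g_{x,\xi}\Phi$ remains uniformly bounded. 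After a suitable rescaling that converts each localized piece into an FIO with $(0,0,0)$-growth phase plus uniform separation, Theorem~\ref{thm-FIO-Mpq} (in its critical-growth case $\al=0$, applied on $M^\fy$) furnishes a uniform local estimate. Reassembly via the product inequalities in mixed modulation spaces, i.e.\ the ``separation of the high-order part of $\Phi$'' technique highlighted in the paper, accounts for the scaling factors and embeds them into the weighted norm $\|\s\|_{M^{\fy,1}_{v_{s_1,s_2}\otimes 1}}$.

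With the $L^2$ and $M^\fy$ endpoints in hand, Stein's interpolation theorem applied to an analytic family of FIOs obtained by analytically varying the power weights on the symbol between the two endpoints yields the desired $M^p$ bound with the sharp threshold and the correct exponential factor; duality then transfers the bound to $M^{p'}$. For the necessity direction, I would test on phase functions saturating the growth, e.g.\ $\Phi(x,\xi)=x\cdot\xi+c_2^{-1}\lan\xi\ran^{2+t_2}+c_1^{-1}\lan x\ran^{2+t_1}$ suitably regularized near the origin, with $\s\equiv 1$ and input given by translated-modulated Gaussian wave packets. Known sharp $M^p$ bounds for unimodular Fourier multipliers of the form $e^{2\pi i\lan\xi\ran^{2+t_2}}$ force $s_2\geq dt_2|1/p-1/2|$, and a symmetric construction in the spatial variable produces the matching condition on $s_1$.

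The hardest step will be the interpolation. The weight $v_{s_1,s_2}$ appears simultaneously multiplicatively in the symbol at the $L^2$ endpoint and inside the symbol-class norm, so the admissible analytic family must be designed so that the symbol-class norms are uniformly controlled on the interpolation strip and the exponential factor is recovered with the sharp coefficient $|2/p-1|$. A secondary obstacle is maintaining sharp constants in the local-to-global assembly on $M^\fy$ so that the final bound matches $\|\s\|_{M^{\fy,1}_{v_{s_1,s_2}\otimes 1}}$ exactly, without logarithmic losses coming from the partition of unity or from the product-inequality step.
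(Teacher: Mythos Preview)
Your overall strategy---prove an endpoint bound, interpolate against the assumed $L^2$ estimate, and for necessity test on the phase $\Phi(x,\xi)=x\cdot\xi+\tfrac{1}{2\pi}(\lan x\ran^{2+t_1}+\lan\xi\ran^{2+t_2})$ with $\s=v_{s_1,s_2}^{-1}$---matches the paper. But the endpoint argument you sketch has a real gap. You propose to localize on unit patches in \emph{both} $(x,\xi)$, rescale each patch to force $(0,0,0)$-growth, and invoke Theorem~\ref{thm-FIO-Mpq}. The obstruction is that the kernel space governing the $M^1$ (resp.\ $M^\fy$) bound, namely $M^{1,1,\fy,\fy}(c_1)$ (resp.\ $c_2$), admits time-localization in only one of the two variables; see Proposition~\ref{pp-tlp-ws3}. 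So one cannot simultaneously cut into unit boxes in $x$ and $\xi$ and reassemble in that space. The paper's fix is exactly the point: first localize and scale in $\xi$ inside $M^{1,1,\fy,\fy}(c_1)$; then use the product inequality $M^{1,1,\fy,\fy}(c_1)\cdot M^{1,\fy,1,\fy}(c_1)\subset M^{1,1,\fy,\fy}(c_1)$ to split off the first-order part of the phase; then pass to the \emph{larger} space $M^{1,\fy,\fy,\fy}_{1\otimes v_{d+\ep,0}}(c_1)$, which \emph{does} have full time-localization (Proposition~\ref{pp-tlp-ws2}), and only there localize and scale in $x$. The two scalings are performed sequentially in different working spaces, not in parallel. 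Along the way the $\xi$-rescaling degrades the uniform-separation constant by a factor $\la_2=\lan l\ran^{-t_2/2}$, producing a loss $\lan l\ran^{dt_2/2}$ that is absorbed by the weight $\lan\xi\ran^{-dt_2/2}$; this bookkeeping is what pins down the sharp exponent.

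On the interpolation you are overcomplicating. No analytic family in the sense of Stein is needed. The paper rewrites $T_{\s,\Phi}\in\calL(M^p)$ as $T_{v_{s_1,s_2}\s,\Phi}\in\calL(M^p_{1\otimes v_{s_2}},M^p_{v_{s_1}^{-1}\otimes 1})$, so that both the endpoint bound (Theorems~\ref{thm-FIO-12hg-M1} and~\ref{thm-FIO-12hg-Mf}, giving weights $dt_i/2$) and the $L^2$ hypothesis (weight $0$) are statements about the \emph{same fixed} operator $T_{v_{s_1,s_2}\s,\Phi}$ acting between weighted modulation spaces. Ordinary complex interpolation of weighted modulation spaces then yields the intermediate weights $dt_i|1/p-1/2|$ and the exponential with exponent $|2/p-1|$ directly.
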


Remarks on Theorem \ref{thm-FIO-Mp-12hg}:
\bn
\item The conclusion of this theorem is inspired by its typical example, namely the boundedness of unimodular multipliers $e^{i|D|^{\b}}\ (\b>2)$ on modulation spaces (see \cite{Miyachi2009PAMS}). Just as in the case of non-high growth phase function, there is a correspondence between the boundedness of FIOs and the boundedness of corresponding unimodular multipliers $e^{i|D|^{\b}}\ (\b>2)$ in the modulation spaces and the Wiener amalgam spaces (see \cite{GuoZhao2020JFA}).
\item We only consider the boundedness in the modulation space $M^p$ (rather than $M^{p,q}$) because the boundedness of its typical example—unimodular multipliers with high growth $e^{i|D|^{\b}}\ (\b>2)$ in the Wiener amalgam space has not been fully resolved (see \cite{GuoZhao2020JFA}).
\item Although the conclusion of FIOs (with high-growth phase) is inspired by the corresponding result of unimodular multipliers, its proof method requires significant innovation.
\begin{itemize}[label=$\circ$]
    \item \textbf{The absence of localization property.}
    In this case of high growth, a crucial step is how to transfer the growth conditions of the second-order derivatives of the phase to the regularity loss of boundedness. To ensure that the regularity loss of boundedness is optimal, we need to localize the time domain $(x,\xi)\in \rdd$ of the phase function. 
    However, taking the boundedness of $M^1$ as an example, for the natural working space $M^{1,1,\fy,\fy}(c_1)$ derived from the Kernel theorem, we can only localize the $\xi$ variable (see Proposition \ref{pp-tlp-ws3}), while there is no possibility of localizing the $x$ variable. 
    This is completely different from the case of unimodular multipliers.
    \item \textbf{The choices of working spaces.}
    To overcome the lack of localization properties for the $x$ variable in the natural working space $M^{1,1,\fy,\fy}(c_1)$, we need to use a stronger working space $M^{1,\fy,\fy,\fy}_{1\otimes v_{d+\ep,0}}(c_1)$ to regain the localization properties of $x$ (see Proposition \ref{pp-tlp-ws2}).
    Note that $M^{1,1,\fy,\fy}(c_1)\subset M^{1,\fy,\fy,\fy}_{1\otimes v_{d+\ep,0}}(c_1)\subset W^{\fy,\fy}_{1\otimes v_{d+\ep,d+\ep}}$.
    The estimates go through the localization of the $\xi$ variable in $M^{1,1,\fy,\fy}(c_1)$ and the localization of the $x$ variable in
    $M^{1,\fy,\fy,\fy}_{1\otimes v_{d+\ep,0}}(c_1)$, and finally return to the space $W^{\fy,\fy}_{1\otimes v_{d+\ep,d+\ep}}$ where the phase function is located.
    
    \item \textbf{The interplay of scaling and localization.}
    In addition to the localization technique, another crucial approach to estimate the loss of regularity is to decompose the operator into a composite of the corresponding scaling operator and a new scaling FIO (Denoted by FIO$_{\la}$), such that the phase of the new FIO$_{\la}$ has lower growth phase function. 
    Correspondingly, in our estimation, this is reflected in performing a localization process and a scaling transformation on the two variables $x$ and $\xi$ respectively.
    At the same time, we also need to separate the high-order part of the phase function in two stages, and the selection of the timing for separation is also very important. Specifically, our processing procedure is as follows:
    \begin{itemize}
        \item The localization of variable $\xi$ in $M^{1,1,\fy,\fy}(c_1)$,
        \item The Scaling of variable $\xi$ in $\calF L^{\fy}$,
        \item For the scaled (for $\xi$) phase function, 
        we separate the part involving high-order derivatives with respect to the
        $(x,\xi)$ variables, by using $M^{1,1,\fy,\fy}(c_1) \cdot M^{1,\fy,1,\fy}(c_1)\subset M^{1,1,\fy,\fy}(c_1)$,
        \item The localization of variable $x$ in $M^{1,\fy,\fy,\fy}_{1\otimes v_{d+\ep,0}}(c_1)$,
        \item The Scaling of variable $x$ in $\calF L^1$,
        \item For the scaled (for $(x,\xi)$) phase function, we separate the part involving high-order derivatives with respect to the
        $(x,\xi)$ variables, by using the growth condition of $\Phi$.
    \end{itemize}
\end{itemize}
\en

This paper is structured as follows.
In Section 2, we introduce first some definitions and fundamental properties of the modulation spaces $M^{p,q}_m(\rd)$ and Wiener amalgam spaces 
$W^{p,q}_m(\rd)$. Then, in Subsection 2.1, we turn to the analysis of working spaces, which will be used frequently in the proof of the boundedness of FIOs. Specifically, we give the definition of a special Wiener amalgam space $W^{\fy,\fy}_{1\otimes v_{d+\ep,d+\ep}}$(severed as the working space of phase function), along with its scaling properties and localization properties. To estimate the kernel functions for the four types of boundedness ($M^1$, $M^{\fy}$, $M^{\fy,1}$ and $M^{1,\fy}$) respectively, we introduce four mixed modulation spaces, namely,  $M^{1,1,\fy,\fy}(c_i)$ with $i=1,2$ and $M^{1,\fy,1,\fy}(c_i)$ with $i=3,4$.
Additionally, we present the product inequalities related to these spaces, which are used to separate the symbol, the lower-order part, and the high-order part of the phase function. 
To handle FIOs associated with high growth phases, we introduce $M^{1,\fy,\fy,\fy}_{1\otimes v_{d+\ep,0}}(c_1)$ as another working space, and establish its localization property and that of $M^{1,1,\fy,\fy}(c_1)$ respectively.
Using the properties of the working spaces established in Subsection 2.2, in Subsection 2.3, we combine the kernel theorems of modulation spaces, present the independence of the Sj\"{o}strand symbol in the boundedness of FIOs, and shed a light for the estimation of the phase function that needs to be established for further studying the boundedness of FIOs in modulation spaces.   

Section 3 is devoted to proving Theorem \ref{thm-FIO-Mpq}.
The sufficiency part is based on the endpoint boundedness established by Theorems \ref{thm-FIO-M1}, \ref{thm-FIO-Mf}, \ref{thm-FIO-Mfi} and \ref{thm-FIO-Mif} respectively, and a standard complex interpolation of the modulation spaces. The necessity relies on the function constructed in Lemma \ref{lm-spf}.
As an application, we present a conclusion on the boundedness of unimodular multipliers on Wiener amalgam spaces $W^{p,q}(\rd)$
in Subsection 3.5.

We give the proof of Theorem \ref{thm-FIO-Mpq-nsp} in Section 4. 
When proving the sufficiency, we establish some endpoint boundedness of FIOs without the uniform separation condition of the phase,
and use interpolation and embedding to obtain the complete boundedness. 
For the proof of necessity, we construct corresponding functions based on the exponent relationships of boundedness to illustrate the optimality of the exponent range.

Finally, in Section 5, we address the boundedness of FIOs with high-growth phase function and complete the proof of Theorem \ref{thm-FIO-Mp-12hg}.
Based on complex interpolation and duality, the proof of this theorem is reduced to proving the $M^1$ boundedness, which will be given in Theorem \ref{thm-FIO-12hg-M1}. 
The optimality of the exponents relies on the construction of some specific functions.

\textbf{Notations.}
We use $A\lesssim B$ to denote the statement that $A\leq CB$, and use
$A\sim B$ to denote the statement $A\lesssim B\lesssim A$, where the implicit constants may vary with the specific context.
The notation $\mathscr{L}$ is used to denote some large positive constant which may be changed
corresponding to the specific context.

\section{Preliminaries}

\subsection{Modulation spaces}
In order to introduce modulation spaces, we first recall some notations of time-frequency representation.
We define the translation operator $T_x$ and modulation operator $M_{\xi}$ by
\be
T_xf(t)=f(t-x),\ \ \ \ M_{\xi}f(t)=e^{2\pi it\cdot\xi}f(t).
\ee
For $(f,g)\in \calS'(\rd)\times \calS(\rd)$,
the short time Fourier transform (STFT) with window function $g$ is defined by
\be
V_gf(x,\xi)=\langle f, M_{\xi}T_xg\rangle
\ee
as a map from $\calS'(\rd)\times \calS(\rd)$ into $\calS'(\rdd)$.
In fact, for $f\in \calS'(\rd)$ and $g\in \calS(\rd)$, $V_gf$ is a continuous function on $\rdd$ with polynomial growth,
see \cite[Theorem 11.2.3]{GrochenigBook2013}.
The so-called fundamental identity of time-frequency analysis is as follows:
\be
V_gf(x,\xi)=e^{-2\pi ix\cdot \xi}V_{\hat{g}}\hat{f}(\xi,-x),\ \ \ (x,\xi)\in \rdd.
\ee

As we mentioned above, modulation spaces are defined as a measure of the STFT of $f\in \calS'$ in the time-frequency plane.
In order to give a quantitative description of decay properties of the STFT,  modulation spaces
are usually defined by using appropriate weight functions.
In this paper, we limit ourselves to the class of weight functions denoted by $\scrP(\rdd)$,
consisting of weight function $m$ that are positive functions on $\rdd$ and satisfy 
$m(x+y)\lesssim v_s(x)m(y)$, $x,y\in \rdd$.
Here, we use the notation $v_s(x)=(1+|x|^2)^{\frac{s}{2}}$.
In particular, we shall consider the weight function 
$v_{s_1,s_2}(x,\xi)=\lan x\ran^{s_1}\lan \xi\ran^{s_2}$ on $\rdd$ 
or $m(z_1,z_2,\z_1,\z_2)=(v_{s_1,s_2}\otimes 1)(z_1,z_2,\z_1,\z_2)=\lan z_1\ran^{s_1}\lan z_2\ran^{s_2}$ on $\rddd$.
The weighted mixed-norm spaces used to measure the STFT are defined as follows.
\begin{definition}[Weighted mixed-norm spaces.]
Let $m\in \scrP(\rdd)$, $1\leq p,q\leq \fy$. Then the weighted mixed-norm space $L^{p,q}_m(\rdd)$
consists of all Lebesgue measurable functions on $\rdd$ such that the norm
\be
\|F\|_{L^{p,q}_m(\rdd)}=\left(\int_{\rd}\left(\int_{\rd}|F(x,\xi)|^pm(x,\xi)^pdx\right)^{q/p}d\xi\right)^{1/q}
\ee
is finite, with usual modification when $p=\fy$ or $q=\fy$.
\end{definition}
Using these weighted mixed-norms, we give the definition of modulation spaces.
\begin{definition}\label{df-M}
Let $1\leq p,q\leq \infty$.
Given a non-zero window function $\phi\in \calS(\rd)$, the (weighted) modulation space $M^{p,q}_m(\rd)$ consists
of all $f\in \calS'(\rd)$ such that the norm
\be
\begin{split}
\|f\|_{M^{p,q}_m(\rd)}&:=\|V_{\phi}f(x,\xi)\|_{L^{p,q}_m(\rdd)}
=\left(\int_{\rd}\left(\int_{\rd}|V_{\phi}f(x,\xi)m(x,\xi)|^{p} dx\right)^{{q}/{p}}d\xi\right)^{{1}/{q}}
\end{split}
\ee
is finite.
\end{definition}
We write $M^{p,q}$ for the modulation space with $m\equiv 1$. 
If $p=q$, we use the notation $M^p_m=M^{p,q}_m$ and $\mp=\mpq$. 
Recall that the above definition of $M^{p,q}_m$ is independent of the choice of window function $\phi$,
one can see \cite{GrochenigBook2013} for this fact.

We use the notation $\calM^{p,q}_m(\rd)$ for the $\calS(\rd)$ closure in $M^{p,q}_m(\rd)$.
Recall that $\calM^{p,q}_m(\rd)=M^{p,q}_m(\rd)$ for $p,q\neq \fy$. 
The duality properties for modulation spaces is as follows.
One can find the proof in \cite[Theorem 11.3.6]{GrochenigBook2013} and \cite[Lemma 2.8]{GuoChenFanZhao2022IMRN}.
\begin{lemma}[Duality of modulation spaces]\label{lm-dual}
  Let $1\leq p,q\leq \fy$, $m\in \mathscr{P}(\rdd)$, we have the duality relation:
  \ben\label{lm-dual-c1}
  (\calM^{p,q}_m(\rd))^*=M^{p',q'}_{m^{-1}}(\rd).
  \een
\end{lemma}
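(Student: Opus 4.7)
The plan is to reduce the duality of modulation spaces to the duality of weighted mixed-norm Lebesgue spaces via the short-time Fourier transform, exploiting the fact that $V_\phi$ realizes $M^{p,q}_m(\rd)$ as a (complemented) closed subspace of $L^{p,q}_m(\rdd)$. Fix a nonzero window $\phi\in\calS(\rd)$ with $\|\phi\|_{L^2}=1$ to avoid constants.

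First, I would prove the containment $M^{p',q'}_{m^{-1}}(\rd)\hookrightarrow (\calM^{p,q}_m(\rd))^*$. Given $h\in M^{p',q'}_{m^{-1}}(\rd)$, define the linear functional
\[
L_h(f)=\int_{\rdd}V_\phi f(x,\xi)\overline{V_\phi h(x,\xi)}\,dx\,d\xi,\qquad f\in \calS(\rd).
\]
By Hölder's inequality with the weights $m(x,\xi)$ and $m^{-1}(x,\xi)$ cancelling,
\[
|L_h(f)|\le \|V_\phi f\|_{L^{p,q}_m}\|V_\phi h\|_{L^{p',q'}_{m^{-1}}}=\|f\|_{M^{p,q}_m}\|h\|_{M^{p',q'}_{m^{-1}}},
\]
so $L_h$ extends uniquely to a bounded functional on $\calM^{p,q}_m(\rd)$, with norm $\lesssim \|h\|_{M^{p',q'}_{m^{-1}}}$. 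Injectivity of $h\mapsto L_h$ follows by testing against $f=\pi(z)\phi$ with $\pi(z)=M_\xi T_x$: since $V_\phi(\pi(z)\phi)$ is (up to phase) a translate of $V_\phi\phi$, the vanishing of all $L_h(\pi(z)\phi)$ forces $V_\phi h\equiv 0$ and hence $h=0$.

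Second, I would establish the reverse inclusion $(\calM^{p,q}_m(\rd))^*\hookrightarrow M^{p',q'}_{m^{-1}}(\rd)$ using the inversion formula $\mathrm{Id}=V_\phi^*V_\phi$ on $\calS(\rd)$. By Definition~\ref{df-M}, $V_\phi:\calM^{p,q}_m(\rd)\to L^{p,q}_m(\rdd)$ is an isometric embedding onto a closed subspace $\calV$. Given $L\in (\calM^{p,q}_m(\rd))^*$, the functional $\tilde L:=L\circ V_\phi^{-1}$ on $\calV$ is bounded; by Hahn--Banach extend it to all of $L^{p,q}_m(\rdd)$. By the duality $(L^{p,q}_m)^*=L^{p',q'}_{m^{-1}}$ (valid since $\calS$ is dense in the closure of the test-function closure of $L^{p,q}_m$, which is exactly where endpoint issues are absorbed by passing to $\calM$), there exists $G\in L^{p',q'}_{m^{-1}}(\rdd)$ representing the extension. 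Now set $h:=V_\phi^* G$, the adjoint STFT (a weakly convergent Bochner-type integral against the Schwartz class). One verifies $L=L_h$ on $\calS(\rd)$, and hence on all of $\calM^{p,q}_m(\rd)$ by density. It remains to show $h\in M^{p',q'}_{m^{-1}}(\rd)$ with controlled norm; this follows from the standard fact that $V_\phi V_\phi^*$ is a bounded projection on $L^{p',q'}_{m^{-1}}(\rdd)$ (a consequence of the reproducing kernel identity $V_\phi h=(V_\phi\phi)\natural V_\phi h$ together with the convolution inequality $L^1_{v_s}*L^{p',q'}_{m^{-1}}\subset L^{p',q'}_{m^{-1}}$, where the $v_s$-moderation of $m^{-1}$ comes from $m\in\scrP(\rdd)$ and the Schwartz decay of $V_\phi\phi$).

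The main obstacle is the endpoint case where $p=\fy$ or $q=\fy$, since $\calS(\rd)$ fails to be dense in $M^{p,q}_m(\rd)$ there; this is precisely why the statement replaces $M^{p,q}_m$ by its Schwartz closure $\calM^{p,q}_m$ on the left. Once density of $\calS(\rd)$ in $\calM^{p,q}_m(\rd)$ is available, the Hahn--Banach identification of the dual with a representing element of $L^{p',q'}_{m^{-1}}$ proceeds uniformly in $p,q$, and the reproducing kernel argument converts that representative back to an honest element of $M^{p',q'}_{m^{-1}}(\rd)$ without any restriction on the exponents.
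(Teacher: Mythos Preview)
The paper does not supply its own proof of this lemma; it simply cites \cite[Theorem 11.3.6]{GrochenigBook2013} and \cite[Lemma 2.8]{GuoChenFanZhao2022IMRN}. Your argument is precisely the standard one from Gr\"ochenig's book: realize $\calM^{p,q}_m$ as a closed subspace of $L^{p,q}_m(\rdd)$ via $V_\phi$, use H\"older for one inclusion, and Hahn--Banach plus the reproducing-kernel projection $V_\phi V_\phi^*$ for the other.

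One point deserves tightening. At the endpoints $p=\infty$ or $q=\infty$ the identification $(L^{p,q}_m)^*=L^{p',q'}_{m^{-1}}$ is false as stated, and your parenthetical remark about ``the closure of the test-function closure'' is circular. The correct fix, which you are gesturing at, is to observe that $V_\phi$ maps $\calS(\rd)$ into $\calS(\rdd)$ and hence, by continuity, maps $\calM^{p,q}_m(\rd)$ into the $\calS$-closure $\calL^{p,q}_m(\rdd)$ of the mixed-norm space; the dual of \emph{that} closure is genuinely $L^{p',q'}_{m^{-1}}(\rdd)$ for all $1\le p,q\le\infty$. Extend $\tilde L$ by Hahn--Banach to $\calL^{p,q}_m$ rather than to $L^{p,q}_m$, and the rest of your argument goes through unchanged. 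This is exactly the refinement that the second reference \cite{GuoChenFanZhao2022IMRN} supplies over Gr\"ochenig's original $p,q<\infty$ statement.
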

We also recall a complex interpolation conclusion for modulation spaces (see, e.g., \cite{FeichtingerGroechenig1989MM}).
\begin{lemma}[Complex interpolation on modulation spaces]
    Let $1\leq p_i,q_i\leq \fy$, $m_i\in \mathscr{P}(\rdd)$ for $i=1,2$.
    For $\theta\in (0,1)$, set
    \be
    \frac{1}{p}=\frac{1-\theta}{p_1}+\frac{\theta}{p_2},\ \ \ \ \ \frac{1-\theta}{p}=\frac{1}{q_1}+\frac{\theta}{q_1},\ \ \ \ \ m=m_1^{1-\theta}m_2^{\theta},
    \ee
    then
    \be
    [M^{p_1,q_1}_{m_1}(\rd),\ M^{p_2,q_2}_{m_1}(\rd)]_{\theta}=M^{p,q}_{m}(\rd).
    \ee
\end{lemma}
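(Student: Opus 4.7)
The plan is to transfer the well-known complex interpolation identity for weighted mixed-norm Lebesgue spaces $[L^{p_1,q_1}_{m_1}(\rdd), L^{p_2,q_2}_{m_2}(\rdd)]_\theta = L^{p,q}_m(\rdd)$ (a Stein--Weiss / Benedek--Panzone type statement) to the modulation spaces by exhibiting each $M^{p_i,q_i}_{m_i}(\rd)$ as a retract of $L^{p_i,q_i}_{m_i}(\rdd)$. Fixing a non-zero window $\phi\in \calS(\rd)$ with $\|\phi\|_{L^2}=1$, the STFT provides the analysis operator $A=V_\phi$, which by Definition~\ref{df-M} maps $M^{p,q}_m(\rd)$ isometrically into $L^{p,q}_m(\rdd)$. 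Its formal adjoint, the synthesis operator $S\colon F\mapsto \iint_{\rdd} F(x,\xi)\, M_\xi T_x\phi\, dx\, d\xi$, can be shown (using the inversion formula and the covariance of the STFT) to map $L^{p,q}_m(\rdd)$ boundedly into $M^{p,q}_m(\rd)$ with $SA=\mathrm{id}_{M^{p,q}_m}$ for every admissible pair $(p,q,m)$ in $\scrP(\rdd)$.

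Next, I would invoke the standard retraction principle in interpolation theory (Triebel): if $(A,S)$ is a retraction--coretraction pair that is consistent across both endpoint couples $(M^{p_i,q_i}_{m_i},L^{p_i,q_i}_{m_i})$, then $[M^{p_1,q_1}_{m_1},M^{p_2,q_2}_{m_2}]_\theta$ is isomorphic to the image of the projection $P=AS$ acting on the complex interpolation space $[L^{p_1,q_1}_{m_1},L^{p_2,q_2}_{m_2}]_\theta$. Combining this with the classical Calder\'on interpolation formula for weighted mixed-norm Lebesgue spaces (whose proof reduces by Fubini to the one-dimensional case with a power-weight reduction), the right-hand side equals the $P$-image of $L^{p,q}_m(\rdd)$, which is precisely $A(M^{p,q}_m(\rd))$; together with the isometric nature of $A$ this yields the desired identification $[M^{p_1,q_1}_{m_1},M^{p_2,q_2}_{m_2}]_\theta = M^{p,q}_m(\rd)$.

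The main obstacle will be the endpoint situations where $p_i$ or $q_i$ equals $\infty$, since in that regime $\calS(\rd)$ is not dense in $M^{p_i,q_i}_{m_i}(\rd)$ and the classical definition of $[\,\cdot\,,\,\cdot\,]_\theta$ via the Calder\'on strip requires density or a suitable predual realization. I would handle this by working throughout with the Gaussian-window STFT so that $V_\phi f$ and $V_\phi \bar f$ are always continuous functions of polynomial growth, by interpreting the weak inversion formula in the $(M^1_{v},M^\infty_{1/v})$ duality provided by Lemma~\ref{lm-dual}, and by appealing to the version of Calder\'on's method adapted to dual couples (as in Bergh--L\"ofstr\"om) so that the retraction argument still applies. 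The only other point requiring care is the consistency of the analysis and synthesis operators across the two couples and the verification that $AS$ is a bounded projection on $L^{p_i,q_i}_{m_i}(\rdd)$ uniformly for the weights under consideration; both follow from the reproducing formula $V_\phi S F = F \ast V_\phi \phi$ together with Young-type convolution estimates on $L^{p,q}_m(\rdd)$, which are standard once $m\in \scrP(\rdd)$.
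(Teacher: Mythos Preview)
The paper does not give its own proof of this lemma; it simply records the statement and cites \cite{FeichtingerGroechenig1989MM}. Your retract argument via the STFT analysis/synthesis pair and reduction to Calder\'on interpolation of weighted mixed-norm Lebesgue spaces is exactly the standard route (and is in essence the coorbit-theoretic mechanism underlying the cited reference), so your proposal is correct and aligned with the literature the paper defers to.
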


\begin{definition}\label{Definition, Wiener amalgam space, continuous form}
Let $1\leq p, q\leq \infty$.
Given a non-zero window function $\phi\in \calS(\rd)$, the (weighted) Wiener amalgam space $W^{p,q}_m$ consists
of all $f\in \calS'(\rd)$ such that the norm
\begin{equation}
\begin{split}
\|f\|_{W^{p,q}_{m}}&=\big\|\|V_{\phi}f(x,\xi)m(x,\xi)\|_{L^q_{\xi}}\big\|_{L^p_{x}}
\\&
=\left(\int_{\mathbb{R}^n}\left(\int_{\mathbb{R}^n}|V_{\phi}f(x,\xi)|^{q}m(x,\xi)^{q}d\xi\right)^{{p}/{q}}dx\right)^{{1}/{p}}
\end{split}
\end{equation}
is finite, with the usual modifications when $p=\infty$ or $q=\infty$.
\end{definition}

In order to obtain the sharp exponents of embedding relations mentioned in our main theorems,
we recall the following conclusion that can be viewed as a special case of \cite[Lemma 4.4]{GuoChenFanZhao2019MMJ}.

\begin{lemma}[Sharpness of embedding, discrete form] \label{lm-exp-eb}
Suppose $1\leq q_1,q_2\leq \infty$, $s_1,s_2\in \mathbb{R}$. Then
\begin{equation*}
l^{q_1}_{v_{s_1}}(\zd)\subset l^{q_2}_{v_{s_2}}(\zd)
\end{equation*}
holds if and only if
\be
s_1-s_2\geq d(1/q_2-1/q_1)\vee 0\ \ \text{with strict inequality for}\ 1/q_2>1/q_1.
\ee
\end{lemma}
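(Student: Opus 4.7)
The plan is to reduce the embedding $l^{q_1}_{v_{s_1}}(\zd)\subset l^{q_2}_{v_{s_2}}(\zd)$ to a multiplier statement between unweighted sequence spaces. Setting $b_k=a_k\lan k\ran^{s_1}$, the embedding is equivalent to the a priori estimate
$$\|\lan k\ran^{s_2-s_1}b_k\|_{\ell^{q_2}(\zd)}\lesssim \|b_k\|_{\ell^{q_1}(\zd)},$$
so the question becomes: for which triples $(s_1-s_2,q_1,q_2)$ does multiplication by the power sequence $\lan k\ran^{s_2-s_1}$ map $\ell^{q_1}(\zd)$ boundedly into $\ell^{q_2}(\zd)$?

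For sufficiency I split into two subcases. If $1/q_2\leq 1/q_1$, the hypothesis $s_1\geq s_2$ gives $\lan k\ran^{s_2-s_1}\leq 1$, so the multiplier is trivially a contraction on $\ell^{q_1}$, and composing with the classical inclusion $\ell^{q_1}\subset \ell^{q_2}$ finishes this case. If $1/q_2>1/q_1$, I set $1/r=1/q_2-1/q_1>0$ and apply H\"older's inequality in the form
$$\|\lan k\ran^{s_2-s_1}b_k\|_{\ell^{q_2}}\leq \|\lan k\ran^{s_2-s_1}\|_{\ell^r}\|b_k\|_{\ell^{q_1}};$$
the first factor is finite precisely when $r(s_1-s_2)>d$, which is the hypothesis $s_1-s_2>d(1/q_2-1/q_1)$.

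For necessity I argue via three test sequences. The Dirac mass $a_k=\delta_{k,k_0}$ reduces the embedding inequality to $\lan k_0\ran^{s_2}\lesssim \lan k_0\ran^{s_1}$ uniformly in $k_0\in\zd$, forcing $s_1\geq s_2$ in all regimes. The annular indicator $a_k=\mathbf{1}_{\{N\leq |k|<2N\}}$ satisfies $\|a\|_{\ell^{q_i}_{v_{s_i}}}\sim N^{s_i+d/q_i}$ as $N\to\fy$, so letting $N\to\fy$ yields $s_1+d/q_1\geq s_2+d/q_2$, equivalently $s_1-s_2\geq d(1/q_2-1/q_1)$.

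The only delicate step, and the one I expect to be the main obstacle, is upgrading this to a \emph{strict} inequality in the regime $1/q_2>1/q_1$. Here I would use a logarithmically refined test sequence
$$a_k=\lan k\ran^{-s_1-d/q_1}\bigl(\log(2+|k|)\bigr)^{-\b},$$
and compute the two norms by dyadic decomposition on shells $|k|\sim 2^j$: one checks that $\|a\|_{\ell^{q_1}_{v_{s_1}}}<\fy$ iff $\b q_1>1$, while at the borderline $s_1-s_2=d(1/q_2-1/q_1)$ the identity $s_1+d/q_1=s_2+d/q_2$ forces $\|a\|_{\ell^{q_2}_{v_{s_2}}}<\fy$ iff $\b q_2>1$. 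Since $q_2<q_1$, any $\b\in(1/q_1,1/q_2)$ produces an element of $l^{q_1}_{v_{s_1}}(\zd)$ lying outside $l^{q_2}_{v_{s_2}}(\zd)$, which rules out the borderline and forces the strict inequality. Apart from this logarithmic refinement, the entire argument is a routine application of H\"older's inequality and the classical $\ell^p$-inclusion theorem.
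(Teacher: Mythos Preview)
Your argument is correct and complete. The reduction to a multiplier question via $b_k=a_k\lan k\ran^{s_1}$ is clean, the H\"older step handles the case $q_2<q_1$ precisely, and your three test sequences (Dirac, annular indicator, logarithmically damped power) are the standard and sharp choices for necessity; the logarithmic refinement to rule out the borderline $s_1-s_2=d(1/q_2-1/q_1)$ is carried out accurately, including the edge case $q_1=\infty$ where the admissible interval becomes $(0,1/q_2)$.

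By comparison, the paper does not prove this lemma at all: it simply records it as a special case of \cite[Lemma~4.4]{GuoChenFanZhao2019MMJ}. Your direct, self-contained proof therefore provides strictly more than the paper does here, and the approach you take is the natural one for this type of weighted sequence-space embedding.
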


\subsection{Analysis of working spaces}
In this subsection, we introduce some useful function spaces and gather some of their frequently-used properties.  To study the boundedness of FIOs on modulation spaces $M^{p,q}(\rd)$, we will need to compute the STFT of specific  functions on \( \mathbb{R}^{2d} \). 
 To enhance the clarity of the expression, we differentiate between the STFT $V_g f(x, \xi)$, where $(x, \xi) \in \mathbb{R}^{2d}$, of \( f \in \mathcal{S}'(\mathbb{R}^d) \) and the STFT \( \scrV_\Phi F(z, \zeta)\), where \((z, \zeta) \in \mathbb{R}^{4d} \) of \( F \in \mathcal{S}'(\mathbb{R}^{2d}) \). 
We write \( z = (z_1, z_2) \in \mathbb{R}^{2d} \) and \( \zeta = (\zeta_1, \zeta_2) \in \mathbb{R}^{2d} \), when necessary. We also utilize the mixed norm Lebesgue spaces on $\rddd$ whose norm is defined as follows:
\begin{equation*}
    \|F(z,\z)\|_{L^{p_1,p_2,q_1,q_2}_{z_1,z_2,\z_1,\z_2}(\rddd)}:= 
    \bigg\|\Big\|\big\|\|F(z,\z)\|_{L^{p_1}_{z_1}(\rd)}\big\|_{L^{p_2}_{z_2}(\rd)}\Big\|_{L^{q_1}_{\z_1}(\rd)}\bigg\|_{L^{q_2}_{\z_2}(\rd)}.
\end{equation*}
For simplicity, we use the notation
\begin{equation*}
    \|F(z,\z)\|_{L^{p,q}_{z,\z}(\rddd)}:= \|F(z,\z)\|_{L^{p,p,q,q}_{z_1,z_2,\z_1,\z_2}(\rddd)}.
\end{equation*}
Now, we introduce a special Wiener amalgam space as our fundamental working space. 
For a more comprehensive introduction to Wiener amalgam spaces, refer to \cite{FeichtingerIPCoFSOB1}.

\begin{definition}\label{def-Wiener}
Let $\ep>0$.
Given a window function $\Phi\in \calS(\rdd)\backslash\{0\}$, the Wiener amalgam space $W^{\fy,\fy}_{1\otimes v_{d+\ep,d+\ep}}(\rdd)$ consists
of all $f\in \calS'(\rdd)$ such that the norm
\begin{equation}
\begin{split}
\|f\|_{W^{\fy,\fy}_{1\otimes v_{d+\ep,d+\ep}}(\rdd)}=\big\|\scrV_{\Phi}f(z,\z)\lan \z_1\ran^{d+\ep} \lan \z_2\ran^{d+\ep}\big\|_{L^{\fy,\fy}_{z,\z}}
\end{split}
\end{equation}
is finite.
\end{definition}

Let $\{\eta_k\}_{k\in \zd}$ be a smooth uniform partition of unity such that $\eta_k(x)=\eta(x-k)$ for some smooth function $\eta$ with support near the origin, satisfying 
$\sum_{k}\eta_k\equiv 1$. For $(k,l)\in \zd\times \zd$, define $\eta_{k,l}=\eta_k\otimes \eta_l$. Then set
\begin{equation*}
    \eta_k^*=\sum_{n\in \La_k}\eta_n,\ \ \La_k=\{n\in \zd: \text{supp}\eta_n\cap \text{supp}\eta_k\neq \emptyset\},
    \ \ \text{and}\ \  \eta_{k,l}^*=\eta_k^*\otimes \eta_l^*.
\end{equation*}

\begin{proposition}[Time localization of $W^{\fy,\fy}_{1\otimes v_{d+\ep,d+\ep}}(\rdd)$]\label{pp-tlp-ws1}
	The function space $W^{\fy,\fy}_{1\otimes v_{d+\ep,d+\ep}}(\rdd)$ has the time localization property as follows:
	\be
	\|F(x,\xi)\|_{W^{\fy,\fy}_{1\otimes v_{d+\ep,d+\ep}}(\rdd)}
	\sim
	\sup_{k,l}\|\eta_{k,l}F\|_{W^{\fy,\fy}_{1\otimes v_{d+\ep,d+\ep}}(\rdd)}
	\sim
	\sup_{k,l}\|\eta_{k,l}F\|_{\scrF L^{\fy,\fy}_{v_{d+\ep}\otimes v_{d+\ep}}}.
	\ee
\end{proposition}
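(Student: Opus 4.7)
The plan is to establish the two claimed equivalences separately, building on the identity
\be
\scrV_\Phi F(z,\z)=\scrF\bigl[F\cdot\overline{T_z\Phi}\bigr](\z),
\ee
which lets one rewrite
\be
\|F\|_{W^{\fy,\fy}_{1\otimes v_{d+\ep,d+\ep}}}=\sup_{z\in\rdd}\bigl\|F\cdot\overline{T_z\Phi}\bigr\|_{\scrF L^{\fy,\fy}_{v_{d+\ep}\otimes v_{d+\ep}}}.
\ee
In this form the Wiener amalgam norm is a continuous-$z$ analogue of the discrete cutoffs $\eta_{k,l}$; the first equivalence then becomes a BUPU-type discretization step, and the second is a replacement of the Schwartz bump $T_z\Phi$ by the compactly supported $\eta_{k,l}$.

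For the first equivalence, the inequality $\sup_{k,l}\|\eta_{k,l}F\|_W\lesssim\|F\|_W$ follows from the fact that pointwise multiplication by $\eta_{k,l}=T_{(k,l)}\eta\in\calS(\rdd)$ is bounded on $W^{\fy,\fy}_{1\otimes v_{d+\ep,d+\ep}}(\rdd)$ with operator norm controlled by a translation-invariant seminorm of $\eta$, hence uniform in $(k,l)$. For the reverse direction, I would decompose $F=\sum_{m,n}\eta_{m,n}F$ and expand $\scrV_\Phi F(z,\z)=\sum_{m,n}\scrV_\Phi(\eta_{m,n}F)(z,\z)$. Since each $\eta_{m,n}F$ is supported in a fixed-size cube around $(m,n)$, the Schwartz decay of $T_z\Phi$ away from that cube forces $|\scrV_\Phi(\eta_{m,n}F)(z,\z)|$ to decay faster than any polynomial in $|(m,n)-z|$, so the sum converges absolutely and is uniformly controlled by $\sup_{m,n}\|\eta_{m,n}F\|_W$, even after multiplication by the frequency weight $\lan\z_1\ran^{d+\ep}\lan\z_2\ran^{d+\ep}$.

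The second equivalence reduces, via translation-invariance of both norms in the time variable, to the local statement: for $G$ supported in a fixed cube around the origin one has $\|G\|_W\sim\|G\|_{\scrF L^{\fy,\fy}_{v_{d+\ep}\otimes v_{d+\ep}}}$. For the direction $\lesssim$, I would use $\scrF[G\overline{T_z\Phi}]=\hat G\ast\overline{\scrF(T_z\Phi)}$ together with the Peetre inequality $v_{d+\ep}(\z+\eta)\lesssim v_{d+\ep}(\z)v_{d+\ep}(\eta)$ and the uniform-in-$z$ bound $\|\scrF(T_z\Phi)\|_{L^1_{v_{d+\ep,d+\ep}}}<\fy$; this is where the overcritical exponent $d+\ep$ enters, ensuring $\hat\Phi\cdot v_{d+\ep,d+\ep}\in L^1(\rdd)$ and turning the convolution into a uniformly bounded operation on $\scrF L^{\fy}_{v_{d+\ep}}$. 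For the direction $\gtrsim$, take $z=0$ so that $\overline{\Phi}$ is bounded below on $\mathrm{supp}\,G$; writing $G=(G\overline{\Phi})\cdot\chi$ for a fixed smooth compactly supported $\chi$ equal to $1/\overline{\Phi}$ on $\mathrm{supp}\,G$, and using that $\chi\in\scrF L^1_{v_{d+\ep,d+\ep}}$, yields $\|G\|_{\scrF L^{\fy,\fy}_{v_{d+\ep}\otimes v_{d+\ep}}}\lesssim\|G\overline{\Phi}\|_{\scrF L^{\fy,\fy}_{v_{d+\ep}\otimes v_{d+\ep}}}\leq\|G\|_W$.

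The main obstacle I expect is maintaining uniform-in-$(k,l)$ constants throughout both reductions. This ultimately reduces to two structural facts: the cutoffs $\eta_{k,l}=T_{(k,l)}\eta$ and the associated dividing functions $\chi_{k,l}$ are translates of a single Schwartz function, so their $\scrF L^1_{v_{d+\ep,d+\ep}}$- and multiplier-seminorms are translation-invariant; and the moderate weight inequality above, together with the overcritical exponent $d+\ep$, converts every nontrivial estimate into a translation-invariant convolution bound.
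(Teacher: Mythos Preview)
Your proposal is correct and follows the same overall architecture as the paper (which omits the proof but points to the analogous Proposition~\ref{pp-tlp-ws2}). There is one technical difference worth noting: the paper chooses a \emph{compactly supported} window $\Psi\in C_c^\fy(\rdd)$, so that for each fixed $z$ the decomposition $\scrV_\Psi F(z,\z)=\sum_{m,n}\scrV_\Psi(\eta_{m,n}F)(z,\z)$ has only finitely many nonzero terms, and for the second equivalence one can pick $\Psi$ with $F\cdot T_z\Psi=F$ for small $|z|$, giving $\scrV_\Psi F(z,\z)=\scrF F(\z)$ outright. This sidesteps both the Schwartz-decay summation argument and the ``divide out $\overline\Phi$'' step you propose. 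Your route via polynomial decay in $|(m,n)-z|$ and the multiplier $\chi=1/\overline\Phi$ is perfectly valid (and carries through with a general Schwartz window), but the compact-support trick is what makes the paper's version shorter and is the intended mechanism behind the reference to Proposition~\ref{pp-tlp-ws2}.
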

We omit the proof of this proposition, as it can be verified via a similar argument to that used in the proof of Proposition \ref{pp-tlp-ws2}.

\begin{proposition}[Dilation operator on $W^{\fy,\fy}_{1\otimes v_{d+\ep,d+\ep}}(\rdd)$]
	Let $\la_1, \la_2\in (0,1]$.
	For $\la=(\la_1,\la_2)$, define the dilation operator $D_{\la}$ by
	\be
	D_{\la}F(x,\xi)
	=D_{(\la_1,\la_2)}F(x,\xi)
	=F(\la_1 x, \la_2 \xi).
	\ee
	We have the following estimate 
	\be
	\|D_{\la}F\|_{W^{\fy,\fy}_{1\otimes v_{d+\ep,d+\ep}}(\rdd)}\leq C \|F\|_{W^{\fy,\fy}_{1\otimes v_{d+\ep,d+\ep}}(\rdd)},
	\ee
	where the constant $C$ is independent of $\la$.	
\end{proposition}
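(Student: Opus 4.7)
The plan is to appeal to the time-localization characterization of the Wiener amalgam norm (Proposition \ref{pp-tlp-ws1}) to reduce the claim to a uniform pointwise estimate on the Fourier transform of each localized piece. Specifically, it suffices to prove that for all $k,l\in\zd$, $\zeta\in\rdd$, and $\la\in(0,1]^2$,
\[
\lan\zeta_1\ran^{d+\ep}\lan\zeta_2\ran^{d+\ep}|\widehat{\eta_{k,l}(D_\la F)}(\zeta)|\lesssim \|F\|_{W^{\fy,\fy}_{1\otimes v_{d+\ep,d+\ep}}}
\]
with implicit constant independent of $k,l,\la$.

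Next I observe that $\eta_k(x)\eta_l(\xi)F(\la_1 x,\la_2\xi)$ samples $F$ only on the box $\la_1\operatorname{supp}\eta_k\times\la_2\operatorname{supp}\eta_l$, which (since $\la_i\leq 1$) is of size at most $1\times1$ and meets only uniformly boundedly many patches of the partition $\{\eta_{m,n}\}$. I may therefore replace $F$ by the finite truncation $F_{k,l,\la}:=\sum_{(m,n)\in\Lambda_{k,l,\la}}\eta_{m,n}F$, whose Fourier transform inherits from Proposition \ref{pp-tlp-ws1} the pointwise bound $|\widehat{F_{k,l,\la}}(v)|\lesssim \|F\|_W\lan v_1\ran^{-(d+\ep)}\lan v_2\ran^{-(d+\ep)}$. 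Writing the product $\eta_{k,l}\cdot D_\la F_{k,l,\la}$ as a Fourier convolution and performing the change of variable $\eta\mapsto\la v$ in the integration, the two resulting Jacobian factors $(\la_1\la_2)^{\pm d}$ cancel, yielding
\[
|\widehat{\eta_{k,l}(D_\la F)}(\zeta)|\lesssim \|F\|_W\prod_{i=1,2}J_i(\zeta_i,\la_i),\qquad J_i(\zeta_i,\la_i):=\int_{\rd}|\hat\eta(\zeta_i-\la_i v_i)|\lan v_i\ran^{-(d+\ep)}dv_i.
\]

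The main obstacle is to prove $\lan\zeta_i\ran^{d+\ep}J_i(\zeta_i,\la_i)\leq C$ uniformly. The naive approach of applying Peetre's inequality to $\lan\zeta_i\ran^{d+\ep}$ and then changing variables $u=\zeta_i-\la_i v_i$ produces a spurious factor $\la_i^{-d}$, so a careful region decomposition is needed. I split the $v_i$-integral according to whether $\la_i|v_i|\leq\lan\zeta_i\ran/4$ or not. On the first region, $|\zeta_i-\la_i v_i|\gtrsim\lan\zeta_i\ran$, so the Schwartz decay $|\hat\eta(\zeta_i-\la_i v_i)|\lesssim\lan\zeta_i\ran^{-N}$ absorbs the weight $\lan\zeta_i\ran^{d+\ep}$ once $N$ is chosen larger than $d+\ep$. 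On the second region, $\lan v_i\ran^{-(d+\ep)}\leq (4\la_i/\lan\zeta_i\ran)^{d+\ep}$; together with $\int|\hat\eta(\zeta_i-\la_i v_i)|dv_i=\la_i^{-d}\|\hat\eta\|_1$ this yields the bound $\la_i^{\ep}\lan\zeta_i\ran^{-(d+\ep)}\|\hat\eta\|_1$, which is controlled by $\lan\zeta_i\ran^{-(d+\ep)}\|\hat\eta\|_1$ since $\la_i\leq 1$. After multiplying by $\lan\zeta_i\ran^{d+\ep}$, both regions contribute uniformly, and the product over $i=1,2$ completes the proof.
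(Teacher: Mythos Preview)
Your argument is correct. Both your proof and the paper's proof begin by invoking the time-localization characterization (Proposition \ref{pp-tlp-ws1}), but the executions diverge from there. The paper factors the dilation as $D_{\la}=D_{1,\la_2}\circ D_{\la_1,1}$ and treats one variable at a time: it rescales the $\xi$-variable, splits according to whether $|\z_2|\geq 1$ or $|\z_2|\leq 1$, and on each region appeals to the convolution-algebra property of $L^{\fy}_{v_{d+\ep}}$ (respectively an $L^{\fy,1}$ convolution bound) together with the product-algebra property of $W^{\fy,\fy}_{1\otimes v_{d+\ep,d+\ep}}$. Your route is more self-contained: you exploit the fact that $\la_i\leq 1$ to replace $F$ by a uniformly finite truncation $F_{k,l,\la}$ whose Fourier transform enjoys a global pointwise decay bound, then reduce to the scalar integral $J_i(\z_i,\la_i)$ and estimate it directly by splitting at $\la_i|v_i|\sim\lan\z_i\ran$. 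The paper's approach keeps more structure (and would adapt easily to other convolution-algebra weights), while yours is more hands-on and avoids invoking the convolution algebra as a black box; both yield the same uniform constant.
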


\begin{proof}
	Denote $F_{\la}=D_{\la}F$.
	Using the translation invariant and time localization property of $W^{\fy,\fy}_{1\otimes v_{d+\ep,d+\ep}}(\rdd)$ (see Proposition \ref{pp-tlp-ws1}), 
    we only need to verify that
	\be
	\|\eta_{0,0}F_{\la}\|_{W^{\fy,\fy}_{1\otimes v_{d+\ep,d+\ep}}(\rdd)}\lesssim \|F\|_{W^{\fy,\fy}_{1\otimes v_{d+\ep,d+\ep}}(\rdd)},
	\ee
	or equivalently, to verify the following inequality
	\be
	\|\eta_{0,0}F_{\la}\|_{\scrF L^{\fy,\fy}_{v_{d+\ep}\otimes v_{d+\ep}}}\lesssim \|F\|_{W^{\fy,\fy}_{1\otimes v_{d+\ep,d+\ep}}(\rdd)}.
	\ee
    Note that $D_{\la}=D_{1,\la_2}\circ D_{\la_1,1}$. We only need to consider the cases of $\la_1=1$ and $\la_2=1$ respectively.
    Since both cases can be handled via a similar method, we only consider the case $\la_1=1$.
    In this case, we write
	\be
	\begin{split}
		\|\eta_{0,0}F_{\la}\|_{\scrF L^{\fy,\fy}_{v_{d+\ep}\otimes v_{d+\ep}}}
		= &
		\|\eta_{0,0}F_{(1,\la_2)}\|_{\scrF L^{\fy,\fy}_{v_{d+\ep}\otimes v_{d+\ep}}}
		\\
		= &
		\la_2^{-d}\|\scrF_{1,2}\big(\eta_{0,0}(x,\xi/\la_2)F(x,\xi)\big)(\z_1,\z_2/\la_2)
        \lan \z_1\ran^{d+\ep}\lan \z_2\ran^{d+\ep}\|_{L^{\fy,\fy}}.
	\end{split}
	\ee
	Observe that for $|\z_2|\geq 1$ we have
	\be
	\lan \z_2/\la_2\ran\sim \la_2^{-1}\lan \z_2\ran.
	\ee
	Using this, we obtain
	\be
	\begin{split}
		I: = &
		\la_2^{-d}\|\scrF_{1,2}\big(\eta_{0,0}(x,\xi/\la_2)F(x,\xi)\big)(\z_1,\z_2/\la_2)\lan \z_1\ran^{d+\ep}\lan \z_2\ran^{d+\ep}\chi_{|\z_2|\geq 1}\|_{L^{\fy,\fy}}
		\\
		\sim &
		\la_2^{\ep}\|\scrF_{1,2}\big(\eta_{0,0}(x,\xi/\la_2)F(x,\xi)\big)(\z_1,\z_2/\la_2)\lan \z_1\ran^{d+\ep}\lan \z_2/\la_2\ran^{d+\ep}\chi_{|\z_2|\geq 1}\|_{L^{\fy,\fy}}
		\\
		\leq &
		\la_2^{\ep}\|\scrF_{1,2}\big(\eta_{0,0}(x,\xi/\la_2)F(x,\xi)\big)(\z_1,\z_2)\lan \z_1\ran^{d+\ep}\lan \z_2\ran^{d+\ep}\|_{L^{\fy,\fy}}
		\\
		\lesssim &
		\la_2^{\ep}
		\|\scrF_{1,2}\big(\eta_{0,0}(x,\xi/\la_2)\big)(\z_1,\z_2)\lan \z_1\ran^{d+\ep}\lan \z_2\ran^{d+\ep}\|_{L^{\fy,\fy}}
        \\
        & \cdot 
		\|\scrF_{1,2}\big(\eta_{0,0}^*(x,\xi)F(x,\xi)\big)(\z_1,\z_2)\lan \z_1\ran^{d+\ep}\lan \z_2\ran^{d+\ep}\|_{L^{\fy,\fy}},
	\end{split}
	\ee
	where in the last inequality we use the convolution algebra property of $L^{\fy,\fy}_{v_{d+\ep}\otimes v_{d+\ep}}$. 
	Then, the desired estimate of $I$:
	\be
	I\lesssim \|\scrF_{1,2}\big(\eta_{0,0}^*(x,\xi)F(x,\xi)\big)(\z_1,\z_2)\lan \z_1\ran^{d+\ep}\lan \z_2\ran^{d+\ep}\|_{L^{\fy,\fy}}
	\ee
	follows by the following estimate
	\be
	\begin{split}
		&
		\la_2^{\ep}
		\|\scrF_{1,2}\big(\eta_{0,0}(x,\xi/\la_2)\big)(\z_1,\z_2)\lan \z_1\ran^{d+\ep}\lan \z_2\ran^{d+\ep}\|_{L^{\fy,\fy}}
		\\
		= &
		\la_2^{\ep}\la_2^{d}
		\|\scrF_{1,2}\big(\eta_{0,0}\big)(\z_1,\la_2 \z_2)\lan \z_1\ran^{d+\ep}\lan \z_2\ran^{d+\ep}\|_{L^{\fy,\fy}}
		\\
		= &
		\la_2^{\ep}\la_2^{d}
		\|\scrF_{1,2}\big(\eta_{0,0}\big)(\z_1, \z_2)\lan \z_1\ran^{d+\ep}\lan \z_2/\la_2\ran^{d+\ep}\|_{L^{\fy,\fy}}
		\\
		\lesssim &
		\|\scrF_{1,2}\big(\eta_{0,0}\big)(\z_1, \z_2)\lan \z_1\ran^{d+\ep}\lan \z_2\ran^{d+\ep}\|_{L^{\fy,\fy}}\lesssim 1.
	\end{split}
    \ee
    On the other hand, we have
    	\be
    \begin{split}
    	II: = &
    	\la_2^{-d}\|\scrF_{1,2}\big(\eta_{0,0}(x,\xi/\la_2)F(x,\xi)\big)(\z_1,\z_2/\la_2)\lan \z_1\ran^{d+\ep} \lan \z_2\ran^{d+\ep}\chi_{|\z_2|\leq 1}\|_{L^{\fy,\fy}}
    	\\
    	\lesssim &
    	\la_2^{-d}\|\scrF_{1,2}\big(\eta_{0,0}(x,\xi/\la_2)F(x,\xi)\big)(\z_1,\z_2/\la_2)\lan \z_1\ran^{d+\ep}\chi_{|\z_2|\leq 1}\|_{L^{\fy,\fy}}
    	\\
    	\leq &
    	\la_2^{-d}\|\scrF_{1,2}\big(\eta_{0,0}(x,\xi/\la_2)F(x,\xi)\big)(\z_1,\z_2)\lan \z_1\ran^{d+\ep}\|_{L^{\fy,\fy}}
    	\\
    	\leq &
    	\la_2^{-d}
    	\|\scrF_{1,2}\big(\eta_{0,0}(x,\xi/\la_2)\big)(\z_1,\z_2)\lan \z_1\ran^{d+\ep}\|_{L^{\fy,\fy}}
        \\
        & \cdot
    	\|\scrF_{1,2}\big(\eta_{0,0}^*(x,\xi)F(x,\xi)\big)(\z_1,\z_2)\lan \z_1\ran^{d+\ep}\|_{L^{\fy,1}}.
    \end{split}
    \ee
   The desired estimate of $II$
   \be
   II\lesssim \|\scrF_{1,2}\big(\eta_{0,0}^*(x,\xi)F(x,\xi)\big)(\z_1,\z_2)\lan \z_1\ran^{d+\ep}\lan \z_2\ran^{d+\ep}\|_{L^{\fy,\fy}}
   \ee
   follows by the following two estimates
   \be
   \begin{split}
   	   \la_2^{-d}
    	\|\scrF_{1,2}\big(\eta_{0,0}(x,\xi/\la_2)\big)(\z_1,\z_2)\lan \z_1\ran^{d+\ep}\|_{L^{\fy,\fy}}
   	=
   	\|\scrF_{1,2}\big(\eta_{0,0}^*\big)(\z_1,\z_2)\lan \z_1\ran^{d+\ep}\|_{L^{\fy,\fy}},
   \end{split}
   \ee
   and
   \be
   \begin{split}
   	&\|\scrF_{1,2}\big(\eta_{0,0}^*(x,\xi)F(x,\xi)\big)(\z_1,\z_2)\lan \z_1\ran^{d+\ep}\|_{L^{\fy,1}}
   	\\
   	\lesssim &
   	\|\scrF_{1,2}\big(\eta_{0,0}^*(x,\xi)F(x,\xi)\big)(\z_1,\z_2)\lan \z_1\ran^{d+\ep}\lan \z_2\ran^{d+\ep}\|_{L^{\fy,\fy}}
        \|\lan \z_2\ran^{-(d+\ep)}\|_{L^1}
   	\\
   	\lesssim &
   	\|\scrF_{1,2}\big(\eta_{0,0}^*(x,\xi)F(x,\xi)\big)(\z_1,\z_2)\lan \z_1\ran^{d+\ep}\lan \z_2\ran^{d+\ep}\|_{L^{\fy,\fy}}.
   \end{split}
   \ee
   Using the estimates of $I$ and $II$, we obtain that
	\be
	\begin{split}
		\|\eta_{0,0}F_{\la}\|_{\scrF L^{\fy,\fy}_{v_{d+\ep}\otimes v_{d+\ep}}}
		\lesssim &
		\|\scrF_{1,2}\big(\eta_{0,0}^*(x,\xi)F(x,\xi)\big)(\z_1,\z_2)\lan \z_1\ran^{d+\ep}\lan \z_2\ran^{d+\ep}\|_{L^{\fy,\fy}}
		\\
		= &
		\|\eta_{0,0}^*F\|_{\scrF L^{\fy,\fy}_{v_{d+\ep}\otimes v_{d+\ep}}}
        =
        \|\eta_{0,0}^*F\|_{W^{\fy,\fy}_{1\otimes v_{d+\ep,d+\ep}}(\rdd)}
        \\
        \lesssim &
        \|\eta_{0,0}^*\|_{W^{\fy,\fy}_{1\otimes v_{d+\ep,d+\ep}}(\rdd)}
        \|F\|_{W^{\fy,\fy}_{1\otimes v_{d+\ep,d+\ep}}(\rdd)}
        \lesssim \|F\|_{W^{\fy,\fy}_{1\otimes v_{d+\ep,d+\ep}}(\rdd)}.
	\end{split}
	\ee
\end{proof}

\begin{proposition}[Product algebra of $W^{\fy,\fy}_{1\otimes v_{d+\ep,d+\ep}}(\rdd)$]\label{pp-agp-W}
	\be
W^{\fy,\fy}_{1\otimes v_{d+\ep,d+\ep}}(\rdd)\cdot W^{\fy,\fy}_{1\otimes v_{d+\ep,d+\ep}}(\rdd) \subset W^{\fy,\fy}_{1\otimes v_{d+\ep,d+\ep}}(\rdd).
    \ee
\end{proposition}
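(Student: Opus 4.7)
The plan is to exploit the time-localization characterization of $W^{\fy,\fy}_{1\otimes v_{d+\ep,d+\ep}}(\rdd)$ given by Proposition \ref{pp-tlp-ws1} so as to reduce the pointwise product $F\cdot G$ to a convolution estimate on the Fourier side, where the standard weighted convolution-algebra property applies.

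First, by Proposition \ref{pp-tlp-ws1},
\[
\|FG\|_{W^{\fy,\fy}_{1\otimes v_{d+\ep,d+\ep}}}\sim \sup_{k,l}\|\eta_{k,l}(FG)\|_{\scrF L^{\fy,\fy}_{v_{d+\ep}\otimes v_{d+\ep}}},
\]
so it suffices to control each localized piece uniformly in $(k,l)$. Because $\eta_{k,l}^*\equiv 1$ on $\mathrm{supp}(\eta_{k,l})$ (this follows from $\sum_n\eta_n\equiv 1$ combined with the observation that only neighbors in $\La_k\times\La_l$ contribute there), I may rewrite $\eta_{k,l}(FG)=(\eta_{k,l}F)(\eta_{k,l}^*G)$, placing the tight cutoff on $F$ and the enlarged one on $G$. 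Applying the Fourier transform then converts the pointwise product into a convolution on $\rdd$:
\[
\|(\eta_{k,l}F)(\eta_{k,l}^*G)\|_{\scrF L^{\fy,\fy}_{v_{d+\ep}\otimes v_{d+\ep}}}=\|\widehat{\eta_{k,l}F}\ast\widehat{\eta_{k,l}^*G}\|_{L^{\fy,\fy}_{v_{d+\ep}\otimes v_{d+\ep}}}.
\]

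Now I invoke the convolution-algebra property of $L^{\fy,\fy}_{v_{d+\ep}\otimes v_{d+\ep}}(\rdd)$ that was already used in the preceding proof: the sub-multiplicative weight $\lan\z_1\ran^{d+\ep}\lan\z_2\ran^{d+\ep}$ has reciprocal integrable on $\rdd$, which yields the Peetre-type bound $\sup_x m(x)\int m(y)^{-1}m(x-y)^{-1}\,dy<\fy$ and hence control by
\[
C\|\eta_{k,l}F\|_{\scrF L^{\fy,\fy}_{v_{d+\ep}\otimes v_{d+\ep}}}\cdot\|\eta_{k,l}^*G\|_{\scrF L^{\fy,\fy}_{v_{d+\ep}\otimes v_{d+\ep}}}.
\]
Taking $\sup_{k,l}$ and applying Proposition \ref{pp-tlp-ws1} once more handles the first factor by $\|F\|_{W^{\fy,\fy}_{1\otimes v_{d+\ep,d+\ep}}}$. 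For the second, I will expand $\eta_{k,l}^*=\sum_{n\in\La_k,\,m\in\La_l}\eta_{n,m}$ (a uniformly bounded number of terms), invoke the triangle inequality in $\scrF L^{\fy,\fy}_{v_{d+\ep}\otimes v_{d+\ep}}$, and dominate by a constant multiple of $\sup_{n,m}\|\eta_{n,m}G\|_{\scrF L^{\fy,\fy}_{v_{d+\ep}\otimes v_{d+\ep}}}\sim\|G\|_{W^{\fy,\fy}_{1\otimes v_{d+\ep,d+\ep}}}$. The only genuine subtlety will be the cutoff bookkeeping that ensures the enlarged window $\eta_{k,l}^*$ can still be absorbed into the $W$-norm of $G$ via Proposition \ref{pp-tlp-ws1}; the weighted convolution bound itself is standard.
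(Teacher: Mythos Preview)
Your argument is correct. Both your proof and the paper's proof ultimately rest on the same convolution-algebra property $L^{\fy}_{v_{d+\ep}}\ast L^{\fy}_{v_{d+\ep}}\subset L^{\fy}_{v_{d+\ep}}$, but the reductions differ. The paper works directly at the STFT level: it splits the window as $\Phi=\Phi_1\Phi_2$, writes $|\scrV_{\Phi}(FG)(z,\z)|\le\int|\scrV_{\Phi_1}F(z,\z-\eta)\,\scrV_{\Phi_2}G(z,\eta)|\,d\eta$, and then applies the weighted convolution bound in the $\z$-variable while taking sup in $z$. Your route first invokes the time-localization characterization (Proposition~\ref{pp-tlp-ws1}) to pass to $\scrF L^{\fy,\fy}_{v_{d+\ep}\otimes v_{d+\ep}}$, writes $\eta_{k,l}(FG)=(\eta_{k,l}F)(\eta_{k,l}^*G)$, and applies the convolution bound on the Fourier side. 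The paper's approach is more self-contained (it does not rely on Proposition~\ref{pp-tlp-ws1}, whose proof is itself only sketched), while yours makes the spatial-localization viewpoint explicit and shows how the algebra property is inherited from $\scrF L^{\fy}_{v_{d+\ep}}$; both are short and the difference is largely a matter of packaging.
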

\begin{proof}
    Take $\Phi_1,\Phi_2\in \calS(\rdd)$ to be two window functions. 
    Denote $\Phi=\Phi_1\Phi_2$.
    For $F,G\in W^{\fy,\fy}_{1\otimes v_{d+\ep,d+\ep}}(\rdd)$, we have
    \be
    \begin{split}
        |\scrV_{\Phi}(FG)(z,\z)|
        = &
        \bigg|\int_{\rdd}F(x,y)\overline{\Phi_1(x-z_1,y-z_2)}\overline{\Phi_2(x-z_1,y-z_2)}
        e^{2\pi i(x,y)\cdot (\z_1,\z_2)}dxdy\bigg|
        \\
        \leq &
        \int_{\rdd}\bigg|\scrV_{\Phi_1}F(z,\z-\eta)\scrV_{\Phi_2}G(z,\eta)\bigg|d\eta.
    \end{split}
    \ee
    The desired conclusion follows by
    \be
    \begin{split}
        \|FG\|_{W^{\fy,\fy}_{1\otimes v_{d+\ep,d+\ep}}(\rdd)}
        = &
        \big\|\scrV_{\Phi}(FG)(z,\z)\lan \z_1\ran^{d+\ep} \lan \z_2\ran^{d+\ep}\big\|_{L^{\fy,\fy}_{z,\z}}
        \\
        \leq &
        \big\|\int_{\rdd}\big\|\scrV_{\Phi_1}F(z,\z-\eta)\big\|_{L^{\fy}_z} \big\|\scrV_{\Phi_2}G(z,\eta)\big\|_{L^{\fy}_z} 
        \lan \z_1\ran^{d+\ep} \lan \z_2\ran^{d+\ep} d\eta\big\|_{L^{\fy}_{\z}}
        \\
        \leq &
        \big\|\scrV_{\Phi_1}F(z,\z)\lan \z_1\ran^{d+\ep} \lan \z_2\ran^{d+\ep}\big\|_{L^{\fy,\fy}_{z,\z}}
        \big\|\scrV_{\Phi_2}G(z,\z)\lan \z_1\ran^{d+\ep} \lan \z_2\ran^{d+\ep}\big\|_{L^{\fy,\fy}_{z,\z}}
        \\
        = &
        \|F\|_{W^{\fy,\fy}_{1\otimes v_{d+\ep,d+\ep}}(\rdd)}\|G\|_{W^{\fy,\fy}_{1\otimes v_{d+\ep,d+\ep}}(\rdd)},
    \end{split}
    \ee
    where in the last inequality, we apply the convolution inequality $L^{\fy}_{v_{d+\ep}}(\rd)\ast L^{\fy}_{v_{d+\ep}}(\rd)\subset L^{\fy}_{v_{d+\ep}}(\rd)$ twice; for reference, see \cite[Theorem 1.3]{GuoFanWuZhao2018SM}.
\end{proof}

For $i=1,2,3,4$, Let $c_i$ denote the permutations defined as follows:
\be
c_1(z_1,z_2,\z_1,\z_2)=(z_2,\z_2,z_1,\z_1),\ \ \  c_2(z_1,z_2,\z_1,\z_2)=(\z_2,z_2,\z_1,z_1),
\ee
and
\be
c_3(z_1,z_2,\z_1,\z_2)=(z_2,\z_2,\z_1,z_1),\ \ \  c_4(z_1,z_2,\z_1,\z_2)=(z_1,\z_1,\z_2,z_2).
\ee
\begin{definition}\label{def-MM}
    Given a window function $\Phi\in \calS(\rdd)$ and let $c_i$ be the permutation mentioned above. 
    Then $M^{p_1,p_2,q_1,q_2}(c_i)$ is the mixed modulation space of tempered
distributions $f\in \calS'(\rdd)$ for which
\be
\|f\|_{M^{p_1,p_2,q_1,q_2}_c(c_i)}=\|\scrV_{\Phi}f\circ c_i(z,\z)m(z,\z)\|_{L^{p_1,p_2,q_1,q_2}_{z_1,z_2,\z_1,\z_2}}
=\|\scrV_{\Phi}f\|_{L^{p_1,p_2,q_1,q_2}_m(c_i)}
<\fy.
\ee
\end{definition}
As a natural generalization of the classical modulation spaces,
this type of function spaces was introduced by Bishop \cite{Bishop2010JMAA} for studying the
Schatten $p$-class properties of pseudodifferential operators. 
It can be shown (cf. \cite{Bishop2010JMAA}) 
that the definition of mixed modulation spaces is independent of the choice of 
the window $g$ in $\calS(\rdd)$, with different windows giving equivalent norms.
These mixed modulation spaces ($M^{1,1,\fy,\fy}(c_i)$ for $i=1,2$) were also used in Cordero-Nicola \cite{CorderoNicola2019JFAA} for the study of kernel theorems 
on modulation spaces.
Based on the kernel theorems of modulation spaces, these mixed spaces naturally arise in the study of the boundedness of Fourier integral operators (FIOs) and are employed to estimate their distribution kernels. Some basic properties of these function spaces are listed below.

\begin{lemma}[Relation between permutation $c_1$ and $c_2$] \label{lm-rlc}
	Let $f\in \calS'(\rdd)$, $\tilde{f}(x,y)=f(y,x)$,
	and let $\phi$ be a radial Schwartz function. 
	We have the following relation:
	\be
	(\scrV_{\phi}\tilde{f})\circ c_1=(\scrV_{\phi}f)\circ c_2.
	\ee
	Moreover, the map $P: f \mapsto \tilde{f}$ is an isomorphism between $M^{p_1,p_2,q_1,q_2}(c_1)$ and $M^{p_1,p_2,q_1,q_2}(c_2)$ with
	\be
	\|\tilde{f}\|_{M^{p_1,p_2,q_1,q_2}(c_1)}\sim \|f\|_{M^{p_1,p_2,q_1,q_2}(c_2)}.
	\ee
	
\end{lemma}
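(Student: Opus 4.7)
The plan is to reduce everything to the pointwise identity $\scrV_{\phi}\tilde{f}(u_1,u_2,\eta_1,\eta_2)=\scrV_{\phi}f(u_2,u_1,\eta_2,\eta_1)$, from which both the stated equality of composed STFTs and the isomorphism of mixed modulation spaces follow by routine bookkeeping. I would first prove this identity on Schwartz representatives by a direct variable swap in the STFT integral, and then extend to $f\in\calS'(\rdd)$ by interpreting $\tilde{f}$ as the distribution $\langle\tilde{f},\varphi\rangle:=\langle f,\tilde{\varphi}\rangle$ and writing the STFT as a dual pairing $\scrV_{\phi}\tilde{f}(u_1,u_2,\eta_1,\eta_2)=\langle\tilde{f},M_{(\eta_1,\eta_2)}T_{(u_1,u_2)}\phi\rangle$.

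Concretely, starting from
$$\scrV_{\phi}\tilde{f}(u_1,u_2,\eta_1,\eta_2)=\int_{\rdd}f(y,x)\,\overline{\phi(x-u_1,y-u_2)}\,e^{-2\pi i(x\cdot\eta_1+y\cdot\eta_2)}\,dx\,dy,$$
I swap $(x,y)\mapsto(y,x)$ in the integral and invoke the equality $\phi(a,b)=\phi(b,a)$ on $\rd\times\rd$, which is precisely where the radiality of $\phi$ is used. The integral is then recognized as $\scrV_{\phi}f(u_2,u_1,\eta_2,\eta_1)$, establishing the key identity. Substituting $(u_1,u_2,\eta_1,\eta_2)=(z_2,\z_2,z_1,\z_1)$ on the left — i.e., composing with $c_1$ — produces $\scrV_{\phi}f(\z_2,z_2,\z_1,z_1)$, which matches $(\scrV_{\phi}f)\circ c_2(z_1,z_2,\z_1,\z_2)$ by the definition of $c_2$; this is the first assertion.

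For the norm statement, I would take the $L^{p_1,p_2,q_1,q_2}_{z_1,z_2,\z_1,\z_2}$-norm of both sides of the just-derived pointwise identity and recognize, using Definition~\ref{def-MM}, the left-hand side as $\|\tilde{f}\|_{M^{p_1,p_2,q_1,q_2}(c_1)}$ and the right-hand side as $\|f\|_{M^{p_1,p_2,q_1,q_2}(c_2)}$. With a fixed radial window, this even yields an isometric identity; window-independence of the mixed modulation norm (the standard fact recalled after Definition~\ref{def-MM}) then upgrades it to the equivalence $\|\tilde{f}\|_{M^{p_1,p_2,q_1,q_2}(c_1)}\sim\|f\|_{M^{p_1,p_2,q_1,q_2}(c_2)}$ for arbitrary admissible windows, and since $P$ is clearly an involution it is an isomorphism. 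The only subtlety — not a genuine obstacle — is making the variable swap rigorous in the distribution setting, which is cleanest via the pull-back description of $\tilde{f}$ under the flip map $\pi(x,y)=(y,x)$ and its commutation with the radial window $\phi=\phi\circ\pi$.
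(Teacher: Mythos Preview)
Your proposal is correct and follows essentially the same approach as the paper: a direct calculation using the variable swap $(x,y)\mapsto(y,x)$ together with the symmetry $\phi(a,b)=\phi(b,a)$ afforded by radiality, yielding $\scrV_{\phi}\tilde{f}(u_1,u_2,\eta_1,\eta_2)=\scrV_{\phi}f(u_2,u_1,\eta_2,\eta_1)$, which after composing with $c_1$ is recognized as $(\scrV_{\phi}f)\circ c_2$. The paper records this in a single line and omits the norm consequence entirely, so your version is simply a more detailed write-up of the same argument.
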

\begin{proof}
	A direct calculation yields that
	\be
	\begin{split}
		(\scrV_{\phi}\tilde{f})\circ c_1(z,\z)=(\scrV_{\phi}\tilde{f})(z_2,\z_2,z_1,\z_1)=(\scrV_{\phi}f)(\z_2,z_2,\z_1,z_1)
		=
		(\scrV_{\phi}f)\circ c_2(z,\z)
	\end{split}
	\ee
\end{proof}

\begin{lemma}\label{lm-eb-WM}
Let $\ep>0$. The following relation is valid:
	\be
	1\in W^{\fy,\fy}_{1\otimes v_{d+\ep,d+\ep}}(\rdd)\subset \msjdd\subset X\subset W^{\fy,1}(\rdd)\subset L^{\fy}(\rdd)
	\ee
    for $X= M^{1,\fy,1,\fy}(c_i)$ for $i=1,2,3$ and $X=M^{\fy,1,\fy,1}(c_4)$.
\end{lemma}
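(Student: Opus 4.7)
The chain consists of five inclusions; four of them reduce to the single elementary rearrangement inequality $\sup_x\int h(x,y)\,dy\le \int \sup_x h(x,y)\,dy$, while the constant-function endpoint is a direct STFT computation. For $1\in W^{\fy,\fy}_{1\otimes v_{d+\ep,d+\ep}}$, a direct calculation gives $\scrV_\Phi 1(z,\z)=e^{-2\pi i z\cdot\z}\,\overline{\hat\Phi(-\z)}$, whose modulus $|\hat\Phi(-\z)|$ is independent of $z$ and rapidly decaying in $\z$, so the weighted $L^{\fy,\fy}_{z,\z}$-norm is finite. For $W^{\fy,\fy}_{1\otimes v_{d+\ep,d+\ep}}\subset \msjdd$, the defining bound yields
\be
\sup_{z\in\rdd}|\scrV_\Phi f(z,\z)|\le \|f\|_{W^{\fy,\fy}_{1\otimes v_{d+\ep,d+\ep}}}\lan\z_1\ran^{-(d+\ep)}\lan\z_2\ran^{-(d+\ep)},
\ee
and integrating in $\z$ converges because $d+\ep>d$. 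The final inclusion $W^{\fy,1}\subset L^\fy$ is classical and follows from the STFT inversion formula: $|f(x)|\lesssim \int|\scrV_\Phi f(x,\xi)|\,d\xi\lesssim \|f\|_{W^{\fy,1}}$ pointwise.

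\textbf{The middle inclusions $\msjdd\subset X\subset W^{\fy,1}$.} Writing $(z,\z)=(x_1,x_2,\xi_1,\xi_2)\in\rddd$, the $\msjdd$-norm reads $\int\int\sup_{x_1,x_2}|\scrV_\Phi f|\,d\xi_1 d\xi_2$ and the $W^{\fy,1}$-norm reads $\sup_{x_1,x_2}\int\int|\scrV_\Phi f|\,d\xi_1 d\xi_2$. Unfolding each permutation $c_i$ and relabelling the four variables turns the corresponding $X$-norm into a nested expression that interleaves sups in $x_1,x_2$ with integrals in $\xi_1,\xi_2$; for instance $c_1$ gives $\sup_{x_2}\int\sup_{x_1}\int|\scrV_\Phi f|\,d\xi_1 d\xi_2$, the permutations $c_2,c_3$ produce structurally identical patterns (with $x_j$'s or $\xi_j$'s possibly swapped), and $c_4$ produces $\int\sup_{x_2}\int\sup_{x_1}|\scrV_\Phi f|\,d\xi_2 d\xi_1$. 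In each case, two successive applications of $\sup\int\le\int\sup$, moving the two sups past the enclosing $\xi$-integrals, dominate this nested expression by $\int\int\sup_{x_1,x_2}|\scrV_\Phi f|\,d\xi_1 d\xi_2$ and give $\msjdd\subset X$. In the opposite direction, writing $\sup_{x_1,x_2}=\sup_{x_2}\sup_{x_1}$ and moving the two sups one at a time \emph{into} the appropriate $\xi$-integral via the same one-line inequality bounds the $W^{\fy,1}$-norm by each $X$-norm.

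\textbf{Main obstacle.} The argument is essentially bookkeeping once one recognises $\sup\int\le\int\sup$ as the only nontrivial input. The only place where care is genuinely required is in step three/four: translating each $c_i$ into the correct concrete order of sups and integrals and noticing that $c_4$ differs in outermost structure from $c_1,c_2,c_3$ (its outermost norm is $L^1$ in $\xi_1$ and innermost is $L^\fy$ in $x_1$), so the bridging to the $\msjdd$- and $W^{\fy,1}$-norms must be verified separately for $c_4$ and for $c_1,c_2,c_3$. Once the permutations are spelled out, every remaining step is a one-line application of the rearrangement inequality.
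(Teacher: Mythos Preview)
Your proposal is correct and matches the paper's approach: the paper itself omits the proof, saying only that it follows from ``a direct calculation and using Minkowski's inequality'' together with a reference for $W^{\fy,1}\subset L^\fy$, and your $\sup\int\le\int\sup$ argument is precisely Minkowski's inequality for the $L^1/L^\fy$ pairing applied after unpacking the permutations $c_i$. You have correctly identified that $c_4$ has a different outer structure than $c_1,c_2,c_3$ and handled both patterns; the only cosmetic remark is that your pointwise bound $|f(x)|\lesssim\int|\scrV_\Phi f(x,\xi)|\,d\xi$ for the last inclusion tacitly uses $\Phi(0)\neq 0$ (or a density argument), but this is standard and the paper simply cites \cite{GuoWuZhao2017JoFA} for it.
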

This lemma can be verified by a direct calculation and using Minkowski's inequality, we omit the details.
See \cite[Theorem 1.3]{GuoWuZhao2017JoFA} for the last embedding relation.

\begin{lemma}\label{lm-pd-c12}
        Let $f$ be a measurable function on $\rdd$ with at most polynomial growth.
	For $i=1,2$, the following two statements are equivalent:
	\bn
	\item
	$f\in M^{1,1,\fy,\fy}(c_i)$,
	\item
	$\|fg\|_{M^{1,1,\fy,\fy}(c_i)}\lesssim \|g\|_{M^{1,\fy,1,\fy}(c_i)}$ for all $g\in M^{1,\fy,1,\fy}(c_i)$,
	\item
	$\|fg\|_{M^{1,1,\fy,\fy}(c_i)}\lesssim \|g\|_{\msjdd}$ for all $g\in \msjdd$.
	\item
	$\|fg\|_{M^{1,1,\fy,\fy}(c_i)}\lesssim \|g\|_{W^{\fy,\fy}_{1\otimes v_{d+\ep,d+\ep}}(\rdd)}$ for all $g\in W^{\fy,\fy}_{1\otimes v_{d+\ep,d+\ep}}(\rdd)$.
	\en
	Moreover, the following product inequalities is valid:
    \be
    \|fg\|_{M^{1,1,\fy,\fy}(c_i)}\lesssim \|f\|_{M^{1,1,\fy,\fy}(c_i)}\|g\|_X, \ \ \ X=M^{1,\fy,1,\fy}(c_i), \msjdd, W^{\fy,\fy}_{1\otimes v_{d+\ep,d+\ep}}(\rdd).
    \ee 
\end{lemma}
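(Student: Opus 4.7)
The plan is to reduce the problem to the single implication $(1) \Rightarrow (2)$; all other equivalences, as well as the product inequalities in the ``moreover'' part, follow by routine embedding manipulations using Lemma \ref{lm-eb-WM}. I treat $i = 1$ in detail; the case $i = 2$ reduces to it via Lemma \ref{lm-rlc}, since the coordinate swap $\tilde f(x,y) = f(y,x)$ is an isomorphism $M^{p_1,p_2,q_1,q_2}(c_1) \leftrightarrow M^{p_1,p_2,q_1,q_2}(c_2)$ that commutes with pointwise multiplication under the joint swap of arguments on both factors.

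For $(1) \Rightarrow (2)$, I would pick the window in factored form $\Phi = \Phi_1 \Phi_2$ with $\Phi_1, \Phi_2 \in \calS(\rdd)$, which is permitted since the modulation-space norms are independent, up to equivalence, of the window choice. Applying the elementary identity $\scrF(h_1 h_2) = \scrF h_1 \ast \scrF h_2$ to the localized factors $f\,\overline{\Phi_1(\cdot-z)}$ and $g\,\overline{\Phi_2(\cdot-z)}$ yields the pointwise STFT bound
\be
|\scrV_\Phi(fg)(z,\z)| \leq \int_{\rdd} |\scrV_{\Phi_1} f(z, \z - \eta)|\,|\scrV_{\Phi_2} g(z, \eta)|\, d\eta.
\ee
Unpacking the $c_1$--permuted mixed norms in natural STFT coordinates gives
\be
\|h\|_{M^{1,1,\fy,\fy}(c_1)} = \sup_{z_2,\z_2} \|\scrV_\Phi h(\cdot, z_2, \cdot, \z_2)\|_{L^1_{z_1} L^1_{\z_1}}, \ \ \|h\|_{M^{1,\fy,1,\fy}(c_1)} = \sup_{z_2} \int \sup_{z_1} \|\scrV_\Phi h(z_1, z_2, \cdot, \z_2)\|_{L^1_{\z_1}}\, d\z_2.
\ee
Starting from the pointwise bound and performing, in order: (i) the inner $d\z_1$ integration on the $f$-factor, using translation invariance to absorb $\eta_1$; (ii) the $d\eta_1$ integration on the $g$-factor; (iii) H\"older's inequality in $z_1$, with $L^1_{z_1}$ on the $f$-factor and $L^\fy_{z_1}$ on the $g$-factor; (iv) Young's convolution $L^\fy \ast L^1 \to L^\fy$ in the $\z_2$ variable; (v) the outer supremum in $z_2$. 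The two factors then separate cleanly into $\|f\|_{M^{1,1,\fy,\fy}(c_1)}$ and $\|g\|_{M^{1,\fy,1,\fy}(c_1)}$.

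The remaining implications are short. The chain $W^{\fy,\fy}_{1\otimes v_{d+\ep,d+\ep}}(\rdd) \subset \msjdd \subset M^{1,\fy,1,\fy}(c_i)$ from Lemma \ref{lm-eb-WM} inverts to the norm domination $\|g\|_{M^{1,\fy,1,\fy}(c_i)} \lesssim \|g\|_{\msjdd} \lesssim \|g\|_{W^{\fy,\fy}_{1\otimes v_{d+\ep,d+\ep}}}$, which immediately yields $(2) \Rightarrow (3) \Rightarrow (4)$. For $(4) \Rightarrow (1)$, I would set $g \equiv 1$: Lemma \ref{lm-eb-WM} asserts that $1 \in W^{\fy,\fy}_{1\otimes v_{d+\ep,d+\ep}}$ with finite norm, whence $\|f\|_{M^{1,1,\fy,\fy}(c_i)} = \|f \cdot 1\|_{M^{1,1,\fy,\fy}(c_i)} \lesssim \|1\|_{W^{\fy,\fy}_{1\otimes v_{d+\ep,d+\ep}}} < \fy$. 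The ``moreover'' product inequalities for $X = M^{1,\fy,1,\fy}(c_i)$ are exactly what the proof of $(1) \Rightarrow (2)$ produces, while those for $X = \msjdd$ and $X = W^{\fy,\fy}_{1\otimes v_{d+\ep,d+\ep}}$ follow by composing with the same embedding chain.

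The main technical obstacle is the orchestration of H\"older's inequality and Young's convolution inequality in the asymmetric $L^1/L^\fy$ structure of the $c_1$-permuted mixed norm. The crucial observation that makes the bookkeeping close is that after integrating out $\z_1$ in the $f$-factor, the result depends only on $z_2$ and the shifted frequency $\z_2 - \eta_2$; this is exactly the structure needed to invoke a genuine $L^\fy \ast L^1 \to L^\fy$ convolution in $\z_2$ that recovers the full $\|f\|_{M^{1,1,\fy,\fy}(c_1)}$. A reversed order of operations instead produces $\sup_{z_2} \int\!\!\int \sup_{z_1} |\scrV_{\Phi_2} g(z_1, z_2, \eta_1, \eta_2)|\, d\eta_1\, d\eta_2$, which is in general strictly larger than $\|g\|_{M^{1,\fy,1,\fy}(c_1)}$ and so does not close the inequality.
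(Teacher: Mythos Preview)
Your proposal is correct and follows essentially the same approach as the paper: both reduce $c_2$ to $c_1$ via Lemma~\ref{lm-rlc}, obtain $(2)\Rightarrow(3)\Rightarrow(4)\Rightarrow(1)$ from the embedding chain of Lemma~\ref{lm-eb-WM} (with $g\equiv 1$ for the last step), and prove $(1)\Rightarrow(2)$ by the STFT convolution bound $|\scrV_{\Phi_1\Phi_2}(fg)|\le |\scrV_{\Phi_1}f|\ast_\z |\scrV_{\Phi_2}g|$ followed by layerwise H\"older/Young estimates in the variables $\z_1,z_1,\z_2,z_2$. Your explicit bookkeeping of the order of operations --- integrate $\z_1$, integrate $\eta_1$, H\"older in $z_1$, Young $L^\fy\ast L^1\to L^\fy$ in $\z_2$, sup in $z_2$ --- is exactly the content of the paper's terse line ``$L^1\ast L^1\subset L^1$, $L^1\cdot L^\fy\subset L^1$, $L^1\ast L^\fy\subset L^\fy$, $L^\fy\cdot L^\fy\subset L^\fy$'' (your version even fixes a minor typo in that list).
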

\begin{proof}
	We only give the proof for the permutation $c_1$, then the $c_2$ case follows by using Lemma \ref{lm-rlc}.
	The relation $(2)\Longrightarrow (3)\Longrightarrow (4)\Longrightarrow (1)$ follows by the embedding relation from Lemma \ref{lm-eb-WM}:
	\be
	1\in W^{\fy,\fy}_{1\otimes v_{d+\ep,d+\ep}}(\rdd)\subset \msjdd\subset M^{1,\fy,1,\fy}(c_1).
	\ee
	To complete this proof,
	we only need to verify the relation $(1)\Longrightarrow (2)$.
	For $\Phi_1,\Phi_2\in \calS(\rdd)$, $f\in M^{1,1,\fy,\fy}(c_1)$ with at most polynomial growth, 
    and $g\in M^{1,\fy,1,\fy}(c_1)$, we have $f\overline{T_z\Phi_1}, g\overline{T_z\Phi_2}\in L^2(\rdd)$, and
        \begin{align*}
            \scrV_{\Phi_1\Phi_2}(fg)(z,\z)
            = &
            \scrF(f\overline{T_z\Phi_1}g\overline{T_z\Phi_2})(\z)
            \\
            = &
            \int_{\rdd}\scrF(f\overline{T_z\Phi_1})(\z-\eta)\scrF(g\overline{T_z\Phi_2})(\eta)d\eta
            \\
            = &
            \int_{\rdd}\scrV_{\phi_1}f(z,\z-\eta)\scrV_{\phi_2}g(z,\eta)d\eta.
        \end{align*}
    From this, we deduce that
	\be
	|\scrV_{\phi_1\phi_2}(fg)(z,\z)|
	\leq 
	\int_{\rdd}|\scrV_{\phi_1}f(z,\z-\eta)\scrV_{\phi_2}g(z,\eta)|d\eta.
	\ee
   By applying the following sequence of inequalities:
   \begin{equation*}
       L^1\ast L^1\subset L^1,\  L^1\cdot L^{\fy}\subset L^1,\  L^1\ast L^{\fy}\subset L^1,\ L^{\fy}\cdot L^{\fy}\subset L^{\fy},
   \end{equation*}
   we obtain the desired conclusion
   \begin{align*}
       \|fg\|_{M^{1,1,\fy,\fy}(c_i)}
       = &
       \bigg\|\Big\|\big\|\|\scrV_{\Phi_1\Phi_2}(fg)(z,\z)\|_{L^{1}_{\z_1}(\rd)}\big\|_{L^{1}_{z}(\rd)}\Big\|_{L^{\fy}_{\z_2}(\rd)}\bigg\|_{L^{\fy}_{z_2}(\rd)}
       \\
       \lesssim &
       \|f\|_{M^{1,1,\fy,\fy}(c_i)}\|g\|_{M^{1,\fy,1,\fy}(c_i)}.
   \end{align*}
\end{proof}
By a similar argument to that of the above lemma, we give the following two lemmas associated with the permutations $c_3$ and $c_4$.
\begin{lemma}\label{lm-pd-c3}
	Let $f$ be a measurable function on $\rdd$ with at most polynomial growth.
	The following two statements are equivalent:
	\bn
	\item
	$f\in M^{1,\fy,1,\fy}(c_3)$,
	\item
	$\|fg\|_{M^{1,\fy,1,\fy}(c_3)}\lesssim \|g\|_{M^{1,\fy,1,\fy}(c_3)}$ for all $g\in M^{1,\fy,1,\fy}(c_3)$,
	\item
	$\|fg\|_{M^{1,\fy,1,\fy}(c_3)}\lesssim \|g\|_{\msjdd}$ for all $g\in \msjdd$.
	\item
	$\|fg\|_{M^{1,\fy,1,\fy}(c_3)}\lesssim \|g\|_{W^{\fy,\fy}_{1\otimes v_{d+\ep,d+\ep}}(\rdd)}$ for all $g\in W^{\fy,\fy}_{1\otimes v_{d+\ep,d+\ep}}(\rdd)$.
	\en
    Moreover, the following product inequalities is valid:
    \be
    \|fg\|_{M^{1,1,\fy,\fy}(c_3)}\lesssim \|f\|_{M^{1,1,\fy,\fy}(c_3)}\|g\|_X, \ \ \ X=M^{1,\fy,1,\fy}(c_3), \msjdd, W^{\fy,\fy}_{1\otimes v_{d+\ep,d+\ep}}(\rdd).
    \ee 
\end{lemma}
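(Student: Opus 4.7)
The plan is to follow the same scheme as Lemma \ref{lm-pd-c12}. The easy implications $(2)\Rightarrow(3)\Rightarrow(4)\Rightarrow(1)$ all reduce to the embedding chain
\[
1\in W^{\fy,\fy}_{1\otimes v_{d+\ep,d+\ep}}(\rdd)\subset \msjdd\subset M^{1,\fy,1,\fy}(c_3)
\]
provided by Lemma \ref{lm-eb-WM}: since larger spaces carry smaller norms, $(2)\Rightarrow(3)\Rightarrow(4)$ are immediate, and $(4)\Rightarrow(1)$ follows by plugging in $g=1$. The content of the lemma is therefore $(1)\Rightarrow(2)$, which simultaneously yields the product inequality.

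For $(1)\Rightarrow(2)$ I would start from the same pointwise bound used in the $c_1,c_2$ case of Lemma \ref{lm-pd-c12},
\[
|\scrV_{\Phi_1\Phi_2}(fg)(z,\z)|\leq \int_{\rdd}|\scrV_{\Phi_1}f(z,\z-\eta)|\,|\scrV_{\Phi_2}g(z,\eta)|\,d\eta,
\]
and unfold the permutation $c_3(z_1,z_2,\z_1,\z_2)=(z_2,\z_2,\z_1,z_1)$ to identify the $M^{1,\fy,1,\fy}(c_3)$-norm with the nested norm $L^1_{\z_2}L^\fy_{z_1}L^1_{\z_1}L^\fy_{z_2}$ (innermost to outermost) applied to $\scrV_\Phi h(z_1,z_2,\z_1,\z_2)$. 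The plan is to push the $\eta=(\eta_1,\eta_2)$-convolution through these four layers in sequence. Step 1 ($L^1_{\z_2}$) uses Young's $L^1\ast L^1\subset L^1$ to collapse the $\eta_2$-integration, leaving a residual $\eta_1$-convolution between $F_1(z_1,z_2,\z_1):=\|\scrV_{\Phi_1}f(z_1,z_2,\z_1,\cdot)\|_{L^1_{\z_2}}$ and the analogous $G_1$. Step 2 ($L^\fy_{z_1}$) invokes the pointwise inequality $\sup_{z_1}(F_1\ast_{\z_1}G_1)\leq (\sup_{z_1}F_1)\ast_{\z_1}(\sup_{z_1}G_1)$ to move the supremum inside the convolution. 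Step 3 ($L^1_{\z_1}$) is another Young's $L^1\ast L^1\subset L^1$ closing the residual $\z_1$-convolution, and Step 4 ($L^\fy_{z_2}$) is simply $L^\fy\cdot L^\fy\subset L^\fy$. Bookkeeping then shows that both $\scrV_{\Phi_1}f$ and $\scrV_{\Phi_2}g$ carry exactly the $L^1_{\z_2}L^\fy_{z_1}L^1_{\z_1}L^\fy_{z_2}$-sequence of norms, which is the defining norm of $M^{1,\fy,1,\fy}(c_3)$, so that
\[
\|fg\|_{M^{1,\fy,1,\fy}(c_3)}\lesssim \|f\|_{M^{1,\fy,1,\fy}(c_3)}\|g\|_{M^{1,\fy,1,\fy}(c_3)}.
\]
The variants with $g\in \msjdd$ or $g\in W^{\fy,\fy}_{1\otimes v_{d+\ep,d+\ep}}(\rdd)$ drop out from the embedding chain above.

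The delicate step is Step 2, where the $L^\fy_{z_1}$ layer sits sandwiched between the two frequency-convolution norms $L^1_{\z_2}$ and $L^1_{\z_1}$. In the $c_1$ setting of Lemma \ref{lm-pd-c12} the two $L^1$ norms and the two $L^\fy$ norms form contiguous blocks, and H\"older-type estimates such as $L^1\cdot L^\fy\subset L^1$ cleanly separate the $f$- and $g$-factors; the $c_3$-ordering instead interleaves a pointwise $\sup_{z_1}$ between two convolutions in $\z$. The $\sup$--convolution inequality needed here is one-sided and strictly weaker than Young's, but crucially it preserves the convolution structure in $\z_1$ that Step 3 relies on. Verifying it rigorously (via the pointwise bound $|A\ast B|\leq \int |A||B|$ and taking $\sup$ under the integral) is the only non-routine technical point; everything else is elementary.
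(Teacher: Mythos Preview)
Your proposal is correct and is precisely the ``similar argument'' the paper alludes to after Lemma~\ref{lm-pd-c12}: the easy direction via Lemma~\ref{lm-eb-WM}, the STFT convolution bound, and then the layer-by-layer estimate adapted to the $c_3$ ordering. You have correctly unfolded the $M^{1,\infty,1,\infty}(c_3)$-norm as $L^1_{\zeta_2}L^\infty_{z_1}L^1_{\zeta_1}L^\infty_{z_2}$ (innermost to outermost) and identified that the only novelty relative to the $c_1$ case is Step~2, where the elementary inequality $\sup_{z_1}\int H\,d\eta_1\le\int\sup_{z_1}H\,d\eta_1$ handles the interleaved supremum; this is exactly what is needed.
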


\begin{lemma}\label{lm-pd-c4}
	Let $f$ be a measurable function on $\rdd$ with at most polynomial growth.
	The following two statements are equivalent:
	\bn
	\item
	$f\in M^{1,\fy,1,\fy}(c_4)$,
	\item
	$\|fg\|_{M^{1,\fy,1,\fy}(c_4)}\lesssim \|g\|_{M^{\fy,1,\fy,1}(c_4)}$ for all $g\in M^{\fy,1,\fy,1}(c_4)$,
	\item
	$\|fg\|_{M^{1,\fy,1,\fy}(c_4)}\lesssim \|g\|_{\msjdd}$ for all $g\in \msjdd$.
	\item
	$\|fg\|_{M^{1,\fy,1,\fy}(c_4)}\lesssim \|g\|_{W^{\fy,\fy}_{1\otimes v_{d+\ep,d+\ep}}(\rdd)}$ for all $g\in W^{\fy,\fy}_{1\otimes v_{d+\ep,d+\ep}}(\rdd)$.
	\en
    Moreover, the following product inequalities is valid:
    \be
    \|fg\|_{M^{1,1,\fy,\fy}(c_4)}\lesssim \|f\|_{M^{1,1,\fy,\fy}(c_4)}\|g\|_X, \ \ \ X=M^{\fy,1,\fy,1}(c_4), \msjdd, W^{\fy,\fy}_{1\otimes v_{d+\ep,d+\ep}}(\rdd).
	\ee
\end{lemma}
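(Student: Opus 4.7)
The plan is to mirror the argument developed for Lemmas \ref{lm-pd-c12} and \ref{lm-pd-c3}, but tailored to the permutation $c_4(z_1,z_2,\z_1,\z_2)=(z_1,\z_1,\z_2,z_2)$, which interchanges $z_2$ and $\z_2$ in the ordering of norms. The cycle of implications $(2)\Longrightarrow(3)\Longrightarrow(4)\Longrightarrow(1)$ will come for free from the chain of embeddings in Lemma \ref{lm-eb-WM}, while the main work lies in $(1)\Longrightarrow(2)$, which will be obtained from the standard STFT product identity combined with a mixed-norm Young/Hölder analysis adapted to $c_4$.

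For the easy cycle, I would use $M^{\fy,1,\fy,1}(c_4)\supset \msjdd\supset W^{\fy,\fy}_{1\otimes v_{d+\ep,d+\ep}}(\rdd)\ni 1$ from Lemma \ref{lm-eb-WM} to pass from $(2)$ to $(3)$ to $(4)$, and then specialize $g\equiv 1$ to obtain $(1)$. The final product inequality will follow by combining $(2)$ (with $g=f_0$ in a suitable dense class) with the trivial embedding of the stronger spaces into $M^{\fy,1,\fy,1}(c_4)$.

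For $(1)\Longrightarrow(2)$, pick windows $\Phi_1,\Phi_2\in\calS(\rdd)$ with $\Phi=\Phi_1\Phi_2$ and use, as in the previous lemmas, the convolution identity
\[
\scrV_{\Phi_1\Phi_2}(fg)(z,\z)=\int_{\rdd}\scrV_{\Phi_1}f(z,\z-\eta)\,\scrV_{\Phi_2}g(z,\eta)\,d\eta.
\]
Writing $z=(z_1,z_2)$, $\z=(\z_1,\z_2)$, $\eta=(\eta_1,\eta_2)$, the structure of $c_4$ requires bounds that are pointwise products in the $z$-coordinates and honest convolutions in the $\z$-coordinates. Concretely, in the $z_1$ slot we match $f\in L^{1}$ against $g\in L^{\fy}$ (Hölder), in the $\z_1$ slot we match $f\in L^{1}$ against $g\in L^{\fy}$ but combined with the convolution in $\eta_1$ via $L^{\fy}\ast L^{1}\subset L^{\fy}$ (so in fact Young's inequality reduces this to Hölder after taking the $\z_1$-norm inside), and the pair $(z_2,\z_2)$ is handled analogously with the roles of $L^1$ and $L^{\fy}$ swapped since $c_4$ exchanges these two arguments. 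This produces exactly the target inequality $\|fg\|_{M^{1,\fy,1,\fy}(c_4)}\lesssim \|f\|_{M^{1,\fy,1,\fy}(c_4)}\|g\|_{M^{\fy,1,\fy,1}(c_4)}$.

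The main obstacle is the bookkeeping for the iterated norm: because $c_4$ places $z_2$ at the outermost position in $\scrV_\Phi f\circ c_4$ and $\z_2$ at the second position, one must apply Minkowski's integral inequality (or equivalently reorder norms using that $L^{\fy}$ majorizes and $L^1$ is majorized by iterated norms in the appropriate direction) to move the $\eta$-integration past the $L^{\fy}_{z_2}$ and $L^{\fy}_{\z_2}$ norms before carrying out the $L^{\fy}\ast L^{1}\subset L^{\fy}$ estimates. Once this reordering is justified, the pointwise product/Young scheme closes exactly as in Lemma \ref{lm-pd-c12}, and the moreover clause follows by the same argument with $\|g\|_X$ replacing $\|g\|_{M^{\fy,1,\fy,1}(c_4)}$ via the embedding $X\hookrightarrow M^{\fy,1,\fy,1}(c_4)$ from Lemma \ref{lm-eb-WM}.
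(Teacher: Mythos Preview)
Your approach is essentially the same as the paper's (which simply says ``by a similar argument to that of the above lemma''), and the overall structure---the embedding cycle $(2)\Rightarrow(3)\Rightarrow(4)\Rightarrow(1)$ via Lemma~\ref{lm-eb-WM} together with $1\in W^{\fy,\fy}_{1\otimes v_{d+\ep,d+\ep}}$, and the STFT product identity for $(1)\Rightarrow(2)$---is exactly right.

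There is one bookkeeping slip worth fixing. Under $c_4(z_1,z_2,\z_1,\z_2)=(z_1,\z_1,\z_2,z_2)$, the \emph{time} variables of $\scrV_\Phi f$ are sent to $z_1$ and $\z_1$, while the \emph{frequency} variables (where the convolution in $\eta$ lives) are sent to $\z_2$ and $z_2$. Thus the product/H\"older slots are $z_1$ and $\z_1$, and the convolution/Young slots are $z_2$ and $\z_2$---not the assignment you gave (you placed a convolution in $\z_1$). With this corrected, the scheme reads: $z_1$ uses $L^1\cdot L^\fy\subset L^1$; $z_2$ uses $L^\fy\ast L^1\subset L^\fy$; $\z_1$ uses $L^1\cdot L^\fy\subset L^1$; $\z_2$ uses $L^\fy\ast L^1\subset L^\fy$, and Minkowski lets you pass the $\eta$-integral through the $L^1$ layers and then execute the $L^\fy$ convolution bounds. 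The moreover clause then follows from the embedding $X\hookrightarrow M^{\fy,1,\fy,1}(c_4)$ exactly as you say.
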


\begin{proposition}
	[Time localization for $M^{1,1,\fy,\fy}(c_1)$]\label{pp-tlp-ws3}
	The function space $M^{1,1,\fy,\fy}(c_1)$ has the time localization property as follows:
	\be
	\|F(x,\xi)\|_{M^{1,1,\fy,\fy}(c_1)}
	\sim
	\sup_{l}\|(1\otimes\eta_{l})F\|_{M^{1,1,\fy,\fy}(c_1)}
	\sim
	\sup_{l}\|\scrF_{\xi}(\eta_l(\xi)V_{\psi}F(\cdot,\xi)(z_1,\z_1))(\z_2)\|_{L^{1,1,\fy}_{\z_1,z_1,\z_2}}.
	\ee
\end{proposition}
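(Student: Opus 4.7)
The plan is to establish the two equivalences in turn. For the first equivalence, $\|F\|_{M^{1,1,\fy,\fy}(c_1)} \sim \sup_l \|(1 \otimes \eta_l) F\|_{M^{1,1,\fy,\fy}(c_1)}$, the direction $\sup_l (\cdots) \lesssim \|F\|_{M^{1,1,\fy,\fy}(c_1)}$ follows immediately from the product inequality in Lemma \ref{lm-pd-c12}: writing $(1 \otimes \eta_l) F$ as a product yields
$$\|(1 \otimes \eta_l) F\|_{M^{1,1,\fy,\fy}(c_1)} \lesssim \|F\|_{M^{1,1,\fy,\fy}(c_1)} \|1 \otimes \eta_l\|_{M^{1,\fy,1,\fy}(c_1)},$$
where the last factor is uniform in $l$ by translation invariance of modulation spaces in the $\xi$-direction. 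For the reverse direction, fix a tensor window $\Phi = \psi_1 \otimes \psi_2$ with $\psi_2 \in \calS(\rd)$ compactly supported; since the norm is window-independent this choice is permissible. Using $\sum_l \eta_l \equiv 1$,
$$\scrV_\Phi F(z_1, \tilde\xi, \z_1, \z_2) = \sum_l \scrV_\Phi\bigl[(1 \otimes \eta_l) F\bigr](z_1, \tilde\xi, \z_1, \z_2),$$
where $\tilde\xi$ denotes the shift-in-$\xi$ variable of the STFT on $\rdd$. The integrand of the $l$-th term contains the factor $\eta_l(\xi)\, \overline{\psi_2(\xi - \tilde\xi)}$, which vanishes unless $|l - \tilde\xi| \lesssim 1$; hence only boundedly many $l$ contribute for each $\tilde\xi$. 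Dominating the sum by its maximum term and then carrying out the $L^1$-integrations in $(z_1, \z_1)$ together with the suprema in $(\tilde\xi, \z_2)$ yields $\|F\|_{M^{1,1,\fy,\fy}(c_1)} \lesssim \sup_l \|(1 \otimes \eta_l) F\|_{M^{1,1,\fy,\fy}(c_1)}$.

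For the second equivalence, exploit the tensor structure of $\Phi$ to write
$$\scrV_\Phi F(z_1, \tilde\xi, \z_1, \z_2) = V_{\psi_2}\bigl[\xi \mapsto V_{\psi_1} F(\cdot, \xi)(z_1, \z_1)\bigr](\tilde\xi, \z_2),$$
and set $h(\xi) := V_{\psi_1} F(\cdot, \xi)(z_1, \z_1)$ with $z_1, \z_1$ viewed as parameters. The identity $V_{\psi_2}[\eta_l h](\tilde\xi, \z_2) = \scrF_\xi\bigl[\eta_l h \cdot \overline{T_{\tilde\xi}\psi_2}\bigr](\z_2)$ expresses this as a convolution in $\z_2$,
$$|V_{\psi_2}[\eta_l h](\tilde\xi, \z_2)| \leq \bigl(|\scrF_\xi[\eta_l h]| \ast |\hat\psi_2|\bigr)(\z_2),$$
with bound independent of $\tilde\xi$. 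Integrating in $(z_1, \z_1)$ by Fubini and then applying Young's inequality $L^\infty \ast L^1 \subset L^\infty$ in $\z_2$ yields the $\lesssim$ direction with constant $\|\hat\psi_2\|_{L^1}$. For the reverse direction, additionally require $\psi_2 \equiv 1$ on $\mathrm{supp}\,\eta$. Then $\eta_l(\xi)\, \overline{\psi_2(\xi - l)} = \eta_l(\xi)$, whence
$$V_{\psi_2}[\eta_l h](l, \z_2) = \scrF_\xi[\eta_l h](\z_2),$$
and specializing $\tilde\xi = l$ shows the Fourier-transform quantity is pointwise dominated by the STFT quantity.

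The main technical subtlety is the simultaneous choice of $\psi_2$: compactly supported (needed for the finite-overlap argument in the first equivalence) and identically $1$ on $\mathrm{supp}\,\eta$ (needed for the reverse direction of the second equivalence). Both conditions can be arranged concurrently by taking $\eta$ supported in a small ball and $\psi_2$ a Schwartz bump equal to $1$ on a slightly larger ball; the independence of the modulation norm from the choice of window then justifies switching windows freely throughout the argument.
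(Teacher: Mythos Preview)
Your proposal is correct and follows essentially the same approach as the paper's own proof: the product inequality from Lemma~\ref{lm-pd-c12} for one direction of the first equivalence, a finite-overlap argument with a compactly supported window in $\xi$ for the other, and for the second equivalence the identification of the STFT with the partial Fourier transform when the $\xi$-window acts as the identity on the support, combined with a convolution estimate and Young's inequality for the reverse bound. The only cosmetic difference is that the paper first reduces to $F$ with $\xi$-support near the origin and specializes to $|z_2|<\delta$, whereas you work with $(1\otimes\eta_l)F$ directly and specialize at $\tilde\xi=l$; these are the same manoeuvre up to a translation.
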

\begin{proof}
	Take the window function $\Psi=\psi\otimes \psi\in C_c^{\fy}(\rd)\otimes C_c^{\fy}(\rd)$ 
	with $\text{supp}\psi\in B(0,\d)$. We have
	\be
	\begin{split}
		\|(1\otimes\eta_{l})F\|_{M^{1,1,\fy,\fy}(c_1)}
		\lesssim &
		\|1\otimes\eta_{l}\|_{M^{1,\fy,1,\fy}(c_1)}\|F\|_{M^{1,1,\fy,\fy}(c_1)}
		\lesssim
		\|F\|_{M^{1,1,\fy,\fy}(c_1)}.
	\end{split}
	\ee
	On the other hand, for any fixed $z_2\in \rd$ we have
	\be
	\scrV_{\Psi}F(z,\z)=\sum_{l}\scrV_{\Psi}((1\otimes\eta_{l})F)(z,\z),
	\ee
	and
	\be
	\|\scrV_{\Psi}F(z,\z)\|_{L^{1,1,\fy}_{\z_1,z_1,\z_2}}\leq \sum_{l}\|\scrV_{\Psi}((1\otimes\eta_{l})F)(z,\z)\|_{L^{1,1,\fy}_{\z_1,z_1,\z_2}}
	\lesssim \sup_{l}\|\scrV_{\Psi}((1\otimes\eta_{l})F)(z,\z)\|_{L^{1,1,\fy}_{\z_1,z_1,\z_2}},
	\ee	
	where the summation only have finite nonzero terms.
    The desired conclusion follows by taking $L^{\fy}_{z_2}$ norm.
    
	Next, we only need to verify that
	\be
	\|F\|_{M^{1,1,\fy,\fy}(c_1)}
	\sim
	\|\scrF_{\xi}(V_{\psi}F(\cdot,\xi)(z_1,\z_1))(\z_2)\|_{L^{1,1,\fy}_{\z_1,z_1,\z_2}},\ \ \ \    \text{supp}F(x,\cdot)\subset B(0,r).
	\ee
	Taking $\psi\in C_c^{\fy}(\rd)$ such that $F\cdot T_{z_2}\psi=F$ for $|z_2|<\d$, we find that for $|z_2|<\d$
	\be
	\begin{split}
		\scrV_{\Psi}F(z,\z)
		= &
		\int_{\rdd}F(x,\xi)\psi(x-z_1)\psi(\xi-z_2)e^{-2\pi i(x\cdot\z_1+\xi\cdot\z_2)}dxd\xi
		\\
		= &
		\int_{\rdd}F(x,\xi)\psi(x-z_1)e^{-2\pi i(x\cdot\z_1+\xi\cdot\z_2)}dxd\xi
		\\
		= &
		\scrF_{\xi}(V_{\psi}F(\cdot,\xi)(z_1,\z_1))(\z_2).
	\end{split}
	\ee 
	From this, we obtain that
	\be
	\begin{split}
		\|F\|_{M^{1,1,\fy,\fy}(c_1)}
		\sim &
		\sup_{z_2}\|\scrV_{\Psi}F(z,\cdot)\|_{L^{1,1,\fy}_{\z_1,z_1,\z_2}}
		\\
		\geq &
		\sup_{|z_2|<\d}\|\scrV_{\Psi}F(z,\cdot)\|_{L^{1,1,\fy}_{\z_1,z_1,\z_2}}=\|\scrF_{\xi}(V_{\psi}F(\cdot,\xi)(z_1,\z_1))(\z_2)\|_{L^{1,1,\fy}_{\z_1,z_1,\z_2}}.
	\end{split}
	\ee
	On the other hand, observe that for every $z_2$ we have
	\be
	|\scrV_{\Psi}F(z,\z)|
	\leq
	\int_{\rd}|\scrF_{\xi}(V_{\psi}F(\cdot,\xi)(z_1,\z_1))(\z_2-\eta)|\cdot |\scrF \psi(\eta)|d\eta.
	\ee
	The desired conclusion $\|F\|_{M^{1,1,\fy,\fy}(c_1)}
	\lesssim
	\|\scrF_{\xi}(V_{\psi}F(\cdot,\xi)(z_1,\z_1))(\z_2)\|_{L^{1,1,\fy}_{\z_1,z_1,\z_2}}$
	follows by taking the $L^{1,1,\fy,\fy}_{\z_1,z_1,\z_2,z_2}$ norm.
\end{proof}

\begin{proposition}
	[Time localization for $M^{1,\fy,\fy,\fy}_{1\otimes v_{d+\ep,0}}(c_1)$]\label{pp-tlp-ws2}
	The function space $M^{1,\fy,\fy,\fy}_{1\otimes v_{d+\ep,0}}(c_1)$ has the time localization property as follows:
	\be
	\|F(x,\xi)\|_{M^{1,\fy,\fy,\fy}_{1\otimes v_{d+\ep,0}}(c_1)}
	\sim
	\sup_{k,l}\|\eta_{k,l}F\|_{M^{1,\fy,\fy,\fy}_{1\otimes v_{d+\ep,0}}(c_1)}
	\sim
	\sup_{k,l}\|\eta_{k,l}F\|_{\scrF L^{1,\fy}_{1\otimes v_{d+\ep}}}.
	\ee
\end{proposition}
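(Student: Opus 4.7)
The plan is to model the argument on the proof of Proposition \ref{pp-tlp-ws3} (for $M^{1,1,\fy,\fy}(c_1)$), splitting the work into the middle equivalence $\|F\|\sim\sup_{k,l}\|\eta_{k,l}F\|$ in the mixed-modulation norm and the right-hand equivalence $\sup_{k,l}\|\eta_{k,l}F\|_{M^{1,\fy,\fy,\fy}_{1\otimes v_{d+\ep,0}}(c_1)}\sim\sup_{k,l}\|\eta_{k,l}F\|_{\scrF L^{1,\fy}_{1\otimes v_{d+\ep}}}$. Throughout, unfolding the permutation $c_1$ reveals the norm to be $\sup_{u,v,\beta}\lan\beta\ran^{d+\ep}\|\scrV_\Phi F(u,v,\cdot,\beta)\|_{L^1_\alpha}$, where $(u,v)$ are the time shifts and $(\alpha,\beta)$ the dual frequencies of $F(x,\xi)$; this aligns the weight $\lan\beta\ran^{d+\ep}$ with the $1\otimes v_{d+\ep}$ weight in $\scrF L^{1,\fy}_{1\otimes v_{d+\ep}}$.

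For the bound $\sup_{k,l}\|\eta_{k,l}F\|\lesssim\|F\|$ (both in the $M^{1,\fy,\fy,\fy}_{1\otimes v_{d+\ep,0}}(c_1)$ norm), I would split the window as $\Phi=\Phi_1\Phi_2$ with $\Phi_1,\Phi_2\in\calS(\rdd)$ and use the product STFT identity from the proof of Proposition \ref{pp-agp-W},
$\scrV_\Phi(\eta_{k,l}F)(z,\zeta)=\int\scrV_{\Phi_1}\eta_{k,l}(z,\zeta-\eta')\,\scrV_{\Phi_2}F(z,\eta')\,d\eta'$, and then apply a weighted Young-type convolution inequality in $L^1_\alpha L^\fy_\beta$ with weight $\lan\beta\ran^{d+\ep}$, taking a pointwise sup in $(u,v)$. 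Translation invariance of $\eta_{k,l}$ gives $\sup_{k,l}\|\eta_{k,l}\|_Y=\|\eta\|_Y$ in any translation-invariant auxiliary space $Y$, so the convolution kernel contributes only a constant independent of $(k,l)$.

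For the reverse inequality $\|F\|\lesssim\sup_{k,l}\|\eta_{k,l}F\|$, I would replace the Schwartz window by a compactly supported $\Psi\in C_c^\fy(\rdd)$ (using window independence of the mixed-modulation norm) and decompose $F=\sum_{k',l'}\eta_{k',l'}F$, so that $\scrV_\Psi F(u,v,\cdot,\cdot)=\sum_{k',l'}\scrV_\Psi(\eta_{k',l'}F)(u,v,\cdot,\cdot)$. The compact supports of $\Psi$ and of the $\eta_{k',l'}$ ensure that for each fixed $(u,v)$ only a uniformly bounded number of terms are nonzero; the triangle inequality followed by $\sup_{u,v}$ yields the bound with a dimensional constant, exactly as in the proof of Proposition \ref{pp-tlp-ws3}.

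For the identification with the Fourier norm on compactly supported pieces, I exploit the compact support of $\eta_{k,l}F$: choose $\Psi\in C_c^\fy(\rdd)$ with $\Psi\equiv 1$ on a neighborhood of $\text{supp}\,\eta$. Then for $(u,v)$ in a small ball around $(k,l)$ one has $T_{(u,v)}\Psi\equiv 1$ on $\text{supp}\,\eta_{k,l}$, so the clean identity $\scrV_\Psi(\eta_{k,l}F)(u,v,\alpha,\beta)=\widehat{\eta_{k,l}F}(\alpha,\beta)$ gives the lower bound. For the upper bound, for arbitrary $(u,v)$ one writes $\scrV_\Psi(\eta_{k,l}F)(u,v,\cdot,\cdot)=\widehat{\eta_{k,l}F}\ast\widehat{T_{(u,v)}\Psi}$ and applies a weighted Young inequality in $L^1_\alpha L^\fy_\beta$ against the Schwartz kernel $\hat\Psi$. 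The main obstacle I anticipate is the careful tracking of the weight placement through the permutation $c_1$ and verifying the weighted convolution estimate $L^{1,\fy}_{1\otimes v_{d+\ep}}\ast L^{1,1}_{1\otimes v_{d+\ep}}\subset L^{1,\fy}_{1\otimes v_{d+\ep}}$ uniformly in $(u,v)$ and $(k,l)$, together with confirming that the compactly supported bump $\Psi$ is admissible as a window for all the spaces involved.
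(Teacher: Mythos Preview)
Your proposal is correct and follows essentially the same approach as the paper's own proof: both prove the middle equivalence via the STFT convolution identity together with a weighted Young/convolution inequality in $L^{1,\fy}_{1\otimes v_{d+\ep}}$ (the paper phrases this as ``$L^{1,\fy}_{1\otimes v_{d+\ep}}$ is a convolution algebra''), handle the reverse direction by the finite-overlap decomposition with a $C_c^{\fy}$ window, and establish the Fourier-norm identification on the compactly supported pieces $\eta_{k,l}F$ exactly as you outline. The only cosmetic difference is that the paper uses the simpler pointwise estimate $|\scrV_{\Psi}(\eta_{k,l}F)(z,\z)|\lesssim\int|\scrV_{\Psi}F(z,\z-\eta)|\,|\scrF\eta_{k,l}(\eta)|\,d\eta$ rather than splitting the window as $\Phi_1\Phi_2$, and appeals directly to the convolution-algebra property of $L^{1,\fy}_{1\otimes v_{d+\ep}}$ rather than your $L^{1,\fy}\ast L^{1,1}$ variant---both routes are valid and essentially identical.
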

\begin{proof}
	Take the real-valued window function $\Psi\in C_c^{\fy}(\rdd)$ supported on $B(0,\d)$.
	A direct calculation yields that
	\be
	\begin{split}
		|\scrV_{\Psi}(\eta_{k,l}F)(z,\z)|
		\lesssim
		\int_{\rdd}|\scrV_{\Psi}F(z,\z-\eta)|\cdot |(\scrF \eta_{k,l})(\eta)|d\eta.
	\end{split}
    \ee
    From this and the fact that $L^{1,\fy}_{1\otimes v_{d+\ep}}$ is a convolution algebra, we conclude that
    \be
    \|\scrV_{\Psi}(\eta_{k,l}F)(z,\cdot )\|_{L^{1,\fy}_{1\otimes v_{d+\ep}}}
    \lesssim
    \|\scrV_{\Psi}F(z,\cdot )\|_{L^{1,\fy}_{1\otimes v_{d+\ep}}}
    \|\scrF \eta_{k,l}\|_{L^{1,\fy}_{1\otimes v_{d+\ep}}},
	\ee
	this implies the desired estimate
	\be
	\begin{split}
	\|\eta_{k,l}F\|_{M^{1,\fy,\fy,\fy}_{1\otimes v_{d+\ep,0}}(c_1)}
	\sim &
    \sup_z \|\scrV_{\Psi}(\eta_{k,l}F)(z,\cdot )\|_{L^{1,\fy}_{1\otimes v_{d+\ep}}}
    \\
    \lesssim &
    \sup_z \|\scrV_{\Psi}F(z,\cdot )\|_{L^{1,\fy}_{1\otimes v_{d+\ep}}}
    \sim
    \|F\|_{M^{1,\fy,\fy,\fy}_{1\otimes v_{d+\ep,0}}(c_1)}.
	\end{split}
	\ee
	On the other hand, for any fixed $z\in \rdd$ we have
	\be
	\scrV_{\Psi}F(z,\z)=\sum_{k,l}\scrV_{\Psi}(\eta_{k,l}F)(z,\z),
	\ee
	and
	\be
	\|\scrV_{\Psi}F(z,\cdot)\|_{L^{1,\fy}_{1\otimes v_{d+\ep}}}\leq \sum_{k,l}\|\scrV_{\Psi}(\eta_{k,l}F)(z,\cdot)\|_{L^{1,\fy}_{1\otimes v_{d+\ep}}},
	\ee	
	where the summation only have finite nonzero terms.
	From this, we further obtain the desired conclusion
	\be
	\begin{split}
	\|\scrV_{\Psi}F(z,\cdot)\|_{L^{1,\fy}_{1\otimes v_{d+\ep}}}
	\lesssim &
	\sup_{k,l}\|\scrV_{\Psi}(\eta_{k,l}F)(z,\cdot)\|_{L^{1,\fy}_{1\otimes v_{d+\ep}}}
	\\
	\leq &
	\sup_{k,l}\sup_z\|\scrV_{\Psi}(\eta_{k,l}F)(z,\cdot)\|_{L^{1,\fy}_{1\otimes v_{d+\ep}}}
	\sim
    \sup_{k,l}\|\eta_{k,l}F\|_{M^{1,\fy,\fy,\fy}_{1\otimes v_{d+\ep,0}}(c_1)}.	
   \end{split}
	\ee
	Next, we only need to verify that
	\be
	\|F\|_{M^{1,\fy,\fy,\fy}_{1\otimes v_{d+\ep,0}}(c_1)}
	\sim
	\|F\|_{\scrF L^{1,\fy}_{1\otimes v_{d+\ep}}},\ \ \ \    \text{supp}F\subset B(0,R).
	\ee
	Taking $\Psi\in C_c^{\fy}(\rdd)$ such that $F\cdot T_{z}\Psi=F$ for $|z|<\d$, we find that
	\be
	\scrV_{\Psi}F(z,\z)=(\scrF F)(\z),\ \ \   |z|<\d.
	\ee 
	From this, we obtain that
	\be
	\begin{split}
		\|F\|_{M^{1,\fy,\fy,\fy}_{1\otimes v_{d+\ep,0}}(c_1)}
		\sim &
		\sup_z\|\scrV_{\Psi}F(z,\cdot)\|_{L^{1,\fy}_{1\otimes v_{d+\ep}}}
		\\
		\geq &
		\sup_{|z|<\d}\|\scrV_{\Psi}F(z,\cdot)\|_{L^{1,\fy}_{1\otimes v_{d+\ep}}}=\|F\|_{\scrF L^{1,\fy}_{1\otimes v_{d+\ep}}}.
	\end{split}
	\ee
	On the other hand, observe that
	\be
	|\scrV_{\Psi}F(z,\z)|
	\leq
	\int_{\rdd}|\scrF F(\z-\eta)|\cdot |\scrF \Psi(\eta)|d\eta.
	\ee
	The desired conclusion $\|F\|_{M^{1,\fy,\fy,\fy}_{1\otimes v_{d+\ep,0}}(c_1)}
	\lesssim
	\|F\|_{\scrF L^{1,\fy}_{1\otimes v_{d+\ep}}}$
	follows by using the convolution algebra property of $L^{1,\fy}_{1\otimes v_{d+\ep}}$.
\end{proof}

The following lemma is used to transform weighted positions, as referenced in \cite[Proposition 2.8]{CorderoNicola2010JFAA}.
\begin{lemma}
Let $\Phi$ be a real-valued function on $\rdd$. We have
\[
\sigma \in M_{v_{s_1}, v_{s_2}}^{\infty, 1} \bigl( \mathbb{R}^{2d} \bigr) 
\iff 
v_{s_1,s_2}\s \in M^{\infty, 1} \bigl( \mathbb{R}^{2d} \bigr).
\]
Moreover, the following statements are equivalent:
\begin{enumerate}
    \item \( T_{\s,\Phi} \) is bounded from \( M^{p_1, q_1} \) into \( M^{p_2,q_2}\),
    \item \(T_{v_{s_1,s_2}\s}\) is bounded from \( M_{v_{0, s_2}}^{p_1, q_1} \) into \( M_{v_{-s_1, 0}}^{p_2,q_2} \).
\end{enumerate}
\end{lemma}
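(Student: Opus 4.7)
The lemma is essentially a bookkeeping result: both equivalences are driven by the moderateness of polynomial weights, which lets one trade a multiplicative weight on $\sigma$ (or on $f$) for an intrinsic weight in the modulation-space norm.

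First I would establish the symbol-level equivalence. Fix a Schwartz window $G$ on $\rdd$ and use the pointwise moderateness inequality
\[
v_{s_1,s_2}(z)\leq v_{s_1,s_2}(Z)\,v_{|s_1|,|s_2|}(z-Z),\qquad z,Z\in\rdd,
\]
to absorb the shift factor into a new Schwartz window $G^{\prime}=v_{|s_1|,|s_2|}G$. A direct STFT computation then gives
\[
|\scrV_{G}(v_{s_1,s_2}\sigma)(Z,\Zeta)|\lesssim v_{s_1,s_2}(Z)\,|\scrV_{G^{\prime}}\sigma(Z,\Zeta)|,
\]
with the reverse bound obtained by replacing $\sigma$ by $v_{s_1,s_2}\sigma$ and $(s_1,s_2)$ by $(-s_1,-s_2)$. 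Taking the $L^{\infty,1}_{Z,\Zeta}$ norm and invoking window-independence of the $M^{\infty,1}$ norm produces
\[
\|v_{s_1,s_2}\sigma\|_{M^{\infty,1}(\rdd)}\sim \|\sigma\|_{M^{\infty,1}_{v_{s_1,s_2}\otimes 1}(\rdd)},
\]
which is exactly the asserted symbol equivalence.

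Next I would prove the operator equivalence by factoring the FIO. Writing $\langle\xi\rangle^{s_2}\hat{f}(\xi)=\widehat{\langle D\rangle^{s_2}f}(\xi)$ and splitting the $x$- and $\xi$-weights inside $v_{s_1,s_2}(x,\xi)$, one obtains immediately
\[
T_{v_{s_1,s_2}\sigma,\Phi}f(x)=\langle x\rangle^{s_1}\,T_{\sigma,\Phi}(\langle D\rangle^{s_2}f)(x).
\]
Applying the moderate-weight argument of the first step at the $\rd$ level shows that pointwise multiplication $M_{\langle x\rangle^{s_1}}:M^{p_2,q_2}\to M^{p_2,q_2}_{v_{-s_1,0}}$ is an isomorphism; combining the same argument with the fundamental identity $V_{g}f(x,\xi)=e^{-2\pi i x\cdot\xi}V_{\hat g}\hat f(\xi,-x)$ transfers the analogous conclusion to the frequency side, giving that $\langle D\rangle^{s_2}:M^{p_1,q_1}_{v_{0,s_2}}\to M^{p_1,q_1}$ is also an isomorphism. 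The equivalence of the two boundedness statements follows at once from the factorization $T_{v_{s_1,s_2}\sigma,\Phi}=M_{\langle x\rangle^{s_1}}\circ T_{\sigma,\Phi}\circ\langle D\rangle^{s_2}$.

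The only mild obstacle is keeping track of signs: since $s_1,s_2$ may be negative, one must carry out the moderate-weight estimate in both directions, which forces the use of the companion weight $v_{-s_1,-s_2}$ and the insertion of a second Schwartz window. No genuine analytic difficulty is expected beyond this; the lemma is purely a change-of-weight identity that allows the proofs of Theorems \ref{thm-FIO-Mpq}, \ref{thm-FIO-Mpq-nsp}, and \ref{thm-FIO-Mp-12hg} to be reduced to an $\mpq$-to-weighted-$\mpq$ statement for the rescaled symbol $v_{s_1,s_2}\sigma\in\msj$.
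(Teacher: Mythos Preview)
Your overall strategy is correct and matches the standard approach (the paper itself does not give a proof here; it simply cites \cite[Proposition 2.8]{CorderoNicola2010JFAA}, and the companion ``Potential lifting'' lemma is likewise referred to \cite[Theorem 2.2]{Toft2004AGAG}). The factorization
\[
T_{v_{s_1,s_2}\sigma,\Phi}=M_{\langle x\rangle^{s_1}}\circ T_{\sigma,\Phi}\circ\langle D\rangle^{s_2}
\]
together with the lifting isomorphisms $M_{\langle x\rangle^{s_1}}:M^{p_2,q_2}\to M^{p_2,q_2}_{v_{-s_1,0}}$ and $\langle D\rangle^{s_2}:M^{p_1,q_1}_{v_{0,s_2}}\to M^{p_1,q_1}$ is exactly the right mechanism for the operator equivalence, and your use of the fundamental identity to pass to the frequency side is fine.

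There is, however, a genuine gap in your first step. The displayed pointwise inequality
\[
|\scrV_{G}(v_{s_1,s_2}\sigma)(Z,\zeta)|\lesssim v_{s_1,s_2}(Z)\,|\scrV_{G'}\sigma(Z,\zeta)|
\]
with the \emph{fixed} window $G'=v_{|s_1|,|s_2|}G$ is not correct: moderateness only gives $v_{s_1,s_2}(z)|G(z-Z)|\le v_{s_1,s_2}(Z)|G'(z-Z)|$, which bounds the modulus of the integrand and destroys the oscillation $e^{-2\pi i z\cdot\zeta}$. What you actually obtain this way is a bound independent of $\zeta$, useless for the $L^{\infty,1}$ norm. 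The clean fix is to use the STFT product inequality
\[
|\scrV_{G_1G_2}(v_{s_1,s_2}\sigma)(Z,\zeta)|\le \bigl(|\scrV_{G_1}v_{s_1,s_2}(Z,\cdot)|\ast|\scrV_{G_2}\sigma(Z,\cdot)|\bigr)(\zeta),
\]
together with the elementary estimate $|\scrV_{G_1}v_{s_1,s_2}(Z,\zeta)|\lesssim v_{s_1,s_2}(Z)\langle\zeta\rangle^{-N}$ (valid because $v_{s_1,s_2}$ has symbol-type derivative bounds), and then apply Young's inequality in $\zeta$. This yields $\|v_{s_1,s_2}\sigma\|_{M^{\infty,1}}\lesssim\|\sigma\|_{M^{\infty,1}_{v_{s_1,s_2}\otimes 1}}$ directly; the reverse follows by replacing $(s_1,s_2)$ with $(-s_1,-s_2)$ exactly as you say. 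Alternatively, you may keep your window-absorption idea but acknowledge that the resulting window $H_Z(w)=\frac{v_{s_1,s_2}(Z+w)}{v_{s_1,s_2}(Z)}G(w)$ depends on $Z$, and argue via the uniform Schwartz bounds on $\{H_Z\}$; either route closes the gap.
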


We also need the following potential lifting lemma for mixed modulation spaces.
\begin{lemma}[Potential lifting]
    Let $1\leq p_1, p_2, q_1, q_2\leq \fy$, $m\in \scrP(\rddd)$.  
    Let $c$ be a permutation.
    For $v_{s_1,s_2}(x,\xi)$, the mapping $F\mapsto v_{s_1,s_2}F$ is a topological isomorphism from $M_{m(v_{s_1,s_2}\otimes 1)\circ c}^{p_1,p_1,q_1,q_2}(c)$ to $M_m^{p_1,p_2,q_1,q_2}(c)$ with the following estimate:
    \begin{align*}
         \|v_{s_1,s_2}F\|_{M^{p_1,p_2,q_1,q_2}_m(c)}
        \sim & 
        \|(v_{s_1,s_2}\otimes 1)\scrV_{\Phi_2}(F)\|_{L^{p_1,p_2,q_1,q_2}_m(c)}
        \\
        \sim &
        \|\scrV_{\Phi_2}F\|_{L^{p_1,p_2,q_1,q_2}_{m(v_{s_1,s_2}\otimes 1)\circ c}(c)}    
        \sim
        \|F\|_{M^{p_1,p_2,q_1,q_2}_{m(v_{s_1,s_2}\otimes 1)\circ c}(c)}.
    \end{align*}
\end{lemma}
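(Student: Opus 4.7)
I would split the equivalence chain into its three $\sim$ relations and tackle each in turn, putting the substance into the first one. For the second $\sim$, I simply unpack the weighted mixed Lebesgue norm: since $(v_{s_1,s_2}\otimes 1)(z,\zeta)=v_{s_1,s_2}(z_1,z_2)$ is a positive measurable weight on $\rddd$ depending only on the position coordinates, it can be absorbed directly into the weight $m$ to yield the composite weight $m\cdot((v_{s_1,s_2}\otimes 1)\circ c)$; this is a formal rewriting, and the permutation $c$ is compatible because $(v_{s_1,s_2}\otimes 1)$ is invariant under permuting the frequency coordinates. For the third $\sim$, I invoke Definition \ref{def-MM} with the new weight $m(v_{s_1,s_2}\otimes 1)\circ c$, the choice of Schwartz window being immaterial by Bishop's window-independence for mixed modulation spaces with moderate weights.

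The content of the lemma lies in the first $\sim$. The central tool is the moderate weight property of $v_{s_1,s_2}$: for any $s_1,s_2\in\rr$ one has the two-sided inequalities
$$v_{s_1,s_2}(a)\lesssim v_{s_1,s_2}(b)\,v_{|s_1|,|s_2|}(a-b),\qquad v_{s_1,s_2}(b)\lesssim v_{s_1,s_2}(a)\,v_{|s_1|,|s_2|}(a-b),\qquad a,b\in\rdd.$$
Applying this inside
$$\scrV_{\Phi_2}(v_{s_1,s_2}F)(z,\zeta)=\int_{\rdd}v_{s_1,s_2}(y)\,F(y)\,\overline{\Phi_2(y-z)}\,e^{-2\pi i y\cdot \zeta}\,dy,$$
I factor out $v_{s_1,s_2}(z_1,z_2)$ and absorb the remainder $v_{|s_1|,|s_2|}(y-z)$ into the modified Schwartz window $\widetilde{\Phi}_2:=v_{|s_1|,|s_2|}\,|\Phi_2|$ (choosing $\Phi_2\ge 0$ from the start, which is legitimate since the norm is window-independent). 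Together with the standard STFT reproducing formula, this produces the two-sided comparison
$$|\scrV_{\Phi_2}(v_{s_1,s_2}F)(z,\zeta)|\;\sim\;v_{s_1,s_2}(z_1,z_2)\,|\scrV_{\widetilde{\Phi}_2}F(z,\zeta)|$$
at the level of convolution with a rapidly decaying TF-plane kernel. Taking $L^{p_1,p_2,q_1,q_2}_m(c)$ norms on both sides and invoking window-independence to return to the canonical window completes the first equivalence.

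The point that requires care is the loss of the oscillation factor $e^{-2\pi i y\cdot\zeta}$ when the moderate weight inequality is applied pointwise to the integrand: a naive bound yields only an $L^1_y$-type estimate, not a genuine pointwise bound on the STFT. This is circumvented by passing through the STFT reproducing formula, so that the window change $\Phi_2\mapsto\widetilde{\Phi}_2$ becomes a convolution on $\rddd$ with a Schwartz kernel. The convolution estimate $L^{p_1,p_2,q_1,q_2}_m\ast L^1_{v_{|s_1|,|s_2|}\otimes 1}\subset L^{p_1,p_2,q_1,q_2}_m$, valid since $m\in\scrP(\rddd)$ is moderate and since the permutation $c$ only reorders the iterated norms, then guarantees that the transfer of the weight $v_{s_1,s_2}$ between $F$ and the window leaves all four mixed Lebesgue norms equivalent. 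This is the main technical obstacle, and once it is dispatched, the chain of $\sim$ signs and the topological isomorphism statement follow immediately.
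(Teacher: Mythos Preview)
The paper gives no proof of this lemma; it simply writes ``This lemma can be verified by a similar argument as in [Toft 2004, Theorem 2.2], we omit the details here.'' Your outline is precisely the standard argument that reference points to: exploit the moderateness of $v_{s_1,s_2}$, pass the weight from the function to the window up to a convolution on the time--frequency plane with a rapidly decaying kernel, and conclude via the Young-type inequality $L^{p_1,p_2,q_1,q_2}_m * L^1_{v} \subset L^{p_1,p_2,q_1,q_2}_m$ for moderate $m$. So your approach matches the intended one.

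One small sharpening: the cleanest way to make rigorous the step you flag (the oscillation obstruction) is not the reproducing formula per se but the STFT product identity
\[
\scrV_{\Phi_1\Phi_2}(v_{s_1,s_2}F)(z,\zeta)=\int_{\rdd}\scrV_{\Phi_1}F(z,\zeta-\eta)\,\scrV_{\Phi_2}(v_{s_1,s_2})(z,\eta)\,d\eta,
\]
together with the elementary bound $|\scrV_{\Phi_2}(v_{s_1,s_2})(z,\eta)|\lesssim v_{s_1,s_2}(z)\langle\eta\rangle^{-N}$ (integration by parts, using that all derivatives of $v_{s_1,s_2}$ are dominated by $v_{s_1,s_2}$ itself). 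This gives one inequality directly; the reverse follows by writing $F=v_{-s_1,-s_2}\cdot(v_{s_1,s_2}F)$ and repeating. This is equivalent to what you wrote but avoids the slightly awkward intermediate ``$\sim$'' with the modified window $\widetilde\Phi_2$.
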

This lemma can be verified by a similar argument as in \cite[Theorem 2.2]{Toft2004AGAG}, we omit the details here.

\subsection{A revisit on the kernel theorems on modulation spaces}
It is well known that kernel theorems are highly effective in studying the boundedness of
integral operators on modulation spaces.
It can convert the investigation of boundedness into the norm estimation of the distributional kernel.
In the realm of time-frequency analysis, Feichtinger's kernel theorem for linear operators on $\calL(M^1(\rd), M^{\fy}(\rd))$ was first presented in \cite{Feichtinger1980CRHdSdldSSA} and proved in \cite{FeichtingerGroechenig1997JoFA}.
In 2019, Cordero and Nicola \cite{CorderoNicola2019JFAA} 
proposed a generalized kernel theorem related to linear operators belonging to $\calL(M^1(\rd),M^p(\rd))$ and $\calL(M^p(\rd),\mfd)$ respectively.
Based on \cite[Corollary 3.4 and Theorem 3.6]{CorderoNicola2019JFAA} and the analysis of working spaces established in Subsection 2.2, we derive a corresponding kernel theorem for FIOs on $M^1(\rd)$ and $M^{\fy}(\rd)$.
This theorem is more suitable for our context and holds independent significance.

\begin{proposition}[FIO on $M^1$ and $\mf$]\label{pp-KFIO-Mi-Mf}
	Let $\Phi$ be a real-valued function on $\rdd$. Denote $\tilde{\Phi}(x,\xi)=\Phi(\xi,x)$.
	The following statements are equivalent:
	\bn
	\item
	For any $\s\in M^{\fy,1}_{v_{s_1,s_2}\otimes 1}$, we have $T_{\s,\Phi}\in \calL(\mfid)$,
	\item
	$\|T_{\s,\Phi}\|_{\calL(\mfid)}\lesssim \|\s\|_{M^{\fy,1}_{v_{s_1,s_2}\otimes 1}}$ for any $\s\in M^{\fy,1}_{v_{s_1,s_2}\otimes 1}$,
	\item
	$\|\s e^{2\pi i\Phi}\|_{M^{1,1,\fy,\fy}(c_1)}\lesssim \|\s\|_{M^{\fy,1}_{v_{s_1,s_2}\otimes 1}}$ for any $\s\in M^{\fy,1}_{v_{s_1,s_2}\otimes 1}$,
	\item
	$v_{s_1,s_2}^{-1}e^{2\pi i\Phi}\in M^{1,1,\fy,\fy}(c_1)$,
	\item
	$T_{v_{s_1,s_2}^{-1},\Phi}\in \calL(\mfid)$,
	\item
	$T_{v_{s_2,s_1}^{-1},\tilde{\Phi}}\in \calL(\mfd)$,
	\item
	$v_{s_2,s_1}^{-1}e^{2\pi i\tilde{\Phi}}\in M^{1,1,\fy,\fy}(c_2)$,
	\item
	$\|\s e^{2\pi i\tilde{\Phi}}\|_{M^{1,1,\fy,\fy}(c_2)}\lesssim \|\s\|_{M^{\fy,1}_{v_{s_2,s_1}\otimes 1}}$ for any $\s\in M^{\fy,1}_{v_{s_2,s_1}\otimes 1}$,
	\item
	$\|T_{\s,\tilde{\Phi}}\|_{\calL(\mfd)}\lesssim \|\s\|_{M^{\fy,1}_{v_{s_2,s_1}\otimes 1}}$ for any $\s\in M^{\fy,1}_{v_{s_2,s_1}\otimes 1}$,
	\item
	For any $\s\in M^{\fy,1}_{v_{s_2,s_1}\otimes 1}$, we have $T_{\s,\tilde{\Phi}}\in \calL(\mfd)$.
	\en
Moreover, if any of the above statements holds, the following norm estimates are valid:
\begin{equation*}
    \|T_{\s,\Phi}\|_{\calL(M^1(\rd))}\sim \|\s e^{2\pi i\Phi}\|_{M^{1,1,\fy,\fy}(c_1)},\ \ \ 
    \|T_{\s,\tilde{\Phi}}\|_{\calL(\mfd)}\sim \|\s e^{2\pi i\tilde{\Phi}}\|_{M^{1,1,\fy,\fy}(c_2)}.
\end{equation*}

\end{proposition}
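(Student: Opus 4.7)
The plan is to establish a cycle of implications connecting all ten statements, pivoting on two applications of the Cordero--Nicola kernel theorem (one for $\calL(\mfid)$ and one for $\calL(\mfd)$), bridged by the variable-swap symmetry in Lemma \ref{lm-rlc}. Explicitly, I would prove the chain $(1)\Leftrightarrow(2)\Leftrightarrow(3)\Leftrightarrow(4)\Leftrightarrow(5)$, then $(4)\Leftrightarrow(7)$ as a bridge, and finally a mirror chain $(6)\Leftrightarrow(7)\Leftrightarrow(8)\Leftrightarrow(9)\Leftrightarrow(10)$.

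First I would handle $(1)\Leftrightarrow(2)$ by the closed graph theorem applied to the linear map $\s\mapsto T_{\s,\Phi}$ from $M^{\fy,1}_{v_{s_1,s_2}\otimes 1}$ into $\calL(\mfid)$, which is everywhere defined by $(1)$ and hence automatically bounded. For $(2)\Leftrightarrow(3)$, the Cordero--Nicola kernel theorem characterizes boundedness on $\mfid$ via the distribution kernel $K(x,y)=\scrF_{\xi\to y}^{-1}[\s(x,\xi)e^{2\pi i\Phi(x,\xi)}](y)$. Using the fundamental identity $V_gf(x,\xi)=e^{-2\pi ix\cdot\xi}V_{\hat g}\hat f(\xi,-x)$ to transport the STFT of $K$ back to that of $\s e^{2\pi i\Phi}$, the resulting mixed-norm indices and variable ordering are exactly those encoded by the permutation $c_1$; this yields the quantitative equivalence $\|T_{\s,\Phi}\|_{\calL(\mfid)}\sim\|\s e^{2\pi i\Phi}\|_{M^{1,1,\fy,\fy}(c_1)}$.

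The equivalence $(3)\Leftrightarrow(4)\Leftrightarrow(5)$ is handled by Lemma \ref{lm-pd-c12} together with the potential-lifting lemma. For $(3)\Rightarrow(4)$, take $\s=v_{s_1,s_2}^{-1}$ in $(3)$; since $v_{s_1,s_2}\s=1\in M^{\fy,1}$, the lifting isomorphism gives $\s\in M^{\fy,1}_{v_{s_1,s_2}\otimes 1}$, and $(3)$ delivers $(4)$. For $(4)\Rightarrow(3)$, factor $\s e^{2\pi i\Phi}=(v_{s_1,s_2}\s)\cdot(v_{s_1,s_2}^{-1}e^{2\pi i\Phi})$: the second factor lies in $M^{1,1,\fy,\fy}(c_1)$ by $(4)$, the first in $M^{\fy,1}$ by potential lifting, and Lemma \ref{lm-pd-c12} produces the product estimate with the correct norm dependence. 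The equivalence $(4)\Leftrightarrow(5)$ is the kernel theorem specialized to the symbol $v_{s_1,s_2}^{-1}$. The bridge $(4)\Leftrightarrow(7)$ is immediate from Lemma \ref{lm-rlc}: writing $F=v_{s_1,s_2}^{-1}e^{2\pi i\Phi}$ and using the identity $v_{s_1,s_2}(\xi,x)=v_{s_2,s_1}(x,\xi)$, the swap $\tilde F(x,\xi)=F(\xi,x)=v_{s_2,s_1}^{-1}(x,\xi)e^{2\pi i\tilde\Phi(x,\xi)}$ converts $M^{1,1,\fy,\fy}(c_1)$ membership of $F$ into $M^{1,1,\fy,\fy}(c_2)$ membership of $\tilde F$. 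The mirror chain $(6)\Leftrightarrow(7)\Leftrightarrow(8)\Leftrightarrow(9)\Leftrightarrow(10)$ is then obtained by repeating the previous arguments verbatim with $\Phi\to\tilde\Phi$, $(s_1,s_2)\to(s_2,s_1)$, $c_1\to c_2$, and $\mfid\to\mfd$; the Cordero--Nicola kernel theorem for $\calL(\mfd)$ supplies the analogue of $(2)\Leftrightarrow(3)$ (note that $\calS$ need not be dense in $\mfd$, so this part is understood on Schwartz functions, matching the convention of the paper).

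The main obstacle is the quantitative identification in $(2)\Leftrightarrow(3)$: translating the abstract kernel-space norm given by Cordero--Nicola into $\|\s e^{2\pi i\Phi}\|_{M^{1,1,\fy,\fy}(c_1)}$ with precisely the permutation $c_1$. This requires careful bookkeeping of the interaction of the partial Fourier transform $\scrF_2$ with the STFT on $\rdd$, confirming that the swap of spatial and frequency coordinates produced by $\scrF_2^{-1}$ is exactly the permutation built into the definition of $M^{1,1,\fy,\fy}(c_1)$; the corresponding statement for $(9)\Leftrightarrow(10)$ then identifies $c_2$ as the correct permutation for $\mfd$. Once this transport is in place, the remainder of the proof is a mechanical cascade of the product inequality and symmetry lemma, and the final norm estimates fall out of the constants tracked along each step.
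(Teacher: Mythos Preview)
Your proposal is correct and follows essentially the same route as the paper: closed graph theorem for $(1)\Leftrightarrow(2)$ and $(9)\Leftrightarrow(10)$, the Cordero--Nicola kernel theorems together with the identification $K=\scrF_2(\s e^{2\pi i\Phi})$ for $(2)\Leftrightarrow(3)$, $(4)\Leftrightarrow(5)$, $(6)\Leftrightarrow(7)$, $(8)\Leftrightarrow(9)$, Lemma~\ref{lm-pd-c12} for $(3)\Leftrightarrow(4)$ and $(7)\Leftrightarrow(8)$, and Lemma~\ref{lm-rlc} for the bridge $(4)\Leftrightarrow(7)$. One small remark: in the closed-graph step you should not say the map is ``hence automatically bounded'' merely because it is everywhere defined --- the paper explicitly verifies closedness by checking that $\s_n\to\s$ in $M^{\infty,1}_{v_{s_1,s_2}\otimes 1}$ implies $\s_n e^{2\pi i\Phi}\to\s e^{2\pi i\Phi}$ in $\calS'$, which is the missing ingredient in your sketch.
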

\begin{proof}
	We define a map from $M^{\fy,1}_{v_{s_1,s_2}\otimes 1}$ to $\calL(\mfid)$ by
	\be
	P: \s\rightarrow T_{\s,\Phi}.
	\ee
	Given any sequence of pairs that $(\s_n, T_{\s_n,\Phi})$ tends to $(\s, T)$ in the topology of graph of
	$P$, for any $f,g \in \calS(\rd)$, we find
	\be
	\begin{split}
		\lan T_{\s,\Phi}f, g\ran
		= &
		\lan \s e^{2\pi i\Phi}, g\otimes \bar{\hat{f}}\ran
		\\
		= &
		\lim_{n\rightarrow \fy}\lan \s_n e^{2\pi i\Phi}, g\otimes \bar{\hat{f}}\ran
		=
		\lim_{n\rightarrow \fy}\lan T_{\s_n,\Phi}f, g \ran=\lan Tf, g\ran.
	\end{split}
	\ee
	Here, we use the fact that $\s_n\rightarrow \s$ in $M^{\fy,1}_{v_{s_1,s_2}\otimes 1}$ implies $\s_n e^{2\pi i\Phi}\rightarrow \s e^{2\pi i\Phi}$ in $\calS'$, which can be verified by
    \begin{equation*}
        \s_n\xrightarrow{M^{\fy,1}_{v_{s_1,s_2}\otimes 1}} \s
        \Longrightarrow
        \s_n\xrightarrow{L^{\fy}_{v_{s_1,s_2}}} \s
        \Longrightarrow
        \s_n e^{2\pi i\Phi}\xrightarrow{L^{\fy}_{v_{s_1,s_2}}} \s e^{2\pi i\Phi}
                \Longrightarrow
        \s_n e^{2\pi i\Phi}\xrightarrow{\calS'} \s e^{2\pi i\Phi}.
    \end{equation*}
	From this, we conclude that $T=T_{\s,\Phi}$. Thus, the operator $P$ is a closed operator.
	Hence, $P$ is bounded by the closed graph theorem. This completes the proof of $(1)\Longrightarrow (2)$.
	The opposite direction $(2)\Longrightarrow (1)$ is obvious.
	Using the same argument, on can also verify $(9)\Longleftrightarrow (10)$
	
	Next, the equivalent relations $(2)\Longleftrightarrow (3)$, $(4)\Longleftrightarrow (5)$, 
	$(6)\Longleftrightarrow (7)$ and $(8)\Longleftrightarrow (9)$
	follow by the kernel theorems for modulation spaces (see \cite[Corollary 3.4 and Theorem 3.6]{CorderoNicola2019JFAA})
    and the fact that the distributional kernel of $T_{\s,\Phi}$ is $K=\scrF_2(\s e^{2\pi i\Phi})$, 
	the equivalent relations $(4)\Longleftrightarrow (7)$ follows by Lemma \ref{lm-rlc},
	the equivalent relations $(3)\Longleftrightarrow (4)$ and $(7)\Longleftrightarrow (8)$ follows by Lemma \ref{lm-pd-c12} and the fact that both $v_{s_1,s_2}^{-1}e^{2\pi i\Phi}$ and $v_{s_2,s_1}^{-1}e^{2\pi i\tilde{\Phi}}$ are measurable functions with at most polynomial growth.
\end{proof}

\begin{remark}
	We point out that Proposition \ref{pp-KFIO-Mi-Mf} demonstrates, to some extent, 
    the independence of the Sj\"{o}strand symbol in the boundedness of FIO. Some interesting applications are listed below:
	\bn
	\item   
	By Proposition \ref{pp-KFIO-Mi-Mf} and the fact that $1\in S_{0,0}^0$, we observe that
	for a fixed phase function $\Phi$,
	  all boundedness results on $\mfi$ and $\mf$ of FIOs
	associated with the H\"{o}rmander class $S_{0,0}^0$ automatically extend to FIOs with $\msj$ symbols. 
	\item 
	When $\Phi(x,\xi)=x\cdot \xi$, the operator $T_{1,\Phi}$ reduces to the identity operator, and  $T_{\s,\Phi}=K_{\s}$ becomes the Kohn-Nirenberg-type pseudodifferential operator with
	symbol $\s$. 
    Applying Proposition \ref{pp-KFIO-Mi-Mf}, the well-known $M^p$ boundedness of $K_{\s}$ (see \cite{Sjoestrand1994MRL, Groechenig2006RMI} ) follows immediately from the trivial fact that the identity operator is bounded on $M^p$.
	\en
\end{remark}

\begin{proposition}[FIO on $M^{\fy,1}$]\label{pp-KFIO-Mfi}
	Let $\Phi$ be a real-valued function on $\rdd$.
    	The following statements are equivalent:
        \bn
    \item $\forall \s\in M^{\fy,1}_{v_{s_1,s_2}\otimes 1}$, we have $\s e^{2\pi i\Phi}\in M^{1,\fy,1,\fy}(c_3)$,
    \item $\forall \s\in M^{\fy,1}$, we have $\s v_{s_1,s_2}^{-1} e^{2\pi i\Phi}\in M^{1,\fy,1,\fy}(c_3)$,
    \item $v_{s_1,s_2}^{-1} e^{2\pi i\Phi}\in M^{1,\fy,1,\fy}(c_3)$.
        \en
    Furthermore, if one of the above holds, the following boundedness result is valid:
    \be
    \forall \s\in M^{\fy,1}_{v_{s_1,s_2}\otimes 1}\ \ \text{we have}\ \ T_{\s,\Phi}\in \calL(M^{\fy,1}),
    \ee
    with the norm estimate as follows:
    \begin{equation*}
        \|T_{\s,\Phi}\|_{\calL(M^{\fy,1})}\lesssim \|\s e^{2\pi i\Phi}\|_{M^{1,\fy,1,\fy}(c_3)}.
    \end{equation*}
\end{proposition}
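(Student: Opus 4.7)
The plan is to first cycle through the three equivalences by combining the potential lifting lemma with the product inequality from Lemma \ref{lm-pd-c3}, and then deduce the boundedness conclusion from the kernel-type theorem adapted to the permutation $c_3$.

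For $(1) \Leftrightarrow (2)$, I would use the potential lifting lemma to realize multiplication by $v_{s_1,s_2}$ as a topological isomorphism between $M^{\fy,1}_{v_{s_1,s_2}\otimes 1}(\rdd)$ and $M^{\fy,1}(\rdd)$. Writing $\tau = v_{s_1,s_2}\sigma$ and $\sigma e^{2\pi i\Phi} = v_{s_1,s_2}^{-1}\tau e^{2\pi i\Phi}$, statement (1) is exactly reformulated as statement (2) with $\tau$ ranging over $M^{\fy,1}(\rdd)$. For $(2) \Rightarrow (3)$, I would take $\sigma \equiv 1$, which belongs to $M^{\fy,1}(\rdd)$ since $|V_\phi 1|$ reduces to $|\widehat{\phi}|$ and hence has finite $L^{\fy,1}$ norm. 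For the reverse direction $(3) \Rightarrow (2)$, I would apply the product inequality in Lemma \ref{lm-pd-c3} to $f = v_{s_1,s_2}^{-1}e^{2\pi i\Phi} \in M^{1,\fy,1,\fy}(c_3)$ and $g = \sigma \in M^{\fy,1} = \msjdd$, which yields
\[
\|\sigma v_{s_1,s_2}^{-1}e^{2\pi i\Phi}\|_{M^{1,\fy,1,\fy}(c_3)} \lesssim \|v_{s_1,s_2}^{-1}e^{2\pi i\Phi}\|_{M^{1,\fy,1,\fy}(c_3)} \|\sigma\|_{\msjdd}.
\]

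For the boundedness conclusion, once $\sigma e^{2\pi i\Phi} \in M^{1,\fy,1,\fy}(c_3)$ is known, the distributional kernel $K = \scrF_2(\sigma e^{2\pi i\Phi})$ of $T_{\sigma,\Phi}$ lies in a mixed modulation space whose structure (through the permutation $c_3$) is matched to the $L^{\fy,1}$ exponent profile of the target space $M^{\fy,1}$. I would then invoke the relevant kernel-type theorem for $\calL(M^{\fy,1})$ from Cordero--Nicola \cite{CorderoNicola2019JFAA}, used in the same spirit as in the proof of Proposition \ref{pp-KFIO-Mi-Mf}, to obtain
\[
\|T_{\sigma,\Phi}\|_{\calL(M^{\fy,1})} \lesssim \|\sigma e^{2\pi i\Phi}\|_{M^{1,\fy,1,\fy}(c_3)}.
\]
Chaining this with the product inequality above recovers the operator-norm estimate in terms of $\|\sigma\|_{M^{\fy,1}_{v_{s_1,s_2}\otimes 1}}$ multiplied by a phase-only factor independent of the Sj\"ostrand symbol, consistent with the symbol-independence principle emphasized after Proposition \ref{pp-KFIO-Mi-Mf}.

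The main obstacle I anticipate lies in the boundedness step rather than in the equivalences: unlike the $M^1 \to \mf$ setting, where Feichtinger's kernel theorem provides a two-sided characterization, the $M^{\fy,1}$-boundedness via kernel conditions is only one-sided, so one must carefully pinpoint that $c_3$ is the correct permutation and verify that the norm on $M^{1,\fy,1,\fy}(c_3)$ genuinely controls $\|T_{\sigma,\Phi}\|_{\calL(M^{\fy,1})}$. This is precisely why the proposition states the boundedness as a one-sided conclusion with the $\lesssim$ estimate, rather than adding it to the list of equivalent conditions.
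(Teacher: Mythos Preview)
Your proposal is correct and follows essentially the same approach as the paper: the equivalences are obtained exactly via Lemma~\ref{lm-pd-c3} applied to $f=v_{s_1,s_2}^{-1}e^{2\pi i\Phi}$ (together with the potential lifting to pass between (1) and (2)), and the boundedness step is precisely \cite[Theorem~4.3~(i)]{CorderoNicola2019JFAA} combined with the identification of the distributional kernel as $K=\scrF_2(\sigma e^{2\pi i\Phi})$. Your observation that the kernel criterion for $\calL(M^{\fy,1})$ is only one-sided, and hence the boundedness statement is not included among the equivalences, is also exactly the reason the paper formulates it that way.
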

\begin{proof}
The equivalent relation $(1)\Longleftrightarrow (2)\Longleftrightarrow (3)$ follows by Lemma \ref{lm-pd-c3}.
The boundedness result follows by \cite[Theorem 4.3 (i)]{CorderoNicola2019JFAA} and the fact that
the distributional kernel of $T_{\s,\Phi}$ is $K=\scrF_2(\s e^{2\pi i\Phi})$.
\end{proof}

\begin{proposition}[FIO on $M^{1,\fy}$]\label{pp-KFIO-Mif}
	Let $\Phi$ be a real-valued function on $\rdd$.
    	The following statements are equivalent:
        \bn
    \item $\forall \s\in M^{\fy,1}_{v_{s_1,s_2}\otimes 1}$, we have $\s e^{2\pi i\Phi}\in M^{1,\fy,1,\fy}(c_4)$,
    \item $\forall \s\in M^{\fy,1}$, we have $\s v_{s_1,s_2}^{-1} e^{2\pi i\Phi}\in M^{1,\fy,1,\fy}(c_4)$,
    \item $v_{s_1,s_2}^{-1} e^{2\pi i\Phi}\in M^{1,\fy,1,\fy}(c_4)$.
        \en
    Furthermore, if one of the above holds, the following boundedness result is valid:
    \be
    \forall \s\in M^{\fy,1}_{v_{s_1,s_2}\otimes 1}\ \ \text{we have}\ \ T_{\s,\Phi}\in \calL(M^{1, \fy}),
    \ee
    with the norm estimate as follows:
    \begin{equation*}
        \|T_{\s,\Phi}\|_{\calL(M^{1,\fy})}\lesssim \|\s e^{2\pi i\Phi}\|_{M^{1,\fy,1,\fy}(c_4)}.
    \end{equation*}
\end{proposition}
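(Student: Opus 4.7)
The statement is the exact mirror of Proposition \ref{pp-KFIO-Mfi}, with the permutation $c_3$ (suited to kernels of operators in $\calL(M^{\fy,1})$) replaced by $c_4$ (suited to kernels of operators in $\calL(M^{1,\fy})$). The plan is therefore to copy the proof strategy of Proposition \ref{pp-KFIO-Mfi} line by line, swapping in the $c_4$-analogues of the technical lemmas that were established in Subsection 2.2.

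First I would settle the equivalence $(1)\Longleftrightarrow(2)\Longleftrightarrow(3)$. Lemma \ref{lm-pd-c4}, characterizing pointwise multipliers of $M^{1,\fy,1,\fy}(c_4)$, says that for a measurable function $f$ of at most polynomial growth, membership $f\in M^{1,\fy,1,\fy}(c_4)$ is equivalent to the product inequality $\|fg\|_{M^{1,\fy,1,\fy}(c_4)}\lesssim \|g\|_{\msjdd}$ for all $g\in \msjdd$. Applying this to $f=v_{s_1,s_2}^{-1}e^{2\pi i\Phi}$ (which is bounded and hence of at most polynomial growth), together with the potential lifting lemma that identifies $M^{\fy,1}_{v_{s_1,s_2}\otimes 1}$ with the image of $M^{\fy,1}$ under multiplication by $v_{s_1,s_2}$, directly yields the three-way equivalence; note that $(1)\Leftrightarrow(2)$ is just the substitution $\s=v_{s_1,s_2}\tilde\s$, and $(2)\Leftrightarrow(3)$ is Lemma \ref{lm-pd-c4}.

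Next I would derive the norm estimate on $\calL(M^{1,\fy})$. The distributional kernel of $T_{\s,\Phi}$ is $K=\scrF_2(\s e^{2\pi i\Phi})$. The $c_4$-analogue of the Cordero--Nicola kernel theorem (i.e.\ the counterpart of \cite[Theorem 4.3 (i)]{CorderoNicola2019JFAA} for $\calL(M^{1,\fy})$, which is \cite[Theorem 4.3 (ii)]{CorderoNicola2019JFAA} or its mirror) states that an operator with kernel $K$ extends boundedly on $M^{1,\fy}$ whenever $K\in M^{1,\fy,1,\fy}(c_4')$ for the appropriate permutation induced by partial Fourier transform. Since the partial Fourier transform $\scrF_2$ intertwines the permutation that describes the kernel of $T_{\s,\Phi}$ with $c_4$ acting on $\s e^{2\pi i\Phi}$ (this is exactly the transition already used in the proof of Proposition \ref{pp-KFIO-Mi-Mf}), we conclude
\begin{equation*}
\|T_{\s,\Phi}\|_{\calL(M^{1,\fy})}\lesssim \|\s e^{2\pi i\Phi}\|_{M^{1,\fy,1,\fy}(c_4)},
\end{equation*}
and combining with Lemma \ref{lm-pd-c4} gives the asserted conclusion: for any $\s\in M^{\fy,1}_{v_{s_1,s_2}\otimes 1}$, the operator $T_{\s,\Phi}$ is bounded on $M^{1,\fy}$.

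I expect the main (albeit mild) obstacle to be bookkeeping: one has to match the ordering of the variables in $c_4$ with the effect of the partial Fourier transform $\scrF_2$ on the symbol--phase product, so as to confirm that the $c_4$-version of the kernel theorem is indeed the correct one to invoke here. Once this matching is verified, the rest of the argument is entirely parallel to Proposition \ref{pp-KFIO-Mfi}, because both the product-algebra structure of $M^{1,\fy,1,\fy}(c_4)$ and its characterization as a multiplier space have already been recorded in Lemma \ref{lm-pd-c4}.
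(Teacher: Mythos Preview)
Your proposal is correct and follows essentially the same approach as the paper: the equivalence $(1)\Longleftrightarrow(2)\Longleftrightarrow(3)$ is obtained from Lemma \ref{lm-pd-c4} (with the substitution $\s\mapsto v_{s_1,s_2}\s$ handling $(1)\Leftrightarrow(2)$), and the boundedness on $M^{1,\fy}$ follows from \cite[Theorem 4.3 (ii)]{CorderoNicola2019JFAA} together with the identification of the distributional kernel as $K=\scrF_2(\s e^{2\pi i\Phi})$. The paper's own proof is in fact just these two sentences, so your added remarks about bookkeeping and the potential lifting lemma are extra detail rather than a different route.
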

\begin{proof}
The equivalent relation $(1)\Longleftrightarrow (2)\Longleftrightarrow (3)$ follows by Lemma \ref{lm-pd-c4}.
The boundedness result follows by \cite[Theorem 4.3 (ii)]{CorderoNicola2019JFAA} and the fact that
the distributional kernel of $T_{\s,\Phi}$ is $K=\scrF_2(\s e^{2\pi i\Phi})$.
\end{proof}

\section{Boundedness on $\mpq$ with separation properties}

\subsection{Boundedness on $\mp$}

\begin{theorem}[FIO on $\mfi$]\label{thm-FIO-M1}
    Let $\Phi$ be a real-valued $C^2(\rdd)$ phase function satisfying
    the $(0,0,0)$-growth condition and the uniform separation condition of $x$-type. 
    Then, we have
	\be
    e^{2\pi i\Phi}\in M^{1,1,\fy,\fy}(c_1),\ \ \ 
	\|e^{2\pi i\Phi}\|_{M^{1,1,\fy,\fy}(c_1)}\lesssim e^{C\sum_{|\g|=2}\|\partial^{\g}\Phi\|_{W^{\fy,\fy}_{1\otimes v_{d+\ep,d+\ep}}}}.
	\ee
	Moreover,
	for every $\s\in \msjdd$, the Fourier integral operator $T_{\s,\Phi}\in \calL(\mfid)$ with the estimate of operator norm
	\be
	\|T_{\s,\Phi}\|_{\calL(\mfid)}\lesssim \|\s\|_{\msjdd}e^{C\sum_{|\g|=2}\|\partial^{\g}\Phi\|_{W^{\fy,\fy}_{1\otimes v_{d+\ep,d+\ep}}}}.
	\ee
\end{theorem}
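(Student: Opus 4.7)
The plan is to first reduce the $\mfid$-boundedness of $T_{\s,\Phi}$ to the distributional assertion $e^{2\pi i\Phi}\in M^{1,1,\fy,\fy}(c_1)$ and then establish the latter. The reduction comes from Proposition \ref{pp-KFIO-Mi-Mf} (specialized to $s_1=s_2=0$), which yields the equivalence, combined with the product inequality of Lemma \ref{lm-pd-c12} applied with $g=\s\in\msjdd$:
\[
\|T_{\s,\Phi}\|_{\calL(\mfid)}\sim \|\s e^{2\pi i\Phi}\|_{M^{1,1,\fy,\fy}(c_1)}\lesssim \|\s\|_{\msjdd}\,\|e^{2\pi i\Phi}\|_{M^{1,1,\fy,\fy}(c_1)}.
\]
Thus only the exponential bound on $\|e^{2\pi i\Phi}\|_{M^{1,1,\fy,\fy}(c_1)}$ remains.

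For that bound I would use the second equivalent characterization in Proposition \ref{pp-tlp-ws3}, which expresses $\|e^{2\pi i\Phi}\|_{M^{1,1,\fy,\fy}(c_1)}$ up to constants as $\sup_{l\in\zd}\|G_l\|_{L^{1,1,\fy}_{\zeta_1,z_1,\zeta_2}}$, where
\[
G_l(\zeta_1,z_1,\zeta_2):=\scrF_\xi\Big(\eta_l(\xi)\,V_\psi(e^{2\pi i\Phi})(\cdot,\xi)(z_1,\zeta_1)\Big)(\zeta_2).
\]
Taylor-expanding $\Phi$ in $\xi$ around $l$ as $\Phi(x,\xi)=\Phi(x,l)+(\xi-l)\cdot\nabla_\xi\Phi(x,l)+R_l(x,\xi)$, the $\xi$-integral in $G_l$ becomes (after the change of variable $u=\xi-l$) the evaluation at $\zeta_2-\nabla_\xi\Phi(x,l)$ of the Fourier transform of the compactly supported function $\eta_0(u)e^{2\pi i R_l(x,l+u)}$. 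Since $R_l(x,l+u)$ is quadratic in $u$ with coefficients controlled by $\partial^2_{\xi\xi}\Phi$ in the space $W^{\fy,\fy}_{1\otimes v_{d+\ep,d+\ep}}$, the algebra property of that space (Proposition \ref{pp-agp-W}) combined with the exponential power series gives a uniform decay estimate
\[
|J(x,\zeta_2)|\lesssim e^{C\sum_{|\g|=2}\|\partial^\g\Phi\|_{W^{\fy,\fy}_{1\otimes v_{d+\ep,d+\ep}}}}\cdot \langle\zeta_2-\nabla_\xi\Phi(x,l)\rangle^{-(d+\ep)}
\]
for the resulting factor $J(x,\zeta_2)$. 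Absorbing a unimodular constant, $G_l$ then reduces to the oscillatory $x$-integral
\[
e^{-2\pi i l\zeta_2}\int e^{2\pi i[\Phi(x,l)-x\cdot\zeta_1]}J(x,\zeta_2)\overline{\psi(x-z_1)}\,dx.
\]

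The hardest step is controlling this integral in the mixed norm $L^1_{\zeta_1}L^1_{z_1}L^\fy_{\zeta_2}$. Integration by parts in $x$, using the bounded second derivatives $\partial^2_{xx}\Phi$ and $\partial^2_{x\xi}\Phi$ to estimate derivatives of the amplitude and phase, yields the decay $|G_l|\lesssim (1+|\zeta_1-\nabla_x\Phi(z_1,l)|)^{-N}\int|J(x,\zeta_2)||\psi(x-z_1)|\,dx$, which makes the $L^1_{\zeta_1}$-integration free; the compact support of $\psi$ then handles the $L^1_{z_1}$-integration, and one is reduced to $\sup_{\zeta_2}\int|J(x,\zeta_2)|\,dx$. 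This is precisely where the uniform separation condition of $x$-type becomes essential: it guarantees that the map $x\mapsto\nabla_\xi\Phi(x,l)$ is injective at unit scale, so a decomposition of $\rd$ into unit cubes $Q_k$ with the images $\{\nabla_\xi\Phi(x,l):x\in Q_k\}$ essentially disjoint (together with the integrable decay $\langle\cdot\rangle^{-(d+\ep)}$ of $J$) yields $\int|J(x,\zeta_2)|\,dx\lesssim e^{C\sum_{|\g|=2}\|\partial^\g\Phi\|_{W^{\fy,\fy}_{1\otimes v_{d+\ep,d+\ep}}}}$ uniformly in $\zeta_2$ and in $l$. Since every constant depends only on the second-order derivative norms of $\Phi$, this produces the exponential bound claimed in the theorem.
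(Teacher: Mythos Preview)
Your reduction via Proposition \ref{pp-KFIO-Mi-Mf} and Lemma \ref{lm-pd-c12} is correct, and the overall shape of the argument --- localize, peel off the linear part of the phase, and use the $x$-type separation on $x\mapsto\nabla_\xi\Phi(x,l)$ --- matches what is needed. The genuine gap is the step where you claim
\[
|G_l|\lesssim (1+|\zeta_1-\nabla_x\Phi(z_1,l)|)^{-N}\int|J(x,\zeta_2)||\psi(x-z_1)|\,dx
\]
by ``integration by parts in $x$, using the bounded second derivatives''. The phase is only assumed $C^2$: each application of the transpose of $L=(2\pi i)^{-1}|\nabla_x\Phi-\zeta_1|^{-2}(\nabla_x\Phi-\zeta_1)\cdot\nabla_x$ differentiates both the amplitude (which contains $J$, itself built from $\partial^2_{\xi\xi}\Phi$) and the coefficients of $L$ (which contain $\partial^2_{xx}\Phi$). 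After a single integration by parts you have exhausted the available regularity, and one power $|\zeta_1-\nabla_x\Phi(z_1,l)|^{-1}$ is not in $L^1_{\zeta_1}(\rd)$ for any $d\ge 1$. Mere $L^\infty$-boundedness of $\partial^2\Phi$ cannot manufacture the required $(d+\ep)$-decay; moreover the displayed bound keeps $|J|$ undifferentiated, which is not what integration by parts actually produces.

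What makes the theorem go through is precisely the hypothesis $\partial^\gamma\Phi\in W^{\fy,\fy}_{1\otimes v_{d+\ep,d+\ep}}(\rdd)$, which encodes $\langle\zeta\rangle^{-(d+\ep)}$ Fourier decay of the second derivatives rather than just boundedness. The paper exploits this at the level of function spaces instead of pointwise oscillatory-integral estimates: it localizes simultaneously in $(x,\xi)$ via $\eta_{k,l}$ and factors
\[
\eta_{k,l}e^{2\pi i\Phi}=\Big(\eta^*_{k,l}e^{2\pi i(\nabla_x\Phi(k,l)\cdot x+\nabla_\xi\Phi(k,l)\cdot\xi)}\Big)\cdot\Big(\eta_{k,l}e^{2\pi i\tau_{k,l}}\Big),
\]
then uses the product inequality $M^{1,1,\fy,\fy}(c_1)\cdot W^{\fy,\fy}_{1\otimes v_{d+\ep,d+\ep}}\subset M^{1,1,\fy,\fy}(c_1)$. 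The linear-phase factor is placed in $M^{1,1,\fy,\fy}(c_1)$ by direct STFT bounds together with the separation condition (this matches your final step), while the remainder factor lands in $W^{\fy,\fy}_{1\otimes v_{d+\ep,d+\ep}}$ via the algebra property (Proposition \ref{pp-agp-W}) and the exponential series --- here the full Wiener amalgam regularity of $\partial^\gamma\Phi$ is used, not merely its $L^\infty$ embedding. Your argument can be repaired along the same lines by replacing the integration-by-parts step with a second Taylor expansion of $\Phi(\cdot,l)$ around $z_1$ and estimating the resulting compactly supported amplitude in $\scrF L^\fy_{v_{d+\ep}}$ using the assumed regularity, but as written that step fails.
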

\begin{proof}
	We first deal with the estimate of $e^{2\pi i\Phi}$.
	Denote by $\eta_{k,l}^*=\eta_k^*\otimes \eta_l^*$.
	Let $\G=\La\times \La$ be a subset of $\zdd$ such that $\text{supp}\eta_{k,l}^*\cap \text{supp}\eta_{\tilde{k},\tilde{l}}^*=\emptyset$, for all $(k,l)\neq (\tilde{k},\tilde{l})$
    with $(k,l),(\tilde{k},\tilde{l})\in \G$.
	We only need to verify 
	\be
	\|\sum_{(k,l)\in \G}\eta_{k,l}e^{2\pi i\Phi}\|_{M^{1,1,\fy,\fy}(c_1)}\lesssim e^{C\sum_{|\g|=2}\|\partial^{\g}\Phi\|_{W^{\fy,\fy}_{1\otimes v_{d+\ep,d+\ep}}}}.
	\ee
	Write
	\be
	\begin{split}
		&\sum_{(k,l)\in \G}\eta_{k,l}(x,\xi)e^{2\pi i\Phi(x,\xi)}
		\\
		= &
		\left(\sum_{(k,l)\in \G}\eta_{k,l}(x,\xi)e^{2\pi i(\Phi(x,\xi)-\nabla_{x} \Phi(k,l)\cdot x-\nabla_{\xi} \Phi(k,l)\cdot \xi)}\right)
		\left(\sum_{(k,l)\in \G}\eta_{k,l}^{*}(x,\xi)e^{2\pi i(\nabla_{x} \Phi(k,l)\cdot x+\nabla_{\xi} \Phi(k,l)\cdot \xi)}\right).
	\end{split}
	\ee
	The estimate of $e^{i\Phi}$ follows by 
	the product inequality 
	\be
	M^{1,1,\fy,\fy}(c_1) \cdot W^{\fy,\fy}_{1\otimes v_{d+\ep,d+\ep}}(\rdd)\subset M^{1,1,\fy,\fy}(c_1),
	\ee
	and
	the following two estimates
	\be
	E_1=\bigg\|\sum_{(k,l)\in \G}\eta_{k,l}^{*}(x,\xi)e^{2\pi i(\nabla_{x} \Phi(k,l)\cdot x+\nabla_{\xi} \Phi(k,l)\cdot \xi)}\bigg\|_{M^{1,1,\fy,\fy}(c_1)}\lesssim 1,
	\ee
	and 
	\be
	E_2=\bigg\| \sum_{(k,l)\in \G}\eta_{k,l}(x,\xi)e^{2\pi i(\Phi(x,\xi)-\nabla_{x} \Phi(k,l)\cdot x-\nabla_{\xi} \Phi(k,l)\cdot \xi)}\bigg\|_{W^{\fy,\fy}_{1\otimes v_{d+\ep,d+\ep}}(\rdd)}
	\lesssim
	e^{C\sum_{|\g|=2}\|\partial^{\g}\Phi\|_{W^{\fy,\fy}_{1\otimes v_{d+\ep,d+\ep}}}}.
	\ee
	Now, we deal with the first estimate $E_1$. Let $\Psi=\psi\otimes \psi$ with $\psi\in C_c^{\fy}(\rd)$, such that $\text{supp}\Psi\subset B(0,\d)$
	for sufficiently small $\d>0$, satisfying that
	\be
	(\text{supp}\eta_{k,l}^*+B(0,\d)) \subset B(k, r)\times B(l, r) \ \ \ r>0
	\ee
	and the orthogonality property
		\be
	B(k, r) \cap B(\tilde{k}, r)=\emptyset,\ \ \ \text{for all}\ k\neq \tilde{k}.
	\ee
	Note that
	\be
	|\scrV_{\Psi}\eta_{0,0}^*(z_1,z_2,\z_1,\z_2)|\lesssim \chi_{B(0,r)}(z_1)\chi_{B(0,r)}(z_2)\lan \z_1\ran^{-\scrL}\lan \z_2\ran^{-\scrL},
	\ee
	and
	\be
	\begin{split}
		&|\scrV_{\Psi}\big(\eta_{k,l}^{*}(x,\xi)e^{2\pi i(\nabla_{x} \Phi(k,l)\cdot x+\nabla_{\xi} \Phi(k,l)\cdot \xi)}\big)(z,\z)|
		\\
		= &
		|\scrV_{\Psi}\big(M_{(\nabla_{x} \Phi(k,l), \nabla_{\xi} \Phi(k,l))}T_{(k,l)}\eta_{0,0}^*\big)(z,\z)|
		= 
		|\scrV_{\Psi}\eta_{0,0}^*(z_1-k,z_2-l,\z_1-\nabla_{x} \Phi(k,l),\z_2-\nabla_{\xi} \Phi(k,l))|.
	\end{split}
	\ee
	From this, we find that
	\ben\label{thm-FIO-M1-1}
	\begin{split}
&|\scrV_{\Psi}\big(\eta_{k,l}^{*}(x,\xi)e^{2\pi i(\nabla_{x} \Phi(k,l)\cdot x+\nabla_{\xi} \Phi(k,l)\cdot \xi)}\big)(z,\z)|
\\
\lesssim & 
\chi_{B(k,r)}(z_1)\chi_{B(l,r)}(z_2)\lan \z_1-\nabla_{x} \Phi(k,l)\ran^{-\scrL}\lan \z_2-\nabla_{\xi} \Phi(k,l)\ran^{-\scrL}.
\end{split}
\een
	
	Using the orthogonality property, we obtain
	\be
	\begin{split}
		E_1=&\|\sum_{(k,l)\in \G}\scrV_{\Psi}\big(\eta_{k,l}^{*}(x,\xi)e^{2\pi i(\nabla_{x} \Phi(k,l)\cdot x+\nabla_{\xi} \Phi(k,l)\cdot \xi)}\big)(z,\z)\|_{L^{1,1,\fy,\fy}_{\z_1,z_1,\z_2,z_2}}
		\\
		\lesssim &
		\|\sum_{(k,l)\in \G}\chi_{B(k,r)}(z_1)\chi_{B(l,r)}(z_2)\lan \z_1-\nabla_{x} \Phi(k,l)\ran^{-\scrL}\lan \z_2-\nabla_{\xi} \Phi(k,l)\ran^{-\scrL}\|_{L^{1,1,\fy,\fy}_{\z_1,z_1,\z_2,z_2}}
		\\
		\sim &
		\|\sum_{(k,l)\in \G}\chi_{B(k,r)}(z_1)\chi_{B(l,r)}(z_2)\lan \z_2-\nabla_{\xi} \Phi(k,l)\ran^{-\scrL}\|_{L^{1,\fy,\fy}_{z_1,\z_2,z_2}}
		\\
		\sim &
		\|\sum_{(k,l)\in \G}\chi_{B(l,r)}(z_2)\lan \z_2-\nabla_{\xi} \Phi(k,l)\ran^{-\scrL}\|_{L^{\fy,\fy}_{\z_2,z_2}}
			\\
		= & 
		\|\sum_{l\in \La}\chi_{B(l,r)}(z_2)\sum_{k\in \La}\lan \z_2-\nabla_{\xi} \Phi(k,l)\ran^{-\scrL}\|_{L^{\fy,\fy}_{\z_2,z_2}}.
	\end{split}
	\ee	
	Using the separation property, we conclude that for some $\d>0$ independent of $l$
	\be
	\bigcup_{k\in \La}B(\nabla_{\xi} \Phi(k,l),\d)\subset \rd.
	\ee
    From this, we obtain that
	\be
	\begin{split}
		\sum_{k\in \La}\lan \z_2-\nabla_{\xi} \Phi(k,l)\ran^{-\scrL}
		\sim &
		\sum_{k\in \La}\int_{B(\nabla_{\xi} \Phi(k,l),\d)}\lan \z_2-\z_1\ran^{-\scrL}d\z_1
		\\
		= &
		\int_{\bigcup_{k\in \La}B(\nabla_{\xi} \Phi(k,l),\d)}\lan \z_2-\z_1\ran^{-\scrL}d\z_1
		\\
		\leq &
		\int_{\rd}\lan \z_2-\z_1\ran^{-\scrL}d\z_1\lesssim 1.
	\end{split}
	\ee
	The desired estimate of $E_1$ follows by
		\be
	\begin{split}
		E_1\lesssim \|\sum_{l\in \La}\chi_{B(l,r)}(z_2)\sum_{k\in \La}\lan \z_2-\nabla_{\xi} \Phi(k,l)\ran^{-\scrL}\|_{L^{\fy,\fy}_{\z_2,z_2}}
		\lesssim 
	\|\sum_{l\in \La}\chi_{B(l,r)}(z_2)\|_{L^{\fy}_{z_2}}\lesssim 1.
	\end{split}
	\ee	
	
	Next, we turn to the estimate of $E_2$.
	Using the definition of $W^{\fy,\fy}_{1\otimes v_{d+\ep,d+\ep}}(\rdd)$, we find that
	\be
	\begin{split}
		E_2
		= &
		\bigg\| \sum_{(k,l)\in \G}\eta_{k,l}(x,\xi)e^{2\pi i(\Phi(x,\xi)-\nabla_{x} \Phi(k,l)\cdot x-\nabla_{\xi} \Phi(k,l)\cdot \xi)}\bigg\|_{W^{\fy,\fy}_{1\otimes v_{d+\ep,d+\ep}}(\rdd)}
		\\
		\sim &
		\sup_{k,l}\bigg\|\eta_{k,l}(x,\xi)e^{2\pi i(\Phi(x,\xi)-\nabla_{x} \Phi(k,l)\cdot x-\nabla_{\xi} \Phi(k,l)\cdot \xi)}\bigg\|_{W^{\fy,\fy}_{1\otimes v_{d+\ep,d+\ep}}(\rdd)}
		\\
		= &
		\sup_{k,l}\bigg\|\eta_{0,0}(x,\xi)e^{2\pi i(\Phi(x+k,\xi+l)-\nabla_{x} \Phi(k,l)\cdot x-\nabla_{\xi} \Phi(k,l)\cdot \xi)}\bigg\|_{W^{\fy,\fy}_{1\otimes v_{d+\ep,d+\ep}}(\rdd)}
		\\
		= &
		\sup_{k,l}\bigg\|\eta_{0,0}e^{2\pi i\tau_{k,l}}\bigg\|_{W^{\fy,\fy}_{1\otimes v_{d+\ep,d+\ep}}(\rdd)},
	\end{split}
	\ee
	where
	\be
	\begin{split}
		\tau_{k,l}:= &
		\Phi(x+k,\xi+l)-\Phi(k,l)-\nabla_{x} \Phi(k,l)\cdot x-\nabla_{\xi} \Phi(k,l)\cdot \xi
		\\
		= &
		2\sum_{|\g|=2}\frac{(x,\xi)^{\g}}{\g !}\int_{0}^1(1-t)(\partial^{\g}\Phi)(k+tx,l+t\xi)dt.
	\end{split}
	\ee
	Using the product algebra property of $W^{\fy,\fy}_{1\otimes v_{d+\ep,d+\ep}}(\rdd)$ (see Proposition \ref{pp-agp-W}), we derive that
	\be
	\begin{split}
		\bigg\|\eta_{0,0}e^{2\pi i\tau_{k,l}}\bigg\|_{W^{\fy,\fy}_{1\otimes v_{d+\ep,d+\ep}}(\rdd)}
		= 
		\bigg\|\eta_{0,0}e^{2\pi i\eta_{0,0}^*\tau_{k,l}}\bigg\|_{W^{\fy,\fy}_{1\otimes v_{d+\ep,d+\ep}}(\rdd)}
		\lesssim
		e^{C\|\eta_{0,0}^*\tau_{k,l}\|_{W^{\fy,\fy}_{1\otimes v_{d+\ep,d+\ep}}(\rdd)}}.
	\end{split}
	\ee
        Let $\phi$ be a $C_c^{\fy}(\rdd)$ function satisfying $\phi=1$ on $\text{supp}\eta_{0,0}^{*}$.
	The desired estimate follows by
	\be
	\begin{split}
		&\|\eta_{0,0}^*\tau_{k,l}\|_{W^{\fy,\fy}_{1\otimes v_{d+\ep,d+\ep}}(\rdd)}
		\\
		= &
		2\bigg\|\sum_{|\g|=2}\frac{(x,\xi)^{\g}}{\g !}\eta_{0,0}^*(x,\xi)\int_{0}^1(1-t)\phi(tx,t\xi)(\partial^{\g}\Phi)(k+tx,l+t\xi)dt\bigg\|_{W^{\fy,\fy}_{1\otimes
				v_{d+\ep,d+\ep}}(\rdd)}
		\\
		\lesssim &
		\sum_{|\g|=2}\bigg\|\int_{0}^1(1-t)\phi(tx,t\xi)(\partial^{\g}\Phi)(k+tx,l+t\xi)dt\bigg\|_{W^{\fy,\fy}_{1\otimes
				v_{d+\ep,d+\ep}}(\rdd)}
		\\
		\lesssim &
		\sum_{|\g|=2}\sup_{t\in (0,1)}\|\phi(tx,t\xi)(\partial^{\g}\Phi)(k+tx,l+t\xi)\|_{W^{\fy,\fy}_{1\otimes
				v_{d+\ep,d+\ep}}(\rdd)}
		\\
		\lesssim &
		\sum_{|\g|=2}\|\phi(x,\xi)(\partial^{\g}\Phi)(k+x,l+\xi)\|_{W^{\fy,\fy}_{1\otimes
				v_{d+\ep,d+\ep}}(\rdd)}
		\lesssim 
		\sum_{|\g|=2}\|\partial^{\g}\Phi\|_{W^{\fy,\fy}_{1\otimes
				v_{d+\ep,d+\ep}}(\rdd)}.
	\end{split}
	\ee
	With the estimate of $e^{i\Phi}$, the estimate of $\|T_{\s,\Phi}\|_{\calL(\mfid)}$ follows by Proposition \ref{pp-KFIO-Mi-Mf}. 
\end{proof}

\begin{theorem}[FIO on $\mf$]\label{thm-FIO-Mf}
	Let $\Phi$ be a real-valued $C^2(\rdd)$ phase function satisfying
    the $(0,0,0)$-growth condition and uniform separation condition of $\xi$-type, 
	we have
	\be
    e^{2\pi i\Phi}\in M^{1,1,\fy,\fy}(c_2),\ \ \ 
	\|e^{2\pi i\Phi}\|_{M^{1,1,\fy,\fy}(c_2)}\lesssim e^{C\sum_{|\g|=2}\|\partial^{\g}\Phi\|_{W^{\fy,\fy}_{1\otimes v_{d+\ep,d+\ep}}}}.
	\ee
	Moreover,
	for every $\s\in \msjdd$, the Fourier integral operator $T_{\s,\Phi}\in \calL(\mfd)$ with the estimate of operator norm
	\be
	\|T_{\s,\Phi}\|_{\calL(\mfd)}\lesssim \|\s\|_{\msjdd}e^{C\sum_{|\g|=2}\|\partial^{\g}\Phi\|_{W^{\fy,\fy}_{1\otimes v_{d+\ep,d+\ep}}}}.
	\ee
\end{theorem}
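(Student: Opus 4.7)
The plan is to reduce Theorem \ref{thm-FIO-Mf} to the already-proved Theorem \ref{thm-FIO-M1} by exploiting the symmetry between the permutations $c_1$ and $c_2$ afforded by Lemma \ref{lm-rlc}. Define the reflected phase $\tilde{\Phi}(x,\xi)=\Phi(\xi,x)$ and set $g(x,\xi)=e^{2\pi i\Phi(x,\xi)}$, so that $\tilde{g}(x,\xi)=g(\xi,x)=e^{2\pi i\tilde{\Phi}(x,\xi)}$. By Lemma \ref{lm-rlc}, the map $f\mapsto \tilde{f}$ is an isomorphism $M^{1,1,\fy,\fy}(c_1)\to M^{1,1,\fy,\fy}(c_2)$, and consequently
\be
\|e^{2\pi i\Phi}\|_{M^{1,1,\fy,\fy}(c_2)}
\sim
\|e^{2\pi i\tilde{\Phi}}\|_{M^{1,1,\fy,\fy}(c_1)}.
\ee

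The next step is to verify that $\tilde{\Phi}$ satisfies the hypotheses of Theorem \ref{thm-FIO-M1}, namely the $(0,0,0)$-growth condition and the $x$-type uniform separation condition. Since the $(0,0,0)$-growth condition with $t_1=t_2=0$ only requires that every second-order partial derivative of the phase lie in $W^{\fy,\fy}_{1\otimes v_{d+\ep,d+\ep}}(\rdd)$, and since this Wiener amalgam norm is invariant under the coordinate swap $(x,\xi)\mapsto (\xi,x)$ (choose a symmetric window and change variables; the weight $v_{d+\ep,d+\ep}$ is itself symmetric), every second-order derivative of $\tilde{\Phi}$ inherits the same property from $\Phi$, with equal norms. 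For the separation condition, a direct computation gives $\nabla_{\xi}\tilde{\Phi}(x,\xi)=(\nabla_{1}\Phi)(\xi,x)$, so that for $|x_1-x_2|\gtrsim 1$,
\be
|\nabla_{\xi}\tilde{\Phi}(x_1,\xi)-\nabla_{\xi}\tilde{\Phi}(x_2,\xi)|
=
|\nabla_{x}\Phi(\xi,x_1)-\nabla_{x}\Phi(\xi,x_2)|\gtrsim 1,
\ee
where the last estimate is precisely the $\xi$-type separation of $\Phi$ (with the roles of the two variables swapped). Thus $\tilde{\Phi}$ satisfies the $x$-type uniform separation condition.

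Applying Theorem \ref{thm-FIO-M1} to $\tilde{\Phi}$ yields
\be
\|e^{2\pi i\tilde{\Phi}}\|_{M^{1,1,\fy,\fy}(c_1)}
\lesssim
e^{C\sum_{|\g|=2}\|\partial^{\g}\tilde{\Phi}\|_{W^{\fy,\fy}_{1\otimes v_{d+\ep,d+\ep}}}}
\sim
e^{C\sum_{|\g|=2}\|\partial^{\g}\Phi\|_{W^{\fy,\fy}_{1\otimes v_{d+\ep,d+\ep}}}},
\ee
which combined with the isomorphism above delivers the claimed $M^{1,1,\fy,\fy}(c_2)$ estimate on $e^{2\pi i\Phi}$. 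Finally, the $M^{\fy}$-boundedness of $T_{\s,\Phi}$ and its norm estimate follow directly from the kernel-theoretic equivalence (7)$\Longleftrightarrow$(10) in Proposition \ref{pp-KFIO-Mi-Mf} (taking $s_1=s_2=0$) together with the product inequality of Lemma \ref{lm-pd-c12}, which gives $\|\s e^{2\pi i\Phi}\|_{M^{1,1,\fy,\fy}(c_2)}\lesssim \|\s\|_{\msjdd}\|e^{2\pi i\Phi}\|_{M^{1,1,\fy,\fy}(c_2)}$.

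There is no real technical obstacle here: the heavy lifting---the decomposition of $e^{2\pi i\Phi}$ into localized modulation/translation pieces, the orthogonality summation against the separation condition, and the exponential control on the remainder via the product algebra of $W^{\fy,\fy}_{1\otimes v_{d+\ep,d+\ep}}(\rdd)$---was done in Theorem \ref{thm-FIO-M1}. The only point requiring care is the bookkeeping for the coordinate swap, particularly checking that the Wiener amalgam norm of the second-order derivatives is unchanged by relabeling $x\leftrightarrow\xi$; this is what justifies transferring the estimate from $\tilde{\Phi}$ back to $\Phi$ without loss.
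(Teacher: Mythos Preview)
Your proposal is correct and follows exactly the same approach as the paper: define $\tilde{\Phi}(x,\xi)=\Phi(\xi,x)$, observe that $\tilde{\Phi}$ satisfies the hypotheses of Theorem \ref{thm-FIO-M1}, and then invoke Proposition \ref{pp-KFIO-Mi-Mf} to transfer the conclusion. The paper's own proof is a terse two-line version of what you wrote; your additional verification of the growth and separation conditions for $\tilde{\Phi}$ and the explicit appeal to Lemma \ref{lm-rlc} simply spell out what the paper leaves implicit.
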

\begin{proof}
	Let $\tilde{\Phi}(x,\xi)=\Phi(\xi,x)$. Observe that $\tilde{\Phi}$ satisfies all the assumptions in Theorem \ref{thm-FIO-M1}. 
	Then the desired conclusions follow by Proposition \ref{pp-KFIO-Mi-Mf}.
\end{proof}

\begin{theorem}[FIO on $M^p$]\label{thm-FIO-Mp}
	Let $\Phi$ be a real-valued $C^2(\rdd)$ phase function satisfying 
    the $(0,0,0)$-growth condition and uniform separation condition,
	we have 
    \be
    e^{2\pi i\Phi}\in M^{1,1,\fy,\fy}(c_1)\cap M^{1,1,\fy,\fy}(c_2)
    \ee
    with
	\be
	\|e^{2\pi i\Phi}\|_{M^{1,1,\fy,\fy}(c_1)}+ \|e^{2\pi i\Phi}\|_{M^{1,1,\fy,\fy}(c_2)}
	\lesssim e^{\sum_{C|\g|=2}\|\partial^{\g}\Phi\|_{W^{\fy,\fy}_{1\otimes v_{d+\ep,d+\ep}}}}.
	\ee
	Moreover,
	for every $\s\in \msjdd$, the Fourier integral operator $T_{\s,\Phi}\in \calL(M^p(\rd))$ with the estimate of operator norm
	\be
	\|T_{\s,\Phi}\|_{\calL(M^p(\rd))}\lesssim \|\s\|_{\msjdd}e^{\sum_{C|\g|=2}\|\partial^{\g}\Phi\|_{W^{\fy,\fy}_{1\otimes v_{d+\ep,d+\ep}}}}.
	\ee
\end{theorem}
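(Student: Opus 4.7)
The plan is to obtain Theorem \ref{thm-FIO-Mp} by combining the two endpoint results just established with complex interpolation on the modulation space scale. Since the $(0,0,0)$-growth condition and the full uniform separation condition (both $x$-type and $\xi$-type) imply the hypotheses of both Theorem \ref{thm-FIO-M1} and Theorem \ref{thm-FIO-Mf}, the first claim follows immediately: the $M^{1,1,\infty,\infty}(c_1)$ estimate comes from Theorem \ref{thm-FIO-M1}, the $M^{1,1,\infty,\infty}(c_2)$ estimate from Theorem \ref{thm-FIO-Mf}, and adding the two exponential bounds gives the displayed intersection estimate.

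For the operator bound, I would invoke Proposition \ref{pp-KFIO-Mi-Mf}: membership of $e^{2\pi i\Phi}$ in $M^{1,1,\infty,\infty}(c_1)$ yields $T_{\sigma,\Phi}\in \calL(M^1)$, while membership in $M^{1,1,\infty,\infty}(c_2)$ yields $T_{\sigma,\Phi}\in \calL(M^\infty)$, both with operator norm controlled by $\|\sigma\|_{M^{\infty,1}}$ times the exponential phase factor. Thus the two endpoint bounds
\be
\|T_{\sigma,\Phi}\|_{\calL(M^1)} + \|T_{\sigma,\Phi}\|_{\calL(M^\infty)} \lesssim \|\sigma\|_{\msjdd}\, e^{C\sum_{|\g|=2}\|\partial^{\g}\Phi\|_{W^{\fy,\fy}_{1\otimes v_{d+\ep,d+\ep}}}}
\ee
are in hand. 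Then I would apply the complex interpolation lemma for modulation spaces with $(p_1,q_1)=(1,1)$, $(p_2,q_2)=(\infty,\infty)$, $m_1=m_2=1$, and $\theta=1-1/p$, which identifies $[M^1, M^\infty]_\theta = M^p$ and transports the common operator bound to every intermediate $M^p$, $1\le p\le \infty$.

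The only minor point to address is the fact that $\calS(\rd)$ is not dense in $M^\infty$, so the $M^\infty$-boundedness produced by Proposition \ref{pp-KFIO-Mi-Mf} should be read as the unique bounded extension from $\calS$ to $\calM^\infty$ into $M^\infty$, as already noted in the introduction. Complex interpolation between $M^1=\calM^1$ and $\calM^\infty$ then yields a bounded operator on $\calM^p=M^p$ for every $p<\infty$, while the $p=\infty$ case is the endpoint itself; in all cases the norm is dominated by the geometric mean of the endpoint norms, which only improves the constant in the stated estimate. I do not anticipate any substantive obstacle here, since the heavy lifting (the phase-function estimate in the working spaces $M^{1,1,\fy,\fy}(c_i)$) was already carried out in Theorems \ref{thm-FIO-M1} and \ref{thm-FIO-Mf}, and interpolation is a routine final step.
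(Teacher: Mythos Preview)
Your proposal is correct and follows essentially the same approach as the paper: the paper's proof is a single sentence invoking Theorems \ref{thm-FIO-M1} and \ref{thm-FIO-Mf} together with a standard complex interpolation argument. Your version is in fact more detailed, spelling out the role of Proposition \ref{pp-KFIO-Mi-Mf} and the density issue at $p=\infty$, but the strategy is identical.
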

\begin{proof}
	The desired conclusions follow by Theorem \ref{thm-FIO-M1} and Theorem \ref{thm-FIO-Mf} and a standard complex interpolation argument.
\end{proof}

\subsection{Boundedness on $\msj$ and $M^{1,\fy}$}

\begin{theorem}[FIO on $\msj$]\label{thm-FIO-Mfi}
	Let $\Phi$ be a real-valued $C^2(\rdd)$ function satisfying 
    the $(\al,0,0)$-growth condition for some $\al\in [0,1]$.
	If $s_1, s_2\geq 0$ and the following relation holds
	\be
	l^{\fy}_{\frac{s_1}{1-\al}}\subset l^1\ \ \text{for}\ \al\in [0,1),
	\ee
	then $T_{\s,\Phi}\in \calL(\msj)$ for all $\s\in M^{\fy,1}_{v_{s_1,s_2}\otimes 1}$, satisfying 
	\be
	\|T_{\s,\Phi}\|_{\calL(\msjd)}
	\lesssim 
	\|\s\|_{M^{\fy,1}_{v_{s_1,s_2}\otimes 1}}e^{C\sum_{|\g|=2}\|\partial^{\g}\Phi\|_{W^{\fy,\fy}_{1\otimes v_{d+\ep,d+\ep}}}}.
	\ee
\end{theorem}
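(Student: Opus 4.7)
The plan is to invoke Proposition \ref{pp-KFIO-Mfi}, which converts the $M^{\infty,1}$-boundedness of $T_{\sigma,\Phi}$ for all $\sigma \in M^{\infty,1}_{v_{s_1,s_2}\otimes 1}$ into the single membership $v_{s_1,s_2}^{-1}e^{2\pi i\Phi} \in M^{1,\infty,1,\infty}(c_3)$ together with a quantitative norm bound. Combined with the product inequality of Lemma \ref{lm-pd-c3} applied to the factorisation $\sigma e^{2\pi i\Phi} = v_{s_1,s_2}^{-1}e^{2\pi i\Phi}\cdot v_{s_1,s_2}\sigma$ and the lifting $\|v_{s_1,s_2}\sigma\|_{M^{\infty,1}} \sim \|\sigma\|_{M^{\infty,1}_{v_{s_1,s_2}\otimes 1}}$, this reduces the entire theorem to proving
\begin{equation*}
\|v_{s_1,s_2}^{-1}e^{2\pi i\Phi}\|_{M^{1,\infty,1,\infty}(c_3)} \lesssim e^{C\sum_{|\gamma|=2}\|\partial^\gamma\Phi\|_{W^{\infty,\infty}_{1\otimes v_{d+\epsilon,d+\epsilon}}}}.
\end{equation*}

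Following the blueprint of Theorem \ref{thm-FIO-M1}, I would decompose $e^{2\pi i\Phi} = \mathcal{Q}\cdot\mathcal{P}$ along a thinned partition of unity $\{\eta_{k,l}\}_{(k,l)\in\Gamma}$ with disjoint extended supports, where
\begin{equation*}
\mathcal{Q} = \sum_{(k,l)\in\Gamma} \eta_{k,l}(x,\xi)\,e^{2\pi i[\Phi(x,\xi)-P_{k,l}(x,\xi)]},\qquad \mathcal{P} = \sum_{(k,l)\in\Gamma} \eta_{k,l}^{*}(x,\xi)\,e^{2\pi i P_{k,l}(x,\xi)},
\end{equation*}
and $P_{k,l}(x,\xi) = \nabla_x\Phi(k,l)\cdot x + \nabla_\xi\Phi(k,l)\cdot\xi$ is the linear part. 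The high-order factor $\mathcal{Q}$ is controlled in $W^{\infty,\infty}_{1\otimes v_{d+\epsilon,d+\epsilon}}$ by $e^{C\sum_{|\gamma|=2}\|\partial^\gamma\Phi\|_{W^{\infty,\infty}_{1\otimes v_{d+\epsilon,d+\epsilon}}}}$ verbatim as in Theorem \ref{thm-FIO-M1}: this uses only bounded second derivatives (available since $t_1=t_2=0$), together with the translation invariance and product algebra (Proposition \ref{pp-agp-W}) of $W^{\infty,\infty}_{1\otimes v_{d+\epsilon,d+\epsilon}}$, and crucially no separation property. Invoking Lemma \ref{lm-pd-c3} for the product $\mathcal{Q}\cdot(v_{s_1,s_2}^{-1}\mathcal{P})$, the task reduces to bounding $v_{s_1,s_2}^{-1}\mathcal{P}$ in $M^{1,\infty,1,\infty}(c_3)$ by an absolute constant.

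For this low-order piece, on $\mathrm{supp}\,\eta_{k,l}^{*}$ one has $v_{s_1,s_2}\sim\langle k\rangle^{s_1}\langle l\rangle^{s_2}$, and a direct STFT computation using the translation-modulation structure of $e^{2\pi iP_{k,l}}$ yields
\begin{equation*}
\bigl|\mathscr{V}_\Psi\bigl(\eta_{k,l}^{*}\,v_{s_1,s_2}^{-1}\,e^{2\pi iP_{k,l}}\bigr)(z,\zeta)\bigr| \lesssim \langle k\rangle^{-s_1}\langle l\rangle^{-s_2}\chi_{B(k,r)}(z_1)\chi_{B(l,r)}(z_2)\langle\zeta_1-\nabla_x\Phi(k,l)\rangle^{-\mathscr{L}}\langle\zeta_2-\nabla_\xi\Phi(k,l)\rangle^{-\mathscr{L}}.
\end{equation*}
Unfolding the $c_3$-norm as the iterated $\sup_{z_2}\int d\zeta_1\,\sup_{z_1}\int d\zeta_2$ of the sum over $(k,l)$: the innermost $\zeta_2$-integral is harmless, and $\sup_{z_1}$ together with the bounded overlap of $\chi_{B(k,r)}$ collapses the $k$-sum into a supremum, reducing the problem (uniformly in $l$) to
\begin{equation*}
\int_{\mathbb{R}^d}\sup_k\langle k\rangle^{-s_1}\langle\zeta_1-\nabla_x\Phi(k,l)\rangle^{-\mathscr{L}}\,d\zeta_1 \lesssim 1.
\end{equation*}
The decisive input is the $(\alpha,0,0)$-growth: the bound $|\nabla_x\Phi(k,l)-\nabla_x\Phi(0,l)|\lesssim\langle k\rangle^{1-\alpha}$ forces $\langle k\rangle\gtrsim\langle\zeta_1-\nabla_x\Phi(0,l)\rangle^{1/(1-\alpha)}$ whenever $\nabla_x\Phi(k,l)$ lies within unit distance of $\zeta_1$, so for $\mathscr{L}$ large the inner supremum is dominated by $\langle\zeta_1-\nabla_x\Phi(0,l)\rangle^{-s_1/(1-\alpha)}$, whose $\zeta_1$-integrability is exactly the hypothesised embedding $l^\infty_{s_1/(1-\alpha)}\subset l^1$. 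The outer $\sup_{z_2}$ with the factors $\chi_{B(l,r)}(z_2)\langle l\rangle^{-s_2}$ is controlled by $s_2\geq 0$ and bounded overlap. The main obstacle is precisely this last $\zeta_1$-estimate: without any separation on $\{\nabla_x\Phi(k,l)\}_k$ the naive bound $\sum_k\langle k\rangle^{-s_1}$ would only yield $s_1>d$, and the sharp threshold $s_1>d(1-\alpha)$ must be harvested from the sublinear spreading of $\nabla_x\Phi(\cdot,l)$ via a careful dyadic-in-$k$ comparison—the \emph{interplay of scaling and localization} highlighted in the introduction, and the single feature that makes the mild-growth regime $\alpha\in(0,1)$ a genuine continuous interpolation between the $\alpha=1$ and $\alpha=0$ endpoints.
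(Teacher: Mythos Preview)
Your strategy matches the paper's proof exactly: reduction via Proposition~\ref{pp-KFIO-Mfi}, the same $\mathcal{Q}\cdot\mathcal{P}$ factorisation with the $E_2$ estimate borrowed verbatim from Theorem~\ref{thm-FIO-M1}, and control of $v_{s_1,s_2}^{-1}\mathcal{P}$ in $M^{1,\infty,1,\infty}(c_3)$ via the same STFT pointwise bound and the same unfolding of the $c_3$-norm. The only place you diverge is in the resolution of the final $\zeta_1$-integral, which you describe as needing ``a careful dyadic-in-$k$ comparison'' and link to the ``interplay of scaling and localization highlighted in the introduction''. That discussion in the introduction concerns the high-growth Theorem~\ref{thm-FIO-Mp-12hg}, not the present statement; here the step is a one-liner.

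The paper's device is to set the decay exponent to the specific value $\mathscr{L}_\alpha = s_1/(1-\alpha)$ (for $\alpha<1$) and apply Peetre's inequality directly:
\begin{equation*}
\langle k\rangle^{-s_1}\langle \zeta_1-\nabla_x\Phi(k,l)\rangle^{-\mathscr{L}_\alpha}
\lesssim \langle k\rangle^{-s_1}\langle \nabla_x\Phi(0,l)-\nabla_x\Phi(k,l)\rangle^{\mathscr{L}_\alpha}\langle \zeta_1-\nabla_x\Phi(0,l)\rangle^{-\mathscr{L}_\alpha}
\lesssim \langle \zeta_1-\nabla_x\Phi(0,l)\rangle^{-\mathscr{L}_\alpha},
\end{equation*}
since the growth hypothesis gives $\langle \nabla_x\Phi(0,l)-\nabla_x\Phi(k,l)\rangle^{\mathscr{L}_\alpha}\lesssim\langle k\rangle^{(1-\alpha)\mathscr{L}_\alpha}=\langle k\rangle^{s_1}$ and the $\langle k\rangle$-powers cancel. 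The condition $\mathscr{L}_\alpha>d$ is exactly the hypothesised embedding $l^\infty_{s_1/(1-\alpha)}\subset l^1$, so the right-hand side lies in $L^1_{\zeta_1}$ uniformly in $k,l$. No dyadic decomposition or scaling machinery is required.
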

\begin{proof}
	Using Proposition \ref{pp-KFIO-Mfi}, we only need to verify that for
	$s_1\geq 0$ with $\al=1$ or $l^{\fy}_{\frac{s_1}{1-\al}}\subset l^1$ with $\al\in [0,1)$, the following estimate is valid:
	\be
	\|v_{s_1}^{-1}e^{2\pi i\Phi}\|_{M^{1,\fy,1,\fy}(c_3)}\lesssim e^{C\sum_{|\g|=2}\|\partial^{\g}\Phi\|_{W^{\fy,\fy}_{1\otimes v_{d+\ep,d+\ep}}}}.
	\ee
	Applying the decomposition as in the proof of Theorem \ref{thm-FIO-M1}, we write
	\be
	\begin{split}
		&\sum_{(k,l)\in \G}\eta_{k,l}(x,\xi)e^{2\pi i\Phi(x,\xi)}
		\\
		= &
		\left(\sum_{(k,l)\in \G}\eta_{k,l}(x,\xi)e^{2\pi i(\Phi(x,\xi)-\nabla_{x} \Phi(k,l)\cdot x-\nabla_{\xi} \Phi(k,l)\cdot \xi)}\right)
		\left(\sum_{(k,l)\in \G}\eta_{k,l}^{*}(x,\xi)e^{2\pi i(\nabla_{x} \Phi(k,l)\cdot x+\nabla_{\xi} \Phi(k,l)\cdot \xi)}\right).
	\end{split}
	\ee
	The desired estimate follows by 
	the product inequality 
	\be
	M^{1,\fy,1,\fy}(c_3) \cdot W^{\fy,\fy}_{1\otimes v_{d+\ep,d+\ep}}(\rdd)\subset M^{1,\fy,1,\fy}(c_3),
	\ee
	and
	the following two estimates
	\be
	E_1=\bigg\|v_{s_1}^{-1}(x)\sum_{(k,l)\in \G}\eta_{k,l}^{*}(x,\xi)e^{2\pi i(\nabla_{x} \Phi(k,l)\cdot x+\nabla_{\xi} \Phi(k,l)\cdot \xi)}\bigg\|_{M^{1,\fy,1,\fy}(c_3)}\lesssim 1,
	\ee
	and 
	\be
	E_2=\bigg\| \sum_{(k,l)\in \G}\eta_{k,l}(x,\xi)e^{2\pi i(\Phi(x,\xi)-\nabla_{x} \Phi(k,l)\cdot x-\nabla_{\xi} \Phi(k,l)\cdot \xi)}\bigg\|_{W^{\fy,\fy}_{1\otimes v_{d+\ep,d+\ep}}(\rdd)}
	\lesssim
	e^{C\sum_{|\g|=2}\|\partial^{\g}\Phi\|_{W^{\fy,\fy}_{1\otimes v_{d+\ep,d+\ep}}}}.
	\ee
	The estimate of $E_2$ has been established in the proof of Theorem \ref{thm-FIO-M1}. We only need to deal with the $E_1$ term.
	
	Using \eqref{thm-FIO-M1-1}, we find that
	\be
	\begin{split}
		E_1
		\sim &
		\|v_{s_1}^{-1}(z_1)\sum_{(k,l)\in \G}\scrV_{\Psi}\big(\eta_{k,l}^{*}(x,\xi)e^{2\pi i(\nabla_{x} \Phi(k,l)\cdot x+\nabla_{\xi} \Phi(k,l)\cdot \xi)}\big)(z,\z)\|_{L^{1,\fy,1,\fy}_{\z_2,z_1,\z_1,z_2}}
		\\
		\lesssim &
		\|v_{s_1}^{-1}(z_1)\sum_{(k,l)\in \G}\chi_{B(k,r)}(z_1)\chi_{B(l,r)}(z_2)\lan \z_1-\nabla_{x} \Phi(k,l)\ran^{-\scrL}\lan \z_2-\nabla_{\xi} \Phi(k,l)\ran^{-\scrL}\|_{L^{1,\fy,1,\fy}_{\z_2,z_1,\z_1,z_2}}    	
		\\
		\sim &
	    \|v_{s_1}^{-1}(z_1)\sum_{(k,l)\in \G}\chi_{B(k,r)}(z_1)\chi_{B(l,r)}(z_2)\lan \z_1-\nabla_{x} \Phi(k,l)\ran^{-\scrL}\|_{L^{\fy,1,\fy}_{z_1,\z_1,z_2}}    	
		\\
		= &
		\|\sup_{k\in \La}v_{s_1}^{-1}(k)\sum_{l\in \La}\chi_{B(l,r)}(z_2)\lan \z_1-\nabla_{x} \Phi(k,l)\ran^{-\scrL}\|_{L^{1,\fy}_{\z_1,z_2}}.
	\end{split}
	\ee
	Take $\scrL_{\al}=d+\ep$ for $\al=1$ and $\scrL_{\al}=\frac{s_1}{1-\al}$ for $\al\in [0,1)$. We have
	\be
	\begin{split}
		v_{s_1}^{-1}(k)\lan \z_1-\nabla_{x} \Phi(k,l)\ran^{-\scrL_{\al}}
		\lesssim &
		v_{s_1}^{-1}(k)\lan \nabla_{x} \Phi(0,l)-\nabla_{x} \Phi(k,l)\ran^{\scrL_{\al}}\lan \z_1-\nabla_{x} \Phi(0,l)\ran^{-\scrL_{\al}}
		\\
		\lesssim &
		v_{s_1}^{-1}(k)\lan k\ran^{(1-\al)\scrL_{\al}}\lan \z_1-\nabla_{x} \Phi(0,l)\ran^{-\scrL_{\al}}
				\\
		\lesssim &
		\lan \z_1-\nabla_{x} \Phi(0,l)\ran^{-\scrL_{\al}}\in L^1_{\z_1},
	\end{split}
	\ee
	where in the last inclusion relation we use the fact $l^{\fy}_{\frac{s_1}{1-\al}}\subset l^1$ with $\al\in [0,1)$.
	From this, the desired estimate of $E_1$ follows by
	\be
	\begin{split}
		E_1
		\lesssim &
		\|\sup_{k\in \La}v_{s_1}^{-1}(k)\sum_{l\in \La}\chi_{B(l,r)}(z_2)\lan \z_1-\nabla_{x} \Phi(k,l)\ran^{-\scrL_{\al}}\|_{L^{1,\fy}_{\z_1,z_2}}
		\\
		\lesssim &
		\|\sum_{l\in \La}\chi_{B(l,r)}(z_2)\lan \z_1-\nabla_{x} \Phi(0,l)\ran^{-\scrL_{\al}}\|_{L^{1,\fy}_{\z_1,z_2}}
		\lesssim 
		\|\sum_{l\in \La}\chi_{B(l,r)}(z_2)\|_{L^{\fy}_{z_2}}\lesssim 1.
	\end{split}
	\ee
We have now completed this proof.
\end{proof}

\begin{theorem}[FIO on $\mif$]\label{thm-FIO-Mif}
    Let $\Phi$ be a real-valued $C^2(\rdd)$ phase function satisfying 
    the $(\al,0,0)$-growth condition for some $\al\in [0,1]$ and uniform separation condition.
	If $s_1, s_2\geq 0$ and
	the following relation holds
	\be
	l^{\fy}_{\frac{s_1}{1-\al}+s_2}\subset l^1\ \ \text{for}\ \al\in [0,1),
	\ee
	then $T_{\s,\Phi}\in \calL(\mif)$ for all $\s\in M^{\fy,1}_{v_{s_1,s_2}\otimes 1}$, satisfying 
	\be
	\|T_{\s,\Phi}\|_{\calL(\mif)}
	\lesssim 
	\|\s\|_{M^{\fy,1}_{v_{s_1,s_2}\otimes 1}}e^{C\sum_{|\g|=2}\|\partial^{\g}\Phi\|_{W^{\fy,\fy}_{1\otimes v_{d+\ep,d+\ep}}}}.
	\ee
\end{theorem}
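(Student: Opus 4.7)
The plan is to mirror the proof of Theorem~\ref{thm-FIO-Mfi}, substituting Proposition~\ref{pp-KFIO-Mif} and Lemma~\ref{lm-pd-c4} for their $c_3$-analogues, and tracking the full weight $v_{s_1,s_2}^{-1}(x,\xi)=\lan x\ran^{-s_1}\lan\xi\ran^{-s_2}$ rather than just $v_{s_1}^{-1}(x)$. By Proposition~\ref{pp-KFIO-Mif} the task reduces to the kernel estimate
\[
\|v_{s_1,s_2}^{-1}e^{2\pi i\Phi}\|_{M^{1,\fy,1,\fy}(c_4)}\lesssim e^{C\sum_{|\g|=2}\|\partial^{\g}\Phi\|_{W^{\fy,\fy}_{1\otimes v_{d+\ep,d+\ep}}}}.
\]
I would apply the same partition-of-unity splitting as in Theorem~\ref{thm-FIO-M1}, with $\G=\La\times\La$ chosen so that the supports $\text{supp}\,\eta_{k,l}^{*}$ are pairwise disjoint, factoring $\sum_{(k,l)}\eta_{k,l}e^{2\pi i\Phi}$ as a product of a linear-phase piece and a Taylor-remainder piece retaining only second-order derivatives of $\Phi$. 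The product inequality from Lemma~\ref{lm-pd-c4} passes the Taylor remainder into $W^{\fy,\fy}_{1\otimes v_{d+\ep,d+\ep}}$ (estimated verbatim as in Theorem~\ref{thm-FIO-M1}, producing the exponential factor) and leaves the quantity
\[
E_1:=\Big\|v_{s_1,s_2}^{-1}(x,\xi)\sum_{(k,l)\in\G}\eta_{k,l}^{*}(x,\xi)e^{2\pi i(\nabla_x\Phi(k,l)\cdot x+\nabla_\xi\Phi(k,l)\cdot\xi)}\Big\|_{M^{1,\fy,1,\fy}(c_4)}
\]
as the principal object to control.

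For the $E_1$ estimate I would use the STFT bound \eqref{thm-FIO-M1-1} together with the observation that the permutation $c_4(z_1,z_2,\z_1,\z_2)=(z_1,\z_1,\z_2,z_2)$ realises $\|\cdot\|_{M^{1,\fy,1,\fy}(c_4)}$ as the iterated norm $L^{1}_{z_1}L^{\fy}_{\z_2}L^{1}_{z_2}L^{\fy}_{\z_1}$. Proceeding from the innermost integral outward: $L^{1}_{z_1}$ absorbs $\lan z_1\ran^{-s_1}\chi_{B(k,r)}(z_1)$ into $\lan k\ran^{-s_1}$; $L^{\fy}_{\z_2}$ is handled by decoupling via the $x$-type uniform separation of $\{\nabla_{\xi}\Phi(k,l)\}_k$ at fixed $l$ (which bounds $\sum_k\lan\z_2-\nabla_{\xi}\Phi(k,l)\ran^{-\scrL}$ uniformly in $\z_2$), reducing the expression to $\sup_k\lan k\ran^{-s_1}\lan\z_1-\nabla_{x}\Phi(k,l)\ran^{-\scrL_\al}$. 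The $(\al,0,0)$-growth condition with the choice $\scrL_\al=s_1/(1-\al)$ then yields the uniform-in-$k$ transfer
\[
\lan k\ran^{-s_1}\lan\z_1-\nabla_{x}\Phi(k,l)\ran^{-\scrL_\al}\lesssim \lan\z_1-\nabla_{x}\Phi(0,l)\ran^{-\scrL_\al},
\]
exactly as in Theorem~\ref{thm-FIO-Mfi}. Finally $L^{1}_{z_2}$ converts $\lan z_2\ran^{-s_2}\chi_{B(l,r)}(z_2)$ into $\lan l\ran^{-s_2}$, so everything reduces to bounding the outer supremum
\[
\sup_{\z_1}\sum_{l\in\La}\lan l\ran^{-s_2}\lan\z_1-\nabla_{x}\Phi(0,l)\ran^{-\scrL_\al}.
\]

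The main obstacle will be controlling this final supremum with the \emph{sharp} embedding condition. The $\xi$-type uniform separation gives that $\{\nabla_{x}\Phi(0,l)\}_l$ is $\gtrsim 1$-separated in $\rd$, so by the standard separated-sum argument $\sum_l\lan\z_1-\nabla_{x}\Phi(0,l)\ran^{-\scrL_\al p}\lesssim 1$ uniformly in $\z_1$ whenever $\scrL_\al p>d$. Applying H\"older's inequality with conjugate exponents $(p,p')$ chosen so that $\scrL_\al p>d$ and $s_2 p'>d$ then produces the desired uniform bound; such a pair exists if and only if $\scrL_\al+s_2>d$, i.e., $s_1/(1-\al)+s_2>d$, which by Lemma~\ref{lm-exp-eb} is exactly the hypothesis $l^{\fy}_{s_1/(1-\al)+s_2}\subset l^1$. (The case $s_2=0$ requires only the direct separated-sum bound with $\scrL_\al>d$, and the case $\al=1$ is handled trivially from $s_1,s_2\geq 0$ since the growth-transfer step is unnecessary.) Combining the bound on $E_1$, the Taylor-remainder estimate, and the reduction from Proposition~\ref{pp-KFIO-Mif} delivers the advertised operator-norm inequality.
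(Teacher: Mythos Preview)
Your proposal is correct and follows essentially the same route as the paper's proof: reduction via Proposition~\ref{pp-KFIO-Mif}, the same partition-of-unity factorisation with the $E_2$-remainder passed to $W^{\fy,\fy}_{1\otimes v_{d+\ep,d+\ep}}$ via Lemma~\ref{lm-pd-c4}, and the same chain of estimates on $E_1$ culminating in the H\"older bound on $\sup_{\z_1}\sum_{l}\lan l\ran^{-s_2}\lan\z_1-\nabla_x\Phi(0,l)\ran^{-\scrL_\al}$ using the $\xi$-type separation. The only cosmetic difference is that the paper applies the growth-transfer $\lan k\ran^{-s_1}\lan\z_1-\nabla_x\Phi(k,l)\ran^{-\scrL_\al}\lesssim\lan\z_1-\nabla_x\Phi(0,l)\ran^{-\scrL_\al}$ \emph{before} invoking the $x$-type separated sum over $k$, whereas you take the $L^{\fy}_{\z_2}$-supremum first; both orderings yield the same bound.
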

\begin{proof}
    Using Proposition \ref{pp-KFIO-Mif}, we only need to verify that for 
	$s_1,s_2\geq 0$ and $l^{\fy}_{\frac{s_1}{1-\al}+s_2}\subset l^1$ with $\al\in [0,1)$, the following estimate is valid:
	\be
	\|v_{s_1,s_2}^{-1}e^{2\pi i\Phi}\|_{M^{1,\fy,1,\fy}(c_4)}\lesssim e^{C\sum_{|\g|=2}\|\partial^{\g}\Phi\|_{W^{\fy,\fy}_{1\otimes v_{d+\ep,d+\ep}}}}.
	\ee
	Using 
	the product inequality 
	\be
	M^{1,\fy,1,\fy}(c_4) \cdot W^{\fy,\fy}_{1\otimes v_{d+\ep,d+\ep}}(\rdd)\subset M^{1,\fy,1,\fy}(c_4),
	\ee
	and the similar argument as in the proof of Theorem \ref{thm-FIO-Mfi}, we only need to verify that
	\be
	E_1=\bigg\|v_{s_1,s_2}^{-1}(x,\xi)\sum_{(k,l)\in \G}\eta_{k,l}^{*}(x,\xi)e^{2\pi i(\nabla_{x} \Phi(k,l)\cdot x+\nabla_{\xi} \Phi(k,l)\cdot \xi)}\bigg\|_{M^{1,\fy,1,\fy}(c_4)}\lesssim 1.
	\ee	
	Using \eqref{thm-FIO-M1-1}, we find that
	\be
	\begin{split}
		E_1
		= &
		\|v_{s_1,s_2}^{-1}(z_1,z_2)\sum_{(k,l)\in \G}\scrV_{\Psi}\big(\eta_{k,l}^{*}(x,\xi)e^{2\pi i(\nabla_{x} \Phi(k,l)\cdot x+\nabla_{\xi} \Phi(k,l)\cdot \xi)}\big)(z,\z)\|_{L^{1,\fy,1,\fy}_{z_1,\z_2,z_2,\z_1}}
		\\
		\lesssim &
		\|v_{s_1,s_2}^{-1}(z_1,z_2)\sum_{(k,l)\in \G}\chi_{B(k,r)}(z_1)\chi_{B(l,r)}(z_2)\lan \z_1-\nabla_{x} \Phi(k,l)\ran^{-\scrL}\lan \z_2-\nabla_{\xi} \Phi(k,l)\ran^{-\scrL}\|_{L^{1,\fy,1,\fy}_{z_1,\z_2,z_2,\z_1}}    	
		\\
		\lesssim &
		\|v_{s_2}^{-1}(z_2)\sum_{k\in \La}v_{s_1}^{-1}(k)\sum_{l\in \La}\chi_{B(l,r)}(z_2)\lan \z_1-\nabla_{x} \Phi(k,l)\ran^{-\scrL}\lan \z_2-\nabla_{\xi} \Phi(k,l)\ran^{-\scrL}\|_{L^{\fy,1,\fy}_{\z_2,z_2,\z_1}}    	
	\end{split}
	\ee
	As in the proof of Theorem \ref{thm-FIO-Mfi}, we obtain 
	\be
	\begin{split}
		v_{s_1}^{-1}(k)\lan \z_1-\nabla_{x} \Phi(k,l)\ran^{-\scrL_{\al}}
		\lesssim &
		\lan \z_1-\nabla_{x} \Phi(0,l)\ran^{-\scrL_{\al}},
	\end{split}
	\ee
    where $\scrL_{\al}=d+\ep$ for $\al=1$ and $\scrL_{\al}=\frac{s_1}{1-\al}$ for $\al\in [0,1)$. 
	From this, $E_1$ can be dominated by
	\be
	\begin{split}
		E_1
		\lesssim &
		\|v_{s_2}^{-1}(z_2)\sum_{k\in \La}v_{s_1}^{-1}(k)\sum_{l\in \La}\chi_{B(l,r)}(z_2)\lan \z_1-\nabla_{x} \Phi(k,l)\ran^{-\scrL_{\al}}\lan \z_2-\nabla_{\xi} \Phi(k,l)\ran^{-\scrL_{\al}}\|_{L^{\fy,1,\fy}_{\z_2,z_2,\z_1}}    
		\\
		\lesssim &
		\|v_{s_2}^{-1}(z_2)\sum_{l\in \La}\chi_{B(l,r)}(z_2)\lan \z_1-\nabla_{x} \Phi(0,l)\ran^{-\scrL_{\al}}\sum_{k\in \La}\lan \z_2-\nabla_{\xi} \Phi(k,l)\ran^{-\scrL}\|_{L^{\fy,1,\fy}_{\z_2,z_2,\z_1}}    
				\\
		\lesssim &
		\|v_{s_2}^{-1}(z_2)\sum_{l\in \La}\chi_{B(l,r)}(z_2)\lan \z_1-\nabla_{x} \Phi(0,l)\ran^{-\scrL_{\al}}\|_{L^{1,\fy}_{z_2,\z_1}}    
		\\
		\lesssim &
		\|\sum_{l\in \La}v_{s_2}^{-1}(l)\lan \z_1-\nabla_{x} \Phi(0,l)\ran^{-\scrL_{\al}}\|_{L^{\fy}_{\z_1}}.    
	\end{split}
	\ee
	Then the desired estimate of $E_1$ follows by the claim
	\be
		\|\sum_{l\in \La}v_{s_2}^{-1}(l)\lan \z_1-\nabla_{x} \Phi(0,l)\ran^{-\scrL_{\al}}\|_{L^{\fy}_{\z_1}}\lesssim 1.
	\ee
	
	If $\al=1$, we have
	\be
	\sum_{l\in \La}v_{s_2}^{-1}(l)\lan \z_1-\nabla_{x} \Phi(0,l)\ran^{-\scrL_{\al}}
	\lesssim 
	\sum_{l\in \La}\lan \z_1-\nabla_{x} \Phi(0,l)\ran^{-(d+\ep)}\lesssim 1,\ \ \forall \z_1\in \rd,
	\ee
	where in the last inequality we use the separation property of $\Phi$.
	
	If $\al\in [0,1)$, we have $l^{\fy}_{\frac{s_1}{1-\al}+s_2}\subset l^1$. This is equivalent to $\frac{s_1}{1-\al}+s_2>d$.
	There exists suitable $r\in [1,\fy]$ such that
	\be
	\frac{s_1}{1-\al}>d/r,\ \ s_2>d/r'.
	\ee
	From this, we conclude that for any $\z_1\in \rd$, the following estimate holds:
	\be
	\begin{split}
			\sum_{l\in \La}v_{s_2}^{-1}(l)\lan \z_1-\nabla_{x} \Phi(0,l)\ran^{-\scrL_{\al}}
		= &
		\sum_{l\in \La}\lan l\ran^{-s_2}\lan \z_1-\nabla_{x} \Phi(0,l)\ran^{\frac{-s_1}{1-\al}}
		\\
		\leq &
		\|(\lan l\ran^{-s_2})_{l}\|_{l^{r'}}\|(\lan \z_1-\nabla_{x} \Phi(0,l)\ran^{\frac{-s_1}{1-\al}})_{l}\|_{l^{r}}\lesssim 1,
	\end{split}
	\ee
	where the final inequality follows from the separation property of $\Phi$.
	This completes the proof.
\end{proof}

\subsection{The proof of Theorem \ref{thm-FIO-Mpq}: sufficiency part}
The case of $\al=1$ or $p=q$ is obvious. We only give the proof for $\al\in [0,1)$ and $p\neq q$.

\textbf{Case 1: $p>q$, $\al\in [0,1)$.}
In this case, we have $\frac{s_1}{1-\al}>d(1/q-1/p)$ and $s_2\geq 0$.
The desired conclusion follows by a standard complex interpolation argument with the fact
that for all $\s\in \msj$ and $\Phi$ satisfies all the conditions in Theorem \ref{thm-FIO-Mpq},
\be
T_{\al,\Phi}\in \calL(M^{\fy,1}, M^{\fy,1}_{v^{-1}_{\frac{s_1}{1-\theta}}\otimes 1})\ \ \text{and}\ \ T_{\al,\Phi}\in \calL(M^r, M^r)
\ee
where
\be
\frac{1-\theta}{\fy}+\frac{\theta}{r}=\frac{1}{p},\ \ \frac{1-\theta}{1}+\frac{\theta}{r}=\frac{1}{q},\ \ 1-\theta=\frac{1}{q}-\frac{1}{p}.
\ee

\textbf{Case 2: $p<q$, $\al\in [0,1)$.}
In this case, we have $\frac{s_1}{1-\al}+s_2>d(1/p-1/q)$ and $s_1, s_2\geq 0$.
The desired conclusion follows by a standard complex interpolation argument with the fact
that for all $\s\in \msj$ and $\Phi$ satisfies all the conditions in Theorem \ref{thm-FIO-Mpq},
\be
T_{\al,\Phi}\in 
\calL(M^{1,\fy}_{1\otimes v_{\frac{s_2}{1-\theta}}}, M^{1,\fy}_{v^{-1}_{\frac{s_1}{1-\theta}}\otimes 1})\cap \calL(M^r, M^r),
\ee
where
\be
\frac{1-\theta}{1}+\frac{\theta}{r}=\frac{1}{p},\ \ \frac{1-\theta}{\fy}+\frac{\theta}{r}=\frac{1}{q},\ \ 1-\theta=\frac{1}{p}-\frac{1}{q}.
\ee

\subsection{The proof of Theorem \ref{thm-FIO-Mpq}: necessity part}

\begin{lemma}\label{lm-sep}
	Let $\al\in [0,1)$, $\mu(x)=\lan x\ran^{2-\al}$ and $k_{\al}=\lan k\ran^{\frac{\al}{1-\al}}k$.
	For any fixed sufficiently  small $r>0$,
	there exists a sufficiently large constant $R>0$ such that 
    \be
    |\nabla \mu(k_{\al})-(2-\al)k|<r,\ \ \ \ 
    B(\nabla \mu(k_{\al}), r)\cap B(\nabla \mu(l_{\al}), r)=\emptyset,\ \ \ \   k\neq l,\ |k|,|l|>R.
    \ee
\end{lemma}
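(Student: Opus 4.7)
The plan is to reduce both claims to a single asymptotic estimate for $\nabla\mu(k_\al)$ as $|k|\to\fy$, and then to conclude the separation via the triangle inequality together with the discreteness of $\zd$.

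First, I would compute $\nabla\mu$ explicitly. Since $\mu(x)=(1+|x|^2)^{(2-\al)/2}$, one has
\be
\nabla\mu(x)=(2-\al)\lan x\ran^{-\al}x.
\ee
Substituting $x=k_\al=\lan k\ran^{\al/(1-\al)}k$ gives
\be
\nabla\mu(k_\al)=(2-\al)\lan k_\al\ran^{-\al}\lan k\ran^{\al/(1-\al)}k,
\ee
so
\be
\nabla\mu(k_\al)-(2-\al)k=(2-\al)k\bigl(\lan k_\al\ran^{-\al}\lan k\ran^{\al/(1-\al)}-1\bigr).
\ee
The task is thus to show that the scalar factor $A(k):=\lan k_\al\ran^{-\al}\lan k\ran^{\al/(1-\al)}-1$ decays faster than $|k|^{-1}$ as $|k|\to\fy$.

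Second, I would obtain the needed asymptotic for $A(k)$ from the identity $\lan k_\al\ran^2=1+\lan k\ran^{2\al/(1-\al)}|k|^2$. Factoring out the dominant term yields
\be
\lan k_\al\ran=\lan k\ran^{\al/(1-\al)}|k|\bigl(1+\lan k\ran^{-2\al/(1-\al)}|k|^{-2}\bigr)^{1/2}=\lan k\ran^{\al/(1-\al)}|k|\bigl(1+O(|k|^{-2/(1-\al)})\bigr).
\ee
Raising to the power $-\al$ and multiplying by $\lan k\ran^{\al/(1-\al)}$ produces (after observing $\al/(1-\al)-\al^2/(1-\al)=\al$)
\be
\lan k_\al\ran^{-\al}\lan k\ran^{\al/(1-\al)}=\lan k\ran^{\al}|k|^{-\al}\bigl(1+O(|k|^{-2/(1-\al)})\bigr)=1+O(|k|^{-2}),
\ee
using $\lan k\ran^\al|k|^{-\al}=(1+|k|^{-2})^{\al/2}=1+O(|k|^{-2})$. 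Plugging back, one finds
\be
|\nabla\mu(k_\al)-(2-\al)k|\leq (2-\al)|k|\cdot|A(k)|=O(|k|^{-1}).
\ee
This immediately gives the first assertion: for any fixed $r>0$, picking $R$ large enough forces $|\nabla\mu(k_\al)-(2-\al)k|<r$ whenever $|k|>R$.

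Third, for the separation property I would use the triangle inequality. For $k\neq l$ in $\zd$ one has $|k-l|\geq 1$, hence $|(2-\al)(k-l)|\geq 2-\al$. Combining with the first estimate yields, for $|k|,|l|>R$,
\be
|\nabla\mu(k_\al)-\nabla\mu(l_\al)|\geq (2-\al)|k-l|-|\nabla\mu(k_\al)-(2-\al)k|-|\nabla\mu(l_\al)-(2-\al)l|\geq (2-\al)-2r.
\ee
Choosing $r$ so small that $(2-\al)-2r>2r$, i.e.\ $r<(2-\al)/4$, gives $|\nabla\mu(k_\al)-\nabla\mu(l_\al)|>2r$, which is exactly the disjointness $B(\nabla\mu(k_\al),r)\cap B(\nabla\mu(l_\al),r)=\emptyset$.

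The only mildly delicate step is tracking the exponents in the asymptotic expansion of $\lan k_\al\ran^{-\al}\lan k\ran^{\al/(1-\al)}$ and confirming that the leading order cancels exactly to give $1$; after this algebraic cancellation, everything reduces to elementary Taylor estimates and the triangle inequality, so no real analytic obstacle remains.
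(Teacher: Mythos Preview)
Your proof is correct and follows essentially the same approach as the paper: compute $\nabla\mu(k_\al)$ explicitly, show the scalar factor $\lan k_\al\ran^{-\al}\lan k\ran^{\al/(1-\al)}-1$ is $O(|k|^{-2})$ (the paper does this via the elementary bound $x-1\lesssim \ln x$ rather than your Taylor expansion, but the outcome is the same), and deduce the separation by the triangle inequality. The only cosmetic difference is that the paper phrases the disjointness step as the containment $B(\nabla\mu(k_\al),r)\subset B((2-\al)k,2r)$, whereas you argue directly with distances; both are equivalent.
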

\begin{proof}
	A direct calculation yields that $\nabla \mu(x)=(2-\al)\lan x\ran^{-\al}x$.
	Thus,
	\be
	\nabla \mu(k_{\al})=(2-\al)\lan k_{\al}\ran^{-\al}k_{\al}=(2-\al)\lan \lan k\ran^{\frac{\al}{1-\al}}k\ran^{-\al}\lan k\ran^{\frac{\al}{1-\al}}k.
	\ee
	Note that
	\be
	\begin{split}
		\lan \lan k\ran^{\frac{\al}{1-\al}}k\ran^{\al}
		=
		\big(1+\lan k\ran^{\frac{2\al}{1-\al}}|k|^2\big)^{\frac{\al}{2}}
		\leq 
		\big(\lan k\ran^{\frac{2\al}{1-\al}}\lan k\ran^2\big)^{\frac{\al}{2}}=\lan k\ran^{\frac{\al}{1-\al}}.
	\end{split}
	\ee
	From this, we conclude that
	\be
	\begin{split}
		|\lan \lan k\ran^{\frac{\al}{1-\al}}k\ran^{-\al}\lan k\ran^{\frac{\al}{1-\al}}-1|
		= &
		\lan \lan k\ran^{\frac{\al}{1-\al}}k\ran^{-\al}\lan k\ran^{\frac{\al}{1-\al}}-1
		\\
		\leq &
		|k|^{\frac{-\al}{1-\al}}\lan k\ran^{\frac{\al}{1-\al}}-1
		\\
		\leq &
		C\ln(|k|^{\frac{-\al}{1-\al}}\lan k\ran^{\frac{\al}{1-\al}})
        =
        \frac{C \al}{1-\al}\ln((1+|k|^{-2})^{\frac{1}{2}})
        \\
        = &
        \frac{C \al}{2(1-\al)}\ln(1+|k|^{-2})\leq \frac{C \al}{1-\al}|k|^{-2}.
	\end{split}
	\ee
	Thus, 
	\be
	\begin{split}
		|\nabla \mu(k_{\al})-(2-\al)k|
		= &
		(2-\al)|\lan \lan k\ran^{\frac{\al}{1-\al}}k\ran^{-\al}\lan k\ran^{\frac{\al}{1-\al}}-1|\cdot |k|
		\\
		\leq &
		\frac{C\al(2-\al)}{1-\al}|k|^{-1}.
	\end{split}
	\ee
	For sufficiently large $R$, we have the following estimate for $|x|>R$:
	\be
	|\nabla \mu(k_{\al})-(2-\al)k|<r,
	\ee
	where $r$ is a fixed sufficiently small constant. From this, we further obtain that
	\be
	B(\nabla \mu(k_{\al}), r)\subset B((2-\al)k, 2r).
	\ee
	Therefore, for $k\neq l$ and sufficiently small $r$, we conclude that
	\be
	B(\nabla \mu(k_{\al}), r)\cap B(\nabla \mu(l_{\al}), r)\subset B((2-\al)k, 2r)\cap B((2-\al)l, 2r)=\emptyset.
	\ee
\end{proof}

\begin{lemma}[Estimates of specific functions]\label{lm-spf}
	Let $\al\in [0,1)$, $\mu(x)=\lan x\ran^{2-\al}$.
	For every fixed nonnegative truncated (only finite nonzero items) sequence $(a_k)_{k\in \zd}$, there
	exists a Schwartz function $F$ such that the following two estimates hold
	\bn
	\item
	$\|F\|_{M^{p,q}_{v_{s_1,s_2}}}\sim \|(a_k)_k\|_{l^p_{\frac{s_1}{1-\al}}}$
	\item
	$\|e^{2\pi i\mu}F\|_{M^{p,q}_{v_{s_1,s_2}}}\sim \|(a_k)_k\|_{l^q_{\frac{s_1}{1-\al}+s_2}}$
	\en
\end{lemma}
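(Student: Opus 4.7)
The natural construction is
\[
F(x)=\sum_k a_k\,\phi(x-k_{\al}),
\]
where $\phi\in C_c^{\fy}(\rd)$ is a fixed nonnegative real-valued bump with $\text{supp}\,\phi\subset B(0,r)$ for some sufficiently small $r>0$ (chosen once, independently of the sequence), and $k_{\al}=\lan k\ran^{\al/(1-\al)}k$ as in Lemma~\ref{lm-sep}. Since $(a_k)$ has finite support, $F\in\calS(\rd)$. The heuristic is that $V_{\phi}F$ concentrates in $\rdd$ near the phase-space points $\{(k_{\al},0)\}_k$, whereas $V_{\phi}(e^{2\pi i\mu}F)$ concentrates near $\{(k_{\al},\nabla\mu(k_{\al}))\}_k$ with $\nabla\mu(k_{\al})\approx (2-\al)k$ by Lemma~\ref{lm-sep}. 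The weights $\lan x\ran^{s_1}$ and $\lan\xi\ran^{s_2}$ therefore register $\lan k_{\al}\ran^{s_1}\sim\lan k\ran^{s_1/(1-\al)}$ and $\lan\nabla\mu(k_{\al})\ran^{s_2}\sim \lan k\ran^{s_2}$ respectively, which explains the different $\ell^p$ and $\ell^q$ weighted norms in (1) and (2).

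For (1), take the window in Definition~\ref{df-M} to be $\phi$ itself. Then $V_{\phi}F(x,\xi)=\sum_k a_k e^{-2\pi i k_{\al}\cdot\xi}V_{\phi}\phi(x-k_{\al},\xi)$, and the ambiguity function $V_{\phi}\phi(\cdot,\xi)$ is supported in $B(0,2r)$. Since $k\mapsto k_{\al}$ is injective and expanding on $\zd$, with $\inf_{k\neq l}|k_{\al}-l_{\al}|\geq 1$ for $\al\in[0,1)$, choosing $r<1/4$ makes the $x$-supports of the summands pairwise disjoint. Consequently
\[
|V_{\phi}F(x,\xi)|^p=\sum_k a_k^p\,|V_{\phi}\phi(x-k_{\al},\xi)|^p
\]
pointwise; the $L^p_x$-integral against $\lan x\ran^{ps_1}$ decouples, using $\lan x\ran^{s_1}\sim\lan k_{\al}\ran^{s_1}$ on the $k$-th piece, and the remaining $L^q_{\xi}$-norm of the common Schwartz profile $V_{\phi}\phi(\cdot,\xi)$ with weight $\lan\xi\ran^{s_2}$ is a fixed finite constant. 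Both upper and lower bounds yield (1).

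For (2), Taylor-expand $\mu(k_{\al}+s)=\mu(k_{\al})+\nabla\mu(k_{\al})\cdot s+R_k(s)$. The remainders $R_k$ have $C^{\fy}$ seminorms on $B(0,r)$ bounded uniformly in $k$: since $|\partial^{\g}\mu(x)|\lesssim\lan x\ran^{2-\al-|\g|}\leq 1$ for $|\g|\geq 2$ and $\al\in[0,1)$, the $|\g|\geq 2$ derivatives of $R_k$ inherit uniform bounds. Setting $\tilde\phi_k(t)=e^{2\pi iR_k(t)}\phi(t)$, one factors
\[
e^{2\pi i\mu(t)}\phi(t-k_{\al})=c_k\,M_{\nabla\mu(k_{\al})}T_{k_{\al}}\tilde\phi_k(t),\qquad |c_k|=1,
\]
and by the same $x$-disjointness
\[
|V_{\phi}(e^{2\pi i\mu}F)(x,\xi)|=\sum_k a_k\,|V_{\phi}\tilde\phi_k(x-k_{\al},\xi-\nabla\mu(k_{\al}))|.
\]
Integrating in $x$ against $\lan x\ran^{ps_1}$ produces $\sum_k a_k^p\lan k_{\al}\ran^{ps_1}\|V_{\phi}\tilde\phi_k(\cdot,\xi-\nabla\mu(k_{\al}))\|_{L^p}^p$. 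Taking the $L^q_{\xi}$-norm with weight $\lan\xi\ran^{qs_2}$, and invoking Lemma~\ref{lm-sep} (which delivers $|\nabla\mu(k_{\al})-\nabla\mu(l_{\al})|\gtrsim 1$ for distinct $k,l$ with $|k|,|l|$ large, together with $\lan\nabla\mu(k_{\al})\ran\sim\lan k\ran$) and the uniform Schwartz decay of $V_{\phi}\tilde\phi_k$, yields $\|(a_k\lan k_{\al}\ran^{s_1}\lan\nabla\mu(k_{\al})\ran^{s_2})\|_{\ell^q}\sim\|(a_k)\|_{\ell^q_{s_1/(1-\al)+s_2}}$, establishing (2).

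The main obstacle is controlling the $\xi$-cross-interference in part (2): unlike the clean $x$-disjointness, the profiles $V_{\phi}\tilde\phi_k(\cdot,\xi-\nabla\mu(k_{\al}))$ are Schwartz but not compactly supported, so their tails do overlap across distinct $k$. Lemma~\ref{lm-sep} is what rescues us, providing uniform separation of the frequency centers $\nabla\mu(k_{\al})$ for large $|k|$, so that summing Schwartz tails in $\ell^q$ reproduces the sharp power-weighted bound rather than an inflated one. The finitely many small-$|k|$ exceptions from Lemma~\ref{lm-sep} contribute a bounded amount absorbed into the equivalence constants, and the $k$-dependence of $\tilde\phi_k$ is tamed uniformly by the decay of the higher derivatives of $\mu$.
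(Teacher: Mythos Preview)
Your construction and overall strategy are correct and coincide with the paper's: the same function $F=\sum_k a_k\phi(\cdot-k_{\al})$, the same $x$-disjointness argument for (1), and the same use of Lemma~\ref{lm-sep} to control the $\xi$-overlap in (2). The one organizational difference is that the paper, instead of carrying $k$-dependent bumps $\tilde\phi_k=e^{2\pi iR_k}\phi$, factors globally as $e^{2\pi i\mu}F=m\cdot G$ with $m=\sum_k h_k^{\al,*}e^{2\pi i(\mu-\nabla\mu(k_{\al})\cdot x)}\in\calC^N\subset M^{\fy,1}_{1\otimes v_{|s_2|}}$ and $G=\sum_k a_kM_{\nabla\mu(k_{\al})}T_{k_{\al}}\phi$; the product inequality $M^{p,q}_{v_{s_1,s_2}}\cdot M^{\fy,1}_{1\otimes v_{|s_2|}}\subset M^{p,q}_{v_{s_1,s_2}}$ then gives $\|e^{2\pi i\mu}F\|\sim\|G\|$ in one stroke, reducing (2) to the same computation as (1) with modulations adjoined. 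Your route via uniform Schwartz bounds on $V_{\phi}\tilde\phi_k$ reaches the same destination but requires tracking the $k$-dependence through the $\ell^p\to\ell^q$ step; the paper's factorization is cleaner precisely because $G$ has a \emph{fixed} profile and the passage from $(\sum_k\cdots)^{1/p}$ to $\sup_k$ to $(\sum_k\cdots)^{1/q}$ (using $\sum_k\lan\xi-\nabla\mu(k_{\al})\ran^{-\scrL}\lesssim 1$) is then transparent.
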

\begin{proof}
Let $h$ be a $C_c^{\fy}(\rd)$ real-valued function supported on $B(0,\d)$ with sufficiently small $\d>0$.
Denote $k_{\al}=\lan k\ran^{\frac{\al}{1-\al}}k$ and $h_k^{\al}(x)=h(x-k_{\al})$.
For any nonnegative truncated (only finite nonzero items) sequence $(a_k)_{k\in \zd}$, we set 
\be
F=\sum_{k}a_kh_k^{\al}.
\ee
Choose $\va\in C_c^{\fy}(\rd)$ as a window function supported on $B(0,\d)$.
We find that
\be
|V_{\va}h_k^{\al}(x,\xi)|=|V_{\va}h(x-k_{\al},\xi)|=|V_{\va}h(x-k_{\al},\xi)\chi_{B(k_{\al},2\d)}(x)|.
\ee
A direct calculation yields that
\be
\begin{split}
	\|F\|_{M^{p,q}_{v_{s_1,s_2}}}
	= &
	\big\|\sum_{k}a_kV_{\va}h_k^{\al}(x,\xi)\big\|_{L^{p,q}_{v_{s_1,s_2}}}
	\\
	\sim &
	\big\|\big(\sum_k a_k^p\big\|V_{\va}h_k^{\al}(x,\xi)\big\|^p_{L^p_{v_{s_1}}}\big)^{1/p}\big\|_{L^q_{v_{s_2}}}
		\\
	= &
	\big\|\big(\sum_k a_k^p\big\|V_{\va}h(x-k_{\al},\xi)\chi_{B(k_{\al},2\d)}(x)\big\|^p_{L^p_{v_{s_1}}}\big)^{1/p}\big\|_{L^q_{v_{s_2}}}
	\\
	\sim &
	\big\|\big(\sum_k a_k^pv_{s_1}(k_{\al})^p\|V_{\va}h(x-k_{\al},\xi)\chi_{B(k_{\al},2\d)}(x)\big\|^p_{L^p}\big)^{1/p}\big\|_{L^q_{v_{s_2}}}
	\\
	= &
	\big\|\big(\sum_k a_k^pv_{s_1}(k_{\al})^p\big\|V_{\va}h(x,\xi)\big\|^p_{L^p}\big)^{1/p}\big\|_{L^q_{v_{s_2}}}\sim\|(a_k)_{k\in \zd}\|_{l^p_{\frac{s_1}{1-\al}}}.
\end{split}
\ee
Next, we turn to the estimate of $\|e^{2\pi i\mu}F\|_{M^{p,q}_{v_{s_1,s_2}}}$. 
Let $h^*$ be a $C_c^{\fy}(\rd)$ real-valued function supported on $B(0,2\d)$, satisfying $h^*\equiv 1$ on $\text{supp}h$.
Denote $h_k^{\al,*}(x)=h^*(x-k_{\al})$.
Write
\be
\begin{split}
	e^{2\pi i\mu}F
	=
	\sum_{k}h_k^{\al,*}e^{2\pi i (\mu-\nabla \mu(k_{\al})x)}\cdot 
	\sum_k a_kh_k^{\al}e^{2\pi i\nabla \mu(k_{\al})x}= :m \cdot G.
\end{split}
\ee
A direct calculation yields that $m\in \calC^N\subset \msj_{1\otimes v_{|s_2|}}$, where
\be
\calC_N :=\{g\in C^{\fy}:\ \ |\partial^{\g}g|\leq C_{\g}\ \text{for all}\ \g\}.
\ee
From this and the product inequality $M^{p,q}_{v_{s_1,s_2}}\cdot \msj_{1\otimes v_{|s_2|}}\subset M^{p,q}_{v_{s_1,s_2}}$
\be
\|e^{2\pi i\mu}F\|_{M^{p,q}_{v_{s_1,s_2}}}
\lesssim 
\|m\|_{\msj_{1\otimes v_{|s_2|}}}\|G\|_{M^{p,q}_{v_{s_1,s_2}}}\lesssim \|G\|_{M^{p,q}_{v_{s_1,s_2}}}.
\ee
On the other hand, we write
\be
G=\sum_k a_kh_k^{\al}e^{2\pi i\nabla \mu(k_{\al})x}
=\overline{\sum_{k}h_k^{\al,*}e^{2\pi i (\mu-\nabla \mu(k_{\al})x)}}e^{2\pi i\mu}F=\overline{m}e^{2\pi i\mu}F
\ee
which implies that
\be
\|G\|_{M^{p,q}_{v_{s_1,s_2}}}=\|\overline{m}e^{2\pi i\mu}F\|_{M^{p,q}_{v_{s_1,s_2}}}
\lesssim
\|\overline{m}\|_{\msj_{1\otimes v_{|s_2|}}}
\|e^{2\pi i\mu}F\|_{M^{p,q}_{v_{s_1,s_2}}}
\lesssim
\|e^{2\pi i\mu}F\|_{M^{p,q}_{v_{s_1,s_2}}}.
\ee
Finally, we turn to the estimate of $\|G\|_{M^{p,q}_{v_{s_1,s_2}}}$. Note that
\be
|V_{\va}(M_{\nabla\mu(k_{\al})}h_k^{\al})(x,\xi)|=|V_{\va}h(x-k_{\al},\xi-\nabla\mu(k_{\al}))|=|V_{\va}h(x-k_{\al},\xi-\nabla\mu(k_{\al}))\chi_{B(k_{\al},2\d)}(x)|.
\ee
We have
\be
\begin{split}
	|V_{\va}G(x,\xi)|
	\leq 
	\sum_k a_k|V_{\va}(M_{\nabla\mu(k_{\al})}h_k^{\al})(x,\xi)|
	\lesssim 
	\sum_k a_k\chi_{B(k_{\al},2\d)}(x)\lan \xi-\nabla\mu(k_{\al})\ran^{-\scrL}.
\end{split}
\ee
Then the upper estimate of $\|G\|_{M^{p,q}_{v_{s_1,s_2}}}$ follows by
\be
\begin{split}
	\|G\|_{M^{p,q}_{v_{s_1,s_2}}}
	\lesssim &
	\big\|\sum_k a_k\chi_{B(k_{\al},2\d)}(x)\lan \xi-\nabla\mu(k_{\al})\ran^{-\scrL}\big\|_{L^{p,q}_{v_{s_1,s_2}}}
	\\
	\lesssim &
	\big\|\big(\sum_k a_k^p \big\|\chi_{B(k_{\al},2\d)}(x)\lan \xi-\nabla\mu(k_{\al})\ran^{-\scrL}\big\|^p_{L^p_{v_{s_1}}}\big)^{1/p}\big\|_{L^q_{v_{s_2}}}
		\\
	\lesssim &
	\big\|\big(\sum_k a_k^p v_{s_1}(k_{\al})^p \lan \xi-\nabla\mu(k_{\al})\ran^{-\scrL}\big)^{1/p}\big\|_{L^q_{v_{s_2}}}
	\\
	\lesssim &
		\big\|\sup_k a_k v_{s_1}(k_{\al}) \lan \xi-\nabla\mu(k_{\al})\ran^{-\scrL}\big\|_{L^q_{v_{s_2}}},
\end{split}
\ee
where in the last inequality we use Lemma \ref{lm-sep} and the fact
\be
\sum_k\lan \xi-\nabla\mu(k_{\al})\ran^{-\scrL}\lesssim 1,\ \ \ \text{uniformly for all}\ \xi\in \rd.
\ee
We continue the estimate by
\be
\begin{split}
	\|G\|_{M^{p,q}_{v_{s_1,s_2}}}
	\lesssim &
	\big\|\sup_k a_k v_{s_1}(k_{\al}) \lan \xi-\nabla\mu(k_{\al})\ran^{-\scrL}\big\|_{L^q_{v_{s_2}}}
	\\
	= &
	\big\|\sup_k a_k v_{s_1}(k_{\al})v_{s_2}(\xi) \lan \xi-\nabla\mu(k_{\al})\ran^{-\scrL}\big\|_{L^q}
		\\
	\lesssim &
	\big\|\sup_k a_k v_{s_1}(k_{\al})v_{s_2}(\nabla\mu(k_{\al})) v_{|s_2|}(\xi-\nabla\mu(k_{\al}))\lan \xi-\nabla\mu(k_{\al})\ran^{-\scrL}\big\|_{L^q}
			\\
	\lesssim &
	\big\|\sup_k a_k v_{s_1}(k_{\al})v_{s_2}(\nabla\mu(k_{\al})) \lan \xi-\nabla\mu(k_{\al})\ran^{-\scrL}\big\|_{L^q}
				\\
	\lesssim &
	\big\|\big(\sum_k a_k^q v_{s_1}(k_{\al})^qv_{s_2}(\nabla\mu(k_{\al}))^q \lan \xi-\nabla\mu(k_{\al})\ran^{-\scrL}\big)^{1/q}\big\|_{L^q}
	\\
	\lesssim &
	\big(\sum_k a_k^q v_{s_1}(k_{\al})^qv_{s_2}(\nabla\mu(k_{\al}))^q \big)^{1/q}.
\end{split}
\ee
Using Lemma \ref{lm-sep} and $k_{\al}=\lan k\ran^{\frac{\al}{1-\al}}k$, we have
\begin{equation*}
    v_{s_1}(k_{\al})\sim \lan k\ran^{\frac{s_1}{1-\al}}=v_{\frac{s_1}{1-\al}}(k),
    \ \ 
    v_{s_2}(\nabla\mu(k_{\al}))\sim v_{s_2}((2-\al)k)\sim v_{s_2}(k).
\end{equation*}
Then, the upper estimate of $\|G\|_{M^{p,q}_{v_{s_1,s_2}}}$ follows by
\begin{align*}
    \|G\|_{M^{p,q}_{v_{s_1,s_2}}}
    \lesssim
    \big(\sum_k a_k^q v_{s_1}(k_{\al})^qv_{s_2}(\nabla\mu(k_{\al}))^q \big)^{1/q}\sim \|(a_k)_k\|_{l^q_{\frac{s_1}{1-\al}+s_2}}.
\end{align*}

On the other hand, then the lower estimate of $\|G\|_{M^{p,q}_{v_{s_1,s_2}}}$ follows by
\be
\begin{split}
	\|G\|_{M^{p,q}_{v_{s_1,s_2}}}
	\sim &
	\big\|\big(\sum_k a_k^p\big\|V_{\va}h(x-k_{\al},\xi-\nabla\mu(k_{\al}))\chi_{B(k_{\al},2\d)}(x)\big\|^p_{L^{p}_{v_{s_1}}}\big)^{1/p}\big\|_{L^q_{v_{s_2}}}
	\\
	\sim &
	\big\|\big(\sum_k a_k^pv_{s_1}(k_{\al})^p\big\|V_{\va}h(x,\xi-\nabla\mu(k_{\al}))\big\|^p_{L^{p}}\big)^{1/p}\big\|_{L^q_{v_{s_2}}}
	\\
	\gtrsim &
	\big(\sum_k \big\|a_kv_{s_1}(k_{\al}) \big\|V_{\va}h(x,\xi-\nabla\mu(k_{\al}))\big\|_{L^{p}}\chi_{B(\nabla\mu(k_{\al}),\d)}(\xi)\big\|^q_{L^q_{v_{s_2}}}\big)^{1/q}
	\\
	\sim &
	\big(\sum_k a_k^qv_{s_1}(k_{\al})^qv_{s_2}(\nabla\mu(k_{\al}))^q
	\big\| \big\|V_{\va}h(x,\xi-\nabla\mu(k_{\al}))\big\|_{L^{p}}\chi_{B(\nabla\mu(k_{\al}),\d)}(\xi)\big\|^q_{L^q}\big)^{1/q}
	\\
	= &
		\big(\sum_k a_k^qv_{s_1}(k_{\al})^qv_{s_2}(\nabla\mu(k_{\al}))^q
	\big\| \big\|V_{\va}h(x,\xi)\big\|_{L^{p}}\chi_{B(0,\d)}(\xi)\big\|^q_{L^q}\big)^{1/q}
	\\
	\sim &
	\big(\sum_k a_k^qv_{s_1}(k_{\al})^qv_{s_2}(\nabla\mu(k_{\al}))^q\big)^{1/q}\sim \|(a_k)_k\|_{l^q_{\frac{s_1}{1-\al}+s_2}}.
\end{split}
\ee
The desired estimate of $\|G\|_{M^{p,q}_{v_{s_1,s_2}}}$ follows by the above upper and lower estimates.
\end{proof}

Now, we are in a position to prove the necessity part of Theorem \ref{thm-FIO-Mpq}. Take the phase function
\be
\Phi(x,\xi)=\lan x\ran^{2-\al}+x\cdot\xi= :\mu(x)+x\cdot\xi.
\ee
Note that $\Phi$ satisfies all the assumptions in the statement (1) of Theorem \ref{thm-FIO-Mpq}. We have
\be
T_{\s,\Phi}\in \calL(\mpq)\ \ \text{for all}\ \ \s\in \msj_{v_{s_1,s_2}}.
\ee
This is equivalent to 
\be
T_{\s,\Phi}\in \calL(\mpq_{1\otimes v_{s_2}},\mpq_{v_{s_1}^{-1}\otimes 1})\ \ \text{for all}\ \ \s\in \msj.
\ee
From this and the fact $1\in \msj$, we obtain that
\be
T_{1,\Phi}\in \calL(\mpq_{1\otimes v_{s_2}},\mpq_{v_{s_1}^{-1}\otimes 1}).
\ee
Note that for $f\in \calS$,
\be
T_{1,\Phi}f(x)=\int_{\rdd}\hat{f}(\xi)e^{2\pi i\Phi(x,\xi)}d\xi=e^{2\pi i\mu(x)}\int_{\rdd}\hat{f}(\xi)e^{2\pi ix\cdot\xi}d\xi=e^{2\pi i\mu(x)}f(x).
\ee
We deduce that for $f\in \calS$,
\be
\|e^{2\pi i\mu}f\|_{\mpq_{v_{s_1}^{-1}\otimes 1}}\lesssim \|f\|_{\mpq_{1\otimes v_{s_2}}}.
\ee
This further implies that for $f\in \calS$,
\be
\|e^{2\pi i\mu}f\|_{\mpq}\lesssim \|f\|_{\mpq_{v_{s_1}\otimes v_{s_2}}}\ \ \text{and}\ \ \ \|f\|_{\mpq}\lesssim \|e^{2\pi i\mu}f\|_{\mpq_{v_{s_1}\otimes v_{s_2}}}.
\ee
Using this and Lemma \ref{lm-spf}, for any truncated sequence $(a_k)_k$
we obtain
\be
\|(a_k)_k\|_{l^q}\lesssim \|(a_k)_k\|_{l^p_{\frac{s_1}{1-\al}}} \ \ \ \text{and}\ \ \ 
\|(a_k)_k\|_{l^p}\lesssim \|(a_k)_k\|_{l^q_{\frac{s_1}{1-\al}+s_2}},
\ee
which implies the desired embedding relations for $\al\in [0,1)$
\be
l^p_{\frac{s_1}{1-\al}}\subset l^q,\ \ \ \ l^q_{\frac{s_1}{1-\al}+s_2}\subset l^p.
\ee
Moreover, take $f=M_kh$ with $h\in \calS\bs \{0\}$, we obtain that
\be
\begin{split}
	\|e^{2\pi i\mu}h\|_{\mpq_{v_{s_1}^{-1}\otimes 1}}
	= &
	\|e^{2\pi i\mu}M_kh\|_{\mpq_{v_{s_1}^{-1}\otimes 1}}
	\\
	\lesssim &
	\|M_kh\|_{\mpq_{1\otimes v_{s_2}}}
	=
	\|V_{\va}h(x,\xi-k)v_{s_2}(\xi)\|_{L^{p,q}}
	\\
	\lesssim &
	\|V_{\va}h(x,\xi-k)v_{|s_2|}(\xi-k)\|_{L^{p,q}}v_{s_2}(k)\sim v_{s_2}(k),
\end{split}
\ee
which implies that $s_2\geq 0$.

If $\al=1$, the desired condition $s_1, s_2\geq 0$ follows by
\be
\|f\|_{\mpq}
=
\|e^{-2\pi i\mu}e^{2\pi i\mu}f\|_{\mpq}
\lesssim 
\|e^{2\pi i\mu}f\|_{\mpq}\lesssim \|f\|_{\mpq_{v_{s_1}\otimes v_{s_2}}}.
\ee
Here, we use the boundedness $e^{i\mu(D)}\in \calL(W^{q,p})$ (see \cite[Corollary 1.7]{GuoZhao2020JFA}) with the fact $W^{q,p}=\scrF M^{p,q}$.

\subsection{An improvement of the boundedness of $e^{i\mu(D)}$ on $W^{p,q}$}

As an application of Theorem \ref{thm-FIO-Mpq}, we give an improvement of the boundedness on Wiener amalgam spaces
of unimodular Fourier multipliers.

\begin{theorem}\label{thm-UFO-W}
    Let $1\leq p,q\leq \infty$.
  Let $\mu$ be a real-valued $C^2(\rd)$ function satisfying
  \be
  |\nabla \mu(\xi)|\lesssim \lan \xi\ran^{1-\al}
  \ \
  \partial^{\g}\mu \in W^{\infty,\fy}_{1\otimes v_{d+\ep}}\ (|\g|=2),
  \ee
  for some $\ep>0$, $\al\in [0,1]$.
  If $s_1,s_2\geq 0$, and 
  the following conditions hold for $\al \in [0,1)$:
  \be
  s_2>d(1-\al)(1/p-1/q)\ \text{if}\ 1/p>1/q,\ \ s_2+(1-\al)s_1>d(1-\al)(1/q-1/p)\ \text{if}\ 1/q>1/p,
  \ee
  we have the boundedness
  \be
  e^{i\mu(D)}: W^{p,q}_{v_{s_1,s_2}} \rightarrow \wpq
  \ee
  with
  \be
  \|e^{i\mu(D)}\|_{\calL(W^{p,q}_{v_{s_1,s_2}}, \wpq)}\lesssim e^{\sum_{|\g|=2}\|\partial^{\g}\mu\|_{W^{\fy,\fy}_{1\otimes v_{d+\ep}}}}.
  \ee
\end{theorem}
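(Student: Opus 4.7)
The strategy is to reduce the boundedness of the Fourier multiplier $e^{i\mu(D)}$ on weighted Wiener amalgam spaces to the boundedness of multiplication by $e^{i\mu(x)}$ on weighted modulation spaces via Fourier duality, and then invoke Theorem \ref{thm-FIO-Mpq}. The fundamental identity $V_g f(x,\xi) = e^{-2\pi ix\cdot\xi}V_{\hat g}\hat f(\xi,-x)$ together with the change of variables $(x,\xi)\mapsto(-\xi,x)$ in the STFT yields an isometric isomorphism $\scrF: W^{p,q}_{v_{s_1,s_2}}(\rd) \to M^{q,p}_{v_{s_2,s_1}}(\rd)$ (the weight indices get swapped because the roles of position and frequency in the STFT are interchanged under the Fourier transform). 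Since $e^{i\mu(D)} = \scrF^{-1}\circ M_{e^{i\mu}}\circ \scrF$, where $M_{e^{i\mu}}$ denotes multiplication by $e^{i\mu(x)}$, the theorem reduces to proving the bound $\|e^{i\mu(x)}g\|_{M^{q,p}} \lesssim \|g\|_{M^{q,p}_{v_{s_2,s_1}}}$ with the stated exponential factor.

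Next, I realize multiplication by $e^{i\mu(x)}$ as the FIO $T_{\s_0,\Phi}$, where $\Phi(x,\xi) = x\cdot\xi + (2\pi)^{-1}\mu(x)$ and $\s_0(x,\xi) = \lan x\ran^{-s_2}\lan\xi\ran^{-s_1}$; a direct computation gives $T_{\s_0,\Phi}f(x) = e^{i\mu(x)}\lan x\ran^{-s_2}(\lan D\ran^{-s_1}f)(x)$. The hypotheses of Theorem \ref{thm-FIO-Mpq} then need to be verified: the identity $v_{s_2,s_1}\s_0 = 1\in M^{\fy,1}$ together with the weight-transfer lemma preceding Subsection 2.3 gives $\s_0\in M^{\fy,1}_{v_{s_2,s_1}\otimes 1}$; the phase $\Phi$ satisfies the $(\al,0,0)$-growth condition because $|\nabla_x\Phi(x,\xi)-\nabla_x\Phi(0,\xi)|=(2\pi)^{-1}|\nabla\mu(x)-\nabla\mu(0)|\lesssim\lan x\ran^{1-\al}$, and the only nontrivial second-order derivative $\partial^\g_{x,x}\Phi$ reduces to $(2\pi)^{-1}\partial^\g\mu$ (extended trivially in the $\xi$-variable), which lies in $W^{\fy,\fy}_{1\otimes v_{d+\ep,d+\ep}}(\rdd)$ by the hypothesis on $\partial^\g\mu$; finally, the uniform separation conditions of both $x$- and $\xi$-types hold trivially from the $x\cdot\xi$ term.

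Applying Theorem \ref{thm-FIO-Mpq} under the index substitution $(p,q,s_1,s_2)\to(q,p,s_2,s_1)$ then yields $T_{\s_0,\Phi}\in\calL(M^{q,p})$ with operator norm bounded by $e^{C\sum_{|\g|=2}\|\partial^\g\mu\|_{W^{\fy,\fy}_{1\otimes v_{d+\ep}}}}$; crucially, the conditions on $(s_1,s_2,p,q,\al)$ stated in Theorem \ref{thm-UFO-W} translate exactly to those in Theorem \ref{thm-FIO-Mpq} under this substitution. Writing any $g\in M^{q,p}_{v_{s_2,s_1}}$ as $g = \lan x\ran^{-s_2}\lan D\ran^{-s_1}h$ with $h\in M^{q,p}$ and using the standard norm equivalence $\|h\|_{M^{q,p}}\sim\|g\|_{M^{q,p}_{v_{s_2,s_1}}}$ (a consequence of the isomorphism properties of $\lan x\ran^s$ and $\lan D\ran^s$ between weighted modulation spaces), the boundedness of $T_{\s_0,\Phi}$ on $M^{q,p}$ yields the target estimate on $M_{e^{i\mu}}$; undoing the Fourier duality from Step 1 then completes the proof.

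The main obstacles will be the careful bookkeeping of the Fourier duality between weighted Wiener amalgam and weighted modulation spaces (ensuring the weight index swap $v_{s_1,s_2}\leftrightarrow v_{s_2,s_1}$ is correct, together with the correct matching of the $L^{p,q}$ orderings in the two families of spaces) and the norm equivalence $\|g\|_{M^{q,p}_{v_{s_2,s_1}}}\sim\|\lan x\ran^{s_2}\lan D\ran^{s_1}g\|_{M^{q,p}}$ in the last step; both are classical but deserve careful verification in this weighted, mixed-norm setting.
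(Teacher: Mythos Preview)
Your proposal is correct and follows essentially the same approach as the paper: both set $\Phi(x,\xi)=x\cdot\xi+(2\pi)^{-1}\mu(x)$, verify the $(\al,0,0)$-growth and uniform separation conditions, apply Theorem~\ref{thm-FIO-Mpq} with the substitution $(p,q,s_1,s_2)\to(q,p,s_2,s_1)$, and then use the Fourier duality $\scrF M^{q,p}_{v_{s_2,s_1}}=W^{p,q}_{v_{s_1,s_2}}$. The only cosmetic difference is that the paper takes the symbol $\s\equiv 1$ and passes the weights to the source and target modulation spaces (writing $T_{1,\Phi}: M^{q,p}_{1\otimes v_{s_1}}\to M^{q,p}_{v_{s_2}^{-1}\otimes 1}$), whereas you absorb the weights into the symbol $\s_0=v_{-s_2,-s_1}$ and then undo this via the potential lifting at the end; these are equivalent via the weight-transfer lemma you cite.
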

\begin{proof}
    Take $\Phi(x,\xi)=\frac{\mu(x)}{2\pi}+x\cdot \xi$. 
    Note that the phase function $\Phi$ satisfies the $(\alpha,0,0)$-growth condition and uniform separation condition.
    Using Theorem \ref{thm-FIO-Mpq}, we obtain the boundedness
    \be
    T_{1,\Phi}: M^{q,p}_{1\otimes v_{s_1}} \longrightarrow M^{q,p}_{v_{s_2}^{-1}\otimes 1}
    \ee
    with
    \be
    \|T_{1,\Phi}\|_{\calL(M^{q,p}_{1\otimes v_{s_1}},M^{q,p}_{v_{s_2}^{-1}\otimes 1})}\lesssim e^{\sum_{|\g|=2}\|\partial^{\g}\mu\|_{W^{\fy,\fy}_{1\otimes v_{d+\ep}}}}.
    \ee
    Then, the desired conclusion follows by the equivalent relation
    \be
    T_{1,\Phi}: M^{q,p}_{1\otimes v_{s_1}} \longrightarrow M^{q,p}_{v_{s_2}^{-1}\otimes 1}
    \Longleftrightarrow
    T_{1,\Phi}: M^{q,p}_{v_{s_2,s_1}} \longrightarrow M^{q,p}
    \Longleftrightarrow
    e^{i\mu(D)}: W^{p,q}_{v_{s_1,s_2}} \rightarrow \wpq,
    \ee
    where we use the facts $\scrF M^{q,p}_{v_{s_2,s_1}}=W^{p,q}_{v_{s_1,s_2}}$ and
    \be
    T_{1,\Phi}f(x)=e^{i\mu(x)}\int_{\rd}\hat{f}(\xi)e^{2\pi ix\cdot \xi}d\xi=e^{i\mu(x)}f(x).
    \ee
\end{proof}

\begin{remark}
    Theorem \ref{thm-UFO-W} is an improvement of \cite[Theorem 1.3]{GuoZhao2020JFA}, 
    because this new theorem allows the function $f$ to be in the Wiener amalgam space with mixed weight $v_{s_1}\otimes v_{s_2}$. 
    Meanwhile, the growth condition $|\nabla \mu(\xi)|\lesssim \lan \xi\ran^{1-\al}$
    is weaker than the corresponding condition 
    $\langle \xi \rangle^{\al-1}\nabla \mu(\xi) \in (W^{\infty,\fy}_{1+v_{d+\ep}})^d$ used in \cite[Theorem 1.3]{GuoZhao2020JFA}.
\end{remark}

\section{Boundedness on $\mpq$ without separation properties}
The careful reader may have noticed that Theorem \ref{thm-FIO-Mfi} does not require the assumption of separated property
of phase function.
An interesting question is how to establish the sharp boundedness result without separated property.
We give a complete answer in Theorem \ref{thm-FIO-Mpq-nsp}.

\subsection{The proof of Theorem \ref{thm-FIO-Mpq-nsp}: sufficiency part}
 We first deal with the case of $p= q$.
 Note that 
 \be
 l^{\fy}_{\frac{s_1}{1-\al}-\frac{\al d}{(1-\al)p}}\subset l^q\Longleftrightarrow l^{\fy}_{s_1}\subset l^p
 \ee
 for $\al\in [0,1)$, $p=q$.
 We only need to establish the corresponding boundedness results under the assumptions that 
 $\al=0$,
 $l^{\fy}_{v_{s_1}}\subset l^p$ and $l^q_{v_{s_2}}\subset l^1$.
 First, we consider the boundedness on $M^1(\rd)$ and $M^{\fy}(\rd)$, then the desired
conclusion follows by a complex interpolation argument.

In order to establish the $M^1(\rd)$ boundeness, we assume that $l^{\fy}_{v_{s_1}}\subset l^1$ and $s_2=0$.
By Proposition \ref{pp-KFIO-Mi-Mf}, we only need to verify that
\be
\|v_{s_1,0}^{-1} e^{2\pi i\Phi}\|_{M^{1,1,\fy,\fy}(c_1)}\lesssim e^{C\sum_{|\g|=2}\|\partial^{\g}\Phi\|_{W^{\fy,\fy}_{1\otimes v_{d+\ep,d+\ep}}}}.
\ee
As in the proof of Theorem \ref{thm-FIO-M1},
we note that the following estimate of $E_2$ remains valid independently of the separated property of $\Phi$
\be
E_2=\bigg\| \sum_{(k,l)\in \G}\eta_{k,l}(x,\xi)e^{2\pi i(\Phi(x,\xi)-\nabla_{x} \Phi(k,l)\cdot x-\nabla_{\xi} \Phi(k,l)\cdot \xi)}\bigg\|_{W^{\fy,\fy}_{1\otimes v_{d+\ep,d+\ep}}(\rdd)}
\lesssim
e^{C\sum_{|\g|=2}\|\partial^{\g}\Phi\|_{W^{\fy,\fy}_{1\otimes v_{d+\ep,d+\ep}}}},
\ee
then the desired estimate follows by
\be
\begin{split}
	E_1
	= &
	\bigg\|v_{s_1}^{-1}(x)\sum_{(k,l)\in \G}\eta_{k,l}^{*}(x,\xi)e^{2\pi i(\nabla_{x} \Phi(k,l)\cdot x+\nabla_{\xi} \Phi(k,l)\cdot \xi)}\bigg\|_{M^{1,1,\fy,\fy}(c_1)}
	\\
	= &	
	\|v_{s_1}^{-1}(z_1)\sum_{(k,l)\in \G}\scrV_{\Psi}\big(\eta_{k,l}^{*}(x,\xi)e^{2\pi i(\nabla_{x} \Phi(k,l)\cdot x+\nabla_{\xi} \Phi(k,l)\cdot \xi)}\big)(z,\z)\|_{L^{1,1,\fy,\fy}_{\z_1,z_1,\z_2,z_2}}
	\\
	\lesssim &
	\|v_{s_1}^{-1}(z_1)\sum_{(k,l)\in \G}\chi_{B(k,r)}(z_1)\chi_{B(l,r)}(z_2)\lan \z_1-\nabla_{x} \Phi(k,l)\ran^{-\scrL}\lan \z_2-\nabla_{\xi} \Phi(k,l)\ran^{-\scrL}\|_{L^{1,1,\fy,\fy}_{\z_1,z_1,\z_2,z_2}}
	\\
	\lesssim &
	\|\sum_{(k,l)\in \G}\chi_{B(k,r)}(z_1)\chi_{B(l,r)}(z_2)\lan \z_1-\nabla_{x} \Phi(k,l)\ran^{-\scrL}\lan \z_2-\nabla_{\xi} \Phi(k,l)\ran^{-\scrL}\|_{L^{1,\fy,\fy,\fy}_{\z_1,z_1,\z_2,z_2}}
	\\
	\lesssim &
	\|\sum_{(k,l)\in \G}\chi_{B(k,r)}(z_1)\chi_{B(l,r)}(z_2)\lan \z_2-\nabla_{\xi} \Phi(k,l)\ran^{-\scrL}\|_{L^{\fy,\fy,\fy}_{z_1,\z_2,z_2}}\lesssim 1,
\end{split}
\ee
where in the second inequality we utilize the assumption $l^{\fy}_{v_{s_1}}\subset l^1$.

In order to establish the $M^{\fy}(\rd)$ boundeness, we assume $l^{\fy}_{v_{s_2}}\subset l^1$ and $s_1=0$. 
Let $\tilde{\Phi}(x,\xi)=\Phi(\xi,x)$. 
By Proposition \ref{pp-KFIO-Mi-Mf}, it suffices to verify that
\be
\|v_{s_2,0}^{-1} e^{2\pi i\tilde{\Phi}}\|_{M^{1,1,\fy,\fy}(c_1)}\lesssim e^{C\sum_{|\g|=2}\|\partial^{\g}\tilde{\Phi}\|_{W^{\fy,\fy}_{1\otimes v_{d+\ep,d+\ep}}}},
\ee
which has already been established above, since $\tilde{\Phi}$ also satisfy the 
$(0,0,0)$-growth condition.

Now, we are in a position to establish the $\mpqd$ boundedness for $p= q$.
Recall that $l^{\fy}_{v_{s_1}}\subset l^p$ and $l^q_{v_{s_2}}\subset l^1$.
For $p=q\in (1,\fy)$, there exists a constant $\theta\in (0,1)$ such that
\be
\frac{1-\theta}{\fy}+\frac{\theta}{1}=\frac{1}{p},\ \ \theta=\frac{1}{p}.
\ee
Note that
\be
l^{\fy}_{v_{s_1}}\subset l^p\Longrightarrow l^{\fy}_{v_{ps_1}}\subset l^1,\ \ \ 
l^q_{s_2}\subset l^1 \Longrightarrow
l^{\fy}_{v_{q's_2}}\subset l^1.
\ee

The desired conclusion follows by a standard complex interpolation argument with the fact
that for all $\s\in \msj$ and $\Phi$ satisfies all the conditions in Theorem \ref{thm-FIO-Mpq-nsp}, we have
\be
T_{\s,\Phi}\in \calL(M^{1}, M^{1}_{v^{-1}_{s_1p}\otimes 1})\cap \calL(\mf_{1\otimes v_{s_2q'}}, \mf).
\ee

Next, we turn to the case $p>q$. Note that the $\msjd$ boundedness has been established in Theorem \ref{thm-FIO-Mfi}.
We only need to consider the case $(p,q)\neq (\fy, 1)$.

\textbf{Case 1: $\al=1$.}
In this case, we have $l^q_{v_{s_2}}\subset l^1$ and $l^{\fy}_{v_{s_1}}\subset l^p$.

If $q\neq 1$, $p\neq \fy$, there exists $r\in (1,\fy)$ and $\theta\in (0,1)$ such that
\be
\frac{1-\theta}{\fy}+\frac{\theta}{r}=\frac{1}{p},\ \ \frac{1-\theta}{1}+\frac{\theta}{r}=\frac{1}{q},\ \ 1-\theta=\frac{1}{q}-\frac{1}{p},\ \ \theta=\frac{r}{p}=\frac{r'}{q'}.
\ee
Note that
\be
l^q_{v_{s_2}}\subset l^1\Longrightarrow l^r_{v_{\frac{q's_2}{r'}}}\subset l^1,\ \ \ 
l^{\fy}_{v_{s_1}}\subset l^p \Longrightarrow
l^{\fy}_{v_{\frac{ps_1}{r}}}\subset l^r.
\ee
The desired conclusion follows by a standard complex interpolation argument with the fact
that for all $\s\in \msj$ and $\Phi$ satisfies all the conditions in Theorem \ref{thm-FIO-Mpq-nsp}, we have
\be
T_{\s,\Phi}\in \calL(\msj, \msj)\cap \calL(M^r_{1\otimes v_{\frac{q's_2}{r'}}}, M^r_{v_{\frac{ps_1}{r}}^{-1}\otimes 1}).
\ee

If $q=1$, $p\neq \fy$, 
there exists $\theta\in (0,1)$ such that
\be
\frac{1-\theta}{\fy}+\frac{\theta}{1}=\frac{1}{p},\ \ \frac{1-\theta}{1}+\frac{\theta}{1}=\frac{1}{1},\ \ \theta=\frac{1}{p}.
\ee
Note that
\be
l^{\fy}_{v_{s_1}}\subset l^p \Longrightarrow
l^{\fy}_{v_{ps_1}}\subset l^1.
\ee
The desired conclusion follows by a standard complex interpolation argument with the fact 
\be
T_{\s,\Phi}\in \calL(\msj, \msj)\cap \calL(M^1, M^1_{v_{ps_1}^{-1}\otimes 1}).
\ee

If $p= \fy$, $q\neq 1$, 
there exists $\theta\in (0,1)$ such that
\be
\frac{1-\theta}{\fy}+\frac{\theta}{\fy}=\frac{1}{\fy},\ \ \frac{1-\theta}{1}+\frac{\theta}{\fy}=\frac{1}{q},\ \ \theta=\frac{1}{q'}.
\ee
Note that
\be
l^q_{v_{s_2}}\subset l^1 \Longrightarrow
l^{\fy}_{v_{q's_2}}\subset l^1.
\ee
The desired conclusion follows by a standard complex interpolation argument with the fact 
\be
T_{\s,\Phi}\in \calL(\msj, \msj)\cap \calL(M^{\fy}_{1\otimes v_{q's_2}}, M^{\fy}).
\ee

\textbf{Case 2: $\al\in [0,1)$.}
In this case, we have 
\be
\begin{split}
	l^{\fy}_{\frac{s_1}{1-\al}-\frac{\al d}{(1-\al)p}}\subset l^q
	\Longrightarrow
	s_1>d\bigg(\frac{1-\al}{q}+\frac{\al}{p}\bigg)
	\Longrightarrow
	s_1>\frac{d}{p}
	\Longrightarrow
	l^{\fy}_{v_{s_1}}\subset l^p.
\end{split}
\ee
Therefore, we only need to establish the boundedness results under the assumptions that
$l^{\fy}_{\frac{s_1}{1-\al}-\frac{\al d}{(1-\al)p}}\subset l^q$ 
and $l^q_{v_{s_2}}\subset l^1$.

If $q\neq 1$, $p\neq \fy$, we have
\be
s_1=d\bigg(\frac{1-\al}{q}+\frac{\al}{p}\bigg)+\ep_1,\ \ \ s_2=\frac{d}{q'}+\ep_2
\ee
for some $\ep_1, \ep_2>0$.
There exists $r\in (1,\fy)$ and $\theta\in (0,1)$ such that
\be
\frac{1-\theta}{\fy}+\frac{\theta}{r}=\frac{1}{p},\ \ \frac{1-\theta}{1}+\frac{\theta}{r}=\frac{1}{q},\ \ 1-\theta=\frac{1}{q}-\frac{1}{p},\ \ \theta=\frac{r}{p}=\frac{r'}{q'}.
\ee
Note that for sufficiently small $\ep>0$ and
\be
\tilde{s_1}=(1-\al)d+\ep,\ \ \tilde{s_2}=0,\ \ \hat{s_1}=\frac{d}{r}+\ep,\ \ \hat{s_2}=\frac{d}{r'}+\ep,
\ee
we have
\be
\begin{split}
	(1-\theta)\tilde{s_1}+\theta \hat{s_1}
	= &
	\bigg(\frac{1}{q}-\frac{1}{p}\bigg)(1-\al)d+\bigg(\frac{1}{q}-\frac{1}{p}\bigg)\ep+\frac{d}{p}+\frac{r}{p}\ep
	\\
	= &
	d\bigg(\frac{1-\al}{q}+\frac{\al}{p}\bigg)+\bigg(\frac{1}{q}-\frac{1}{p}+\frac{r}{p}\bigg)\ep\leq s_1
\end{split}
\ee
and
\be
\begin{split}
	(1-\theta)\tilde{s_2}+\theta \hat{s_2}
	= &
	\frac{r'}{q'}\bigg(\frac{d}{r'}+\ep\bigg)=\frac{d}{q'}+\frac{d}{q'}\ep\leq s_2.
\end{split}
\ee
The desired conclusion follows by a standard complex interpolation argument with the fact
that for all $\s\in \msj$ and $\Phi$ satisfies all the conditions in Theorem \ref{thm-FIO-Mpq-nsp}, we have
\be
T_{\s,\Phi}\in \calL(\msj_{1\otimes v_{\tilde{s_2}}}, \msj_{v_{\tilde{s_1}}^{-1}\otimes 1})\cap 
\calL(M^r_{1\otimes v_{\hat{s_2}}}, M^r_{v_{\hat{s_1}}^{-1}\otimes 1}).
\ee

If $q=1$, $p\neq \fy$, we have
\be
s_1=d\bigg(\frac{1-\al}{q}+\frac{\al}{p}\bigg)+\ep_1=d\bigg(1-\al+\frac{\al}{p}\bigg)+\ep_1,\ \ \ s_2\geq 0
\ee
for some $\ep_1>0$.
There exists $\theta\in (0,1)$ such that
\be
\frac{1-\theta}{\fy}+\frac{\theta}{1}=\frac{1}{p},\ \ \frac{1-\theta}{1}+\frac{\theta}{1}=\frac{1}{1},\ \ \theta=\frac{1}{p}.
\ee
Note that for sufficiently small $\ep>0$ and
\be
\tilde{s_1}=(1-\al)d+\ep,\ \ \tilde{s_2}=0,\ \ \hat{s_1}=d+\ep,\ \ \hat{s_2}=0,
\ee
we have
\be
(1-\theta)\tilde{s_1}+\theta \hat{s_1}\leq s_1,\ \ \ (1-\theta)\tilde{s_2}+\theta \hat{s_1}\leq s_2.
\ee
The desired conclusion follows by a standard complex interpolation argument with the fact 
\be
T_{\s,\Phi}\in \calL(\msj_{1\otimes v_{\tilde{s_2}}}, \msj_{v_{\tilde{s_1}}^{-1}\otimes 1})\cap 
\calL(M^1_{1\otimes v_{\hat{s_2}}}, M^1_{v_{\hat{s_1}}^{-1}\otimes 1}).
\ee

If $p= \fy$, $q\neq 1$, we have
\be
s_1=\frac{d(1-\al)}{q}+\ep_1,\ \ \ s_2=\frac{d}{q'}+\ep_2
\ee
for some $\ep_1, \ep_2>0$.

There exists $\theta\in (0,1)$ such that
\be
\frac{1-\theta}{\fy}+\frac{\theta}{\fy}=\frac{1}{\fy},\ \ \frac{1-\theta}{1}+\frac{\theta}{\fy}=\frac{1}{q},\ \ \theta=\frac{1}{q'}.
\ee
Note that for sufficiently small $\ep>0$ and
\be
\tilde{s_1}=(1-\al)d+\ep,\ \ \tilde{s_2}=0,\ \ \hat{s_1}=0,\ \ \hat{s_2}=d+\ep,
\ee
we have
\be
(1-\theta)\tilde{s_1}+\theta \hat{s_1}\leq s_1,\ \ \ (1-\theta)\tilde{s_2}+\theta \hat{s_1}\leq s_2.
\ee
The desired conclusion follows by a standard complex interpolation argument with the fact 
\be
T_{\s,\Phi}\in \calL(\msj_{1\otimes v_{\tilde{s_2}}}, \msj_{v_{\tilde{s_1}}^{-1}\otimes 1})\cap 
\calL(M^{\fy}_{1\otimes v_{\hat{s_2}}}, M^{\fy}_{v_{\hat{s_1}}^{-1}\otimes 1}).
\ee

   Finally, we turn to $\mpqd$ boundedness for $p< q$. 
   In this case, we have
\be
\begin{split}
	l^{\fy}_{v_{s_1}}\subset l^p
	\Longrightarrow
	s_1>\frac{d}{p}
	\Longrightarrow
	s_1>d\bigg(\frac{1-\al}{q}+\frac{\al}{p}\bigg)
	\Longrightarrow
	l^{\fy}_{\frac{s_1}{1-\al}-\frac{\al d}{(1-\al)p}}\subset l^q
\end{split}
\ee
for $\al\in [0,1)$.
Therefore, we only need to establish the boundedness results under the assumptions that   
$l^{\fy}_{v_{s_1}}\subset l^p$ and $l^q_{v_{s_2}}\subset l^1$.
This implies that
\be
l^{\fy}\subset l^q_{v_{\frac{ps_1}{q}}^{-1}}\subset l^p_{v_{s_1}^{-1}},\ \ \ \ M^q_{v_{\frac{ps_1}{q}}^{-1}\otimes 1}\subset \mpq_{v_{s_1}^{-1}\otimes 1},
\ee
in which the embedding relation of modulation spaces follows by \cite[Theorem 1.5]{GuoChenFanZhao2019MMJ}.
The desired $\mpqd$ boundedness follows by
\be
\begin{split}
	\|T_{\s,\Phi}f\|_{\mpq_{v_{s_1}^{-1}\otimes 1}}
	\lesssim 
	\|T_{\s,\Phi}f\|_{M^q_{v_{\frac{ps_1}{q}}^{-1}\otimes 1}}
	\lesssim 
	\|f\|_{M^q_{1\otimes v_{s_2}}}
	\lesssim
	\|f\|_{\mpq_{1\otimes v_{s_2}}},
\end{split}
\ee
where the second inequality follows by the case $p=q$ proved before, with the facts $L^{\fy}_{v_{\frac{ps_1}{q}}}\subset l^q$ and $l^q_{v_{s_2}}\subset l^1$.

\begin{remark}
	Note that for $p\leq q$ the boundedness on $\mpq$ of FIOs without separation property, is independent
	of the growth condition associated with $\al\in [0,1]$.
\end{remark}

\subsection{The proof of Theorem \ref{thm-FIO-Mpq-nsp}: necessity part}

Take $\s \equiv	1\in \msjdd$, $\Phi(x,\xi)=\va(\xi)\in 1\otimes C_c^{\fy}(\rd)$, and $f\in \calS(\rd)$, such as
\be
T_{1,\Phi}f(x)=\int_{\rd}\hat{f}(\xi)e^{2\pi i\va(\xi)}d\xi=1.
\ee
If (1) holds, we have
$T_{1,\Phi}\in \calL(\mpq_{1\otimes v_{s_2}}(\rd), \mpq_{v_{s_1}^{-1}\otimes 1}(\rd))$ , then $1\in \mpq_{v_{s_1}^{-1}\otimes 1}(\rd)$. 
By a direct calculation, for $g\in \calS(\rd)$ we have
\be
|V_g(1)(x,\xi)|=|\scrF(\bar{g})(\xi)|\in L^{p,q}_{v_{s_1}^{-1}\otimes 1}.
\ee
This implies that $1\in L^p_{v_{s_1}^{-1}}$, which is equivalent to $l^{\fy}_{s_1}\subset l^p$.

Take $\s \equiv	1\in \msjdd$, $\Phi(x,\xi)=\va(x)=\lan x\ran^{2-\al}$, and $f\in \calS(\rd)$. A direct calculation yields that
\be
T_{\s,\Phi}f(x)=e^{2\pi i\lan x\ran^{2-\al}}\int_{\rd}\hat{f}(\xi)d\xi=e^{2\pi i\lan x\ran^{2-\al}} f(0).
\ee
If $T_{\s,\Phi}\in \calL(\mpq_{1\otimes v_{s_2}}(\rd), \mpq_{v_{s_1}^{-1}\otimes 1}(\rd))$, we have 
\be
|f(0)|\cdot \|e^{2\pi i\lan x\ran^{2-\al}}\|_{\mpq_{v_{s_1}^{-1}\otimes 1}(\rd)}= \|T_{\s,\Phi}f\|_{\mpq_{v_{s_1}^{-1}\otimes 1}(\rd))}\lesssim \|f\|_{\mpq_{1\otimes v_{s_2}}(\rd)}.
\ee
This implies that $\|e^{2\pi i\lan x\ran^{2-\al}}\|_{\mpq_{v_{s_1}^{-1}\otimes 1}(\rd)}\lesssim 1$, and 
\begin{equation*}
    |f(0)|\lesssim 
    \|f\|_{\mpq_{1\otimes v_{s_2}}(\rd)},\ \ \ \ \forall f\in \calS(\rd).
\end{equation*}
For $a_k\geq 0$,
we take $f=\sum_{k}a_kM_k\scrF^{-1}\va$ 
with some $\va\in C_c^{\fy}$ satisfying $\text{supp}\va\subset B(0,\d)$ and $\scrF^{-1}\va(0)=1$.

Then, the inequality $|f(0)|\lesssim \|f\|_{\mpq_{1\otimes v_{s_2}}}$ implies that
\be
\|(a_k)_k\|_{l^1}=|f(0)|\lesssim \|f\|_{\mpq_{1\otimes v_{s_2}}} \sim \|(a_k)_k\|_{l^q_{v_{s_2}}}.
\ee
The embedding relation $l^q_{v_{s_2}}\subset l^1$ follows.

Moreover, we also have $\|e^{2\pi i\lan x\ran^{2-\al}}\|_{\mpq_{v_{s_1}^{-1}\otimes 1}(\rd)}\lesssim 1$. Denote $\mu(x)=\lan x\ran^{2-\al}$.
Set
\be
g_k^{\al}(x)=g(\frac{x-k_{\al}}{\lan k\ran^{\frac{\al}{1-\al}}}),
\ee
where $g$ is a smooth function satisfying that
\be
g(x)=1\ \text{for}\ |x|\leq \d,\ \ \ \ \ g(x)=0\ \text{for}\ |x|> 2\d.
\ee
Write
\be
\begin{split}
	G:=
	\sum_k g_k^{\al}(x)e^{2\pi i\nabla\mu(k_{\al})x}
	=
	\sum_k g_k^{\al}(x)e^{2\pi i\nabla\mu(k_{\al})x-\mu(x)}e^{2\pi i\mu(x)}=: m e^{2\pi i\mu(x)}.
\end{split}
\ee
Using the fact $m\in \calC^N\subset \msj$ and the product inequality $\mpq_{v_{s_1}^{-1}\otimes 1} \cdot \msj\subset \mpq_{v_{s_1}^{-1}\otimes 1}$, we conclude that
\be
\|G\|_{\mpq_{v_{s_1}^{-1}\otimes 1}(\rd)}\lesssim \|m\|_{\msj(\rd)}\|e^{2\pi i\mu(x)}\|_{\mpq_{v_{s_1}^{-1}\otimes 1}(\rd)}\lesssim 1.
\ee
Choose real-valued $\va\in C_c^{\fy}(\rd)$ as a window function supported on $B(0,\d)$.
We find that
\be
g_{k}^{\al}T_x\va=T_x\va,\ \ \ x\in B(k_{\al},\lan k\ran^{\frac{\al}{1-\al}}\d/2),
\ee
which implies that for $x\in B(k_{\al},\lan k\ran^{\frac{\al}{1-\al}}\d/2)$,
\be
\begin{split}
	|V_{\va}(M_{\nabla\mu(k_{\al})}g_k^{\al})(x,\xi)|
	= &
	|V_{\va}g_k^{\al}(x,\xi-\nabla\mu(k_{\al}))|
	\\
	= &
	|\scrF(g_{k}^{\al}(\cdot)T_x\va(\cdot))(\xi-\nabla\mu(k_{\al}))|
	\\
	= &
	|\scrF(T_x\va(\cdot))(\xi-\nabla\mu(k_{\al}))|
	=|\scrF\va(\xi-\nabla\mu(k_{\al}))|.
\end{split}
\ee
From this and the orthogonality derived by
\be
|V_{\va}(M_{\nabla\mu(k_{\al})}g_k^{\al})(x,\xi)|=|V_{\va}(M_{\nabla\mu(k_{\al})}g_k^{\al})(x,\xi)|\chi_{B(k_{\al},4\lan k\ran^{\frac{\al}{1-\al}}\d)},
\ee
for $p\neq \fy$ we conclude that
\be
\begin{split}
	\|G\|_{\mpq_{v_{s_1}^{-1}\otimes 1}(\rd)}
	\sim &
	\|\big(\sum_k\|V_{\va}(M_{\nabla\mu(k_{\al})}g_k^{\al})(x,\xi)\|^p_{L^p_{v_{s_1}^{-1}}}\big)^{1/p}\|_{L^q}
	\\
	\gtrsim &
	\|\big(\sum_k|\scrF\va(\xi-\nabla\mu(k_{\al}))|^p\|\chi_{B(k_{\al},\lan k\ran^{\frac{\al}{1-\al}}\d/2)}(x)\|^p_{L^p_{v_{s_1}^{-1}}}\big)^{1/p}\|_{L^q}
	\\
	\sim &
	\|\big(\sum_kv_{s_1}(k_{\al})^{-p}\lan k\ran^{\frac{\al d}{1-\al}}|\scrF\va(\xi-\nabla\mu(k_{\al}))|^p\big)^{1/p}\|_{L^q}
	\\
	\gtrsim &
	\big(\sum_kv_{s_1}(k_{\al})^{-q}\lan k\ran^{\frac{\al dq}{(1-\al)p}}\|\scrF\va(\xi-\nabla\mu(k_{\al}))\chi_{B(\nabla\mu(k_{\al}),\d)}(\xi)\|^q_{L^q}\big)^{1/q}
	\\
	= &
	\big(\sum_kv_{s_1}(k_{\al})^{-q}\lan k\ran^{\frac{\al dq}{(1-\al)p}}\|\scrF\va(\xi)\chi_{B(0,\d)}(\xi)\|^q_{L^q}\big)^{1/q}
	\sim
	\big\|\big(\lan k\ran^{\frac{\al d}{(1-\al)p}-\frac{s_1}{1-\al}}\big)_k\big\|_{l^q}.
\end{split}
\ee
From this, we obtain the desired embedding relation
\be
l^{\fy}_{\frac{s_1}{1-\al}-\frac{\al d}{(1-\al)p}}\subset l^q\ \text{for}\ \al\in [0,1).
\ee
The case of $p=\fy$ can also be handled by a similar way. 
Now, we have completed this proof. 

\begin{remark}\label{rmk-role-sep}
    When comparing Theorem \ref{thm-FIO-Mpq} and Theorem \ref{thm-FIO-Mpq-nsp}, one can find that in the absence
    of uniform separation condition on $\Phi$, 
    stronger conditions on $\s\in M^{\fy,1}_{v_{s_1,s_2}\otimes 1}$ are required 
    for the corresponding boundedness on $\mpq$ of $T_{\s,\Phi}$, unless the case of $(p,q)=(\fy,1)$.
    This also demonstrates the crucial role of the uniform separation condition on the phase function $\Phi$ in the boundedness on $\mpq$ of $T_{\s,\Phi}$.
\end{remark}

\section{the boundedness on $\mp$ with high growth condition}
\begin{theorem}[FIO on $\mfi$, high growth]\label{thm-FIO-12hg-M1}
	Let $\Phi$ be a real-valued $C^2(\rdd)$ function 
    satisfying 
    the $(-\fy,t_1,t_2)$-growth condition and uniform separation condition of $x$-type.
	Denote 
	\be
	A=\sum_{|\g|=2}\|\lan x\ran^{-t_1}\partial^{\g}_{x,x}\Phi\|_{W^{\fy,\fy}_{1\otimes v_{d+\ep,d+\ep}}},\ \ \ 
	B=\sum_{|\g|=2}\|\lan \xi\ran^{-t_2}\partial^{\g}_{\xi,\xi}\Phi\|_{W^{\fy,\fy}_{1\otimes v_{d+\ep,d+\ep}}},
	\ee
	and
	\be
	C=\sum_{|\g|=2}\|\partial^{\g}_{x,\xi}\Phi\|_{W^{\fy,\fy}_{1\otimes v_{d+\ep,d+\ep}}}
	\ee
	We have $\lan x\ran^{-dt_1/2}\lan \xi\ran^{-dt_2/2}e^{2\pi i\Phi(x,\xi)}\in M^{1,1,\fy,\fy}(c_1)$ with
	\ben\label{thm-FIO-12hg-M1-cd1}
	\|\lan x\ran^{-dt_1/2}\lan \xi\ran^{-dt_2/2}e^{2\pi i\Phi(x,\xi)}\|_{M^{1,1,\fy,\fy}(c_1)}
	\lesssim 
	e^{A+B+C}.
	\een
	Moreover,
	for every $\s\in \msj_{v_{\frac{dt_1}{2},\frac{dt_2}{2}}\otimes 1}(\rdd)$, the Fourier integral operator $T_{\s,\Phi}\in \calL(\mfid)$ with the estimate of operator norm
	\be
	\begin{split}
		\|T_{\s,\Phi}\|_{\calL(\mfid)}
		\lesssim  \|\s\|_{\msj_{v_{\frac{dt_1}{2},\frac{dt_2}{2}}\otimes 1}(\rdd)}e^{A+B+C}.
	\end{split}
	\ee
\end{theorem}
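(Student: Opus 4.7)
The plan is to reduce the operator bound to the kernel estimate \eqref{thm-FIO-12hg-M1-cd1}, and then prove the latter by a two-stage anisotropic scaling-and-localization scheme. For the reduction I factor
\[
\s e^{2\pi i\Phi}=\bigl(v_{\frac{dt_1}{2},\frac{dt_2}{2}}\s\bigr)\cdot \bigl(v_{\frac{dt_1}{2},\frac{dt_2}{2}}^{-1}e^{2\pi i\Phi}\bigr),
\]
note that $v_{\frac{dt_1}{2},\frac{dt_2}{2}}\s\in \msjdd\subset M^{1,\fy,1,\fy}(c_1)$ by Lemma \ref{lm-eb-WM}, and combine the product inequality of Lemma \ref{lm-pd-c12} with Proposition \ref{pp-KFIO-Mi-Mf} to bound $\|T_{\s,\Phi}\|_{\calL(\mfid)}$ by \eqref{thm-FIO-12hg-M1-cd1} times $\|\s\|_{\msj_{v_{\frac{dt_1}{2},\frac{dt_2}{2}}\otimes 1}}$.

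For \eqref{thm-FIO-12hg-M1-cd1} itself I first invoke the $\xi$-localization of $M^{1,1,\fy,\fy}(c_1)$ from Proposition \ref{pp-tlp-ws3}, reducing the task to a uniform bound over $\ell\in\zd$ of $\|(1\otimes\eta_\ell)\lan x\ran^{-dt_1/2}\lan \xi\ran^{-dt_2/2}e^{2\pi i\Phi}\|_{M^{1,1,\fy,\fy}(c_1)}$. On the slab $\xi\approx \ell$ the weight $\lan \xi\ran^{-dt_2/2}$ is essentially $\lan \ell\ran^{-dt_2/2}$, and an affine $\xi$-dilation $\xi=\ell+\la_\ell\eta$ with $\la_\ell=\lan \ell\ran^{-t_2/2}$ turns the rescaled phase $\Phi_\ell(x,\eta):=\Phi(x,\ell+\la_\ell\eta)$ into one with bounded second $\eta$-derivatives, since $\la_\ell^2\lan\ell\ran^{t_2}\sim 1$ by the $(-\fy,t_1,t_2)$-growth condition. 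The accompanying Jacobian $\la_\ell^{-d}=\lan \ell\ran^{dt_2/2}$ is exactly absorbed by the prefactor $\lan\ell\ran^{-dt_2/2}$. A first application of the product inequality in Lemma \ref{lm-pd-c12} then peels off the $\pa^2_{x\xi}$ and $\pa^2_{\xi\xi}$ contributions of the rescaled phase, whose $W^{\fy,\fy}_{1\otimes v_{d+\ep,d+\ep}}$-norms are controlled by $B$ and $C$ exactly as in the proof of Theorem \ref{thm-FIO-M1}.

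The remaining factor depends only on the $x$-derivatives of $\Phi$, which still grow like $\lan x\ran^{t_1}$. Since $M^{1,1,\fy,\fy}(c_1)$ lacks $x$-localization, I embed into the stronger working space $M^{1,\fy,\fy,\fy}_{1\otimes v_{d+\ep,0}}(c_1)$ and apply its $x$-localization from Proposition \ref{pp-tlp-ws2}. On each box $x\approx k$ an $x$-dilation $x=k+\mu_k y$ with $\mu_k=\lan k\ran^{-t_1/2}$ flattens the second $x$-derivatives and produces a Jacobian $\lan k\ran^{dt_1/2}$ cancelled by the prefactor $\lan x\ran^{-dt_1/2}\sim \lan k\ran^{-dt_1/2}$. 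The residual linearization $e^{2\pi i(\nabla_x\Phi(k,\ell)\cdot x+\nabla_\xi\Phi(k,\ell)\cdot \xi)}$ is then summed in $(k,\ell)$ as in Theorem \ref{thm-FIO-M1}: the $x$-type uniform separation condition forces $\{\nabla_\xi\Phi(k,\ell)\}_k$ to be uniformly separated for each fixed $\ell$, which yields the required $L^1$-summability in the frequency variable $\z_2$.

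The main technical obstacle will be verifying that the two anisotropic dilations interact cleanly with the iterated norm $L^{1,1,\fy,\fy}_{\z_1,z_1,\z_2,z_2}$ defining $M^{1,1,\fy,\fy}(c_1)$, so that the Jacobian losses $\lan \ell\ran^{dt_2/2}$ and $\lan k\ran^{dt_1/2}$ are absorbed \emph{exactly} by $\lan x\ran^{-dt_1/2}\lan \xi\ran^{-dt_2/2}$ with no residual $(k,\ell)$-dependence. The transition between the two working spaces must be sequenced so that, after $x$-localization and $x$-scaling, the quadratic remainder of the scaled phase is estimated in $W^{\fy,\fy}_{1\otimes v_{d+\ep,d+\ep}}(\rdd)$ with exponential norm $e^{A+B+C}$ uniformly in $(k,\ell)$, which then returns the final bound into $M^{1,1,\fy,\fy}(c_1)$ and yields \eqref{thm-FIO-12hg-M1-cd1}.
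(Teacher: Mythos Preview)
Your architecture matches the paper's proof --- $\xi$-localize via Proposition~\ref{pp-tlp-ws3}, $\xi$-scale by $\la_2=\lan\ell\ran^{-t_2/2}$, product-split by Lemma~\ref{lm-pd-c12}, pass to $M^{1,\fy,\fy,\fy}_{1\otimes v_{d+\ep,0}}(c_1)$ for $x$-localization (Proposition~\ref{pp-tlp-ws2}), $x$-scale by $\la_1=\lan k\ran^{-t_1/2}$, and finish in $W^{\fy,\fy}_{1\otimes v_{d+\ep,d+\ep}}$ --- and the reduction to \eqref{thm-FIO-12hg-M1-cd1} via Proposition~\ref{pp-KFIO-Mi-Mf} is correct. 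But the sequencing of the product split relative to the $x$-localization is off, and the $\xi$-side factor count is incomplete.

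The linearization sum $\sum_{k}\eta^*_{k,m}e^{2\pi i\nabla\Phi_{\la_2}(k,m)\cdot(x,\xi)}$ must stay in the $M^{1,1,\fy,\fy}(c_1)$ slot of the product inequality; this is the only place where the separation argument makes sense (one sums over $k$ to control the $L^{\fy}_{\z_2}$ norm). After $\xi$-scaling the separation of $\{\nabla_\xi\Phi_{\la_2}(k,m)\}_k$ degrades to $\gtrsim\la_2$, so this sum costs $\la_2^{-d}=\lan\ell\ran^{dt_2/2}$ --- a loss you do not mention, and it is precisely what makes the weight exponent $dt_2/2$ sharp. Together with the piece count $|A_\ell|\sim\lan\ell\ran^{dt_2/2}$ from redecomposing the stretched bump $\eta_\ell(\la_2\xi)$ and the Jacobian gain $\la_2^{d}$, the net on the $\xi$-side is $\lan\ell\ran^{dt_2/2}$, matching the prefactor. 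Meanwhile only the \emph{quadratic remainder carrying $\lan x\ran^{-dt_1/2}$} is moved to the $M^{1,\fy,1,\fy}(c_1)$ slot and then to $M^{1,\fy,\fy,\fy}_{1\otimes v_{d+\ep,0}}(c_1)\subset M^{1,\fy,1,\fy}(c_1)$ for $x$-localization. Your plan instead $x$-localizes a factor that still contains the linearization; since $M^{1,1,\fy,\fy}(c_1)\subset M^{1,\fy,\fy,\fy}_{1\otimes v_{d+\ep,0}}(c_1)$, that embedding goes the wrong way for the bound you need, and after $x$-localizing there is no sum over $k$ left to which separation could apply. Peel off the linearization \emph{before} switching working spaces; the extra pre-split of $\tau^{x\xi}+\tau^{\xi\xi}$ you propose is harmless but unnecessary, since after both scalings all three second-order pieces are estimated together in $W^{\fy,\fy}_{1\otimes v_{d+\ep,d+\ep}}$ by $e^{A+B+C}$.
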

\begin{proof}
     We only need to verify \eqref{thm-FIO-12hg-M1-cd1},
     and then the boundedness follows from Proposition \ref{pp-KFIO-Mi-Mf}.
     By the local property of $M^{1,1,\fy,\fy}(c_1)$, we have
     \be
     \begin{split}
    \|\lan x\ran^{-dt_1/2}\lan \xi\ran^{-dt_2/2}e^{2\pi i\Phi(x,\xi)}\|_{M^{1,1,\fy,\fy}(c_1)}
    \sim &
    \sup_l \|\lan x\ran^{-dt_1/2}\lan \xi\ran^{-dt_2/2}\eta_l(\xi)e^{2\pi i\Phi(x,\xi)}\|_{M^{1,1,\fy,\fy}(c_1)}
    \\
    \sim &
    \sup_l \lan l\ran^{-dt_2/2}\|\lan x\ran^{-dt_1/2}\eta_l(\xi)e^{2\pi i\Phi(x,\xi)}\|_{M^{1,1,\fy,\fy}(c_1)}.
    \end{split}
     \ee
    From this, in order to verify \eqref{thm-FIO-12hg-M1-cd1}, we only need to establish the following estimate
    \be
    \|\lan x\ran^{-dt_1/2}\eta_l(\xi)e^{2\pi i\Phi(x,\xi)}\|_{M^{1,1,\fy,\fy}(c_1)}\lesssim \lan l\ran^{dt_2/2},\ \ \ \forall l\in \zd.
    \ee
    For fixed $l\in \zd$, we denote $\la_2=\lan l\ran^{-t_2/2}$. Using the following estimate
    \be
\begin{split}
    \|\lan x\ran^{-dt_1/2}\eta_l(\xi)e^{2\pi i\Phi(x,\xi)}\|_{M^{1,1,\fy,\fy}(c_1)}
    \sim &
    \bigg\|\scrF_{\xi}\bigg(\eta_l(\xi)V_{\va}(\lan \cdot\ran^{-dt_1/2}e^{2\pi i\Phi(\cdot,\xi)})(z_1,\z_1)\bigg)(\z_2)\bigg\|_{L^{1,1,\fy}_{\z_1,z_1,\z_2}}
    \\
    = &
    \la_2^{d}\bigg\|\scrF_{\xi}\bigg(\eta_l(\la_2\xi)V_{\va}(\lan \cdot\ran^{-dt_1/2}e^{2\pi i\Phi(\cdot,\la_2\xi)})(z_1,\z_1)\bigg)(\z_2)\bigg\|_{L^{1,1,\fy}_{\z_1,z_1,\z_2}},
\end{split}
\ee
we only need to verify
\be
\bigg\|\scrF_{\xi}\bigg(\eta_l(\la_2\xi)V_{\va}(\lan \cdot\ran^{-dt_1/2}e^{2\pi i\Phi(\cdot,\la_2\xi)})(z_1,\z_1)\bigg)(\z_2)\bigg\|_{L^{1,1,\fy}_{\z_1,z_1,\z_2}}\lesssim \lan l\ran^{dt_2}.
\ee
Write
$\eta_l(\la_2\xi)=\eta_l(\la_2\xi)\sum_{m\in A_l}\eta_m(\xi)$ where
\be
A_l=\{m\in \zd, \eta_m(\xi)\eta_l(\la_2\xi)\neq 0\},\ \ \ |A_l|\sim \lan l\ran^{d t_2/2}.
\ee
Using this and the triangle inequality, we have
\be
\begin{split}
     &
    \bigg\|\scrF_{\xi}\bigg(\eta_l(\la_2\xi)V_{\va}(\lan \cdot\ran^{-dt_1/2}e^{2\pi i\Phi(\cdot,\la_2\xi)})(z_1,\z_1)\bigg)(\z_2)\bigg\|_{L^{1,1,\fy}_{\z_1,z_1,\z_2}}
    \\
    \lesssim &
    \sum_{m\in A_l}
    \bigg\|\scrF_{\xi}\bigg(\eta_l(\la_2\xi)\eta_m(\xi)V_{\va}(\lan \cdot\ran^{-dt_1/2}e^{2\pi i\Phi(\cdot,\la_2\xi)})(z_1,\z_1)\bigg)(\z_2)\bigg\|_{L^{1,1,\fy}_{\z_1,z_1,\z_2}}
    \\
    \lesssim &
    \sum_{m\in A_l}\bigg\|\scrF_{\xi}\bigg(\eta_m(\xi)V_{\va}(\lan \cdot\ran^{-dt_1/2}e^{2\pi i\Phi(\cdot,\la_2\xi)})(z_1,\z_1)\bigg)(\z_2)\bigg\|_{L^{1,1,\fy}_{\z_1,z_1,\z_2}}
    \|\scrF_{\xi}\eta_l(\la_2\xi)\|_{L^1}
    \\
    \lesssim &
    \lan l\ran^{d t_2/2}\sup_{m\in A_l}\bigg\|\scrF_{\xi}\bigg(\eta_m(\xi)V_{\va}(\lan \cdot\ran^{-dt_1/2}e^{2\pi i\Phi(\cdot,\la_2\xi)})(z_1,\z_1)\bigg)(\z_2)\bigg\|_{L^{1,1,\fy}_{\z_1,z_1,\z_2}}.
\end{split}
\ee
We only need to verify that
\be
\bigg\|\scrF_{\xi}\bigg(\eta_m(\xi)V_{\va}(\lan \cdot\ran^{-dt_1/2}e^{2\pi i\Phi(\cdot,\la_2\xi)})(z_1,\z_1)\bigg)(\z_2)\bigg\|_{L^{1,1,\fy}_{\z_1,z_1,\z_2}}
\lesssim \lan l\ran^{dt_2/2},\ \ \ \forall m\in A_l,
\ee
which is equivalent to
\be
\|\lan x\ran^{-dt_1/2}\eta_m(\xi)e^{2\pi i\Phi_{\la_2}(x,\xi)}\|_{M^{1,1,\fy,\fy}(c_1)}
\lesssim \lan l\ran^{\frac{dt_2}{2}},\ \ \  \Phi_{\la_2}(x,\xi)=\Phi(x,\la_2\xi).
\ee
Let $\La$ be a subset of $\zd$ such that $\text{supp}\eta_{k}^*\cap \text{supp}\eta_{\tilde{k}}^*=\emptyset$, for all $k\neq \tilde{k}$
    with $k,\tilde{k}\in \La$.
Write
\be
\begin{split}
	&\lan x\ran^{-dt_1/2}\sum_{k\in \La}\eta_{k,m}(x,\xi)e^{2\pi i\Phi_{\la_2}(x,\xi)}
    = \left(\sum_{k\in \La}\eta_{k,m}^{*}(x,\xi)e^{2\pi i(\nabla_{x} \Phi_{\la_2}(k,m)\cdot x+\nabla_{\xi} \Phi_{\la_2}(k,m)\cdot \xi)}\right)
	\\
	\cdot &
	\left(\lan x\ran^{-dt_1/2}\sum_{k\in \La}\eta_{k,m}(x,\xi)e^{2\pi i(\Phi_{\la_2}(x,\xi)-\nabla_{x} \Phi_{\la_2}(k,m)\cdot x-\nabla_{\xi} \Phi_{\la_2}(k,m)\cdot \xi)}\right).
\end{split}
\ee
Using the product inequality in Lemma \ref{lm-pd-c12}:
	\be
	M^{1,1,\fy,\fy}(c_1) \cdot M^{1,\fy,1,\fy}(c_1)\subset M^{1,1,\fy,\fy}(c_1),
	\ee
    we only need to verify that
	\be
	E_1=\bigg\|\sum_{k\in \La}\eta_{k,m}^{*}(x,\xi)e^{2\pi i(\nabla_{x} \Phi_{\la_2}(k,m)\cdot x+\nabla_{\xi} \Phi_{\la_2}(k,m)\cdot \xi)}\bigg\|_{M^{1,1,\fy,\fy}(c_1)}\lesssim \lan l\ran^{\frac{dt_2}{2}}
	\ee
    and 
	\be
	E_2=\bigg\|\lan x\ran^{-dt_1/2}\sum_{k\in \La}\eta_{k,m}(x,\xi)e^{2\pi i(\Phi_{\la_2}(x,\xi)-\nabla_{x} \Phi_{\la_2}(k,m)\cdot x-\nabla_{\xi} \Phi_{\la_2}(k,m)\cdot \xi)}\bigg\|_{M^{1,\fy,1,\fy}(c_1)}
	\lesssim
	1.
	\ee

Let us deal with the estimate of $E_1$ first. Let $\Psi=\psi\otimes \psi$ with $\psi\in C_c^{\fy}(\rd)$, such that $\text{supp}\Psi\subset B(0,\d)$
	for sufficiently small $\d>0$, satisfying that
	\be
	(\text{supp}\eta_{k,m}^*+B(0,\d)) \subset B(k, r)\times B(m, r) \ \ \ r>0
	\ee
	and the orthogonality property
		\be
	B(k, r) \cap B(\tilde{k}, r)=\emptyset,\ \ \ \text{for all}\ k\neq \tilde{k}.
	\ee
	Note that
	\be
	|\scrV_{\Psi}\eta_{0,0}^*(z_1,z_2,\z_1,\z_2)|\lesssim \chi_{B(0,r)}(z_1)\chi_{B(0,r)}(z_2)\lan \z_1\ran^{-\scrL}\lan \z_2\ran^{-\scrL},
	\ee
	and
	\be
	\begin{split}
		&|\scrV_{\Psi}\big(\eta_{k,m}^{*}(x,\xi)e^{2\pi i(\nabla_{x} \Phi_{\la_2}(k,m)\cdot x+\nabla_{\xi} \Phi_{\la_2}(k,m)\cdot \xi)}\big)(z,\z)|
		\\
		= &
		|\scrV_{\Psi}\big(M_{(\nabla_{x} \Phi_{\la_2}(k,m), \nabla_{\xi} \Phi_{\la_2}(k,m))}T_{(k,m)}\eta_{0,0}^*\big)(z,\z)|
        \\
		= &
		|\scrV_{\Psi}\eta_{0,0}^*(z_1-k,z_2-m,\z_1-\nabla_{x} \Phi_{\la_2}(k,m),\z_2-\nabla_{\xi} \Phi_{\la_2}(k,m))|.
	\end{split}
	\ee
	From this, we find that
	\be
	\begin{split}
&|\scrV_{\Psi}\big(\eta_{k,m}^{*}(x,\xi)e^{2\pi i(\nabla_{x} \Phi_{\la_2}(k,m)\cdot x+\nabla_{\xi} \Phi_{\la_2}(k,m)\cdot \xi)}\big)(z,\z)|
\\
\lesssim & 
\chi_{B(k,r)}(z_1)\chi_{B(m,r)}(z_2)\lan \z_1-\nabla_{x} \Phi_{\la_2}(k,m)\ran^{-\scrL}\lan \z_2-\nabla_{\xi} \Phi_{\la_2}(k,m)\ran^{-\scrL}.
\end{split}
\ee
	Using the orthogonality property, we obtain 
	\be
	\begin{split}
		E_1=&\|\sum_{k}\scrV_{\Psi}\big(\eta_{k,m}^{*}(x,\xi)e^{2\pi i(\nabla_{x} \Phi_{\la_2}(k,m)\cdot x+\nabla_{\xi} \Phi_{\la_2}(k,m)\cdot \xi)}\big)(z,\z)\|_{L^{1,1,\fy,\fy}_{\z_1,z_1,\z_2,z_2}}
		\\
		\lesssim &
		\|\sum_{k}\chi_{B(k,r)}(z_1)\chi_{B(m,r)}(z_2)\lan \z_1-\nabla_{x} \Phi_{\la_2}(k,m)\ran^{-\scrL}\lan \z_2-\nabla_{\xi} \Phi_{\la_2}(k,m)\ran^{-\scrL}\|_{L^{1,1,\fy,\fy}_{\z_1,z_1,\z_2,z_2}}
		\\
		\sim &
		\|\sum_{k}\chi_{B(k,r)}(z_1)\chi_{B(m,r)}(z_2)\lan \z_2-\nabla_{\xi} \Phi_{\la_2}(k,m)\ran^{-\scrL}\|_{L^{1,\fy,\fy}_{z_1,\z_2,z_2}}
		\\
		\sim &
		\|\sum_{k}\chi_{B(m,r)}(z_2)\lan \z_2-\nabla_{\xi} \Phi_{\la_2}(k,m)\ran^{-\scrL}\|_{L^{\fy,\fy}_{\z_2,z_2}}
			\\
		= & 
		\|\chi_{B(m,r)}(z_2)\sum_{k}\lan \z_2-\nabla_{\xi} \Phi_{\la_2}(k,l)\ran^{-\scrL}\|_{L^{\fy,\fy}_{\z_2,z_2}}.
	\end{split}
	\ee	
	Using the separation property, we have the following estimate for $k\neq \tilde{k}$:
    \be
    \begin{split}
    |\nabla_{\xi} \Phi_{\la_2}(k,m)-\nabla_{\xi} \Phi_{\la_2}(\tilde{k},m)|
    = &
    \la_2 |\nabla_{\xi} \Phi(k,\la_2 m)-\nabla_{\xi} \Phi(\tilde{k},\la_2 m)|\gtrsim \la_2.
    \end{split}
    \ee
    There exists some $\d>0$ independent of $m$ and $l$ such that the sequence of sets
    \be
    \big\{B(\nabla_{\xi} \Phi_{\la_2}(k,m),\la_2 \d)\big\}_{k\in \zd}
    \ee
    is pairwise disjoint.
    From this, we obtain that
	\be
	\begin{split}
		\sum_{k}\lan \z_2-\nabla_{\xi} \Phi(k,m)\ran^{-\scrL}
		\sim &
		\la_2^{-d}\sum_{k}\int_{B(\nabla_{\xi} \Phi(k,m),\la_2\d)}\lan \z_2-\z_1\ran^{-\scrL}d\z_1
		\\
		= &
		\la_2^{-d}\int_{\bigcup_{k}B(\nabla_{\xi} \Phi(k,m),\la_2\d)}\lan \z_2-\z_1\ran^{-\scrL}d\z_1
		\\
		\leq &
		\la_2^{-d}\int_{\rd}\lan \z_2-\z_1\ran^{-\scrL}d\z_1\lesssim \la_2^{-d}=\lan l\ran^{\frac{dt_2}{2}}.
	\end{split}
	\ee
	The desired estimate of $E_1$ follows by
		\be
	\begin{split}
		E_1\lesssim \|\chi_{B(m,r)}(z_2)\sum_{k}\lan \z_2-\nabla_{\xi} \Phi_{\la_2}(k,l)\ran^{-\scrL}\|_{L^{\fy,\fy}_{\z_2,z_2}}
		\lesssim 
	\lan l\ran^{\frac{dt_2}{2}}\|\chi_{B(m,r)}(z_2)\|_{L^{\fy}_{z_2}}\lesssim \lan l\ran^{\frac{dt_2}{2}}.
	\end{split}
	\ee	
	Next, we turn to the estimate of $E_2$. Using the embedding relation
	\be
	M^{1,\fy,\fy,\fy}_{1\otimes v_{d+\ep,0}}(c_1) \subset M^{1,\fy,1,\fy}(c_1),
	\ee
	we only need to verify
		\be
	\bigg\|\lan x\ran^{-dt_1/2}\sum_{k}\eta_{k,m}(x,\xi)e^{2\pi i(\Phi_{\la_2}(x,\xi)-\nabla_{x} \Phi_{\la_2}(k,m)\cdot x-\nabla_{\xi} \Phi_{\la_2}(k,m)\cdot \xi)}\bigg\|_{M^{1,\fy,\fy,\fy}_{1\otimes v_{d+\ep,0}}(c_1)}\lesssim 1.
	\ee
	Using Proposition \ref{pp-tlp-ws2}, we find 
	\be
	\begin{split}
		&\bigg\|\lan x\ran^{-dt_1/2}\sum_{k}\eta_{k,m}(x,\xi)e^{2\pi i(\Phi_{\la_2}(x,\xi)-\nabla_{x} \Phi_{\la_2}(k,m)\cdot x-\nabla_{\xi} \Phi_{\la_2}(k,m)\cdot \xi)}\bigg\|_{M^{1,\fy,\fy,\fy}_{1\otimes v_{d+\ep,0}}(c_1)}
		\\
		\sim &
		\sup_k \bigg\|\lan x\ran^{-dt_1/2}\eta_{k,m}(x,\xi)e^{2\pi i(\Phi_{\la_2}(x,\xi)-\nabla_{x} \Phi_{\la_2}(k,m)\cdot x-\nabla_{\xi} \Phi_{\la_2}(k,m)\cdot \xi)}\bigg\|_{\scrF L^{1,\fy}_{1\otimes v_{d+\ep}}}
		\\
		\sim &
		\sup_k \lan k\ran^{\frac{-dt_1}{2}}\bigg\|\eta_{k,m}(x,\xi)e^{2\pi i(\Phi_{\la_2}(x,\xi)-\nabla_{x} \Phi_{\la_2}(k,m)\cdot x-\nabla_{\xi} \Phi_{\la_2}(k,m)\cdot \xi)}\bigg\|_{\scrF L^{1,\fy}_{1\otimes v_{d+\ep}}}
	\end{split}
	\ee
	From this, we only need to verify
			\ben\label{thm-FIO-12hg-M1-2}
	\bigg\|\eta_{k,m}(x,\xi)e^{2\pi i(\Phi_{\la_2}(x,\xi)-\nabla_{x} \Phi_{\la_2}(k,m)\cdot x-\nabla_{\xi} \Phi_{\la_2}(k,m)\cdot \xi)}\bigg\|_{\scrF L^{1,\fy}_{1\otimes v_{d+\ep}}}
	\lesssim 
	\lan k\ran^{\frac{dt_1}{2}}.
	\een
	Let $\la_1=\lan k\ran^{\frac{-t_1}{2}}$.
        Define
        \begin{equation*}
            B_k=\{j\in \zd: \eta_j(x)\eta_k(\la_1x)\neq 0\}.
        \end{equation*}
        Write the left term of \eqref{thm-FIO-12hg-M1-2} by
	\be
	\begin{split}
		&\bigg\|\eta_{k,m}(x,\xi)e^{2\pi i(\Phi_{\la_2}(x,\xi)-\nabla_{x} \Phi_{\la_2}(k,m)\cdot x-\nabla_{\xi} \Phi_{\la_2}(k,m)\cdot \xi)}\bigg\|_{\scrF L^{1,\fy}_{1\otimes v_{d+\ep}}}
		\\
		\sim &
		\bigg\|\eta_{k,m}(\la_1x,\xi)e^{2\pi i(\Phi_{\la_2}(\la_1x,\xi)-\nabla_{x} \Phi_{\la_2}(k,m)\cdot \la_1x-\nabla_{\xi} \Phi_{\la_2}(k,m)\cdot \xi)}\bigg\|_{\scrF L^{1,\fy}_{1\otimes v_{d+\ep}}}
		\\
		\sim &
		\bigg\|\sum_{j\in B_k}\eta_j(x)\eta_{k}(\la_1x)\eta_{m}(\xi)e^{2\pi i(\Phi(\la_1x,\la_2\xi)-\nabla_{\xi} \Phi(k,\la_2m)\cdot \la_2\xi)}\bigg\|_{\scrF L^{1,\fy}_{1\otimes v_{d+\ep}}}
		\\
		\lesssim &
		\la_1^{-d}
		\sup_{j\in B_k}\bigg\|\eta_j(x)\eta_{m}(\xi)e^{2\pi i(\Phi(\la_1x,\la_2\xi)-\nabla_{\xi} \Phi(k,\la_2m)\cdot \la_2\xi)}\bigg\|_{\scrF L^{1,\fy}_{1\otimes v_{d+\ep}}}
		\bigg\|\eta_{k}(\la_1x)\eta_m^*(\xi)\bigg\|_{\scrF L^{1,\fy}_{1\otimes v_{d+\ep}}}
		\\
		\lesssim &
		\lan k\ran^{\frac{dt_1}{2}}
		\sup_{j\in B_k}\bigg\|\eta_j(x)\eta_{m}(\xi)e^{2\pi i(\Phi(\la_1x,\la_2\xi)-\nabla_{\xi} \Phi(k,\la_2m)\cdot \la_2\xi)}\bigg\|_{\scrF L^{1,\fy}_{1\otimes v_{d+\ep}}}.
	\end{split}
	\ee
	From this, in order to verify \eqref{thm-FIO-12hg-M1-2}, we only need to verify that
	\be
	\bigg\|\eta_j(x)\eta_{m}(\xi)e^{2\pi i(\Phi(\la_1x,\la_2\xi)-\nabla_{\xi} \Phi(k,\la_2m)\cdot \la_2\xi)}\bigg\|_{\scrF L^{1,\fy}_{1\otimes v_{d+\ep}}}
	\lesssim 1.
	\ee
	Denote $\Phi_{\la}(x,\xi)=\Phi(\la_1 x,\la_2 \xi)$. The above estimate is equivalent to
	\be
	\bigg\|\eta_{0,0}e^{2\pi i(\Phi_{\la}(x+j,\xi+m)-\nabla_{\xi} \Phi_{\la}(k/\la_1,m)\cdot \xi)}\bigg\|_{\scrF L^{1,\fy}_{1\otimes v_{d+\ep}}}\lesssim 1.
	\ee
	Write
	\be
	\begin{split}
		&
		\Phi_{\la}(x+j,\xi+m)-\nabla_{\xi} \Phi_{\la}(k/\la_1,m)\cdot \xi
		\\
		= &
		(\Phi_{\la}(x+j,\xi+m)-\Phi_{\la}(j,m)-\nabla_x \Phi_{\la}(j,m)\cdot x-\nabla_{\xi} \Phi_{\la}(j,m)\cdot \xi)
		\\
		& +
		(\Phi_{\la}(j,m)+\nabla_x \Phi_{\la}(j,m)\cdot x)+(\nabla_{\xi} \Phi_{\la}(j,m)\cdot \xi-\nabla_{\xi} \Phi_{\la}(k/\la_1,m)\cdot \xi)
		\\
		= &:
		\tau_{k,l,j,m}^{\la}(x,\xi)+(\Phi_{\la}(j,m)+\nabla_x \Phi_{\la}(j,m)\cdot x)+(\nabla_{\xi} \Phi_{\la}(j,m)\cdot \xi-\nabla_{\xi} \Phi_{\la}(k/\la_1,m)\cdot \xi).
	\end{split}
	\ee	
	Note that
	\be
	\begin{split}
		|\nabla_{\xi} \Phi_{\la}(j,m)-\nabla_{\xi} \Phi_{\la}(k/\la_1,m)|
		\lesssim &
		|j-k/\la_1|\sup_{|\g|=2}\|\partial_{x,\xi}^{\g}\Phi_{\la}\|_{L^{\fy}}
		\\
		\lesssim &
		\la_1\la_2 |j-k/\la_1|\sup_{|\g|=2}\|\partial_{x,\xi}^{\g}\Phi\|_{L^{\fy}}
		\\
		\lesssim & 
		\la_2|\la_1 j-k|\sup_{|\g|=2} \|\partial_{x,\xi}^{\g}\Phi\|_{M^{1,\fy,\fy,\fy}_{1\otimes v_{d+\ep,0}}(c_1)}\lesssim 1,
	\end{split}
	\ee
	We conclude that
	\be
	\begin{split}
		&\bigg\|\eta_{0,0}e^{2\pi i(\Phi_{\la}(x+j,\xi+m)-\nabla_{\xi} \Phi_{\la}(k/\la_1,m)\cdot \xi)}\bigg\|_{\scrF L^{1,\fy}_{1\otimes v_{d+\ep}}}
		\\
		= &
		\bigg\|\eta_{0,0}e^{2\pi i(\tau_{k,l,j,m}^{\la}(x,\xi)+\nabla_{\xi} \Phi_{\la}(j,m)\cdot \xi-\nabla_{\xi} \Phi_{\la}(k/\la_1,m)\cdot \xi)}\bigg\|_{\scrF L^{1,\fy}_{1\otimes v_{d+\ep}}}
		\sim 
		\bigg\|\eta_{0,0}e^{2\pi i\tau_{k,l,j,m}^{\la}(x,\xi)}\bigg\|_{\scrF L^{1,\fy}_{1\otimes v_{d+\ep}}}.
	\end{split}
	\ee
	From this and Proposition \ref{pp-tlp-ws2}, we only need to verify that
	\be
	\bigg\|\eta_{0,0}e^{2\pi i\tau_{k,l,j,m}^{\la}(x,\xi)}\bigg\|_{M^{1,\fy,\fy,\fy}_{1\otimes v_{d+\ep,0}}(c_1)} \lesssim 1.
	\ee
    Furthermore, by using the embedding relation $W^{\fy,\fy}_{1\otimes v_{d+\ep,d+\ep}}(\rdd) \subset M^{1,\fy,\fy,\fy}_{1\otimes v_{d+\ep,0}}(c_1)$, we only need to verify that
    \be
    \bigg\|\eta_{0,0}e^{2\pi i\tau_{k,l,j,m}^{\la}(x,\xi)}\bigg\|_{W^{\fy,\fy}_{1\otimes v_{d+\ep,d+\ep}}} \lesssim 1.
    \ee

	Note that
	\be
	\begin{split}
		\bigg\|\eta_{0,0}e^{2\pi i\tau_{k,l,j,m}^{\la}(x,\xi)}\bigg\|_{W^{\fy,\fy}_{1\otimes v_{d+\ep,d+\ep}}}
		= 
		\bigg\|\eta_{0,0}e^{2\pi i\eta_{0,0}^{*}\tau_{k,l,j,m}^{\la}(x,\xi)}\bigg\|_{W^{\fy,\fy}_{1\otimes v_{d+\ep,d+\ep}}}
		\lesssim
		e^{C\|\eta_{0,0}^{*}\tau_{k,l,j,m}^{\la}\|_{W^{\fy,\fy}_{1\otimes v_{d+\ep,d+\ep}}}}.
	\end{split}
	\ee
	Write
	\be
	\begin{split}
		\tau_{k,l,j,m}^{\la}
		= 
		2\sum_{|\g|=2}\frac{(x,\xi)^{\g}}{\g !}\int_{0}^1(1-t)(\partial^{\g}\Phi_{\la})(j+tx,m+t\xi)dt
		= 
		\tau_{k,l,j,m}^{\la,x,x}+\tau_{k,l,j,m}^{\la,x,\xi}+\tau_{k,l,j,m}^{\la,\xi,\xi},
	\end{split}
	\ee
	Here, we use $\tau_{k,l,j,m}^{\la,x,x}$, $\tau_{k,l,j,m}^{\la,\xi,\xi}$ and $\tau_{k,l,j,m}^{\la,x,\xi}$ to denote the partial summation of $\tau_{k,l,j,m}$ associated with $\partial_{x,x}^{\g}$ terms,
	$\partial_{\xi,\xi}^{\g}$ terms and $\partial_{x,\xi}^{\g}$ terms, respectively.
	We only need to verify that
	\be
	\|\eta_{0,0}^{*}\tau_{k,l,j,m}^{\la,x,x}\|_{W^{\fy,\fy}_{1\otimes v_{d+\ep,d+\ep}}}
	\lesssim
	A=\sum_{|\g|=2}\|\lan x\ran^{-t_1} \partial^{\g}_{x,x}\Phi\|_{W^{\fy,\fy}_{1\otimes v_{d+\ep,d+\ep}}},
	\ee
	\be
	\|\eta_{0,0}^{*}\tau_{k,l,j,m}^{\la,\xi,\xi}\|_{W^{\fy,\fy}_{1\otimes v_{d+\ep,d+\ep}}}
	\lesssim 
	B=\sum_{|\g|=2}\|\partial^{\g}_{\xi,\xi}\Phi\|_{W^{\fy,\fy}_{1\otimes v_{d+\ep,d+\ep}}}
	\ee
	and
	\be
	\|\eta_{0,0}^{*}\tau_{k,l,j,m}^{\la,x,\xi}\|_{W^{\fy,\fy}_{1\otimes v_{d+\ep,d+\ep}}}
	\lesssim 
	C=\sum_{|\g|=2}\|\lan \xi\ran^{-t_2}\partial^{\g}_{x,\xi}\Phi\|_{W^{\fy,\fy}_{1\otimes v_{d+\ep,d+\ep}}}.
	\ee
	We only give the proof for the first and second estimates, since the estimate of the third term is similar and easier.
	Take $\phi$ to be a $C_c^{\fy}(\rdd)$ function such that $\phi=1$ on the $\text{supp}\eta_{0,0}^{*}$.
	Write
	\be
	\begin{split}
		&\|\eta_{0,0}^{*}\tau_{k,l,j,m}^{\la,x,x}\|_{W^{\fy,\fy}_{1\otimes v_{d+\ep,d+\ep}}}
		\\
		= &
		2\bigg\|\sum_{|\g|=2}\frac{x^{\g}}{\g !}\eta_{0,0}^{*}(x,\xi)\int_{0}^1(1-t)\phi(tx,t\xi)(\partial^{\g}_{x,x}\Phi_{\la})(j+tx,m+t\xi)dt\bigg\|_{W^{\fy,\fy}_{1\otimes v_{d+\ep,d+\ep}}}
		\\
		\lesssim &
		\sum_{|\g|=2}\bigg\|\int_{0}^1(1-t)\phi(tx,t\xi)(\partial^{\g}_{x,x}\Phi_{\la})(j+tx,m+t\xi)dt\bigg\|_{W^{\fy,\fy}_{1\otimes v_{d+\ep,d+\ep}}}
		\\
		\lesssim &
		\sum_{|\g|=2}\sup_{t\in (0,1)}\|\phi(tx,t\xi)(\partial^{\g}_{x,x}\Phi_{\la})(j+tx,m+t\xi)\|_{W^{\fy,\fy}_{1\otimes v_{d+\ep,d+\ep}}}
		\\
		\lesssim &
		\sum_{|\g|=2}\|\phi(x,\xi)(\partial^{\g}_{x,x}\Phi_{\la})(j+x,m+\xi)\|_{W^{\fy,\fy}_{1\otimes v_{d+\ep,d+\ep}}}.
	\end{split}
	\ee
	Similarly, we have
	\be
	\|\eta_{0,0}^{*}\tau_{k,l,j,m}^{\la,\xi,\xi}\|_{W^{\fy,\fy}_{1\otimes v_{d+\ep,d+\ep}}}
	\lesssim
	\sum_{|\g|=2}\|\phi(x,\xi)(\partial^{\g}_{\xi,\xi}\Phi_{\la})(j+x,m+\xi)\|_{W^{\fy,\fy}_{1\otimes v_{d+\ep,d+\ep}}}.
	\ee
	The only remaining thing is to verify
	\be
	\|\phi(x,\xi)(\partial^{\g}_{x,x}\Phi_{\la})(j+x,m+\xi)\|_{W^{\fy,\fy}_{1\otimes v_{d+\ep,d+\ep}}}
	\lesssim 
	\|\lan x\ran^{-t_1} \partial^{\g}_{x,x}\Phi\|_{W^{\fy,\fy}_{1\otimes v_{d+\ep,d+\ep}}}.
	\ee
	and
		\be
	\|\phi(x,\xi)(\partial^{\g}_{\xi,\xi}\Phi_{\la})(j+x,m+\xi)\|_{W^{\fy,\fy}_{1\otimes v_{d+\ep,d+\ep}}}
	\lesssim 
	\|\lan \xi\ran^{-t_2} \partial^{\g}_{\xi,\xi}\Phi\|_{W^{\fy,\fy}_{1\otimes v_{d+\ep,d+\ep}}}.
	\ee
	
	Take a suitable function $\rho\in C_c^{\fy}(\rdd)$ such that for all $j\in B_k, m\in A_l$.
        Denote $\rho_{k,l}(x,\xi)=\rho(x-l,\xi-k)$.
        We have
	\be
	\phi(x-j,\xi-m)\rho_{k,l}(\la_1 x,\la_2 \xi)=\phi(x-j,\xi-m).
	\ee
	Using this, we conclude that
	\be
	\begin{split}
		&
		\|\rho_{k,l}(\la_1 x,\la_2 \xi)(\partial_{x,x}^{\g}\Phi_{\la})(x,\xi)\|_{W^{\fy,\fy}_{1\otimes v_{d+\ep,d+\ep}}}
		\\
		= &
		\la_1^2
		\|\rho_{k,l}(\la_1 x,\la_2 \xi)(\partial_{x,x}^{\g}\Phi)(\la_1 x,\la_2 \xi)\|_{W^{\fy,\fy}_{1\otimes v_{d+\ep,d+\ep}}}
		\\
		\lesssim &
		\la_1^2\|\rho_{k,l}\partial_{x,x}^{\g}\Phi\|_{W^{\fy,\fy}_{1\otimes v_{d+\ep,d+\ep}}}
		\\
		\lesssim &
		\la_1^2 \lan k\ran^{t_1}
		\|\lan x\ran^{-t_1}\partial^{\g}_{x,x}\Phi\|_{W^{\fy,\fy}_{1\otimes v_{d+\ep,d+\ep}}}
		=\|\lan x\ran^{-t_1}\partial^{\g}_{x,x}\Phi\|_{W^{\fy,\fy}_{1\otimes v_{d+\ep,d+\ep}}},
	\end{split}
	\ee
	and
		\be
	\begin{split}
		&
		\|\rho_{k,l}(\la_1 x,\la_2 \xi)(\partial_{\xi,\xi}^{\g}\Phi_{\la})(x,\xi)\|_{W^{\fy,\fy}_{1\otimes v_{d+\ep,d+\ep}}}
		\\
		= &
		\la_2^2
		\|\rho_{k,l}(\la_1 x,\la_2 \xi)(\partial_{\xi,\xi}^{\g}\Phi)(\la_1 x,\la_2 \xi)\|_{W^{\fy,\fy}_{1\otimes v_{d+\ep,d+\ep}}}
		\\
		\lesssim &
		\la_2^2\|\rho_{k,l}\partial_{\xi,\xi}^{\g}\Phi\|_{W^{\fy,\fy}_{1\otimes v_{d+\ep,d+\ep}}}
		\\
		\lesssim &
		\la_2^2 \lan l\ran^{t_2}
		\|\lan \xi\ran^{-t_2}\partial^{\g}_{\xi,\xi}\Phi\|_{W^{\fy,\fy}_{1\otimes v_{d+\ep,d+\ep}}}
		\lesssim \|\lan \xi\ran^{-t_2}\partial^{\g}_{\xi,\xi}\Phi\|_{W^{\fy,\fy}_{1\otimes v_{d+\ep,d+\ep}}}.
	\end{split}
	\ee
	From this, we obtain the desired estimate by
	\be
	\begin{split}
		&
		\|\phi(x,\xi)(\partial^{\g}_{x,x}\Phi_{\la})(j+x,m+\xi)\|_{W^{\fy,\fy}_{1\otimes v_{d+\ep,d+\ep}}}
		\\
		= &
		\|\phi(x-j,\xi-m)(\partial^{\g}_{x,x}\Phi_{\la})(x,\xi)\|_{W^{\fy,\fy}_{1\otimes v_{d+\ep,d+\ep}}}
		\\
		= &
		\|\phi(x-j,\xi-m)\rho_{k,l}(\la_1 x,\la_2 \xi)(\partial^{\g}_{x,x}\Phi_{\la})(x,\xi)\|_{W^{\fy,\fy}_{1\otimes v_{d+\ep,d+\ep}}}
		\\
		\lesssim &
		\|\rho_{k,l}(\la_1 x,\la_2 \xi)(\partial^{\g}_{x,x}\Phi_{\la})(x,\xi)\|_{W^{\fy,\fy}_{1\otimes v_{d+\ep,d+\ep}}}
		\|\phi(x-j,\xi-m)\|_{W^{\fy,\fy}_{1\otimes v_{d+\ep,d+\ep}}}
		\\
		\lesssim &
		\|\lan x\ran^{-t_1} \partial^{\g}_{x,x}\Phi\|_{W^{\fy,\fy}_{1\otimes v_{d+\ep,d+\ep}}}.
	\end{split}
	\ee
   The estimate of $\|\phi(x,\xi)(\partial^{\g}_{\xi,\xi}\Phi_{\la})(j+x,m+\xi)\|_{W^{\fy,\fy}_{1\otimes v_{d+\ep,d+\ep}}}$ can be treated similarly.
\end{proof}

\begin{theorem}[FIO on $\mf$, high growth]\label{thm-FIO-12hg-Mf}
	Let $\Phi$ be a real-valued $C^2(\rdd)$ function 
	satisfying 
    the $(-\fy,t_1,t_2)$-growth condition and uniform separation condition of $\xi$-type.
	Denote 
	\be
	A=\sum_{|\g|=2}\|\lan x\ran^{-t_1}\partial^{\g}_{x,x}\Phi\|_{W^{\fy,\fy}_{1\otimes v_{d+\ep,d+\ep}}},\ \ \ 
	B=\sum_{|\g|=2}\|\lan \xi\ran^{-t_2}\partial^{\g}_{\xi,\xi}\Phi\|_{W^{\fy,\fy}_{1\otimes v_{d+\ep,d+\ep}}},
	\ee
	and
	\be
	C=\sum_{|\g|=2}\|\partial^{\g}_{x,\xi}\Phi\|_{W^{\fy,\fy}_{1\otimes v_{d+\ep,d+\ep}}}
	\ee
	We have $\lan x\ran^{-dt_1/2}\lan \xi\ran^{-dt_2/2}e^{2\pi i\Phi(x,\xi)}\in M^{1,1,\fy,\fy}(c_2)$ with
	\ben\label{thm-FIO-12hg-Mf-cd1}
	\|\lan x\ran^{-dt_1/2}\lan \xi\ran^{-dt_2/2}e^{2\pi i\Phi(x,\xi)}\|_{M^{1,1,\fy,\fy}(c_2)}
	\lesssim 
	e^{A+B+C}.
	\een
	Moreover,
	for every $\s\in \msj_{v_{\frac{dt_1}{2},\frac{dt_2}{2}}\otimes 1}(\rdd)$, the Fourier integral operator $T_{\s,\Phi}\in \calL(\mfd)$ with the estimate of operator norm
	\be
	\begin{split}
		\|T_{\s,\Phi}\|_{\calL(\mfd)}
		\lesssim  \|\s\|_{\msj_{v_{\frac{dt_1}{2},\frac{dt_2}{2}}\otimes 1}(\rdd)}e^{A+B+C}.
	\end{split}
	\ee
\end{theorem}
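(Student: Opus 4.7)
The plan is to reduce Theorem \ref{thm-FIO-12hg-Mf} to Theorem \ref{thm-FIO-12hg-M1} via the transposition $\tilde{\Phi}(x,\xi)=\Phi(\xi,x)$, in exact parallel with the derivation of Theorem \ref{thm-FIO-Mf} from Theorem \ref{thm-FIO-M1} in the low-growth setting. The key observation is that all the data in the hypothesis transforms predictably under the swap $(x,\xi)\leftrightarrow(\xi,x)$: the uniform $\xi$-type separation of $\Phi$ becomes the $x$-type separation of $\tilde{\Phi}$; the working space $W^{\fy,\fy}_{1\otimes v_{d+\ep,d+\ep}}(\rdd)$ is invariant under coordinate swap, since its weight places the same exponent on both frequency components (see Definition \ref{def-Wiener}); and the identities $\partial^{\g}_{x,x}\tilde{\Phi}(x,\xi)=\partial^{\g}_{\xi,\xi}\Phi(\xi,x)$, $\partial^{\g}_{\xi,\xi}\tilde{\Phi}(x,\xi)=\partial^{\g}_{x,x}\Phi(\xi,x)$, $\partial^{\g}_{x,\xi}\tilde{\Phi}(x,\xi)=\partial^{\g}_{\xi,x}\Phi(\xi,x)$ hold for $|\g|=2$. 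First I would check that $\tilde{\Phi}$ satisfies the $(-\fy,t_2,t_1)$-growth condition together with the uniform $x$-type separation, and that the quantities $A,B,C$ of Theorem \ref{thm-FIO-12hg-Mf} equal, respectively, $\tilde B,\tilde A,\tilde C$ built from $\tilde{\Phi}$ with parameters $(t_2,t_1)$.

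Next, I would invoke Theorem \ref{thm-FIO-12hg-M1} applied to $\tilde{\Phi}$ with parameters $(t_2,t_1)$ to obtain
\be
\|\lan x\ran^{-dt_2/2}\lan\xi\ran^{-dt_1/2}e^{2\pi i\tilde{\Phi}(x,\xi)}\|_{M^{1,1,\fy,\fy}(c_1)}\lesssim e^{\tilde A+\tilde B+\tilde C}=e^{A+B+C}.
\ee
Setting $F(x,\xi)=\lan x\ran^{-dt_1/2}\lan\xi\ran^{-dt_2/2}e^{2\pi i\Phi(x,\xi)}$, a one-line computation yields $\tilde F(x,\xi):=F(\xi,x)=\lan x\ran^{-dt_2/2}\lan\xi\ran^{-dt_1/2}e^{2\pi i\tilde{\Phi}(x,\xi)}$, and Lemma \ref{lm-rlc} then delivers $\|F\|_{M^{1,1,\fy,\fy}(c_2)}\sim\|\tilde F\|_{M^{1,1,\fy,\fy}(c_1)}\lesssim e^{A+B+C}$, which is the desired inequality \eqref{thm-FIO-12hg-Mf-cd1}.

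The $\mf$ boundedness of $T_{\s,\Phi}$ for $\s\in\msj_{v_{\frac{dt_1}{2},\frac{dt_2}{2}}\otimes 1}(\rdd)$, with the announced norm bound, then follows directly from Proposition \ref{pp-KFIO-Mi-Mf}: applied with phase $\tilde{\Phi}$ and weight parameters $(\frac{dt_2}{2},\frac{dt_1}{2})$, it links the $M^1$-boundedness of $T_{\s,\tilde{\Phi}}$ (which Theorem \ref{thm-FIO-12hg-M1} has just supplied) to the $M^{\fy}$-boundedness of $T_{\s,\tilde{\tilde{\Phi}}}=T_{\s,\Phi}$. I do not anticipate any substantive analytic hurdle in this argument: the only care required is bookkeeping the swap of weight indices under transposition and verifying the symmetry of $W^{\fy,\fy}_{1\otimes v_{d+\ep,d+\ep}}$ under $(x,\xi)\leftrightarrow(\xi,x)$. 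Once these checks are in place the proof collapses to a compact invocation of Theorem \ref{thm-FIO-12hg-M1}, Lemma \ref{lm-rlc}, and Proposition \ref{pp-KFIO-Mi-Mf}.
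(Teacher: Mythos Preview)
Your proposal is correct and follows essentially the same approach as the paper: reduce to Theorem \ref{thm-FIO-12hg-M1} by setting $\tilde{\Phi}(x,\xi)=\Phi(\xi,x)$, verify that $\tilde{\Phi}$ satisfies the $(-\infty,t_2,t_1)$-growth condition and the $x$-type separation, apply Theorem \ref{thm-FIO-12hg-M1} with swapped parameters, then use Lemma \ref{lm-rlc} to pass from $M^{1,1,\infty,\infty}(c_1)$ to $M^{1,1,\infty,\infty}(c_2)$ and Proposition \ref{pp-KFIO-Mi-Mf} for the $M^{\infty}$ boundedness. The only minor simplification is that once \eqref{thm-FIO-12hg-Mf-cd1} is established, Proposition \ref{pp-KFIO-Mi-Mf} applies directly to $\Phi$ (via the $c_2$ norm estimate) without needing to route through the $M^1$-boundedness of $T_{\sigma,\tilde{\Phi}}$.
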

\begin{proof}
	Let $\tilde{\Phi}(x,\xi)=\Phi(\xi,x)$. Observe that $\tilde{\Phi}$ satisfies the 
    uniform separation property of $x$-type:
	\be
	|\nabla_{\xi} \tilde{\Phi}(x_1,\xi)-\nabla_{\xi} \tilde{\Phi}(x_2,\xi)|\gtrsim 1\ \ \text{uniformly for all}\ \xi\in \rd,\ |x_1-x_2|\gtrsim 1,
	\ee
	and norm conditions of the two order derivatives:
	\be
	\partial_{x,\xi}^{\g}\tilde{\Phi}\in W^{\fy,\fy}_{1\otimes v_{d+\ep,d+\ep}}\ \ \text{for all}\ |\g|=2,
	\ee
	and
	\be
	\lan x\ran^{-t_2}\partial_{x,x}^{\g}\tilde{\Phi}\in W^{\fy,\fy}_{1\otimes v_{d+\ep,d+\ep}},\ \ 
	\lan \xi\ran^{-t_1}\partial_{\xi,\xi}^{\g}\tilde{\Phi}\in W^{\fy,\fy}_{1\otimes v_{d+\ep,d+\ep}},
	\ \ \text{for all}\ |\g|=2.
	\ee
    From this and Theorem \ref{thm-FIO-12hg-M1}, we conclude that $\lan x\ran^{-dt_2/2}\lan \xi\ran^{-dt_1/2}e^{2\pi i\tilde{\Phi}(x,\xi)}\in M^{1,1,\fy,\fy}(c_1)$ with
	\be
	\|\lan x\ran^{-dt_2/2}\lan \xi\ran^{-dt_1/2}e^{2\pi i\tilde{\Phi}(x,\xi)}\|_{M^{1,1,\fy,\fy}(c_1)}
	\lesssim 
	e^{\tilde{A}+\tilde{B}+\tilde{C}},
    \ee
    where
    	\be
	\tilde{A}=\sum_{|\g|=2}\|\lan x\ran^{-t_2}\partial_{x,x}^{\g}\tilde{\Phi}\|_{W^{\fy,\fy}_{1\otimes v_{d+\ep,d+\ep}}},\ \ \ 
	\tilde{B}=\sum_{|\g|=2}\|\lan \xi\ran^{-t_1}\partial_{\xi,\xi}^{\g}\tilde{\Phi}\|_{W^{\fy,\fy}_{1\otimes v_{d+\ep,d+\ep}}},
	\ee
	and
	\be
	\tilde{C}=\sum_{|\g|=2}\|\partial^{\g}_{x,\xi}\tilde{\Phi}\|_{W^{\fy,\fy}_{1\otimes v_{d+\ep,d+\ep}}}
	\ee
    Using this and Lemma \ref{lm-rlc}, we obtain the desired estimate \eqref{thm-FIO-12hg-Mf-cd1}.
    The corresponding boundedness on $\mf$ follows by Proposition \ref{pp-KFIO-Mi-Mf}.
\end{proof}

\begin{proof}[The proof of Theorem \ref{thm-FIO-Mp-12hg}: sufficiency part]
The cases of $p=1$ and $p=\fy$ have been handled in Theorem \ref{thm-FIO-12hg-M1} and Theorem \ref{thm-FIO-12hg-Mf} respectively.
We only need to consider the cases $1<p<2$ and $2<p<\fy$. 
Since both of these two cases can be treated by the same way, we only
give the proof for $1<p<2$. 
In this case, we need to verify the boundedness:
\be
\|T_{\s,\Phi}\|_{\calL(M^p)}+\|T_{\s,\Phi}\|_{\calL(M^{p'})}
\lesssim 
\|\s\|_{M^{\fy,1}_{v_{s_1,s_2}\otimes 1}}e^{(A+B+C)|2/p-1|},
\ee
which is equivalent to 
\be
\|T_{v_{s_1,s_2}\s,\Phi}\|_{\calL(M^p_{1\otimes v_{s_2}}, M^p_{v_{s_1}^{-1}\otimes 1})}
+
\|T_{v_{s_1,s_2}\s,\Phi}\|_{\calL(M^{p'}_{1\otimes v_{s_2}}, M^{p'}_{v_{s_1}^{-1}\otimes 1})}
\lesssim 
\|\s\|_{M^{\fy,1}_{v_{s_1,s_2}\otimes 1}}e^{(A+B+C)|2/p-1|}.
\ee
Using Theorems \ref{thm-FIO-12hg-M1} and \ref{thm-FIO-12hg-Mf} and the fact
\be
v_{s_1-\frac{dt_1}{2},s_2-\frac{dt_2}{2}}\s\in M^{\fy,1}_{v_{\frac{dt_1}{2},\frac{dt_2}{2}}},
\ee
we have the boundedness
\be
\begin{split}
    &\|T_{v_{s_1-\frac{dt_1}{2},s_2-\frac{dt_2}{2}}\s, \Phi}\|_{\calL(M^1)}+
    \|T_{v_{s_1-\frac{dt_1}{2},s_2-\frac{dt_2}{2}}\s, \Phi}\|_{\calL(M^{\fy})}
    \\
\lesssim & 
\|v_{s_1-\frac{dt_1}{2},s_2-\frac{dt_2}{2}}\s\|_{M^{\fy,1}_{v_{\frac{dt_1}{2},\frac{dt_2}{2}}}}e^{A+B+C}
= 
\|\s\|_{M^{\fy,1}_{v_{s_1,s_2}}}e^{A+B+C}.
\end{split}
\ee
This is equivalent to
\be
\|T_{v_{s_1,s_2}\s,\Phi}\|_{\calL(M^1_{1\otimes v_{\frac{dt_2}{2}}},M^1_{v_{\frac{dt_2}{2}^{-1}\otimes 1}})}+
\|T_{v_{s_1,s_2}\s,\Phi}\|_{\calL(M^{\fy}_{1\otimes v_{\frac{dt_2}{2}}},M^{\fy}_{v_{\frac{dt_2}{2}^{-1}\otimes 1}})}
\lesssim 
\|\s\|_{M^{\fy,1}_{v_{s_1,s_2}}}e^{A+B+C}.
\ee
Applying the complex interpolation argument between this boundedness and
 the assumption 
$\|T_{v_{s_1,s_2}\s,\Phi}\|_{\calL(L^2)}\lesssim \|\s\|_{M^{\fy,1}_{v_{s_1,s_2}}}$, we conclude that
\be
\|T_{v_{s_1,s_2}\s,\Phi}\|_{\calL(M^p_{1\otimes v_{\tilde{s_2}}}, M^p_{v_{\tilde{s_1}}^{-1}\otimes 1})}+
\|T_{v_{s_1,s_2}\s,\Phi}\|_{\calL(M^{p'}_{1\otimes v_{\tilde{s_2}}}, M^{p'}_{v_{\tilde{s_1}}^{-1}\otimes 1})}
\lesssim 
\|\s\|_{M^{\fy,1}_{v_{s_1,s_2}\otimes 1}}e^{(A+B+C)|2/p-1|},
\ee
where
\be
\tilde{s_1}=dt_1|1/p-1/2|,\ \ \  \tilde{s_2}=dt_2|1/p-1/2|.
\ee
The desired conclusion follows by
\be
\begin{split}
    &\|T_{v_{s_1,s_2}\s,\Phi}\|_{\calL(M^p_{1\otimes v_{s_2}}, M^p_{v_{s_1}^{-1}\otimes 1})}
    +
    \|T_{v_{s_1,s_2}\s,\Phi}\|_{\calL(M^{p'}_{1\otimes v_{s_2}}, M^{p'}_{v_{s_1}^{-1}\otimes 1})}
    \\
    \lesssim & 
    \|T_{v_{s_1,s_2}\s,\Phi}\|_{\calL(M^p_{1\otimes v_{\tilde{s_2}}}, M^p_{v_{\tilde{s_1}}^{-1}\otimes 1})}
    +
    \|T_{v_{s_1,s_2}\s,\Phi}\|_{\calL(M^{p'}_{1\otimes v_{\tilde{s_2}}}, M^{p'}_{v_{\tilde{s_1}}^{-1}\otimes 1})}
    \lesssim 
\|\s\|_{M^{\fy,1}_{v_{s_1,s_2}\otimes 1}}e^{(A+B+C)|2/p-1|}.
\end{split}
\ee
\end{proof}

\begin{lemma}[\cite{Domar1971AM,Littman1963BAMS}]\label{lm-osc}
Let $\Omega$ be a bounded open set of $\ \rd$. Assume that $g$ is a smooth function, $\mathrm{supp}g\subset \Omega.$
Let $\phi$ be a real-valued $C^{\infty}$ function on $\rd$ satisfying
\begin{equation}
\begin{split}
(1)~ &
|\partial^{\gamma}\phi|\leq A~on~\Omega~for~|\gamma|\leq N, \hspace{60mm}
\\
(2)~ &
|\det Hess\phi|\geq 1/A~on~\Omega,
\end{split}
\end{equation}
where $N$ is a positive integer determined by the dimension $d$. Then there exists a constant $C=C(d,g,A)$ such that
\begin{equation}
\|\mathcal {F}^{-1}[g(\xi)e^{i\lambda \phi(\xi)}]\|_{L^{\infty}}\leq C(1+|\lambda|)^{-d/2}
\end{equation}
for all $\lambda\in \mathbb{R}$. 
\end{lemma}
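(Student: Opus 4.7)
The plan is to establish this classical oscillatory integral estimate via the method of stationary phase, treated uniformly in the inverse-Fourier parameter. Writing
\be
\calF^{-1}[g(\xi)e^{i\lambda \phi(\xi)}](x) = \int_{\Omega} g(\xi)\, e^{i\psi_x(\xi)}\,d\xi, \qquad \psi_x(\xi) := \lambda \phi(\xi) + 2\pi x \cdot \xi,
\ee
I would view this as an oscillatory integral with parameter-dependent phase whose Hessian is $\lambda \cdot \mathrm{Hess}\,\phi$, inheriting from hypothesis (2) the uniform non-degeneracy $|\det \mathrm{Hess}\,\psi_x| \geq |\lambda|^d/A$. The case $|\lambda|\leq 1$ is trivial, bounded by $|\Omega|\,\|g\|_{L^{\infty}}$, so I focus on $|\lambda|>1$.

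I would then split according to $|x|$. For $|x| > C_0 |\lambda|$ with $C_0 > \|\nabla\phi\|_{L^{\infty}(\Omega)}/\pi$, one has $|\nabla\psi_x| \geq \pi|x|$ on $\Omega$, so repeated integration by parts via the transpose of the operator $L = (i|\nabla\psi_x|^{2})^{-1}\nabla\psi_x \cdot \nabla$ (for which $L e^{i\psi_x} = e^{i\psi_x}$) produces arbitrary polynomial decay in $|x|^{-1}$, hence in $|\lambda|^{-1}$, provided enough derivatives of $\phi$ are available on $\Omega$; this is one source of the dimension-dependent requirement on $N$.

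The essential case is $|x| \leq C_0|\lambda|$. Here a critical point $\xi_x^*$ of $\psi_x$ may exist in a neighborhood of $\Omega$, determined by $\nabla\phi(\xi_x^*) = -2\pi x/\lambda$. Using a fixed partition of unity subordinate to a covering of $\overline{\Omega}$, on pieces where $|\nabla\psi_x|$ remains bounded below uniformly in $x$ I would apply non-stationary integration by parts as above. On the piece containing $\xi_x^*$, I would invoke the Morse lemma to change variables so that $\psi_x(\xi) = \psi_x(\xi_x^*) + \tfrac{\lambda}{2}\sum_{j=1}^d \sigma_j u_j^2$ with $\sigma_j \in \{\pm 1\}$; the integral then factors (up to a smooth remainder) as a product of $d$ one-dimensional Fresnel integrals, each of size $O(|\lambda|^{-1/2})$, totaling $O(|\lambda|^{-d/2})$.

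The main obstacle is arranging that the stationary phase estimate be \emph{uniform in $x$}: the Morse change of variables, its Jacobian, and the $C^k$ norms of the transformed amplitude must all be controlled solely by the fixed data $A$ and the $C^N$ bounds on $\phi$ from hypothesis (1), independently of $x$. This is precisely why the hypothesis demands control of $|\partial^{\gamma}\phi|$ up to a dimension-dependent order $N$ rather than merely $|\gamma|\leq 2$: enough derivatives are needed both to close the non-stationary integration by parts and to ensure that the error terms in the Morse normal form (whose constants depend on finitely many derivatives of $\phi$) remain bounded uniformly in $x$. Summing the stationary and non-stationary contributions across the finite partition then yields $\|\calF^{-1}[g e^{i\lambda\phi}]\|_{L^{\infty}} \leq C(d,g,A)(1+|\lambda|)^{-d/2}$, as claimed.
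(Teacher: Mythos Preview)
The paper does not supply a proof of this lemma at all: it is stated with citations to Domar and Littman and used as a black box in the necessity argument for Theorem~\ref{thm-FIO-Mp-12hg}. So there is nothing to compare your argument against on the paper's side.

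Your sketch is the standard stationary-phase proof and is correct in outline. The splitting into $|\lambda|\leq 1$, the non-stationary region $|x|\gg |\lambda|$ (repeated integration by parts), and the stationary region handled via the Morse lemma is exactly the classical route, and you correctly identify the reason a dimension-dependent $N$ appears: one needs enough derivative bounds on $\phi$ both to iterate the integration-by-parts operator and to control the Morse change of variables and its Jacobian uniformly in the parameter $x$. One small point worth tightening if you were to write this out in full: the critical point $\xi_x^*$ need not lie in $\Omega$ itself (only $\nabla\phi(\xi_x^*)=-2\pi x/\lambda$ is required, and $\phi$ is only assumed controlled on $\Omega$), so the partition-of-unity localization should be set up so that on each piece either $|\nabla\psi_x|$ is uniformly bounded below on the support of the amplitude, or the piece is small enough that a single Morse chart covers it; this is routine but is where the uniformity in $x$ actually has to be checked. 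Otherwise your proposal is a faithful account of the argument behind the cited references.
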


\begin{proof}[The proof of Theorem \ref{thm-FIO-Mp-12hg}: necessity part]
Let
\be
\mu_1(x)=\lan x\ran^{2+t_1}, \ \ \  \mu_2(\xi)=\lan \xi\ran^{2+t_2},
\ee
and
\be
\s(x,\xi)=\lan x\ran^{-s_1}\lan \xi\ran^{-s_2}\in M^{\fy,1}_{v_{t_1,t_2}\otimes 1},
\ \ \ 
\Phi(x,\xi)=\frac{\mu_1(x)+\mu_2(\xi)}{2\pi}+x\cdot \xi.
\ee
Then, the symbol function $\s\in M^{\fy,1}_{v_{s_1,s_2}\otimes 1}$,
the phase function $\Phi$ satisfy $(-\fy,t_1,t_2)$-growth condition and uniform separation condition.
We have
\be
T_{\s,\Phi}f(x)=\int_{\rd}\lan x\ran^{-s_1}\lan \xi\ran^{-s_2}e^{i\mu_1(x)}e^{i\mu_2(\xi)}\hat{f}(\xi)e^{2\pi ix\cdot \xi}d\xi
=\lan x\ran^{-s_1}e^{i\mu_1(x)}\lan D\ran^{-s_2}e^{i\mu_2(D)}f(x).
\ee
Moreover, we have the boundedness $T_{v_{s_1,s_2}\s,\Phi}\in \calL(L^2)$ since
\be
T_{v_{s_1,s_2}\s,\Phi}f(x)=T_{1,\Phi}f(x)=e^{i\mu_1(x)}e^{i\mu_2(D)}f(x).
\ee
What we want to prove is the following conclusion
\be
T_{\s,\Phi}\in \calL(M^p)\cap \calL(M^{p'})\Longrightarrow\ \ s_i\geq d t_i|1/p-1/2|.
\ee
Take $\va$ to be a smooth function supported on $B(0,\d)$, and let $\psi=\check{\va}$. 
Using the boundedness of $T_{\s,\Phi}$, we have
\be
\|\lan x\ran^{-s_1}e^{i\mu_1(x)}T_k\va\|_{\mp} \lesssim \|\lan D\ran^{s_2}e^{-i\mu_2(D)}T_k\va\|_{\mp}\lesssim 1
\ee
and
\be
1\lesssim \|\lan x\ran^{-s_1}e^{i\mu_1(x)}M_k\psi\|_{M^{p'}} \lesssim \|\lan D\ran^{s_2}e^{-i\mu_2(D)}M_k\psi\|_{M^{p'}}.
\ee
From this, we deduce that
\be
\lan k\ran^{-s_1}\|e^{i\mu_1(x)}T_k\va\|_{\scrF L^p}
\sim
\|\lan x\ran^{-s_1}e^{i\mu_1(x)}T_k\va\|_{\mp}
\lesssim 1
\ee
and
\be
1
\lesssim 
\|\lan D\ran^{s_2}e^{-i\mu_2(D)}M_k\psi\|_{M^{p'}}
\sim
\lan k\ran^{s_2}\|e^{-i\mu_2(x)}T_k\va\|_{\scrF L^{p'}}.
\ee
To achieve our goal, we need to verify that
\be
\|e^{i\mu_1(x)}T_k\va\|_{\scrF L^p} \gtrsim \lan k\ran^{dt_1|1/2-1/p|},
\ \ \ 
\|e^{-i\mu_2(x)}T_k\va\|_{\scrF L^{p'}}\lesssim \lan k\ran^{-dt_2|1/2-1/p|}.
\ee
We only deal with the second estimate, then the first follows by 
a similar argument and
the following inequality
\begin{equation*}
    1\lesssim \|e^{i\mu_1(x)}T_k\va\|_{\scrF L^2}^2
    \lesssim
    \|e^{i\mu_1(x)}T_k\va\|_{\scrF L^p}\|e^{-i\mu_1(x)}T_k\va\|_{\scrF L^{p'}}.
\end{equation*}
Let $\r$ be a smooth function supported on the annulus $\{x: 1/4\leq |x|\leq 4\}$ satisfies 
$\r(x)=1$ on $\{x: 1/2\leq |x|\leq 2\}$.
For sufficiently large $k$ we have $T_k\va \r(\frac{\cdot}{|k|})=T_k\va$.
Then, we have the estimate
\be
\begin{split}
    \|e^{-i\mu_2(x)}T_k\va\|_{\scrF L^{p'}}
    \lesssim &
    \|e^{-i\mu_2(x)}T_k\va\|_{\scrF L^{2}}^{\frac{2}{p'}}\|e^{-i\mu_2(x)}T_k\va\|_{\scrF L^{\fy}}^{\frac{2}{p}-1}
    \\
    \lesssim &
    \|e^{-i\mu_2(x)}T_k\va\|_{\scrF L^{\fy}}^{\frac{2}{p}-1}
    = 
    \|e^{-i\mu_2(x)}T_k\va \r(\frac{\cdot}{|k|})\|_{\scrF L^{\fy}}^{\frac{2}{p}-1}
    \\
    \lesssim &
    \|T_k\va \|_{\scrF L^{1}}^{\frac{2}{p}-1}
    \|e^{-i\mu_2(x)}\r(\frac{\cdot}{|k|})\|_{\scrF L^{\fy}}^{\frac{2}{p}-1}
    \lesssim 
    \|e^{-i\mu_2(x)}\r(\frac{\cdot}{|k|})\|_{\scrF L^{\fy}}^{\frac{2}{p}-1}.
\end{split}
\ee
From this, the only remaining thing is to verify 
\be
\|e^{-i\mu_2(x)}\r(\frac{\cdot}{|k|})\|_{\scrF L^{\fy}}\lesssim \lan k\ran^{\frac{-dt_2}{2}}.
\ee
Write
\be
\begin{split}
\|e^{-i\mu_2(x)}\r(\frac{\cdot}{|k|})\|_{\scrF L^{\fy}}
= &
|k|^d \|e^{-i\lan |k|x\ran^{(2+t_2)}}\rho\|_{\scrF L^{\fy}}
\\
\sim &
\lan k\ran^d \|e^{-i |k|^{(2+t_2)}\phi_k(x)}\rho\|_{\scrF L^{\fy}},
\end{split}
\ee
where $\phi_k(x)=\frac{\lan |k|x\ran^{(2+t_2)}}{|k|^{(2+t_2)}}$.
A direct calculation yields that
\be
|\partial^{\g}\phi_k(x)|\lesssim 1,\ \ \ \ \ |\det \text{Hess} \phi_k(x)|\gtrsim 1
\ee
uniformly for all $k$.
Using this and Lemma \ref{lm-osc}, we obtain the desired estimate
\be
\begin{split}
    \|e^{-i\mu_2(x)}\r(\frac{\cdot}{|k|})\|_{\scrF L^{\fy}}
    \sim &
    \lan k\ran^d \|e^{-i |k|^{(2+t_2)}\phi_k(x)}\rho\|_{\scrF L^{\fy}}
    \\
    \lesssim &
    \lan k\ran^d |k|^{\frac{-d(2+t_2)}{2}}
    \sim
    \lan k\ran^{\frac{-dt_2}{2}}.
\end{split}
\ee
\end{proof}


\bibliographystyle{abbrv}

\end{document}